\definecolor{nicered}{rgb}{0.6, 0, 0.1}
\definecolor{niceblue}{rgb}{0.06, 0.3, 0.57}
\definecolor{nicegreen}{rgb}{0.0, 0.51, 0.5}
\setlist[enumerate]{leftmargin=2em,label=\textup{(\roman*)}}
\newcommand\ubar[1]{%
  \underaccent{\bar}{#1}}
\crefname{equation}{Eq.}{Eqs.}
\crefname{theorem}{Theorem}{Theorems} 
\crefname{lemma}{Lemma}{Lemmas}
\crefname{corollary}{Corollary}{Corollaries}
\crefname{proposition}{Proposition}{Propositions}
\crefname{definition}{Definition}{Definitions}
\crefname{remark}{Remark}{Remarks}
\crefname{example}{Example}{Examples}
\crefname{notation}{Notation}{Notations}
\crefname{setup}{Setup}{Setup}
\crefname{question}{Question}{Question}
\crefname{convention}{Convention}{Conventions}
\newtheorem{theorem}{Theorem}[section]
\newtheorem{corollary}[theorem]{Corollary}
\newtheorem{lemma}[theorem]{Lemma}
\newtheorem{proposition}[theorem]{Proposition}
\theoremstyle{definition}
\newtheorem{definition}[theorem]{Definition}
\newtheorem{example}[theorem]{Example}
\newtheorem{remark}[theorem]{Remark}
\newtheorem{conjecture}[theorem]{Conjecture}
\numberwithin{equation}{section}
\newcommand{\lct}{\operatorname{lct}}	
\newcommand{\m}{\mathfrak{m}}
\newcommand{\p}{\mathfrak{p}}
\newcommand{\fa}{\mathfrak{a}}
\newcommand{\fb}{\mathfrak{b}}
\newcommand{\n}{\mathfrak{n}}
\newcommand{\FF}{\mathbb{F}}
\newcommand{\KK}{\mathbb{K}}
\newcommand{\CC}{\mathbb{C}}
\newcommand{\QQ}{\mathbb{Q}}
\newcommand{\RR}{\mathbb{R}}
\newcommand{\cJ}{\mathcal{J}}
\newcommand{\cB}{\mathcal{B}}
\newcommand{\cO}{\mathcal{O}}
\newcommand{\cF}{\mathcal{F}}
\newcommand{\Cech}{ \check{\rm{C}}}
\newcommand{\ZZ}{\mathbb{Z}}
\newcommand{\NN}{\mathbb{N}}
\newcommand{\act}{\mathbin{\vcenter{\hbox{\scalebox{0.7}{$\bullet$}}}}}
\newcommand{\Hom}{\operatorname{Hom}}
\newcommand{\Ext}{\operatorname{Ext}}
\newcommand{\Rank}{\operatorname{rank}}
\newcommand{\End}{\operatorname{End}}
\newcommand{\Der}{\operatorname{Der}}
\newcommand{\Grade}{\operatorname{grade}}
\newcommand{\gd}{\operatorname{gl.dim}}
\newcommand{\Frac}{\operatorname{Frac}}
\newcommand{\Ass}{\operatorname{Ass}}
\newcommand{\Len}{\operatorname{length}}
\newcommand{\gr}{\operatorname{gr}}	
\newcommand{\Ann}{\operatorname{Ann}}	
\newcommand{\e}{\operatorname{e}}
\newcommand{\cC}{\mathcal{C}}
\NewDocumentCommand \pd { m o }
{
\IfNoValueTF {#2}
{ \partial_{#1} }
{ \partial_{#1} \act #2 }
}
\NewDocumentCommand \fpd { m o }
{
\IfNoValueTF {#2}
{ \frac{\partial\ }{\partial #1 } }
{ \frac{\partial #2 }{\partial #1 } }
}
\newcommand{\fs}{\boldsymbol{f^s}}
\newcommand{\R}{R}
\renewcommand{\S}{S}
\NewDocumentCommand \seq { o m }
{
\IfNoValueTF {#1}
{ \ubar{#2} }
{ {#2}_1,\ldots,{#2}_{#1} }
}
\NewDocumentCommand \pt { o m }
{
\IfNoValueTF {#1}
{ #2 }
{ ({#2}_1,\ldots,{#2}_{#1}) }
}
\newcommand{\details}[2][]{} 
\title{Bernstein-Sato polynomials in commutative algebra}
\dedicatory{Dedicated to Professor David Eisenbud on the occasion of his seventy-fifth birthday.}
\author[\`Alvarez Montaner et al.]{Josep \`Alvarez Montaner{$^1$}}
\address{Departament de Matem\`atiques  and  Institut de Matem\`atiques de la UPC-BarcelonaTech (IMTech)\\  Universitat Polit\`ecnica de Catalunya }
\email{josep.alvarez@upc.edu}
\author[]{Jack Jeffries${^2}$}
\address{University of Nebraska-Lincoln}
\email{jack.jeffries@unl.edu}
\author[]{Luis N\'u\~nez-Betancourt${^3}$}
\address{Centro de Investigaci\'on en Matem\'aticas, Guanajuato, Gto., M\'exico}
\email{luisnub@cimat.mx}
\thanks{{$^1$}Partially supported by grants  2017SGR-932 (AGAUR) and PID2019-103849GB-I00 (AEI/10.13039/501100011033).}
\thanks{{$^2$}Partially supported by National Science Foundation grant DMS-1606353 and CAREER Award DMS-2044833.}
\thanks{{$^3$}Partially supported by the  CONACYT Grant 284598 and C\'atedras Marcos Moshinsky.}
\subjclass[2010]{Primary: 14F10, 13N10, 13A35, 16S32; Secondary: 13D45, 14B05, 14M25, 13A50.}
\keywords{Bernstein--Sato polynomial, $D$-module, singularities, multiplier ideals.}
\begin{document}
\maketitle

\begin{abstract}
This is an expository survey on the theory of Bernstein-Sato polynomials with special emphasis in its recent developments  and its importance in commutative algebra.
\end{abstract}

\setcounter{tocdepth}{1}

\tableofcontents

\section{Introduction}

The origin of the theory of $D$-modules can be found in the works of Kashiwara \cite{Kas70} and Bernstein  \cite{Ber71, Ber72}.  The motivation behind Bernstein's approach was to give a solution to a question posed by I. M. Gel'fand  \cite{Gelf54} at the 1954 edition of the International Congress of Mathematicians
regarding the analytic continuation of the complex zeta function.  The solution is based on the existence of a polynomial in a single variable satisfying a certain functional equation. This polynomial coincides with the $b$-function developed by Sato in the context of prehomogeneous vector spaces and it is known as the \emph{Bernstein-Sato polynomial}.

The theory of $D$-modules grew up immensely in the $1970$'s and $1980$'s and fundamental results regarding Bernstein-Sato polynomials were obtained by Malgrange \cite{Mal74, BernsteinRationalMalgrange, MalgrangeVfil} and Kashiwara \cite{KashiwaraRationality, KashiwaraVfil}. For instance, they proved the rationality of the roots of the Bernstein-Sato polynomial and related the roots to the eigenvalues of the monodromy  of the Milnor fiber associated to the singularity. Indeed this link is made through the concept of $V$-filtrations and the Hilbert-Riemann correspondence.

The theory of $D$-modules burst into  commutative algebra through the seminal work of Lyubeznik \cite{Lyubeznik93} where he proved some finiteness properties of local cohomology modules. Nowadays, the theory of $D$-modules is an essential tool used in the area and has a prominent role.
For example,  the smallest integer root of the Bernstein-Sato polynomial determines the structure of the localization \cite{WaltherBS},  and thus, using the \v{C}ech complex, it is a key ingredient in the computation of local cohomology modules \cite{Oaku97,OakuLC,OakuLC2,OakuLC3}.  
In addition, several results regarding finiteness aspects of local cohomology were obtained via the existence of the Bernstein-Sato polynomial and related techniques \cite{NBDT,AMHNB}. Finally, there are several invariants that measure singularity that are related to the Bernstein-Sato polynomial 
\cite{ELSV2004,MTW2005,BudurSaito05,BMS2006a}.

In this expository paper we survey several features of the theory of Bernstein-Sato polynomials relating to commutative algebra that have been developed over the last fifteen years or so.  For instance, we discuss a version of Bernstein-Sato polynomial associated to ideals was introduced by Budur, Musta\c{t}\u{a}, and Saito \cite{BMS2006a}. We also  present  a version of the theory for rings of positive characteristic developed by Musta\c{t}\u{a} \cite{MustataBSprime} and furthered by Bitoun \cite{BitounBSpos} and Quinlan-Gallego \cite{EamonBSpos}. Finally,  we treat  a recent extension to certain singular rings  \cite{HsiaoMatusevich,AMHNB,Vfilt}. In addition, we discuss relations between the roots of the Bernstein-Sato polynomial and the poles of the complex zeta function  \cite{Ber71, Ber72} and also  the relation with multiplier ideals and jumping numbers \cite{ELSV2004,BudurSaito05,BMS2006a}.

In this survey we have extended a few results to greater generality than previously in the literature. For instance, we prove the existence of Bernstein-Sato polynomials of nonprincipal ideals for differentiably admissible algebras in   Theorem~\ref{ThmExistinceNonPrincipal}. In  Proposition~\ref{uli},
we show that Walther's proof \cite{WaltherBS} about generation of the localization as a $D$-module also holds for nonregular rings. In Theorem~\ref{thm:finite} we observe conditions sufficient for the finiteness of the associated primes of local cohomology in terms of the existence of the Bernstein-Sato polynomial; this covers several cases where this finiteness result is known. We point out that these results are likely expected by the experts and the proofs are along the lines of previous results. They are in this survey to expand the literature on this subject.

 We have attempted to collect as many examples as possible.
 In particular, Section \ref{SecExamples} is devoted to discuss several examples for classical Bernstein-Sato polynomials.
 In Section \ref{SecNonPrincipalRelative}, we also provide several examples for nonprincipal ideals. In addition, we tried to collect many examples in 
other sections. We also  attempted to present this material in an accessible way for people with no previous experience in the subject.
 
 The theory surrounding the Bernstein-Sato polynomial is vast, and only a portion of it is discussed here. Our most blatant omission is the relation of the roots of Bernstein-Sato polynomials with the eigenvalues of the monodromy of the Milnor fiber \cite{Mal74b}. Another crucial aspect of the theory that is not touched upon here is mixed Hodge modules \cite{Saitomhm}. We also do not discuss the different variants of the Strong Monodromy conjecture which relate the poles of the $p$-adic Igusa zeta function or the topological zeta function with the roots of the Bernstein-Sato polynomial \cite{Igusa_book, Denef_Loeser, Nicaise}. We also omitted computational aspects of this subject \cite{Oaku97,BLAlg}.  
We do not discuss in depth several recent results obtained via representation theory \cite{LRWW,LA}.
We hope the reader of this survey is inspired to learn more and we enthusiastically recommend the surveys of Budur \cite{SurveyBudur, BudurNotes}, Granger \cite{Granger2010}, Saito \cite{Saito_survey}, and Walther \cite{WSurvey,UliAlg} for further insight.

\section{Preliminaries}

\subsection{Differential operators}

\begin{definition}\label{def:WEyl}
	Let $\KK$ be a field of characteristic zero, and let $A$ be either 
	\begin{itemize}
		\item $A=\KK[x_1,\dots,x_d]$, a polynomial ring over $\KK$,
		\item $A=\KK\llbracket x_1,\dots,x_d\rrbracket$, a power series ring over $\KK$, or
		\item $A=\CC\{ x_1,\dots,x_d\}$, the ring of convergent power series in a neighborhood of the origin over $\CC$.
	\end{itemize} The ring of differential operators $D_{A|\KK}$ is the $\KK$-subalgebra of $\End_{\KK}(A)$ generated by $A$ and $\partial_1,\dots,\partial_d$, where $\partial_i$ is the derivation $\frac{\partial}{\partial x_i}$.
\end{definition}

In the polynomial ring case, $D_{A|\KK}$ is the Weyl algebra. We refer the reader to books on this subject \cite{Coutinho},  \cite[Chapter~15]{McC_Rob} for  a basic introduction to this ring and its modules. The Weyl algebra can be described in terms of generators and relations as
 
 \[D_{A|\KK} = \frac{\KK\langle x_1,\dots,x_d,\partial_1,\dots,\partial_d \rangle}{(\partial_i x_j - x_j \partial_i - \delta_{ij} \ | \ i,j=1,\dots,d)},\]
 where $\delta_{ij}$ is the Kronecker delta. As $D_{A|\KK}$ is a subalgebra of $\End_{\KK}(A)$, $x_i\in D_{A|\KK}$ is the operator of multiplication by $x_i$. The ring $D_{A|\KK}$ has an order filtration
  \[ D^i_{A|\KK} = \bigoplus_{\substack{a_1,\dots,a_d\in \NN \\  b_1+\cdots+b_d \leq i }}\KK \cdot  x_1^{a_1} \cdots x_d^{a_d} \partial_1^{b_1} \cdots \partial^{b_d}_d .\]
The associated graded ring of $D_{A|\KK}$ with respect to the order filtration is a polynomial ring in $2d$ variables. Many good properties follow from this, for instance,  the Weyl algebra is left-Noetherian,  is right-Noetherian, and has finite global dimension.

In the generality of Definition~\ref{def:WEyl}, the associated graded ring of $D_{A|\KK}$ with respect to the order filtration is a polynomial ring over $A$.

Rings of differential operators are defined much more generally as follows.

\begin{definition}
	Let $\KK$ be a field, and $R$ be a $\KK$-algebra.
\begin{itemize}
	\item $D^0_{R|\KK} = \Hom_R(R,R) \subseteq \End_{\KK}(R)$.
	\item Inductively, we define $D^i_{R|\KK}$  as 
	\[\{ \delta \in \End_{\KK}(R) \ | \ \delta \circ \mu - \mu \circ \delta \in D^{i-1}_{R|\KK} \ \text{for all} \ \mu\in D^0_{R|\KK}\}.\]
	\item $D_{R|\KK} =\bigcup_{i\in \NN} D^i_{R|\KK}$.
\end{itemize}
We call $D_{R|\KK}$ the ring of ($\KK$-linear) differential operators on $R$, and 
\[D^0_{R|\KK} \subseteq D^1_{R|\KK} \subseteq D^2_{R|\KK}  \subseteq \cdots\] the order filtration on $D_{R|\KK}$.
\end{definition}

We refer the interested reader to classic literature on this subject, e.g., \cite[\S 16.8]{EGA}, \cite{Bjork79}, \cite{Nakai}, and \cite[Chapter 15]{McC_Rob}.
We now present a few examples of rings of differential operators.

\begin{enumerate}
	\item If $A$ is a polynomial ring over a field $\KK$, then

		\[ D^i_{A|\KK} = \bigoplus_{a_1 + \cdots +a_d \leq i} A \cdot \frac{\partial^{a_1}_{1}}{a_1!} \cdots \frac{\partial^{a_d}_{d}}{a_d!},\]
	where $\displaystyle \frac{\partial^{a_i}_{i}}{a_i!}$ is the $\KK$-linear operator given by 
	\[\frac{\partial^{a_i}_{i}}{a_i!}(x_1^{b_1} \cdots x_d^{b_d}) =  \binom{b_i}{a_i}  x_1^{b_1} \cdots x_i^{b_i-a_i} \cdots x_d^{b_d}.\]
	Here, we identify an element $a\in A$ with the operator of multiplication by~$a$.
	In particular, when $\KK$ has characteristic zero, this definition agrees with Definition~\ref{def:WEyl}.
	\item If $R$ is essentially of finite type over $\KK$, and $W\subseteq R$ is multiplicatively closed, then $D^i_{W^{-1}R|\KK}=W^{-1}D^i_{R|\KK}$. In particular, for $R=\KK[x_1,\dots,x_d]_f$, 
	\[D^i_{R|\KK}=\bigoplus_{a_1 + \cdots +a_d \leq i} K[x_1,\dots,x_d]_f \cdot \frac{\partial^{a_1}_{1}}{a_1!} \cdots \frac{\partial^{a_d}_{d}}{a_d!} .\]
	\item If $A$ is a polynomial ring over $\KK$, and $R=A/\fa$ for some ideal $\fa$, then
	\[ D^i_{R|\KK} = \frac{ \{ \delta\in D^i_{A|\KK} \ | \ \delta(\fa)\subseteq \fa \} } {\fa D^i_{A|\KK}}. \]
	\end{enumerate}

In general, rings of differential operators need not be left-Noetherian or right-Noetherian, nor have finite global dimension \cite{DiffNonNoeth}.

We note that if $R$ is an $\NN$-graded $\KK$-algebra, then $D_{R|\KK}$ admits a compatible $\ZZ$-grading via $\deg(\delta)=\deg(\delta(f))-\deg(f)$ for all homogeneous $f\in R$.

\begin{remark}\label{rem:loc} The ring $R$ is tautologically a left $D_{R|\KK}$-module. Every localization of $R$ is a $D_{R|\KK}$-module as well. For $\delta\in D_{R|\KK}$, and $f\in R$, we define $\delta^{(j),f}$ inductively as $\delta^{(0),f}=\delta$, and $\delta^{(j),f} =\delta^{(j-1),f} \circ f - f \circ \delta^{(j-1),f}$. The action of $D_{R|\KK}$ on $W^{-1}R$ is then given by
\[ \delta \cdot \frac{r}{f} = \sum_{j=0}^{t} \frac{\delta^{(j),f}(r)}{f^{j+1}} \]
for $\delta\in D^t_{R|\KK}$, $r\in R$, $f\in W$.
\end{remark}

\begin{definition}
	Let $\fa\subseteq R$ be an ideal and   $F=f_1,\ldots, f_\ell\in R$ be a set of generators for $\fa$. 
	Let $M$ be any
	$R$-module. The $\check{\mbox{C}}$ech complex of $M$ with respect to
	$F$ is defined by
	$$
	\Cech^\bullet(F;M): \hskip 3mm 0\to M\to \bigoplus_i
	M_{f_i}\to\bigoplus_{i,j} M_{f_i f_j}\to \cdots \to M_{f_1 \cdots
		f_\ell} \to 0,
	$$
	where the maps on every summand are localization maps up to a sign.
	The local cohomology of $M$ with support on $\fa$ is defined by
	$$
	H^i_{\fa}(M)=H^i(\Cech^\bullet(F;M)).
	$$ 
	This module is independent of the set of generators of $\fa$.
\end{definition}

As a special case, $\displaystyle H^1_{(f)}(R)=\frac{R_f}{R}$.

The \v{C}ech complex of any left $D_{R|\KK}$-module with respect to any sequence of elements is a complex of $D_{R|\KK}$-modules, and hence the local cohomology of any $D_{R|\KK}$-module with respect to any ideal is  a left $D_{R|\KK}$-module.

\subsection{Differentiably admissible $\KK$-algebras} \label{SubSecPreDiffAdm}
In this subsection we introduce what is called now differentiably admissible algebras.  To the best of our knowledge, this is the more general class of ring where the existence of the Bernstein-Sato polynomial is known. 
We follow the extension  done for Tate and  Dwork-Monsky-Washnitzer  $\KK$-algebras by  Mebkhout and Narv\'{a}ez-Macarro \cite{MNM},  which was extended by the third-named author to differentiably admissible algebras \cite{NBDT}. We assume that $\KK$ is a field of characteristic zero.

\begin{definition}
Let $A$ be a Noetherian regular $\KK$-algebra  of dimension $d$. We say that $A$ is  differentiably admissible  if 
\begin{enumerate}
\item $\dim (A_\m)=d$ for every maximal ideal $\m\subseteq A$,
\item $A/\m$ is an algebraic extension of $\KK$  for every maximal ideal $\m\subseteq A$, and
\item $\Der_{A|\KK}$ is a projective $A$-module of rank $d$ such that the natural map 
$$
A_\m \otimes_A \Der_{A | \KK} \to \Der_{A_\m|\KK}
$$ 
 is an isomorphism. 
\end{enumerate}
\end{definition}

\begin{example} \label{ExDifAd}
The following are examples of differentiably admissible algebras:
\begin{enumerate}
	\item Polynomial rings over $\KK$.
	\item Power series rings over $\KK$.
	\item The ring of convergent power series in a neighborhood of the origin over $\CC$.
	\item Tate and  Dwork-Monsky-Washnitzer  $\KK$-algebras \cite{MNM}.
	\item The localization of a complete regular rings of mixed characteristic at the uniformizer \cite{NBDT,LyuMixChar}.
	\item Localization of complete local domains of equal-characteristic zero at certain elements \cite{Put}.
\end{enumerate}
\end{example}

We note that in the Examples \ref{ExDifAd}(i)-(iv), we have that $\Der_{A|\KK}$  is free, because there exists 
$x_1,\ldots,x_d\in R$ and $\partial_1,\ldots, \partial_d\in \Der_{A|\KK}$  such that $\partial_i (x_j)=\delta_{i,j}$ \cite[Theorem 99]{Matsumura}.

\begin{theorem}[{\cite[Theorem 2.7]{NBDT}}]
Let $A$ be a differentiably admissible $\KK$-algebra.
 If there is an element $f\in A$
such that $R=A/fA$ is a regular ring, then $R$ is a differentiably admissible $\KK$-algebra.
\end{theorem}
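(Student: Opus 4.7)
The plan is to verify the three axioms of differentiable admissibility for $R = A/fA$ with expected dimension $d-1$. Conditions (i) and (ii) are quick: every maximal ideal $\n$ of $R$ corresponds to some maximal $\m \supseteq fA$ of $A$ with $R/\n = A/\m$ algebraic over $\KK$; and since $A_\m$ is a regular local domain of dimension $d$, the element $f$ is a nonzerodivisor on $A_\m$, so by Krull's Hauptidealsatz $\dim R_\n \leq d-1$, while regularity of $R_\n$ forces $f \in \m \setminus \m^2$ and hence $\dim R_\n = d-1$.

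The substantive content is condition (iii), which I would attack locally at each maximal $\n = \m/(f)$ of $R$ by exhibiting an explicit free basis. Because $A/\m$ is algebraic over the characteristic-zero field $\KK$ and hence separable, $\Omega_{(A/\m)|\KK} = 0$, and the fundamental exact sequence yields a surjection $\m/\m^2 \twoheadrightarrow \Omega_{A_\m|\KK} \otimes_{A_\m} A_\m/\m$. Since $\Der_{A_\m|\KK}$ is free of rank $d$ (projective rank $d$ over a local ring) and the residue-field fiber of $\Omega_{A_\m|\KK}$ is then also $d$-dimensional, this surjection is an isomorphism. Hence I may choose a regular system of parameters $x_1, \ldots, x_d$ of $A_\m$ with $x_1 = f$, together with $\partial_1, \ldots, \partial_d \in \Der_{A_\m|\KK}$ satisfying $\partial_i(x_j) = \delta_{ij}$.

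The derivations $\partial_2, \ldots, \partial_d$ annihilate $f$ and hence descend to $\KK$-derivations of $R_\n$; they are $R_\n$-linearly independent since they pair to $\delta_{ij}$ with $\bar x_2, \ldots, \bar x_d$. To see they span $\Der_{R_\n|\KK}$, apply $\Hom_R(-, R)$ to the conormal sequence
\[
fA/f^2A \to \Omega_{A|\KK} \otimes_A R \to \Omega_{R|\KK} \to 0
\]
to produce $0 \to \Der_{R|\KK} \to \Der_\KK(A, R) \xrightarrow{\delta \mapsto \overline{\delta(f)}} R$. Localizing at $\n$ and identifying $\Der_\KK(A,R)_\n$ with $R_\n^d$ via the basis $\partial_1, \ldots, \partial_d$, the right-hand arrow becomes projection onto the first coordinate (as $\partial_1(f)=1$), whose kernel is precisely $\bigoplus_{i\geq 2} R_\n \partial_i$. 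Hence $\Der_{R_\n|\KK}$ is free of rank $d-1$, giving projectivity of rank $d-1$ globally, and compatibility with further localization is inherited from the corresponding property for $A$. The main obstacle is the middle step — extracting a local basis of $\Der_{A_\m|\KK}$ dual to a regular system of parameters including $f$ — which is where both the separability of the residue field (via characteristic zero) and the rank-$d$ projectivity of $\Der_{A|\KK}$ combine essentially.
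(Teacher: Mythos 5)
Your overall skeleton---verify (i)--(ii) directly, then at each maximal ideal $\m\supseteq fA$ produce a regular system of parameters $x_1=f,x_2,\dots,x_d$ with dual derivations and show that $\partial_2,\dots,\partial_d$ descend to a basis of $\Der_{R_\n|\KK}$---is the right one; the survey only cites \cite{NBDT} for this theorem, but it runs exactly this kind of argument for the analogous \Cref{PropPowerSeriesDiffAd}. The genuine gap is in how you produce the dual pair, and it has a structural source: for the key examples of differentiably admissible algebras (power series, convergent power series, Tate and Dwork--Monsky--Washnitzer algebras) the module $\Omega_{A|\KK}$ is \emph{not} finitely generated, so arguments that dualize it, count its residue-field fiber, or commute $\Hom$ with base change are not formal. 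Concretely, writing $L=A/\m$: from $\Der_{A_\m|\KK}=\Hom_{A_\m}(\Omega_{A_\m|\KK},A_\m)$ being free of rank $d$ you cannot conclude that $\dim_L(\Omega_{A_\m|\KK}\otimes_{A_\m} L)=d$, since Nakayama is unavailable and the natural map $\Hom_{A_\m}(\Omega_{A_\m|\KK},A_\m)\otimes_{A_\m} L\to\Hom_L(\Omega_{A_\m|\KK}\otimes_{A_\m} L,L)$ need not be an isomorphism; and even granting that the conormal map $\m/\m^2\to\Omega_{A_\m|\KK}\otimes_{A_\m} L$ is bijective, what you get are derivations with $\partial_i(x_j)\equiv\delta_{ij}\bmod\m$, not the exact Kronecker pairing your later steps use. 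The statement you actually need here is precisely Matsumura's theorem quoted as \Cref{99}: since $\Rank\Der_{A_\m|\KK}=d$ (condition (v)), the partial derivatives dual to \emph{any} regular system of parameters of $A_\m$---in particular one with $x_1=f$---map $A_\m$ into $A_\m$ (condition (i)). This is how \cite{NBDT}, and the proof of \Cref{PropPowerSeriesDiffAd} in this survey, proceed.

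The same issue affects your endgame: identifying $\Der_\KK(A,R)_\n$ with $R_\n^{d}$ ``via the basis $\partial_1,\dots,\partial_d$'' silently asserts both that $\Hom$ commutes with this localization and that every $\KK$-derivation $A_\m\to R_\n$ is an $R_\n$-combination of the $\partial_i$; neither is automatic when $\Omega_{A|\KK}$ is not finitely presented. A correct substitute is direct: given $D\in\Der_{R_\n|\KK}$, subtract $\sum_{i\geq 2}D(\bar x_i)\bar\partial_i$ and show the difference is zero, using that a derivation of the Noetherian local ring $R_\n$ is $\n$-adically continuous, hence extends to $\widehat{R_\n}\cong L\llbracket \bar x_2,\dots,\bar x_d\rrbracket$, where it kills the coefficient field $L$ (separably algebraic over $\KK$, as we are in characteristic zero) and the $\bar x_i$, hence vanishes. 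Finally, the local computation alone does not finish condition (iii): you must still show that $\Der_{R|\KK}$ is a projective (in particular finitely generated) $R$-module and that the natural map $R_\n\otimes_R\Der_{R|\KK}\to\Der_{R_\n|\KK}$ is an isomorphism; this requires the kind of bookkeeping carried out with the finitely generated submodule $M$ in the proof of \Cref{PropPowerSeriesDiffAd}, e.g.\ exhibiting globally defined derivations whose localizations realize the bases just constructed.
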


\begin{remark}[{\cite[Proposition 2.10]{NBDT}}]\label{RmkDiffAdGenDer}
Let $A$ be a differentiably admissible $\KK$-algebra.
Then,
\begin{enumerate}
\item $D^n_{A|\KK}=(\Der_{A|\KK}+A)^n$, and
\item $D_{A|\KK}\cong  A\langle \Der_{A|\KK}\rangle $.
\end{enumerate}
\end{remark}

\begin{theorem}[{\cite[Section 2]{NBDT}}]\label{ThmDifType}
Let  $A$  be  a differentiably admissible $\KK$-algebra.
Then,
\begin{enumerate}
\item $D_{A|\KK}$ is left and right Noetherian;
\item $\gr_{D^\bullet_{A|\KK}}(D_{A|\KK})$ is a regular ring of pure graded dimension $2d$;
\item $\gd(D_{A|\KK})=d$.
\end{enumerate}
\end{theorem}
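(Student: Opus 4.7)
My plan is to treat the three statements as a package built on one local structural fact: at every maximal ideal $\m\subseteq A$, the module $\Der_{A_\m|\KK}$ is free of rank $d$, and by \cite[Theorem~99]{Matsumura} one can choose $x_1,\dots,x_d\in\m$ and $\partial_1,\dots,\partial_d\in\Der_{A_\m|\KK}$ with $\partial_i(x_j)=\delta_{ij}$. Combined with Remark~\ref{RmkDiffAdGenDer}, this says that locally $D_{A_\m|\KK}=A_\m\langle \partial_1,\dots,\partial_d\rangle$ behaves like a Weyl algebra over the regular base $A_\m$. All three claims are really statements that can be reduced to or modeled on the classical Weyl-algebra case.

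For (2), I would first show the natural surjection $\Sym_A(\Der_{A|\KK})\twoheadrightarrow\gr_{D^\bullet_{A|\KK}}(D_{A|\KK})$ is well-defined and surjective. Surjectivity is immediate from Remark~\ref{RmkDiffAdGenDer}(i), and well-definedness comes from the two bracket computations $[\partial,\partial']\in\Der_{A|\KK}\subseteq D^1_{A|\KK}$ and $[f,\partial]=-\partial(f)\in A=D^0_{A|\KK}$, which both vanish in the associated graded. Injectivity can be checked stalkwise at each maximal ideal $\m$, where freeness of $\Der_{A_\m|\KK}$ reduces the question to a PBW-type theorem: the monomials $\partial_1^{a_1}\cdots\partial_d^{a_d}$ with $\sum a_i\le n$ form an $A_\m$-basis of $D^n_{A_\m|\KK}$. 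This is proved by the standard induction on order, using $[\partial_i,x_j]=\delta_{ij}$ to show that any differential operator of order $\le n$ is congruent modulo $D^{n-1}$ to a unique $A_\m$-combination of such monomials. Consequently $\gr(D_{A|\KK})\cong\Sym_A(\Der_{A|\KK})$ is, Zariski-locally, a polynomial ring in $d$ variables over a regular ring of dimension $d$, hence regular of pure dimension $2d$.

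Part (1) then follows from part (2) by the standard filtered-ring principle: for any filtered ring whose filtration is exhaustive and whose graded pieces are finitely generated $A$-modules, Noetherianity of the associated graded ring lifts to Noetherianity of the ring itself (one argues that an ascending chain of left or right ideals induces a chain in the graded ring that must stabilize, and then lifts this stabilization degree by degree). Here each $D^n_{A|\KK}$ is a finitely generated $A$-module (the degree-$n$ part of $\Sym_A(\Der_{A|\KK})$ is finitely generated because $\Der_{A|\KK}$ is finitely generated projective), so both left- and right-Noetherianity descend from (2).

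The hardest item is (3), precisely because the naive filtered bound gives $\gd(D_{A|\KK})\le \gd(\gr(D_{A|\KK}))=2d$, not $d$. My plan here is a two-step argument. First, for the upper bound $\gd(D_{A|\KK})\le d$, construct a Spencer-type resolution of $A$ as a left $D_{A|\KK}$-module:
\[
0\to D_{A|\KK}\otimes_A \wedge^d\Der_{A|\KK}\to\cdots\to D_{A|\KK}\otimes_A\Der_{A|\KK}\to D_{A|\KK}\to A\to 0,
\]
with differentials built from the action of derivations; the complex is locally the Koszul complex on $\partial_1,\dots,\partial_d$ acting on $D_{A|\KK}$ (with $A$ viewed as $D/D\cdot\Der$), and its exactness follows from the regularity of $\partial_1,\dots,\partial_d$ in $\gr(D_{A_\m|\KK})=A_\m[\xi_1,\dots,\xi_d]$ via the standard lift-of-Koszul argument for filtered rings. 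This plus the fact that $A$ has projective dimension $d$ over itself (regularity) and over $D_{A|\KK}$ (via this resolution) lets one bound $\Ext^i_{D_{A|\KK}}(M,N)$ to vanish for $i>d$ by a change-of-rings spectral sequence. For the lower bound, exhibit a module whose projective dimension is exactly $d$, typically $A$ itself, by showing $\Ext^d_{D_{A|\KK}}(A,D_{A|\KK})\neq 0$ (a computation again done locally). The essential use of characteristic zero enters in the exactness of the Spencer complex; in characteristic $p$ the derivations generate only a "truncated" subalgebra and the bound would collapse back to $2d$. This is the step I expect to require the most care, and I would follow the analogous arguments in \cite{MNM} and \cite[Section~2]{NBDT} closely.
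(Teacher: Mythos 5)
Note first that the survey does not actually prove Theorem~\ref{ThmDifType}: it is quoted from \cite[Section 2]{NBDT}, which in turn builds on \cite{MNM} and \cite{Bjork79}, so your proposal is being measured against that route rather than a proof printed here. For parts (i) and (ii) your plan is essentially the argument from the literature: the Rinehart/PBW isomorphism $\gr_{D^\bullet_{A|\KK}}(D_{A|\KK})\cong \Sym_A(\Der_{A|\KK})$, checked locally using \cite[Theorem~99]{Matsumura} and Remark~\ref{RmkDiffAdGenDer}, followed by the standard lifting of Noetherianity from the associated graded ring to the filtered ring. Both steps are fine as sketched.

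The genuine gap is in the upper bound of (iii). Your Spencer--Koszul complex does give $\mathrm{pd}_{D_{A|\KK}}(A)\le d$, and together with $\Ext^d_{D_{A|\KK}}(A,D_{A|\KK})\neq 0$ it yields the lower bound $\gd(D_{A|\KK})\ge d$; but no change-of-rings formalism can promote $\mathrm{pd}_{D_{A|\KK}}(A)\le d$ and $\gd(A)=d$ to $\gd(D_{A|\KK})\le d$. Indeed, every ingredient you invoke --- $D_{A|\KK}$ projective over $A$, $\gd(A)=d$, a length-$d$ resolution of $A$ by induced modules $D_{A|\KK}\otimes_A\wedge^i\Der_{A|\KK}$ --- holds verbatim for the commutative ring $\Sym_A(\Der_{A|\KK})=\gr D_{A|\KK}$, whose global dimension is $2d$; and resolving an arbitrary module by induced modules and then resolving the coefficients over $A$ only reproduces the bound $2d$ that the order filtration already gives. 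The passage from $2d$ to $d$ is the real content of (iii): one needs the Auslander-regular filtered-ring machinery (the grade inequality $\Grade(\Ext^i_{D_{A|\KK}}(M,D_{A|\KK}))\ge i$ together with the grade--dimension equality of Proposition~\ref{ThmHomologicalBernsteinIneq}) combined with a Bernstein-type inequality $\dim_D M\ge d$ --- or, as in \cite{NBDT}, a reduction by localization and faithfully flat completion to Bj\"ork's theorem for power series rings \cite{Bjork79} --- which forces $\Ext^i_{D_{A|\KK}}(M,D_{A|\KK})=0$ for $i>d$ and hence $\mathrm{pd}(M)\le d$ for finitely generated $M$. Your closing remark about characteristic also misidentifies where the hypothesis enters: the Spencer complex is exact in any characteristic, while characteristic zero is used in $D_{A|\KK}$ being generated by derivations (Remark~\ref{RmkDiffAdGenDer}) and in the dimension bound; in characteristic $p$ one still has $\gd(D_{R|\KK})=\dim R$ for smooth algebras by \cite{SmithSP}, not a collapse to $2d$.
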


We recall that for Noetherian rings the  left and right global dimension are equal. In fact, this number is also equal to the weak global dimension \cite[Theorem~8.27]{Rot}.

\begin{definition}[{\cite{MNM}}]
We say that $D_{A|\KK}$ is a  \emph{ring of differentiable type}
if
\begin{enumerate}
	\item $D_{A|\KK}$ is left and right Noetherian,
	\item $\gr_{D^\bullet_{A|\KK}}(D_{A|\KK})$ is a regular ring of pure graded dimension $2d$, and
	\item $\gd(D_{A|\KK})=d$.
\end{enumerate}
\end{definition}

By Theorem~\ref{ThmDifType}, the ring of differential operators of any differentiably admissible algebra is a ring of differentiable type.

\subsection{Log-resolutions} \label{Log-resolutions}

Let $A=\CC[x_1,\dots,x_d]$ be the polynomial ring over the complex numbers and set $X=\CC^d$. 
A  \emph{log-resolution} of an ideal $\mathfrak{a}\subseteq A$ is a proper birational morphism
  $\pi: X' \rightarrow X$  such that $X'$ is smooth, $\mathfrak{a}\cdot\cO_{X'} = \cO_{X'}\left(-F_\pi\right)$ for some effective Cartier divisor $F_\pi$ and $F_\pi +E$ is a simple normal crossing divisor where $E={\rm Exc}(\pi)=\sum_{i=1}^r E_i$ denotes the exceptional divisor. We have a decomposition $F_\pi = F_{exc}+F_{aff}$ into its \emph{exceptional} and \emph{affine} parts which we denote \[ F_\pi := \sum_{i=1}^r N_i E_i + \sum_{j=1}^s N'_j S_j \] with $N_i, N'_j$ being nonnegative integers. For a principal ideal  $\mathfrak{a}=(f)$  we have that $F_\pi =\pi^\ast f$ is the total transform divisor and  $S_j$ are the irreducible components of the \emph{strict transform} of \(f\).  In particular $N'_j=1$ for all $j$ when \(f\) is reduced.

The \emph{relative canonical divisor} \[ K_\pi := \sum_{i=1}^r k_i E_i \]  is the effective divisor with exceptional support defined by the Jacobian determinant of the morphism $\pi$.

There are many invariants of singularities that are defined using log-resolutions but for now we only focus on \emph{multiplier ideals}.  We introduce the basics on these invariants and we refer the interested reader to Lazarsfeld's book \cite{Laz2004}. We also want to point out that there is an analytical definition of these ideals that we consider in \Cref{Multipliers}. 

\begin{definition}\label{Mult_ideal}
The multiplier ideal  associated to an ideal $\fa \subseteq A$ and $\lambda \in \RR_{\geq 0}$ is defined as
\[\mathcal{J}(\mathfrak{a}^\lambda) = \pi_*\mathcal{O}_{X'}\left(\left\lceil K_{\pi} - \lambda F_\pi \right\rceil\right)= \{g\in A \hskip 2mm | \hskip 2mm  {\rm ord}_{E_i}(\pi^\ast g) \geq \lfloor  \lambda e_i - k_i \rfloor \hskip 2mm \forall i \}.\]
\end{definition}

An important feature is that
 \(\mathcal{J}(\mathfrak{a}^\lambda)\) does not depend on the log-resolution ${\pi: X' \rightarrow X}$. Moreover we have
\(R^i\pi_*\mathcal{O}_{X'}\left(\left\lceil K_{\pi} - \lambda F_\pi \right\rceil\right)= 0\) for all $i>0$.

From its definition we deduce that multiplier ideals satisfy the following properties:

\begin{proposition}\label{PropBasicsTestIDeals}
Let  $\fa,\fb\subseteq A$ be ideals, and $\lambda,\lambda'\in\RR_{\geq 0}.$
Then,
\begin{enumerate}
\item If $\fa\subseteq \fb,$ then $\mathcal{J}(\fa^\lambda)\subseteq \mathcal{J}(\fb^{\lambda})$.
\item If $\lambda<\lambda',$ then $\mathcal{J}(\fa^{\lambda'})\subseteq \mathcal{J}(\fa^{\lambda})$.
\item There exists $\epsilon>0$ such that $\mathcal{J}(\fa^{\lambda})= \mathcal{J}(\fa^{\lambda'})$,
if $\lambda'\in [\lambda,\lambda+\epsilon)$.
\end{enumerate}
\end{proposition}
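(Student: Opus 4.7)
The plan is to realize all three statements on a single log-resolution and then compare the defining divisors termwise, using the description in Definition~\ref{Mult_ideal}. For (i) we take a log-resolution $\pi\colon X'\to X$ that simultaneously resolves both $\fa$ and $\fb$, for instance a log-resolution of the product $\fa\fb$; for (ii) and (iii) any log-resolution of $\fa$ suffices. Since multiplier ideals are independent of the chosen log-resolution, this loses no generality.

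For (i), write $\fa\cdot\cO_{X'}=\cO_{X'}(-F^\fa_\pi)$ and $\fb\cdot\cO_{X'}=\cO_{X'}(-F^\fb_\pi)$. The inclusion $\fa\subseteq\fb$ gives $\cO_{X'}(-F^\fa_\pi)\subseteq\cO_{X'}(-F^\fb_\pi)$, so $F^\fa_\pi\geq F^\fb_\pi$ as effective divisors. Then $K_\pi-\lambda F^\fa_\pi\leq K_\pi-\lambda F^\fb_\pi$, which is preserved by $\lceil\cdot\rceil$ and by $\pi_*$, yielding $\mathcal{J}(\fa^\lambda)\subseteq\mathcal{J}(\fb^\lambda)$. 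Part (ii) is analogous: $F_\pi$ being effective and $\lambda<\lambda'$ give $K_\pi-\lambda' F_\pi\leq K_\pi-\lambda F_\pi$, and again $\lceil\cdot\rceil$ and $\pi_*$ yield the claim.

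For (iii), I would inspect the coefficients of $K_\pi-\mu F_\pi$ along the finitely many prime divisors in the SNC support of $F_\pi+E$: on each exceptional $E_i$ the coefficient is $k_i-\mu N_i$, and on each strict-transform component $S_j$ it is $-\mu N'_j$. Each such coefficient is an affine function of $\mu$, so $\mu\mapsto\lceil k_i-\mu N_i\rceil$ and $\mu\mapsto\lceil -\mu N'_j\rceil$ are piecewise constant, non-increasing, right-continuous step functions whose jumps occur only on the discrete set of $\mu$ where $k_i-\mu N_i\in\ZZ$, respectively $-\mu N'_j\in\ZZ$. Choosing $\epsilon>0$ smaller than the distance from $\lambda$ to the next jump of any of these (finitely many) step functions makes $\lceil K_\pi-\mu F_\pi\rceil$ constant on $[\lambda,\lambda+\epsilon)$, and hence so is $\mathcal{J}(\fa^\mu)$.

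There is no serious obstacle; the only point requiring care is matching the right-continuity of $\lceil\cdot\rceil$ (which comes from the convention $\lceil n\rceil=n$ for $n\in\ZZ$, so $\mu\mapsto\lceil a-\mu b\rceil$ with $b\geq 0$ drops to its next lower value precisely as $\mu$ crosses the critical point from the left) with the half-open interval $[\lambda,\lambda+\epsilon)$ appearing in the statement. With this matched up correctly, (iii) is immediate, and (i) and (ii) are purely formal consequences of the divisor comparisons described above.
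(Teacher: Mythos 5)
Your proof is correct, and it is exactly the argument the paper has in mind: the paper states these properties without proof, asserting they follow "from the definition," and your verification — passing to a simultaneous log-resolution (a log-resolution of $\fa\fb$ does principalize both factors, since an invertible product of ideal sheaves forces each factor to be locally principal), comparing the divisors $K_\pi-\lambda F_\pi$ coefficientwise, and noting that the finitely many functions $\mu\mapsto\lceil k_i-\mu N_i\rceil$, $\mu\mapsto\lceil -\mu N'_j\rceil$ are non-increasing, right-continuous step functions — supplies precisely the omitted details. The care you take in matching right-continuity of the ceiling with the half-open interval $[\lambda,\lambda+\epsilon)$ is the only delicate point, and you handle it correctly.
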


\begin{definition}
We say that  $\lambda$ is a \textit{jumping number}
of $\fa$ if
$$
\mathcal{J}(\fa^{\lambda})\neq \mathcal{J}(\fa^{\lambda-\epsilon})
$$
for every $\epsilon>0.$
\end{definition}

Notice that jumping numbers have to be rational 
and we have a nested filtration
$$A\supsetneqq \mathcal{J}(\mathfrak{a}^{\lambda_1})\supsetneqq \mathcal{J}(\mathfrak{a}^{\lambda_2})\supsetneqq \cdots \supsetneqq \mathcal{J}(\mathfrak{a}^{\lambda_i})\supsetneqq \cdots$$ 
where the jumping numbers  are the $\lambda_i$ where  we have a strict inclusion and $\lambda_1={\rm lct}(\mathfrak{a})$ is the so-called \emph{log-canonical threshold}. Skoda's theorem  states that $\mathcal{J}(\mathfrak{a}^{\lambda}) =\mathfrak{a} \cdot \mathcal{J}(\mathfrak{a}^{\lambda -1}) $ for all $\lambda > \dim A$.

Multiplier ideals can be generalized without much effort  to the case where $X$ is a normal $\QQ$-Gorenstein variety over a field $\KK$ of characteristic zero; one needs to consider $\mathbb{Q}$-divisors. Fix a log-resolution $\pi: X' \rightarrow X$ and let $K_X$  be  a canonical divisor on $X$ which is $\QQ$-Cartier with index $m$ large enough. Pick a canonical divisor $K_{X'}$ in $X'$ such that $\pi_{\ast} K_{X'}=K_X$. Then, the relative canonical divisor is 
\[K_\pi = K_{X'} - \frac{1}{m}\pi^{\ast}(m K_X)\]  and the multiplier ideal of an ideal $\fa \subseteq \mathcal{O}_X$ is \(\mathcal{J}(\mathfrak{a}^\lambda) = \pi_*\mathcal{O}_{X'}\left(\left\lceil K_{\pi} - \lambda F_\pi \right\rceil\right)\).

A version of multiplier ideals for normal varieties has been given by de Fernex and Hacon \cite{dFH}. In this generality  we ensure the existence of canonical divisors that are not necessarily $\QQ$-Cartier. Then we may find some effective boundary divisor $\Delta$ such that $K_X + \Delta $ is $\QQ$-Cartier with index $m$ large enough. Then we consider $$K_\pi= K_{X'} - \frac{1}{m}\pi^{\ast}(m(K_X+\Delta)) $$ and the multiplier ideal 
$\cJ(\fa^{\lambda}, \Delta) = \pi_*\mathcal{O}_{X'}\left(\left\lceil K_\pi - \lambda F \right\rceil\right)$ which depends on $\Delta$. This construction allowed de Fernex and Hacon to  define the multiplier ideal $\cJ(\fa^{\lambda})$  associated to $\fa$ and $\lambda$ as the unique maximal element of the set of multiplier ideals $\cJ(\fa^{\lambda}, \Delta)$ where $\Delta$ varies among all the effective divisors such that $K_X + \Delta $ is $\QQ$-Cartier. A key point proved in \cite{dFH} is the existence of such a divisor $\Delta$ that realizes the multiplier ideal as $\cJ(\fa^{\lambda})=\cJ(\fa^{\lambda}, \Delta) $.

\subsection{Methods in prime characteristic}
In this  section we recall definitions and results in prime characteristic that are used in Section \ref{sec:positivechar}.
We focus on Cartier operators, differential operators, and test ideals.

Let $R$ be a ring of prime characteristic $p$.
The Frobenius map $F:R\to R$ is defined by $r\mapsto r^p$.
We denote by $F^e_* R$ the $R$-module that is isomorphic to $R$ as an Abelian group with the sum and the scalar multiplication is given by the $e$-th iteration of Frobenius. To distinguish  the elements  of  $F^e_* R$ from $R$ we write them as $F^e_* f$. In particular, $r\cdot F^e_* f=F^e_* (r^{p^e}f)$. Throughout this subsection we assume that 
$F^e_* R$ is a finitely generated  $R$-module: that is, $R$ is \emph{$F$-finite}.

\begin{definition} \label{FrobeniusCartier}
Let $R$ be an $F$-finite ring.
\begin{enumerate}
\item  An additive map $\psi:R\to R$ is a \textit{$p^{e}$-linear map} if
$\psi(r f)=r^{p^e}\psi(f).$ Let $\mathcal{F}^e_R$ be the set of all
the $p^{e}$-linear maps. 
\item  An additive map $\phi:R\to R$ is a \textit{$p^{-e}$-linear ma}p if
$\phi(r^{p^e} f)=r\phi(f).$ Let $\cC^e_R$ be the set of all the
$p^{-e}$-linear maps. 
\item An additive map $\delta:R\to R$ is a \textit{differential operator of level $e$} if it is $R^{p^e}$-linear.
Let $D^{(e)}_R$ be the set of all 
differential operator of level $e$.
\end{enumerate}
\end{definition}

Differential operators relate to the Frobenius map in the following important way.
This alternative characterization of the ring of differential operators  is used in Section \ref{sec:positivechar}.

\begin{theorem}[{\cite[Theorem~2.7]{SmithSP}, \cite[Theorem~1.4.9]{yekutieli.explicit_construction}}]\label{thm:opscharp} Let $R$ be a finitely generated algebra over a perfect field $\KK$.
Then
	\[D_{R|\KK} = \bigcup_{e\in \NN} D^{(e)}_R =\bigcup_{e\in \NN} \Hom_{R^{p^e}}(R,R).\]
	In particular, any operator of degree $\leq p$ is $R^{p}$-linear.
	\end{theorem}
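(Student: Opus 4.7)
The proof splits into two inclusions; the equality $D^{(e)}_R = \Hom_{R^{p^e}}(R,R)$ is the very definition given in Definition \ref{FrobeniusCartier}(iii).

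For $D_{R|\KK} \subseteq \bigcup_{e\in\NN} \Hom_{R^{p^e}}(R,R)$, I would use the standard characterization that $\delta$ has order $\leq n$ if and only if the $(n+1)$-fold iterated commutator $[\cdots[[\delta,r_0],r_1],\ldots,r_n]$ vanishes for every $r_0,\ldots,r_n\in R$. Writing $\operatorname{ad}(r)$ for $[\,\cdot\,,r]$, the congruence $\binom{p}{k}\equiv 0 \pmod p$ for $1\leq k\leq p-1$ shows that $(\operatorname{ad} r)^p = \operatorname{ad}(r^p)$ in any associative $\FF_p$-algebra, and iteration yields $(\operatorname{ad} r)^{p^e} = \operatorname{ad}(r^{p^e})$. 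Hence if $\delta \in D^n_{R|\KK}$ and $p^e>n$, then $[\delta,r^{p^e}] = (\operatorname{ad} r)^{p^e}(\delta) = 0$ for all $r\in R$. Since $\KK$ is perfect, $R^{p^e}$ is generated as a $\KK$-algebra by $\{r^{p^e} : r\in R\}$, so $\delta$ commutes with all of $R^{p^e}$, i.e., it is $R^{p^e}$-linear. The case $e=1$ yields the \emph{in particular} assertion for operators of sufficiently small order.

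For the converse, let $\delta\in \Hom_{R^{p^e}}(R,R)$. By adjunction $\delta$ corresponds to the $R$-linear map $\widetilde\delta: R\otimes_{R^{p^e}}R\to R$ sending $a\otimes b\mapsto a\,\delta(b)$. The statement that $\delta$ has order $\leq N$ is equivalent to saying that $\widetilde\delta$, pulled back to $R\otimes_\KK R$, annihilates the $(N+1)$-st power of the diagonal ideal $I_\KK=\ker(R\otimes_\KK R\to R)$. Writing $I_e\subseteq R\otimes_{R^{p^e}} R$ for the kernel of multiplication, it therefore suffices to show $I_e^N=0$ for some $N$. For any $r\in R$, the freshman's dream in characteristic $p$ gives
\[
(r\otimes 1 - 1\otimes r)^{p^e} = r^{p^e}\otimes 1 - 1\otimes r^{p^e} = 0,
\]
the last equality because $r^{p^e}\in R^{p^e}$ may be slid across the tensor. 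Because $R$ is a finitely generated $\KK$-algebra and $\KK$ is perfect, $R$ is also finitely generated as an $R^{p^e}$-algebra; hence $I_e$ is generated by finitely many pairwise-commuting nilpotents of exponent at most $p^e$, and so $I_e^N=0$ for some $N$. Consequently $\widetilde\delta$ kills $I_\KK^N$, proving $\delta\in D^{N-1}_{R|\KK}$.

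The main obstacle is the reverse inclusion: $R^{p^e}$-linearity does not, by itself, bound the order of $\delta$. The key idea is to recast the order bound as a nilpotency statement for the diagonal ideal of $R\otimes_{R^{p^e}}R$, where the freshman's dream combined with the finite generation of $R$ over $\KK$ produces the required nilpotency.
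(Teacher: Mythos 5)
Your argument is correct. Note that the paper does not prove this statement at all — it is quoted with citations to Smith and to Yekutieli — so there is no in-text proof to compare against; your proof is essentially the standard one found in those references. The forward inclusion via $(\operatorname{ad} r)^{p^e}=\operatorname{ad}(r^{p^e})$ (valid because left and right multiplication by $r$ commute, so the binomial congruence applies) is exactly the usual argument that operators of order $<p^e$ are $R^{p^e}$-linear, and the reverse inclusion via the Grothendieck principal-parts characterization together with nilpotency of the diagonal ideal of $R\otimes_{R^{p^e}}R$ — each generator $1\otimes x_i - x_i\otimes 1$ killed by the $p^e$-th power, finitely many generators because $R$ is module-finite (indeed algebra-finite) over $R^{p^e}$ by finite generation over the perfect field $\KK$ — is the argument in Yekutieli. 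One small point in your favor: your cautious phrasing of the ``in particular'' clause is actually the accurate one. The commutator argument gives $R^{p}$-linearity for operators of order $\le p-1$, and this bound is sharp: on $\FF_p[x]$ the divided-power operator $\frac{\partial_x^p}{p!}$ has order exactly $p$ but sends $x^p$ to $1$, so it is not $R^p$-linear; thus the paper's ``degree $\leq p$'' should be read as ``degree $<p$.'' The only ingredient you invoke without proof is the equivalence of the inductive (commutator) definition of $D^N_{R|\KK}$ with annihilation of $I_\KK^{N+1}$ under $a\otimes b\mapsto a\delta(b)$; this is the standard EGA IV 16.8 fact and is fair to cite, since $I_\KK$ is generated by the elements $1\otimes r-r\otimes 1$ and the two conditions translate into one another by the telescoping identity you use implicitly.
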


\begin{remark}\label{RemCorresCartier}
Suppose that $R$ is a reduced ring. Then, we may identify $F^e_* R = R^{1/p^e}$. We have that
\begin{enumerate}
\item $\cF^e_R\cong \Hom_R(R,F^e_* R)$,
\item $\cC^e_R\cong \Hom_R(F^e_* R,R)$, and 
\item $D^{(e)}_R\cong \Hom_R(F^e_* R,F^e_* R)$.
\end{enumerate}
\end{remark}

\begin{remark}
Let $A$ be a regular $F$-finite ring. Then,
$$\cC^e_A \otimes_A \mathcal{F}^e_A
\cong D^{(e)}_A.$$
This can be reduced to the case of a  complete regular local ring. In this case, one can construct explicitly a free basis for 
$F^e_* A$ as $A$ is a power series over an $F$-finite  field.
Then, it follows that 
$\cC^e_A$, $ \mathcal{F}^e_A$, and $D^{(e)}_A$ are free $A$-modules.
From this it follows  that
$\cC^e_A \fa=\cC^e_A\fb$ if and only $D^{(e)}_{A} \fa=D^{(e)}_{A} \fb$
for any two ideals $\fa,\fb	\subseteq A$.
\end{remark}

We now focus on test ideals.
These ideals have been a fundamental tool
to study singularities in prime characteristic. 
They were first introduced by means of  tight closure
developed by Hochster and Huneke \cite{HoHuStrong,HoHu1,HoHu2,HoHu3}.
Hara and Yoshida \cite{HY2003} extended the theory to include
test ideals  of pairs.
An  approach to test ideals by means of Cartier operators was given by
Blickle, Musta\c{t}\u{a}, and Smith  \cite{BMS2008,BMS2009}  in the case that
$A$ is a regular ring.  Test ideals have also been studied in singular rings via Cartier maps  \cite{TestQGor,BB-CartierMod,BlickleP-1maps}.

\begin{definition}
Let $A$ be an $F$-finite regular ring. The test ideal  associated to an ideal $\fa \subseteq A$ and $\lambda \in \RR_{\geq 0}$ is
defined by
$$
\tau_A(\fa^{\lambda})=\bigcup_{e\in\NN}\cC^e_A \fa^{\lceil p^e \lambda \rceil}.
$$
\end{definition}

We note that the chain of ideals $\{ \cC^e_A I^{\lceil p^e \lambda
\rceil}\}$   is increasing \cite{BMS2008}, and so,
$\tau_A(\fa^{\lambda})=\cC^e_A \fa^{\lceil p^e \lambda \rceil}$ for
$e\gg 0$.

We now summarize basic well-known properties of test ideals.

\begin{proposition}[{\cite{BMS2008}}]\label{PropBasicsTestIDeals}
Let $A$ be an $F$-finite regular ring, $\fa,\fb\subseteq A$ ideals, and $\lambda,\lambda'\in\RR_{>0}.$
Then,
\begin{enumerate}
\item If $\fa\subseteq \fb,$ then $\tau_A(\fa^\lambda)\subseteq \tau_A(\fb^{\lambda})$.
\item If $\lambda<\lambda',$ then $\tau_A(\fa^{\lambda'})\subseteq \tau_A(\fa^{\lambda})$.
\item There exists $\epsilon>0,$ such that $\tau_A(\fa^{\lambda})= \tau_A(\fa^{\lambda'})$,
if $\lambda'\in [\lambda,\lambda+\epsilon)$.
\end{enumerate}
\end{proposition}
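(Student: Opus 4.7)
The plan is to unwind the definition $\tau_A(\fa^\lambda)=\bigcup_{e\in\NN}\cC^e_A \fa^{\lceil p^e \lambda \rceil}$ together with the fact (noted in the excerpt just before the statement) that this union is an ascending chain and therefore stabilizes in the Noetherian ring $A$, so that
\[\tau_A(\fa^\lambda)=\cC^{e_0}_A \fa^{\lceil p^{e_0} \lambda \rceil}\quad \text{for all } e_0\gg 0.\]
Parts (i) and (ii) will fall out directly. For (i), the containment $\fa\subseteq \fb$ gives $\fa^n\subseteq \fb^n$ for every $n\in\NN$, so in particular $\cC^e_A \fa^{\lceil p^e\lambda\rceil}\subseteq \cC^e_A \fb^{\lceil p^e\lambda\rceil}$ for every $e$; taking the union over $e$ yields the asserted inclusion of test ideals. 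For (ii), the inequality $\lambda<\lambda'$ gives $\lceil p^e\lambda\rceil\leq \lceil p^e\lambda'\rceil$ for every $e$, hence $\fa^{\lceil p^e\lambda'\rceil}\subseteq \fa^{\lceil p^e\lambda\rceil}$, and applying $\cC^e_A$ and taking unions gives $\tau_A(\fa^{\lambda'})\subseteq \tau_A(\fa^\lambda)$.

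For (iii), choose $e_0$ large enough that $\tau_A(\fa^\lambda)=\cC^{e_0}_A \fa^{\lceil p^{e_0}\lambda\rceil}$. The strategy is to find $\epsilon>0$ such that for all $\lambda'\in[\lambda,\lambda+\epsilon)$ we have $\lceil p^{e_0}\lambda'\rceil=\lceil p^{e_0}\lambda\rceil$, as then $\cC^{e_0}_A \fa^{\lceil p^{e_0}\lambda'\rceil}=\tau_A(\fa^\lambda)$ sits inside the union defining $\tau_A(\fa^{\lambda'})$, while the reverse inclusion $\tau_A(\fa^{\lambda'})\subseteq \tau_A(\fa^\lambda)$ is given by (ii). When $p^{e_0}\lambda\notin\ZZ$ one may simply take $\epsilon=(\lceil p^{e_0}\lambda\rceil-p^{e_0}\lambda)/p^{e_0}$, and the argument finishes.

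The main obstacle is the case in which $\lambda$ is a $p$-adic rational, since then $p^e\lambda\in\ZZ$ for all $e$ sufficiently large, so no choice of $e_0$ makes the interval $[\lambda,\lceil p^{e_0}\lambda\rceil/p^{e_0}]$ nontrivial. To handle this one must upgrade the stabilization argument: the point is to show directly that $g\in \tau_A(\fa^\lambda)$ forces $g\in \tau_A(\fa^{\lambda'})$ for all $\lambda'$ in some right neighborhood of $\lambda$. Given $g\in \cC^{e_0}_A \fa^{\lceil p^{e_0}\lambda\rceil}$, one uses a splitting of the Frobenius (which exists because $A$ is $F$-finite and regular, giving a map $\psi\in\cC^1_A$ with $\psi(1)$ a unit) to rewrite $g=(\phi\circ\psi)(h^p\cdot u)$ with $h\in \fa^{\lceil p^{e_0}\lambda\rceil}$ and exploit that $h^p\in\fa^{p\lceil p^{e_0}\lambda\rceil}$; choosing $e$ large enough produces room in the exponent so that for all $\lambda'$ sufficiently close to $\lambda$ one has $\lceil p^e\lambda'\rceil\leq p^{e-e_0}\lceil p^{e_0}\lambda\rceil$, placing $g$ in $\cC^e_A\fa^{\lceil p^e\lambda'\rceil}\subseteq \tau_A(\fa^{\lambda'})$. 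Alternatively, one can invoke the descending chain condition on the collection $\{\tau_A(\fa^{\lambda'})\}_{\lambda'>\lambda}$ (ordered by reverse containment via (ii)), noting that an increasing union inside the Noetherian ring $A$ must stabilize, and then identify the stable value with $\tau_A(\fa^\lambda)$ using the explicit generators from the stabilized step. Either route yields the desired $\epsilon$, completing (iii).
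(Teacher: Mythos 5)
The paper itself states this proposition without proof, citing \cite{BMS2008}, so your argument has to stand on its own. Parts (i) and (ii) are correct, and your treatment of (iii) is fine whenever one can pick a stabilizing index $e_0$ with $p^{e_0}\lambda\notin\ZZ$, since then $\lceil p^{e_0}\lambda'\rceil=\lceil p^{e_0}\lambda\rceil$ on a nontrivial right interval.

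The gap is precisely in the case you single out as the main obstacle, $\lambda$ a $p$-adic rational (e.g.\ $\lambda\in\ZZ_{>0}$), and neither of your routes closes it. In your first route, the inequality you need, $\lceil p^e\lambda'\rceil\leq p^{e-e_0}\lceil p^{e_0}\lambda\rceil$, is false for \emph{every} $\lambda'>\lambda$ once $p^{e_0}\lambda\in\ZZ$: the right-hand side then equals $p^e\lambda$ exactly, while the left-hand side is at least $p^e\lambda+1$; and if $\lambda$ is $p$-adically rational, every sufficiently large stabilizing $e_0$ has $p^{e_0}\lambda\in\ZZ$, so this case cannot be avoided. Moreover, writing $g$ as a Cartier image of $h^p$ with $h\in\fa^N$, $N=\lceil p^{e_0}\lambda\rceil$, only reproves the chain containment $\cC^{e_0}_A\fa^{N}\subseteq\cC^{e_0+1}_A\fa^{pN}$; it creates no extra unit of slack in the exponent, which is exactly what is required. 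In your second route, Noetherianity does give some $\lambda_0>\lambda$ with $\tau_A(\fa^{\lambda'})$ constant on $(\lambda,\lambda_0]$, but the step ``identify the stable value with $\tau_A(\fa^{\lambda})$'' is precisely the nontrivial containment $\tau_A(\fa^{\lambda})\subseteq\tau_A(\fa^{\lambda_0})$ and cannot follow from such soft considerations: the same reasoning applied on the left of $\lambda$ would ``prove'' left-constancy, contradicting the existence of $F$-jumping numbers. What (iii) really needs is a containment with strict slack, such as $\cC^{e_0}_A\fa^{N}\subseteq\cC^{e_0+k}_A\fa^{Np^{k}+1}$ for some $k\gg 0$. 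One standard way to get it is the projection formula $\cC^{k}_A\bigl(\fa\cdot J^{[p^k]}\bigr)=J\cdot\cC^{k}_A\fa$, which gives
\[ \cC^{e_0+k}_A\fa^{Np^k+1}\supseteq\cC^{e_0+k}_A\bigl(\fa\cdot(\fa^{N})^{[p^k]}\bigr)=\cC^{e_0}_A\bigl(\fa^{N}\cdot\cC^{k}_A\fa\bigr), \]
combined with the fact that $\cC^{k}_A\fa=A$ for $k\gg 0$ when $\fa\neq(0)$ (clear for polynomial rings by degree bounds, and needing its own short argument in general); then $\epsilon=1/p^{e_0+k}$ works, since $\lceil p^{e_0+k}\lambda'\rceil\leq Np^k+1$ for $\lambda'\in[\lambda,\lambda+\epsilon)$. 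This is essentially how right-constancy is proved in \cite{BMS2008}, and your write-up needs this (or an equivalent) ingredient to be complete.
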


In this way, to every ideal $\fa \subseteq A$ is associated a family of test ideals $\tau_A(\fa^{\lambda})$
parameterized by real numbers $\lambda \in \mathbb{R}_{>0}$. Indeed, they form a nested chain of ideals.
The real numbers where the test ideals change are called {\it $F$-jumping numbers}. To be precise:

\begin{definition}
Let $A$ be an $F$-finite regular ring and let $\fa\subseteq A$ be an ideal. A real number $\lambda$ is an \textit{$F$-jumping number}
of $\fa$ if
$$
\tau_A(\fa^{\lambda})\neq \tau_A(\fa^{\lambda-\epsilon})
$$
for every $\epsilon>0.$
\end{definition}

\section{The classical theory for regular algebras in characteristic zero}\label{sec:def-first-prop}

\subsection{Definition of the Bernstein-Sato polynomial of an hypersurface}\label{subsec:GeneralDef}

One basic reason that the ring of differential operators is useful is that we can use its action on the original ring to ``undo'' multiplication on $A$: we can bring nonunits in $A$ to units by applying a differential operator. The Bernstein-Sato functional equation yields a strengthened version of this principle. Before we state the general definition, we consider what is perhaps the most basic example.

\begin{example}
 Consider the variable $x\in \KK[x]$. Differentiation by $x$ not only sends $x$ to $1$, but, moreover, decreases powers of $x$:
\begin{equation}\label{Eq B-S OneVar} 
 \partial_x x^{s+1} = (s+1) x^s \qquad \text{for all} \ s\in \NN.
\end{equation}
	In this equation, we were able to use one fixed differential operator to turn any power of $x$ into a constant times the next smaller power. Moreover, the constant we obtain is a linear function of the exponent $s$.
\end{example}

The functional equation arises as a way to  obtain a version for  Equation \ref{Eq B-S OneVar} for any element in a $\KK$-algebra.

\begin{definition}
Let $\KK$ a field of characteristic zero and $A$ be a regular  $\KK$-algebra.
	A \emph{Bernstein-Sato functional equation} for an element $f$ in $A$ is an equation of the form
	\[ \delta(s) f^{s+1} = b(s) f^s \qquad \text{for all \ } s\in \NN,\]
	where $\delta(s)\in D_{A|\KK}[s]$ is a polynomial differential operator, and $b(s)\in \KK[s]$ is a polynomial. We say that such a functional equation is nonzero if $b(s)$ is nonzero; this implies that $\delta(s)$ is nonzero as well. We may say that $(\delta(s),b(s))$ as above \emph{determine a functional equation for $f$}.
\end{definition}

\begin{theorem}\label{exists-BS}
	Any nonzero element $f\in A$ satisfies a nonzero Bernstein-Sato functional equation. That is, there exist $\delta(s)\in D_{A|\KK}[s]$ and $b(s)\in \KK[s]\smallsetminus\{0\}$ such that
		\[ \delta(s) f^{s+1} = b(s) f^s \qquad \text{for all \ } s\in \NN.\]
\end{theorem}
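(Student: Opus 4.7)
The plan is to adapt Bernstein's original proof to the setting of differentiably admissible $\KK$-algebras, using the structural results about $D_{A|\KK}$ recorded in Theorem~\ref{ThmDifType}. Introduce a new indeterminate $s$ and set $D(s) := D_{A|\KK} \otimes_\KK \KK(s)$. Form the free rank-one $A_f \otimes_\KK \KK(s)$-module
\[M := \left( A_f \otimes_\KK \KK(s) \right) \cdot f^s\]
on a formal symbol $f^s$, and equip it with a left $D(s)$-module structure via the chain rule: for $\partial \in \Der_{A|\KK}$,
\[\partial \cdot (g \, f^s) \;=\; \partial(g) f^s + s \, g \, \frac{\partial(f)}{f} \, f^s.\]
By Remark~\ref{RmkDiffAdGenDer}, this uniquely extends to an action of $D(s)$ on $M$. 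Specialization $s \mapsto n \in \NN$, $f^s \mapsto f^n$, recovers the tautological $D_{A|\KK}$-action on $A_f$.

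The crucial step, and the main technical obstacle, is to show that $M$ is holonomic as a $D(s)$-module; equivalently, that its dimension with respect to the order filtration equals $d = \dim A$. I would produce a good filtration $\{F_i M\}_{i \in \NN}$ compatible with the order filtration on $D(s)$: choose a filtration on $A$ of polynomial growth in $d$ (the degree filtration in the polynomial case, an $\m$-adic style filtration in the complete local case), and let $F_i M$ be the $\KK(s)$-span of elements $(a/f^i) f^s$ with $a$ in the $i(1+\deg f)$-th filtered piece of $A$, suitably defined. A direct computation using the chain rule shows $D^j(s) \cdot F_i M \subseteq F_{i+j} M$ and $\dim_{\KK(s)} F_i M = O(i^d)$. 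Since $\gr_{D^\bullet(s)} D(s)$ is a regular ring of dimension $2d$ (by Theorem~\ref{ThmDifType} after base change to $\KK(s)$), the Bernstein inequality combined with the polynomial growth estimate forces $M$ to have dimension exactly $d$, i.e., $M$ is holonomic.

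Once holonomicity is established, $M$ has finite length as a $D(s)$-module, and so the descending chain
\[D(s) \cdot f^s \;\supseteq\; D(s) \cdot f^{s+1} \;\supseteq\; D(s) \cdot f^{s+2} \;\supseteq\; \cdots\]
of cyclic submodules must stabilize. Pick $k \geq 0$ with $D(s) \cdot f^{s+k} = D(s) \cdot f^{s+k+1}$; this yields an equation $\delta'(s) \cdot f^{s+k+1} = f^{s+k}$ in $M$ for some $\delta'(s) \in D(s)$. Clearing denominators in $\KK(s)$ produces $\delta(s) \in D_{A|\KK}[s]$ and a nonzero $b(s) \in \KK[s]$ with
\[\delta(s) \, f^{s+k+1} \;=\; b(s) \, f^{s+k} \quad \text{in } M.\]
Substituting the indeterminate $s \mapsto s-k$ (a formal operation, since both sides are polynomials in $s$ with values in $A_f \cdot f^s$) produces the desired functional equation $\delta(s-k) f^{s+1} = b(s-k) f^s$. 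Specializing at each $n \in \NN$ and using that $f$ is a non-zero-divisor on $A$ (so $A \hookrightarrow A_f$) yields the equality as elements of $A$ for every $s \in \NN$, completing the proof.
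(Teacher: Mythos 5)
Your endgame (stabilization of the chain $D(s)\cdot f^{s+k}\supseteq D(s)\cdot f^{s+k+1}\supseteq\cdots$, clearing denominators in $\KK(s)$, and shifting $s\mapsto s-k$) is exactly the paper's, and for $A$ a \emph{polynomial} ring your holonomicity step is also the paper's: the filtration $\Gamma_i=\frac{1}{f^i}\{g:\deg g\le(\deg f+1)i\}\boldsymbol{f^s}$ with respect to the Bernstein filtration, polynomial growth of $\dim_{\KK(s)}\Gamma_i$, Bernstein's inequality, hence holonomic, hence finite length (this is Subsection~\ref{subsec:ExistencePoly}, Theorem~\ref{ThmAfsHol} and Corollary~\ref{CorExistenceBSPoly}).

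The gap is in the claimed extension to general differentiably admissible $A$ (power series, convergent power series, etc.) by the same dimension count. There is no ``$\m$-adic style'' substitute for the Bernstein filtration there: an exhaustive increasing filtration of $A$ (hence of $M$) by finite-dimensional $\KK(s)$-subspaces does not exist when $A=\KK\llbracket x_1,\dots,x_d\rrbracket$ (its $\KK$-dimension is uncountable, and the $\m$-adic filtration is decreasing), and the Bernstein filtration itself has no analogue since $D_{A|\KK}$ only carries the order filtration, whose associated graded is a polynomial ring \emph{over $A$}, not over the field. Consequently the criterion ``$\dim_{\KK(s)}F_iM=O(i^d)\Rightarrow$ holonomic'' is simply not available: in this setting dimension is measured by the characteristic variety (support of $\gr M$ over $\gr D$), Bernstein's inequality is itself a homological statement ($\dim_{D}(M)+\Grade_{D}(M)=2d$, Proposition~\ref{ThmHomologicalBernsteinIneq}), and the hard point is to show that $A_f[s]\boldsymbol{f^s}\otimes_{\KK[s]}\KK(s)$ has a submodule $N$ in the Bernstein class with $N_f$ equal to the whole module, which is the content of Mebkhout--Narv\'aez-Macarro's argument used in Theorem~\ref{ThmExistenceHomological} (together with the duality $M\mapsto\Ext^d_{D}(M,D)$ giving Noetherian plus Artinian, hence finite length). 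So your proof is complete for polynomial rings, where it coincides with the paper's, but the passage ``a direct computation shows $\dim_{\KK(s)}F_iM=O(i^d)$'' cannot be carried out in the complete local (or Tate/DMW) case, and that is precisely the case for which the paper switches to the homological method.
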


We pause to make an observation. Fix $f\in A$, and suppose that $(\delta_1(s),b_1(s))$ and $(\delta_2(s),b_2(s))$ determine two Bernstein-Sato functional equations for $f$:
	\[ \delta_i(s) f^{s+1} = b_i(s) f^s \qquad \text{for all \ } s\in \NN \ \text{for} \ i=1,2.\]
Let $c(s)\in \KK[s]$ be a polynomial. Then
\[  (c(s) \delta_1(s) + \delta_2(s)) f^{s+1} = (c(s) b_1(s) + b_2(s)) f^s  \qquad \text{for all \ } s\in \NN.\]
It follows that, for $f\in A$,
\[ \{ b(s) \in \KK[s] \ | \ \exists \delta(s)\in D_{A|\KK}[s] \ \text{ such that} \ \delta(s) f^{s+1} = b(s) f^{s} \ \text{for all} \ s\in \NN \} \]
is an ideal of $\KK[s]$. By Theorem~\ref{exists-BS}, this ideal is nonzero.

\begin{definition}\label{DefBSPoly}
	The \emph{Bernstein-Sato polynomial} of $f\in A$ is the minimal monic generator of the ideal 
	\[ \{ b(s) \in \KK[s] \ | \ \exists \,\delta(s)\in D_{A|\KK}[s] \ \text{such that} \ \delta(s) f^{s+1} = b(s) f^{s} \ \text{for all} \ s\in \NN \} \subset \KK[s].\]
	This polynomial is denoted $b_f(s)$.
\end{definition}

The polynomial described in Definition~\ref{DefBSPoly}  was originaly introduced  in independent constructions by Bernstein \cite{Ber71, Ber72} to establish meromorphic extensions of distributions, and by Sato \cite{MR595585,MR1086566} as the $b$-function in the theory of prehomogeneous vector spaces.

\subsection{The $D$-modules $D_{A|\KK}[s] \boldsymbol{f^s}$ and $A_f[s] \boldsymbol{f^s}$}\label{Subsec:modules}

For the proof of Theorem~\ref{exists-BS} and for many applications, it is preferable to consider the Bernstein-Sato functional equation as a single equality in a $D_{A|\KK}[s]$-module where $f^s$ is replaced by a formal power ``$\boldsymbol{f^s}$.'' We are interested in two such modules that are closely related:
\[D_{A|\KK}[s] \boldsymbol{f^s} \subseteq A_f[s] \boldsymbol{f^s}.\]
 We give a couple different constructions of each. For much more on these modules, we refer the interested reader to Walther's survey \cite{WSurvey}.

\subsubsection{Direct construction of $A_f[s] \boldsymbol{f^s}$}

\begin{definition}
	We define the left $D_{A_f|\KK}[s]$-module $A_f[s] \boldsymbol{f^s}$ as follows:
	\begin{itemize}
	\item As an $A_f[s]$-module, $A_f[s] \boldsymbol{f^s}$ is a free cyclic module with generator $\boldsymbol{f^s}$.
	\item Each partial derivative $\partial_i$ acts by the rule
	\[ \partial_i (a(s) \boldsymbol{f^s}) = \left(\partial_i(a(s)) +  \frac{s a(s) \partial_i(f)}{f}\right) \boldsymbol{f^s} \]
	for $a(s)\in A_f[s]$.
	\end{itemize}
\end{definition} 

We often consider this as a module over the subring $D_{A|\KK}[s]\subseteq D_{A_f|\KK}[s]$ by restriction of scalars.
To justify that this gives a well-defined $D_{A_f|\KK}[s]$-module structure, one checks that $\partial_i (x_i a(s) \boldsymbol{f^s}) = x_i\partial_i (a(s) \boldsymbol{f^s}) + a(s) \boldsymbol{f^s} $.

From the definition, we see that this module is compatible with specialization $s \mapsto n\in \ZZ$. Namely, for all $n\in \ZZ$, define the specialization maps 
\[\theta_n:A_f[s] \boldsymbol{f^s} \to A_f \quad  \text{by} \quad \theta_n(a(s) \boldsymbol{f^s}) = a(n) f^n\]
and
\[ \pi_n: D_{A_f|\KK}[s] \to D_{A_f|\KK} \quad  \text{by} \quad \pi_n(\delta(s)) = \delta(n).\]
We then have $\pi_n(\delta(s)) \cdot \theta_n(a(s) \boldsymbol{f^s}) = \theta_n(\delta(s) \cdot a(s) \boldsymbol{f^s})$. This simply follows from the fact that the formula for $\partial_i (a(s) \boldsymbol{f^s})$ in the definition agrees with the power rule for derivations when $s$ is replaced by an integer $n$ and $\boldsymbol{f^s}$ is replaced by $f^n$.

\subsubsection{Local cohomology construction of $A_f[s] \boldsymbol{f^s}$}\label{subsub:localcohomconstruction} It is also advantageous to consider $A_f[s] \boldsymbol{f^s}$ as a submodule of a local cohomology module.

Consider the local cohomology module $H^1_{(f-t)}(A_f[t])$, where $t$ is an indeterminate over $A$. As an $A_f$-module, this is free with basis
\begin{equation}\label{eq-basis-H1Aft} \left\{ \left[\frac{1}{f-t}\right], \left[\frac{1}{(f-t)^2}\right] , \left[\frac{1}{(f-t)^3}\right], \ldots \right\}:\end{equation}
indeed, these are linearly independent over $A_f$, and we can rewrite any element
\[ \left[ \frac{p(t)}{(f-t)^m} \right]\in H^1_{(f-t)}(A_f[t]), \;  \text{with}\ p(t)\in A_f[t] \]
in this form by writing $t=f-(f-t)$, expanding, and collecting powers of $f-t$.
By Remark~\ref{rem:loc}, $H^1_{(f-t)}(A_f[t])$ is naturally a $D_{A_f[t]|\KK}$-module.

Consider the subring $D_{A_f|\KK}[-\partial_t t] \subseteq D_{A_f[t]|\KK}$. We note that $-\partial_t t$ commutes with every element of $D_{A_f|\KK}$ and that $-\partial_t t$ does not satisfy any nontrivial algebraic relation over $D_{A_f|\KK}$, so $D_{A_f|\KK}[-\partial_t t]\cong D_{A_f|\KK}[s]$ for an indeterminate $s$. We consider $H^1_{(f-t)}(A_f[t])$ as a $D_{A_f|\KK}[s]$-module via this isomorphism. Namely,
\[ (\delta_m s^m + \cdots + \delta_0) \cdot \left[ \frac{a}{(f-t)^n}\right] = (\delta_m( -\partial_t t)^m + \cdots + \delta_0 ) \cdot \left[ \frac{a}{(f-t)^n}\right], \]
where the action on the right is the natural action on the localization.

\begin{lemma}\label{lem-inj-alpha}
		The elements 
	\[\left\{ (-\partial_t t)^n \cdot \left[ \frac{1}{f-t}\right] \ \big| \ n\in \NN\right\} \]
	are $A_f$-linearly independent in $H^1_{(f-t)}(A[t]) \subseteq H^1_{(f-t)}(A_f[t])$.
	\end{lemma}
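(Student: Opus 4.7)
The plan is to perform a direct calculation in the explicit $A_f$-basis
$\{e_k := [1/(f-t)^k]\}_{k\geq 1}$ of $H^1_{(f-t)}(A_f[t])$ from \eqref{eq-basis-H1Aft}, and show that the operator $s = -\partial_t t$ acts on this basis in a triangular fashion. This will immediately force $A_f$-linear independence.

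First I would compute the action of $-\partial_t t$ on a single $e_k$. Using the Weyl-algebra commutation $\partial_t t = t\partial_t + 1$, the identity $\partial_t e_k = k\, e_{k+1}$ coming from the quotient rule, and the relation $t\, e_{k+1} = -e_k + f\, e_{k+1}$ (obtained by writing $t = f - (f-t)$ and applying to $[1/(f-t)^{k+1}]$), a short manipulation gives
\[ (-\partial_t t)\, e_k \;=\; (k-1)\, e_k \;-\; k f\, e_{k+1}. \]
Next, I would prove by induction on $n$ that
\[ (-\partial_t t)^n\, e_1 \;=\; (-1)^n\, n!\, f^n\, e_{n+1} \;+\; \sum_{k=2}^{n} a_{n,k}\, e_k \qquad \text{for some } a_{n,k}\in A_f, \]
with the convention that the sum is empty for $n=0,1$. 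For the inductive step, one applies the preceding formula: the leading term $(-1)^n n!\, f^n e_{n+1}$ yields the new leading term $(-1)^{n+1}(n+1)!\, f^{n+1} e_{n+2}$ together with an $A_f$-multiple of $e_{n+1}$, while applying $-\partial_t t$ to the lower-order tail $\sum_{k=2}^n a_{n,k} e_k$ produces only terms in $e_2,\ldots,e_{n+1}$.

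Linear independence then follows immediately. Given any putative nontrivial $A_f$-linear relation $\sum_{n=0}^{N} c_n (-\partial_t t)^n e_1 = 0$ with $c_N \neq 0$, the coefficient of $e_{N+1}$ when expanded in the basis $\{e_k\}$ is exactly $c_N\cdot(-1)^N N!\, f^N$, since no term with $n<N$ contributes to $e_{N+1}$. Because $f$ is a unit in $A_f$ and $N!$ is invertible in the characteristic-zero field $\KK$, this coefficient is nonzero, contradicting $A_f$-linear independence of the basis elements $e_k$. The lemma then transfers to $H^1_{(f-t)}(A[t])$ via the given inclusion $H^1_{(f-t)}(A[t]) \subseteq H^1_{(f-t)}(A_f[t])$.

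I do not foresee a serious obstacle here; the argument is essentially a triangular-matrix observation. The only mild care is bookkeeping of the commutators and checking in the inductive step that the ``lower-order'' terms really stay strictly below $e_{n+2}$ under $-\partial_t t$, which is guaranteed by the formula $(-\partial_t t) e_k = (k-1)e_k - kf e_{k+1}$ raising the index by at most one at a time.
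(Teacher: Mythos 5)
Your proof is correct and follows essentially the same route as the paper: the same formula $(-\partial_t t)\,[a/(f-t)^k] = [(k-1)a/(f-t)^k] - [kfa/(f-t)^{k+1}]$ and the same induction showing the coefficient of $[1/(f-t)^{n+1}]$ in $(-\partial_t t)^n[1/(f-t)]$ is nonzero, hence triangularity and $A_f$-linear independence. Your explicit identification of that leading coefficient as $(-1)^n n!\,f^n$ is just a slightly more detailed version of the paper's argument.
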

	
\begin{proof}	We show by induction on $n$ that the coefficient of $(-\partial_t t)^n \cdot \left[ \frac{1}{f-t}\right]$ corresponding to the element $\left[ \frac{1}{(f-t)^{n+1}}\right]$ in the $A_f$-basis \eqref{eq-basis-H1Aft} is nonzero. This is trivial if  $n=0$, and the inductive step follows from the formula
	\[ -\partial_t t \ \cdot \ \left[ \frac{a}{(f-t)^n} \right] = \left[ \frac{(n-1) a}{(f-t)^n}\right] + \left[\frac{-n f a}{(f-t)^{n+1}}\right]. \qedhere\]
\end{proof}

\begin{proposition}\label{prop-compare-module-LC}
The map
\[ \alpha: A_f[s] \boldsymbol{f^s} \to H^1_{(f-t)}(A_f[t]) \quad \text{given by} \quad \alpha(a(s)\boldsymbol{f^s}) = a(-\partial_t t) \cdot \left[ \frac{1}{f-t}\right] \]
is an injective homomorphism of $D_{A_f|\KK}[s]$-modules.
\end{proposition}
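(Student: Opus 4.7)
The plan is to verify two claims separately: that $\alpha$ respects the $D_{A_f|\KK}[s]$-action, and that it is injective. Since $A_f[s]\boldsymbol{f^s}$ is free of rank one over $A_f[s]$, the formula $\alpha(a(s)\boldsymbol{f^s}) = a(-\partial_t t) \cdot [1/(f-t)]$ unambiguously defines an $A_f[s]$-linear map, where $s$ acts on the target as $-\partial_t t$. This automatically handles compatibility with the $A_f[s]$-part of the action, so it remains to check $\alpha(\partial_i \cdot m) = \partial_i \cdot \alpha(m)$ for each $i$ and each $m = a(s)\boldsymbol{f^s}$.

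The central auxiliary calculation is
\[
-\partial_t t \cdot \left[\frac{1}{f-t}\right] = -\partial_t \cdot \left[\frac{t}{f-t}\right] = \left[\frac{-f}{(f-t)^2}\right],
\]
obtained by applying the quotient rule inside the localization $A_f[t]_{f-t}$; rewritten, this says $[1/(f-t)^2] = -(1/f)(-\partial_t t)\cdot[1/(f-t)]$. To verify the $\partial_i$-compatibility, I would expand both sides using three ingredients: the defining formula $\partial_i(a(s)\boldsymbol{f^s}) = (\partial_i a(s) + s a(s)\partial_i(f)/f)\boldsymbol{f^s}$ on the left; the commutation $[\partial_i, -\partial_t t]=0$ and $\partial_i \circ a_j = a_j \partial_i + \partial_i(a_j)$ for $a_j \in A_f$ on the right; and finally $\partial_i\cdot[1/(f-t)] = [-\partial_i(f)/(f-t)^2]$ together with the auxiliary identity above. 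Writing $a(s) = \sum_j a_j s^j$, the $\partial_i(a_j)$ terms produced by the commutation step on the right-hand side match the $\partial_i(a(s))$ summand on the left, while the terms where $\partial_i$ lands on $[1/(f-t)]$ and gets converted back via the identity $[1/(f-t)^2] = -(1/f)(-\partial_t t)[1/(f-t)]$ reproduce the $sa(s)\partial_i(f)/f$ summand.

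For injectivity, suppose $\alpha(a(s)\boldsymbol{f^s}) = 0$ with $a(s) = \sum_j a_j s^j$, $a_j \in A_f$. Then
\[
\sum_j a_j (-\partial_t t)^j \cdot \left[\frac{1}{f-t}\right] = 0
\]
in $H^1_{(f-t)}(A_f[t])$. By Lemma~\ref{lem-inj-alpha}, the family $\{(-\partial_t t)^j\cdot[1/(f-t)]\}_{j\in\NN}$ is $A_f$-linearly independent, so each $a_j$ vanishes and $a(s)\boldsymbol{f^s} = 0$, proving injectivity.

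The only real obstacle is the careful bookkeeping in the $\partial_i$-compatibility check: one must track the noncommutation of $\partial_i$ with the coefficients $a_j\in A_f$ and then convert between the natural basis $\{[1/(f-t)^{n+1}]\}$ of $H^1_{(f-t)}(A_f[t])$ and the ``$(-\partial_t t)^n$-basis'' used to define $\alpha$. Once the single conversion identity $(-\partial_t t)\cdot[1/(f-t)] = [-f/(f-t)^2]$ is in hand, the match of the two sides is a direct algebraic expansion.
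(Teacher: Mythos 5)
Your proof is correct and follows essentially the same route as the paper: injectivity via Lemma~\ref{lem-inj-alpha}, and $D_{A_f|\KK}[s]$-linearity reduced to commutation with the $\partial_i$, hinging on the identity $(-\partial_t t)\cdot\left[\frac{1}{f-t}\right]=\left[\frac{-f}{(f-t)^2}\right]$, which is equivalent to the paper's observation that $t\left[\frac{1}{f-t}\right]=f\left[\frac{1}{f-t}\right]$. The only cosmetic difference is that the paper verifies the commutation just on the generator $\boldsymbol{f^s}$ (which suffices by $A_f[s]$-linearity and the Leibniz rule), whereas you carry out the expansion over general coefficients $a_j$ explicitly.
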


\begin{proof}
Injectivity of $\alpha$ follows from Lemma~\ref{lem-inj-alpha}. We just need to check that this map is linear with respect to the action of $D_{A_f|\KK}[s]$. We have that  $\alpha$ is $A_f[s]$-linear; we just need to check that $\alpha$ commutes with the derivatives $\partial_i$. We compute that 
\[ \alpha ( \partial_i \boldsymbol{f^s}) = \alpha\left(\frac{s \partial_i(f)}{f} \boldsymbol{f^s}\right) = -\partial_t t \frac{\partial_i(f)}{f} \left[ \frac{1}{f-t} \right] = - {\partial_i(f)} \partial_t\left[ \frac{1}{f-t} \right] = \partial_i \left[ \frac{1}{f-t} \right],\]
where in the penultimate equality we used that 
\[t \left[ \frac{1}{f-t} \right] = (f-(f-t)) \left[ \frac{1}{f-t} \right]=f\left[ \frac{1}{f-t} \right].\qedhere\]
\end{proof}

We note that $\alpha$ is not surjective in general.

As $A_f[s] \boldsymbol{f^s}$ is generated by $\boldsymbol{f^s}$ as a $D_{A_f|\KK}[s]$-module, Proposition~\ref{prop-compare-module-LC} yields the following result.

\begin{proposition}\label{prop:iso-LC-construction} The $D_{A_f|\KK}[s]$-module
	$A_f[s] \boldsymbol{f^s}$ is isomorphic to the submodule
	$D_{A_f|\KK}[s] \cdot \left[ \frac{1}{f-t}\right] \subseteq H^1_{(f-t)}(A_f[t])$, where $s$ acts on the latter by $-\partial_t t$.
\end{proposition}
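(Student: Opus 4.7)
The plan is to bootstrap directly from Proposition~\ref{prop-compare-module-LC}, which already supplies an injective $D_{A_f|\KK}[s]$-linear map
\[ \alpha: A_f[s] \boldsymbol{f^s} \longrightarrow H^1_{(f-t)}(A_f[t]), \qquad \boldsymbol{f^s} \longmapsto \left[\tfrac{1}{f-t}\right]. \]
Once this map is in hand, only two things remain: identify the image of $\alpha$ with $D_{A_f|\KK}[s] \cdot \left[\tfrac{1}{f-t}\right]$, and confirm that the $s$-action on the target really does correspond to multiplication by $-\partial_t t$.

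The key observation is that $A_f[s]\boldsymbol{f^s}$ is cyclic as a $D_{A_f|\KK}[s]$-module, generated by $\boldsymbol{f^s}$. This is immediate from the direct construction, since $A_f[s]\boldsymbol{f^s}$ is already free of rank one over the subring $A_f[s]\subseteq D_{A_f|\KK}[s]$ with generator $\boldsymbol{f^s}$. Consequently, the image of $\alpha$ is the $D_{A_f|\KK}[s]$-submodule generated by $\alpha(\boldsymbol{f^s}) = \left[\tfrac{1}{f-t}\right]$, which is precisely $D_{A_f|\KK}[s] \cdot \left[\tfrac{1}{f-t}\right]$. Combined with injectivity, $\alpha$ restricts to an isomorphism of $D_{A_f|\KK}[s]$-modules onto this submodule.

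The compatibility with the $s$-action is built into the $D_{A_f|\KK}[s]$-module structure placed on $H^1_{(f-t)}(A_f[t])$ back in Section~\ref{subsub:localcohomconstruction}: by fiat, $s$ acts as $-\partial_t t$, and the full intertwining of the $D_{A_f|\KK}[s]$-actions on source and target is what was verified in the proof of Proposition~\ref{prop-compare-module-LC} (the computation $\alpha(\partial_i \boldsymbol{f^s}) = \partial_i \alpha(\boldsymbol{f^s})$, together with the $A_f[s]$-linearity of $\alpha$). Thus there is no genuine obstacle here: the proposition is essentially a restatement of Proposition~\ref{prop-compare-module-LC} with the cyclicity of $A_f[s]\boldsymbol{f^s}$ made explicit, and the only thing one needs to remember to check in the write-up is that the isomorphism $D_{A_f|\KK}[-\partial_t t]\cong D_{A_f|\KK}[s]$ produced earlier is precisely what transports the identifications on the two sides.
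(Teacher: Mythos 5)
Your argument is correct and matches the paper's reasoning exactly: the paper likewise deduces this proposition from Proposition~\ref{prop-compare-module-LC} together with the observation that $A_f[s]\boldsymbol{f^s}$ is generated by $\boldsymbol{f^s}$ as a $D_{A_f|\KK}[s]$-module, so the injective map $\alpha$ restricts to an isomorphism onto $D_{A_f|\KK}[s]\cdot\left[\tfrac{1}{f-t}\right]$. Your justification of the cyclicity via the $A_f[s]$-freeness is a fine way to make that step explicit.
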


\subsubsection{Constructions of the module $D_{A|\KK}[s] \boldsymbol{f^s}$}

We now give three constructions of the submodule $D_{A|\KK}[s] \boldsymbol{f^s}$ of the module $A_f[s] \boldsymbol{f^s}$. The first is exactly as suggested by the notation.

\begin{definition}
We define $D_{A|\KK}[s] \boldsymbol{f^s}$ as the $D_{A|\KK}[s]$-submodule of $A_f[s] \boldsymbol{f^s}$ generated by the element $\boldsymbol{f^s}$.
\end{definition}

\begin{proposition}\label{prop:isochar0} There is an isomorphism
\[D_{A|\KK}[s] \boldsymbol{f^s} \cong \frac{D_{A|\KK}[s]}  {\{ \delta(s) \in D_{A|\KK}[s] \ | \ \delta(n) f^n = 0 \ \text{for all} \ n\in \NN\}}.\]
\end{proposition}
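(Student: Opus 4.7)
The surjective $D_{A|\KK}[s]$-linear map $\varphi\colon D_{A|\KK}[s] \twoheadrightarrow D_{A|\KK}[s]\boldsymbol{f^s}$ sending $\delta(s) \mapsto \delta(s)\boldsymbol{f^s}$ reduces the claim to showing that $\ker\varphi$ coincides with the set $J:=\{\delta(s) \in D_{A|\KK}[s] : \delta(n) f^n = 0 \text{ for all } n\in \NN\}$, which is a left submodule of $D_{A|\KK}[s]$ since $\pi_n\colon D_{A|\KK}[s]\to D_{A|\KK}$ is a ring homomorphism for every $n$. Both inclusions will be controlled by the specialization maps $\pi_n$ and $\theta_n$ introduced in Subsection \ref{Subsec:modules}, which together satisfy the key identity $\theta_n(\delta(s)\boldsymbol{f^s}) = \pi_n(\delta(s))\theta_n(\boldsymbol{f^s}) = \delta(n) f^n$ for every $n\in \NN$.

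The inclusion $\ker\varphi\subseteq J$ is then immediate: if $\delta(s)\boldsymbol{f^s}=0$ in $A_f[s]\boldsymbol{f^s}$, applying $\theta_n$ yields $\delta(n) f^n = 0$ for each $n\in\NN$.

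For the reverse inclusion $J\subseteq\ker\varphi$, I exploit that $A_f[s]\boldsymbol{f^s}$ is a free cyclic $A_f[s]$-module on $\boldsymbol{f^s}$. This lets me write $\delta(s)\boldsymbol{f^s} = a(s)\boldsymbol{f^s}$ for a unique coefficient $a(s)\in A_f[s]$. Specializing gives $a(n) f^n = \delta(n) f^n$, so if $\delta(s)\in J$, then $a(n)f^n = 0$, and since $f$ is a unit in $A_f$, $a(n)=0$ for every $n\in\NN$. The remaining task is to deduce $a(s)=0$: writing $a(s)=\sum_{i=0}^{d}c_i s^i$ with $c_i\in A_f$, the vanishing at $n=0,1,\dots,d$ amounts to a linear system over $A_f$ whose coefficient matrix is the integer Vandermonde $(n^i)_{0\le n,i\le d}$, whose determinant $\prod_{0\le i<j\le d}(j-i)$ is a nonzero integer and hence invertible in $A_f$ because $\KK$ has characteristic zero; this forces each $c_i=0$.

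The only genuinely delicate point is this final Vandermonde inversion, which is precisely where the characteristic-zero hypothesis is used; everything else is essentially bookkeeping about the free cyclic $A_f[s]$-module structure on $A_f[s]\boldsymbol{f^s}$ together with the compatibility of the specialization maps $\pi_n$ and $\theta_n$.
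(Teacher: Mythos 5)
Your proposal is correct and follows essentially the same route as the paper: reduce to identifying $\operatorname{Ann}_{D_{A|\KK}[s]}(\boldsymbol{f^s})$, write $\delta(s)\boldsymbol{f^s}=a(s)\boldsymbol{f^s}$ using the free cyclic $A_f[s]$-structure, and use the specializations $\theta_n,\pi_n$ to translate the condition into $a(n)=0$ for all $n\in\NN$. Your explicit Vandermonde inversion is just a spelled-out version of the paper's closing remark that, in characteristic zero, a polynomial vanishing identically on $\NN$ is the zero polynomial.
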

\begin{proof}
	We just need to show that the annihilator of $\boldsymbol{f^s}$ in $A_f[s] \boldsymbol{f^s}$ is \[\{ \delta(s) \in D_{A|\KK}[s] \ | \ \delta(n) f^n = 0 \ \text{for all} \ n\in \NN\}.\]
	We can write $\delta(s) \boldsymbol{f^s}$ as $p(s) \boldsymbol{f^s}$ for some $p(s)\in A_f[s]$. Observe that 
	\begin{align*}
 p(s) \boldsymbol{f^s} = 0 &\Leftrightarrow p(s)=0 \\
 &\Leftrightarrow p(n)=0 \ \text{for all} \ n\in \NN\\
 &\Leftrightarrow p(n) f^n =0 \ \text{for all} \ n\in \NN\\
 &\Leftrightarrow \theta_n(p(s) \boldsymbol{f^s})=0 \ \text{for all} \ n\in \NN .
	\end{align*}
	Then, $\delta(s) \boldsymbol{f^s} = 0$ if and only if $0=\theta_n(\delta(s) \boldsymbol{f^s})= \delta(n) f^n$ for all $n\in \NN$, as required.
\end{proof}

Note that this is using characteristic zero in a crucial way: we need that a polynomial that has infinitely many zeroes (or that is identically zero on $\NN$) is the zero polynomial.

\begin{remark}\label{rem:specialization} An argument analogous to the above shows that, for $\delta(s)\in D_{A|\KK}[s]$, the following are equivalent:
	\begin{enumerate}
		\item $\delta(s) \boldsymbol{f^s} =0$ in $A_f[s]\boldsymbol{f^s}$;
		\item $\delta(n) f^n =0$ in $A$ for all $n\in \NN$;
		\item $\delta(n) f^n =0$ in $A_f$ for all $n\in \ZZ$;
		\item $\delta(n) f^n =0$ in $A_f$ for infinitely many $n\in \ZZ$.
	\end{enumerate}
Likewise, by shifting the evaluations, ones sees this is equivalent to:
\begin{enumerate}
	\setcounter{enumi}{4}
	\item $\delta(s+t) f^t\boldsymbol{f^s} =0$ in $A_f[s]\boldsymbol{f^s}$.
	\end{enumerate}

\end{remark}

Finally, we observe that $D_{A|\KK}[s] \boldsymbol{f^s}$ can be constructed via local cohomology as in Subsubsection~\ref{subsub:localcohomconstruction}. By restricting the isomorphism of Proposition~\ref{prop:iso-LC-construction}, we obtain the following result.

\begin{proposition} The $D_{A|\KK}[s]$-module
	$D_{A|\KK}[s] \boldsymbol{f^s}$ is isomorphic to the submodule $D_{A|\KK}[s] \cdot \left[ \frac{1}{f-t}\right] \subseteq H^1_{(f-t)}(A[t])$, where $s$ acts on the latter by $-\partial_t t$.
	\end{proposition}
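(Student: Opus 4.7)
The plan is to obtain this proposition by restricting the $D_{A_f|\KK}[s]$-linear isomorphism $\alpha$ of Proposition~\ref{prop-compare-module-LC} to the $D_{A|\KK}[s]$-submodule $D_{A|\KK}[s]\boldsymbol{f^s} \subseteq A_f[s]\boldsymbol{f^s}$. Since $D_{A|\KK}[s]$ is a subring of $D_{A_f|\KK}[s]$, the map $\alpha$ is automatically $D_{A|\KK}[s]$-linear by restriction of scalars, and its restriction to any $D_{A|\KK}[s]$-submodule remains injective.

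The first step is to identify the image of the restriction. Because $\alpha$ is $D_{A|\KK}[s]$-linear and $D_{A|\KK}[s]\boldsymbol{f^s}$ is generated (as a $D_{A|\KK}[s]$-module) by $\boldsymbol{f^s}$, the image $\alpha(D_{A|\KK}[s]\boldsymbol{f^s})$ is generated as a $D_{A|\KK}[s]$-module by $\alpha(\boldsymbol{f^s}) = \bigl[\tfrac{1}{f-t}\bigr]$. Hence $\alpha(D_{A|\KK}[s]\boldsymbol{f^s}) = D_{A|\KK}[s]\cdot\bigl[\tfrac{1}{f-t}\bigr]$.

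The second step is to verify that this image actually sits inside $H^1_{(f-t)}(A[t])$ rather than only $H^1_{(f-t)}(A_f[t])$. This follows because $\bigl[\tfrac{1}{f-t}\bigr]\in H^1_{(f-t)}(A[t])$, and the action of $D_{A|\KK}[s] = D_{A|\KK}[-\partial_t t]\subseteq D_{A[t]|\KK}$ preserves the $D_{A[t]|\KK}$-submodule $H^1_{(f-t)}(A[t])\subseteq H^1_{(f-t)}(A_f[t])$. So $D_{A|\KK}[s]\cdot\bigl[\tfrac{1}{f-t}\bigr]$ lies inside $H^1_{(f-t)}(A[t])$ as claimed in the statement.

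Combining these observations, the restriction of $\alpha$ is an injective $D_{A|\KK}[s]$-module homomorphism from $D_{A|\KK}[s]\boldsymbol{f^s}$ onto $D_{A|\KK}[s]\cdot\bigl[\tfrac{1}{f-t}\bigr]\subseteq H^1_{(f-t)}(A[t])$, hence an isomorphism. There is no real obstacle here: the work was already done in proving Proposition~\ref{prop-compare-module-LC} (in particular, the $A_f$-linear independence established in Lemma~\ref{lem-inj-alpha}), and the present statement is a formal consequence. The only small point to keep in mind is the identification of the $D_{A|\KK}[s]$-action on the target: the indeterminate $s$ acts via $-\partial_t t$, which commutes with $D_{A|\KK}$ and is algebraically independent over it, exactly as in the discussion preceding Lemma~\ref{lem-inj-alpha}.
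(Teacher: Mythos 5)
Your proof is correct and follows essentially the same route as the paper, which simply obtains the statement by restricting the isomorphism of Proposition~\ref{prop:iso-LC-construction} (equivalently, the injection $\alpha$ of Proposition~\ref{prop-compare-module-LC}) to the submodule generated by $\boldsymbol{f^s}$. Your additional verification that the image $D_{A|\KK}[s]\cdot\left[\frac{1}{f-t}\right]$ lies in $H^1_{(f-t)}(A[t])$ is a small point the paper leaves implicit, and you handle it correctly.
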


\begin{proposition}\label{prop:othercharsBS}
	The following are equal:
	\begin{enumerate}
		\item The Bernstein-Sato polynomial of $f$;
		\item The minimal polynomial of the action of $s$ on $\displaystyle \frac{D_{A|\KK}[s]\boldsymbol{f^s}}{D_{A|\KK}[s] f \boldsymbol{f^s}}$;
		\item The minimal polynomial of the action of $-\partial_t t$ on $[\frac{1}{f-t}]$ in $$\displaystyle \frac{D_{A|\KK}[-\partial_t t] \cdot [\frac{1}{f-t}]}{D_{A|\KK}[-\partial_t t] \cdot f [\frac{1}{f-t}]};$$
		\item The monic element of smallest degree in $\KK[s] \cap (\mathrm{Ann}_{D[s]}(\boldsymbol{f^s}) + D_{A|\KK}[s] f)$.
	\end{enumerate}
\end{proposition}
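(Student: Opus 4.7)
The plan is to establish $(1)=(2)=(4)$ and $(2)=(3)$ by identifying each polynomial with the monic generator of an ideal of $\KK[s]$ attached to the cyclic $D_{A|\KK}[s]$-module
\[ M := \frac{D_{A|\KK}[s]\fs}{D_{A|\KK}[s]f\fs}. \]
The key preliminary observation is that $s$ is central in $D_{A|\KK}[s]$ and $M$ is generated by $\overline{\fs}$, so for any $p(s)\in\KK[s]$ one has $p(s)\cdot M = D_{A|\KK}[s]\cdot p(s)\overline{\fs}$. Hence $p(s)$ annihilates $M$ iff $p(s)\overline{\fs}=0$, and the minimal polynomial of $s$ on $M$ is the monic generator of
\[ \mathrm{Ann}_{\KK[s]}(\overline{\fs}) = \{b(s)\in\KK[s] \mid b(s)\fs \in D_{A|\KK}[s]f\fs\}. \]

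The equality $(1)=(2)$ then reduces to the following translation: a polynomial $b(s)\in\KK[s]$ lies in this annihilator iff there exists $\delta(s)\in D_{A|\KK}[s]$ with $(b(s)-\delta(s)f)\fs = 0$ in $A_f[s]\fs$, and by \Cref{rem:specialization} this holds iff $b(n)f^n = \delta(n)f^{n+1}$ for every $n\in\NN$, i.e., iff $(\delta(s),b(s))$ determines a Bernstein-Sato functional equation for $f$. Thus the defining ideal of $b_f(s)$ coincides with $\mathrm{Ann}_{\KK[s]}(\overline{\fs})$, and their monic generators agree.

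For $(2)=(4)$, apply \Cref{prop:isochar0} to identify $M \cong D_{A|\KK}[s]/(\mathrm{Ann}_{D[s]}(\fs) + D_{A|\KK}[s]f)$, so that $\mathrm{Ann}_{\KK[s]}(\overline{\fs}) = \KK[s]\cap(\mathrm{Ann}_{D[s]}(\fs) + D_{A|\KK}[s]f)$ as claimed. For $(2)=(3)$, the final proposition of \Cref{Subsec:modules} preceding the current statement provides a $D_{A|\KK}[s]$-linear isomorphism $D_{A|\KK}[s]\fs \xrightarrow{\sim} D_{A|\KK}[-\partial_t t]\cdot[\tfrac{1}{f-t}]$ sending $\fs\mapsto [\tfrac{1}{f-t}]$ and identifying $s$ with $-\partial_t t$; this carries $D_{A|\KK}[s]f\fs$ onto $D_{A|\KK}[-\partial_t t]\cdot f[\tfrac{1}{f-t}]$ and hence descends to an isomorphism of quotients intertwining the two actions, so the corresponding minimal polynomials coincide. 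The only delicate point in the argument is the translation in $(1)=(2)$ between a single symbolic identity in $A_f[s]\fs$ and the family of functional equalities indexed by $n\in\NN$, which is precisely the content of \Cref{rem:specialization}; once this is in hand, everything else is a direct application of the presentations of $D_{A|\KK}[s]\fs$ established earlier in \Cref{Subsec:modules}.
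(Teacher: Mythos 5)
Your proposal is correct and follows essentially the same route as the paper: (1)=(2) from the definition via the specialization statement in \Cref{rem:specialization}, (2)=(3) from the local cohomology isomorphism of \Cref{Subsec:modules}, and (2)=(4) from the presentation coming out of \Cref{prop:isochar0}, which is exactly the paper's cokernel computation. The only difference is that you make explicit the (correct) bookkeeping the paper leaves implicit, namely that since $M$ is cyclic and $s$ is central, the minimal polynomial of $s$ on the quotient coincides with the monic generator of the annihilator of the class of $\boldsymbol{f^s}$.
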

\begin{proof}
	The equality between the first two follows from the definition. The equality between the second and the third follows from the previous proposition. For the equality between the second and the fourth, we observe that
	\[ \frac{D_{A|\KK}[s]\boldsymbol{f^s}}{D_{A|\KK}[s] f \boldsymbol{f^s}} \cong \mathrm{coker}\left( \frac{D_{A|\KK}[s]}{\mathrm{Ann}_{D[s]}(\boldsymbol{f^s})} \xrightarrow{\cdot f} \frac{D_{A|\KK}[s]}{\mathrm{Ann}_{D[s]}(\boldsymbol{f^s})} \right) \cong \frac{D_{A|\KK}[s]}{\mathrm{Ann}_{D[s]}(\boldsymbol{f^s}) + D_{A|\KK}[s] f}. \]
\end{proof}

\begin{remark}\label{rem:D-mod-alpha}
	For any rational number $\alpha$, we can consider the $D_{R|\KK}$-modules $D_{R|\KK} f^\alpha$ and $A_f f^\alpha$ by specializing $s\mapsto \alpha$ in the $D_{R|\KK}[s]$-modules $D_{R|\KK}[s]\boldsymbol{f^s}$ and $A_f[s] \boldsymbol{f^s}$. These modules are important in $D$-module theory, but we do not focus on them in depth here.
\end{remark}

We end this subsection with equivalent characterizations on $A_f[s] \boldsymbol{f^s} \otimes_{\KK[s]} \KK(s)$ for $f$  to have a nonzero functional equation. This lemma plays a role in 
Corollary \ref{CorExistenceBSPoly} and 
Theorem \ref{ThmExistenceHomological}.

\begin{lemma}[{\cite[Proposition 2.18]{Vfilt}}] \label{PropEquivExistence}
   Fix an element $f\in A$.
   Then, the following are equivalent\textup:
	\begin{enumerate}
		\item There exists a Bernstein--Sato polynomial for $f$;
				\item $A_f[s] \boldsymbol{f^s} \otimes_{\KK[s]} \KK(s)$ is generated by $\fs$ as a $D_{A(s)|\KK(s)}$-module;
		\item $A_f[s] \boldsymbol{f^s} \otimes_{\KK[s]} \KK(s)$ is a finitely-generated $D_{A(s)|\KK(s)}$-module.
	\end{enumerate}
\end{lemma}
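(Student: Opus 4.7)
The plan is to prove the cycle $(1) \Rightarrow (2) \Rightarrow (3) \Rightarrow (1)$; the middle implication is immediate since a cyclic module is finitely generated. The main tool for the other two is a \emph{shift automorphism} $\sigma$ of the $D_{A_f|\KK}$-module $A_f[s]\fs$, defined by $\sigma(a(s)\fs) = a(s-1) f^{-1}\fs$ for $a(s) \in A_f[s]$. Using the formula $\partial_i\fs = (s\partial_i(f)/f)\fs$, one verifies that $\partial_i\sigma(\fs)$ and $\sigma(\partial_i\fs)$ both equal $(s-1) f^{-2}\partial_i(f)\fs$, so $\sigma$ commutes with every partial derivative and is $A_f$-linear by construction; it is therefore a $D_{A_f|\KK}$-module automorphism whose inverse satisfies $\sigma^{-k}(a(s)\fs) = a(s+k) f^k\fs$. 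Crucially, for any $\delta(s) \in D_{A|\KK}[s]$ one has $\sigma(\delta(s) u) = \delta(s-1)\sigma(u)$, so $\sigma^{\pm k}$ transports any relation in $A_f[s]\fs$ to one in which the argument of every polynomial in $s$ has been shifted.

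For $(1) \Rightarrow (2)$, apply $\sigma$ to the Bernstein-Sato functional equation $\delta(s) f\fs = b(s)\fs$ to obtain $\delta(s-1)\fs = b(s-1) f^{-1}\fs$. Since $b(s-1)$ is a nonzero element of $\KK[s]$, it becomes a unit after tensoring with $\KK(s)$, so $f^{-1}\fs \in D_{A(s)|\KK(s)}\cdot\fs$. Iterating by applying further powers of $\sigma$ yields $f^{-k}\fs \in D_{A(s)|\KK(s)}\cdot\fs$ for every $k \geq 0$, and since every element of $A_f[s]\fs \otimes_{\KK[s]}\KK(s)$ is an $A(s)$-combination of the $f^{-k}\fs$, this module equals $D_{A(s)|\KK(s)}\cdot\fs$.

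For $(3) \Rightarrow (1)$, suppose $A_f[s]\fs \otimes_{\KK[s]}\KK(s)$ is generated over $D_{A(s)|\KK(s)}$ by finitely many elements. Each such element can be written as $h(s) f^{-n}\fs$ for some $h(s) \in A(s)$ and $n \in \NN$, so choosing $N$ to be the largest such $n$ among the generators and absorbing $f^{N-n} h(s) \in A(s) \subseteq D_{A(s)|\KK(s)}$ into the differential operator, we reduce to the single generator $f^{-N}\fs$. In particular $f^{-N-1}\fs = \eta(s) f^{-N}\fs$ for some $\eta(s) \in D_{A(s)|\KK(s)}$, and clearing denominators in $\KK(s)$ gives a relation $c(s) f^{-N-1}\fs = \tilde{\eta}(s) f^{-N}\fs$ in $A_f[s]\fs$ with $0 \neq c(s) \in \KK[s]$ and $\tilde{\eta}(s) \in D_{A|\KK}[s]$. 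Applying $\sigma^{-(N+1)}$ transforms the left-hand side into $c(s+N+1)\fs$ and the right-hand side into $\tilde{\eta}(s+N+1) f\fs$, yielding the Bernstein-Sato functional equation $\tilde{\eta}(s+N+1)\cdot f\fs = c(s+N+1)\fs$. The central subtlety throughout is verifying that $\sigma$ is simultaneously $D_{A|\KK}$-linear and a shift in $s$; once this is in place, both nontrivial implications become formal manipulations that use $\KK(s)$ only to invert nonzero polynomials in $\KK[s]$.
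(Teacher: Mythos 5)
Your proof is correct and follows essentially the same route as the paper: your shift automorphism $\sigma$ is precisely the paper's isomorphism $\psi_m$ (iterated), used in the same way to get $(1)\Rightarrow(2)$, and your $(3)\Rightarrow(1)$ argument — reduce to the single generator $f^{-N}\fs$, express $f^{-N-1}\fs$ in terms of it, clear denominators in $\KK(s)$, and shift back — is exactly the paper's. The only difference is that you spell out details the paper leaves implicit (the $D_{A_f|\KK}$-linearity of the shift and the reduction from finitely many generators to one), which is fine.
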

\begin{proof}
We first show that (i) implies  (ii).
For every $m\in\ZZ$, we have an isomorphism of $D_{A(s)|\KK(s)}$-modules 
$$\psi_m:  A_f \boldsymbol{f^s} \otimes_{\KK[s]} \KK(s)    \to A_f \boldsymbol{f^s} \otimes_{\KK[s]} \KK(s)$$ defined by 
$$
 \frac{r(s) h}{f^\alpha } \boldsymbol{f^s} \mapsto  
 \frac{r(s-m) h}{f^{\alpha+m}} \boldsymbol{f^s}.
$$
Applying these isomoprhism to the functional equation, we obtain that $\frac{1}{f^m}  \boldsymbol{f^s} \in D_{A(s)|\KK(s)}  \boldsymbol{f^s}  $.

Since  (ii) implies  (iii) follows from definition, we focus in proving that   (iii) implies (i).
First we note that (iii) implies that $\frac{1}{f^m} \boldsymbol{f^s}. $ generates $A_f \boldsymbol{f^s} \otimes_{\KK[s]} \KK(s)$  for some $m\in\NN$. Then,  $\frac{1}{f^{m+1}} \boldsymbol{f^s}\in D_{A(s)|\KK(s)} \frac{1}{f^m} \boldsymbol{f^s}. $
Then, there exists $\delta(s)\in D_{A(s)|\KK(s)}$ such that 
$$\delta(s)  \frac{1}{f^m} \boldsymbol{f^s} =\frac{1}{f^{m+1}} \boldsymbol{f^s} .$$
After clearing denominators and shifting by ${-m-1}$, we obtain a functional equation.
\end{proof}

\subsection{Existence of Bernstein-Sato polynomials for polynomial rings via filtrations}\label{subsec:ExistencePoly}

In this subsection  $A=\KK[x_1,\ldots, x_d]$ is a polynomial ring over a field, $\KK$, of characteristic zero. 
This was proved in this case by Bernstein \cite{Ber71, Ber72}. 
We show the existence of the Bernstein-Sato polynomial using the strategy of Coutinho's book \cite{Coutinho}.

We define the \textit{Bernstein filtration }of $A$, $\cB^\bullet_{A|\KK}$ as 
$$
\cB^i_{A|\KK}=\bigoplus_{a_1+\cdots+a_d + b_1 + \cdots+b_d \leq i} \KK \cdot x_1^{a_1} \cdots x_d^{a_d} \partial_1^{b_1} \cdots \partial_d^{b_d}.
$$
We note that 
\begin{enumerate}
\item $\dim_\KK  \cB^i_{A|\KK}=\binom{n+i}{i}<\infty$,
\item $D_{A|\KK}=\bigcup_{i\in\NN} \cB^i_{A|\KK}$,
\item $ \cB^i_{A|\KK} \cB^j_{A|\KK} =\cB^{i+j}_{A|\KK}$, and
\item $[ \cB^i_{A|\KK}, \cB^j_{A|\KK}]\subseteq  \cB^{i+j-2}_{A|\KK}$.
\end{enumerate}
We observe that the associated graded ring of the filtration, $\gr(\cB^\bullet_{A|\KK},D_{A|\KK})$, is isomorphic to $\KK[x_1,\ldots, x_d,y_1,\ldots,y_d]$.

Given a left , $D_{A|\KK}$-module, $M$, we say that a filtration $\Gamma^\bullet$ of $\KK$-vector spaces is $\cB^\bullet_{A|\KK}$-compatible if
\begin{enumerate}
\item $\dim_\KK  \Gamma^i<\infty$,
\item $M=\bigcup_{i\in\NN} \Gamma^i$, and
\item $ \cB^i_{A|\KK} \Gamma^j\subseteq \Gamma^{i+j}$.
\end{enumerate}
In this manuscript,  by a $D_{A|\KK}$-module, unless specified, we mean a left  $D_{A|\KK}$-module.

We observe that $\gr(\Gamma^\bullet,M)$ is a graded $\gr(\cB^\bullet_{A|\KK},D_{A|\KK})$-module. 
Moreover, $M$ is finitely generated as a $D_{A|\KK}$-module if and only if there exists a filtration $\Gamma^\bullet$ such that 
$\gr(\Gamma^\bullet,M)$ is finitely generated as a  $\gr(\cB^\bullet_{A|\KK},D_{A|\KK})$-module. 
In this case, we say that $\Gamma$ is a \emph{good filtration for} $M$.

\begin{proposition}\label{PropAnnGooFil}
Let $M$ be a finitely generated  $D_{A|\KK}$-module. Let $G$ 
 denote the associated graded ring with respect to the Bernstein filtration.
Let $\Gamma_1^\bullet$ and $\Gamma_2^\bullet$ be two good filtrations for $M$.
Then,
$$
\sqrt{\Ann_{G} \gr(\Gamma_1^\bullet,M) }
=\sqrt{\Ann_{G} \gr(\Gamma_2^\bullet,M)}.
$$
\end{proposition}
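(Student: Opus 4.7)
The plan is to pass to the Rees algebra of the Bernstein filtration: both associated graded modules arise as mod-$t$ quotients of Rees modules, and their $G$-supports can be read off by intersecting supports of Rees modules with the vanishing locus of the central element $t$.

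First I would set up the Rees algebra $\tilde{D} := \bigoplus_{n \geq 0} \cB^n_{A|\KK}$, graded with $\cB^n_{A|\KK}$ in degree $n$, with the central element $t \in \tilde{D}_1$ corresponding to $1 \in \cB^0_{A|\KK} \hookrightarrow \cB^1_{A|\KK}$. Then $\tilde{D}/t\tilde{D} \cong G$ as graded rings, and $\tilde{D}$ is Noetherian because it is a finitely generated graded $\KK$-algebra: by $\cB^{i}_{A|\KK}\cB^{j}_{A|\KK} = \cB^{i+j}_{A|\KK}$, the degree-$1$ piece $\tilde{D}_1 = \cB^1_{A|\KK}$ (spanned by $1, x_1,\ldots,x_d,\partial_1,\ldots,\partial_d$) generates $\tilde{D}$ as a $\KK$-algebra. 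For any $\cB^\bullet_{A|\KK}$-compatible filtration $\Gamma^\bullet$ on $M$, the Rees module $\tilde{M}_{\Gamma} := \bigoplus_n \Gamma^n$ is a graded $\tilde{D}$-module with $\tilde{M}_\Gamma/t\tilde{M}_\Gamma \cong \gr(\Gamma^\bullet, M)$, and $\Gamma^\bullet$ is good precisely when $\tilde{M}_\Gamma$ is finitely generated.

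The key step is a \emph{Comparability Lemma}: whenever $\Gamma_1^\bullet$ is good and $\Gamma_2^\bullet$ is any $\cB^\bullet_{A|\KK}$-compatible filtration of $M$, there exists $k \geq 0$ with $\Gamma_1^n \subseteq \Gamma_2^{n+k}$ for all $n$. I would prove it by choosing homogeneous generators $m_l \in \Gamma_1^{j_l}$ of $\tilde{M}_{\Gamma_1}$, using exhaustiveness to pick $i_l$ with $m_l \in \Gamma_2^{i_l}$, and expanding
\[ \Gamma_1^n = \sum_l \cB^{n-j_l}_{A|\KK} \cdot m_l \subseteq \sum_l \Gamma_2^{n - j_l + i_l} \subseteq \Gamma_2^{n + k}, \qquad k := \max_l (i_l - j_l). \]
Applying the lemma twice (with both filtrations good) yields a common $k$ with $\Gamma_1^n \subseteq \Gamma_2^{n+k}$ and $\Gamma_2^n \subseteq \Gamma_1^{n+k}$ for all $n$.

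The first inclusion defines a $\tilde{D}$-linear injection $\tilde{M}_{\Gamma_1} \hookrightarrow \tilde{M}_{\Gamma_2}(k)$ of finitely generated $\tilde{D}$-modules, so
\[ \Supp_{\tilde{D}}(\tilde{M}_{\Gamma_1}) \subseteq \Supp_{\tilde{D}}(\tilde{M}_{\Gamma_2}(k)) = \Supp_{\tilde{D}}(\tilde{M}_{\Gamma_2}). \]
Intersecting both sides with $V(t) \subseteq \Spec \tilde{D}$, identified with $\Spec G$, a graded Nakayama argument yields $\Supp_{\tilde{D}}(\tilde{M}_{\Gamma_i}) \cap V(t) = \Supp_G(\gr(\Gamma_i^\bullet, M))$: for a homogeneous prime $\tilde{P} \supseteq (t)$, finite generation of $\tilde{M}_{\Gamma_i}$ forces $(\tilde{M}_{\Gamma_i})_{\tilde{P}} = 0$ iff its mod-$t$ quotient vanishes at $\tilde{P}/(t)$. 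Hence $\Supp_G \gr(\Gamma_1^\bullet, M) \subseteq \Supp_G \gr(\Gamma_2^\bullet, M)$, and the symmetric argument gives equality. This is equivalent to $\sqrt{\Ann_G \gr(\Gamma_1^\bullet, M)} = \sqrt{\Ann_G \gr(\Gamma_2^\bullet, M)}$. The main technical obstacle is the Comparability Lemma, where goodness of $\Gamma_1^\bullet$ (equivalently, finite generation of $\tilde{M}_{\Gamma_1}$) is really used; the subsequent reduction to supports in $\Spec G$ is formal Rees-algebra bookkeeping plus one application of graded Nakayama.
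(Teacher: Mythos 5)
Your Comparability Lemma, and its proof via homogeneous generators of the Rees module, is correct and is exactly the first step of the standard argument (the paper itself states this proposition without proof, following Coutinho/Bj\"ork). The gap is in the second half. The Rees ring $\tilde{D}=\bigoplus_{n\geq 0}\cB^n_{A|\KK}$ is \emph{not} commutative: it is the homogenized Weyl algebra, in which for instance $\partial_i x_i - x_i\partial_i = t^2$. Consequently $\Spec \tilde{D}$, the locus $V(t)$, localization $(\tilde{M})_{\tilde{P}}$ at a homogeneous prime, $\Supp_{\tilde{D}}$, and the graded Nakayama argument at $\tilde{P}$ are not available in the form you use them: ``support equals non-vanishing of localizations,'' ``injections preserve support,'' and Nakayama at a prime are commutative facts, and Ore localization at a prime of $\tilde D$ need not even exist. (Relatedly, your justification that $\tilde D$ is Noetherian because it is a finitely generated graded $\KK$-algebra is not valid in the noncommutative setting -- free algebras are finitely generated but not Noetherian; $\tilde D$ is indeed Noetherian, but for a different reason.) Note also that the shortcut you are attempting is precisely where the radical has to enter: an inclusion of Rees modules does not give an injection on the mod-$t$ fibres, the induced map $\gr(\Gamma_1^\bullet,M)\to\gr(\Gamma_2^\bullet,M)$ can have a kernel, and in general one can only compare radicals of annihilators, not the annihilators or supports by a direct transport along that map.

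The standard way to finish from your Comparability Lemma, avoiding any localization of $\tilde D$, is the power trick. Let $\sigma\in \Ann_{G}\gr(\Gamma_2^\bullet,M)$ be homogeneous of degree $i$ and lift it to $d\in\cB^i_{A|\KK}$, so that $d\,\Gamma_2^n\subseteq\Gamma_2^{n+i-1}$ for all $n$; iterating gives $d^m\Gamma_2^n\subseteq\Gamma_2^{n+mi-m}$. Using both inclusions $\Gamma_1^n\subseteq\Gamma_2^{n+k}$ and $\Gamma_2^n\subseteq\Gamma_1^{n+k}$, one gets for $m=2k+1$ that $d^m\Gamma_1^n\subseteq \Gamma_2^{n+k+m(i-1)}\subseteq\Gamma_1^{n+mi-1}$, which says exactly that $\sigma^m$ annihilates $\gr(\Gamma_1^\bullet,M)$. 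Hence $\sqrt{\Ann_G\gr(\Gamma_2^\bullet,M)}\subseteq\sqrt{\Ann_G\gr(\Gamma_1^\bullet,M)}$, and symmetry gives equality. If you prefer to keep the Rees-module language, you must replace the $\Spec\tilde D$ step by an argument of this kind (or by interpolating a chain of adjacent good filtrations), since only the commutative ring $G$ admits the support-theoretic reasoning you invoke.
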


Thanks to the previous result we are able to define the \emph{dimension} of 
 a finitely generated  $D_{A|\KK}$-module  $M$  as
 $$
 \dim_{D} (M)=  \dim_{G}  \left(   \frac{G}{  \Ann_{G} \gr(\Gamma^\bullet,M) }\right).
 $$

\begin{theorem}[Bernstein's  Inequality]
Let $M$ be a finitely generated  $D_{A|\KK}$-module.
Then,
$$
d\leq  \dim_{D} (M)\leq 2d.
$$
\end{theorem}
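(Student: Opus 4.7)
The plan is to establish the two inequalities separately using a good filtration $\Gamma^\bullet$ of $M$.

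The upper bound $\dim_{D}(M) \leq 2d$ is immediate from what has already been observed: the associated graded ring $\gr(\cB^\bullet_{A|\KK}, D_{A|\KK})$ is isomorphic to $\KK[x_1,\ldots,x_d,y_1,\ldots,y_d]$, a polynomial ring in $2d$ variables, so any finitely generated module over it has Krull dimension at most $2d$.

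For the lower bound I would follow Coutinho's strategy and reduce it to the following injectivity claim: for any good filtration $\Gamma^\bullet$ of a nonzero $M$ (shifted so that $\Gamma^0 \neq 0$, which is harmless), the $\KK$-linear evaluation map
\[
\varphi_n \colon \cB^n_{A|\KK} \longrightarrow \Hom_\KK(\Gamma^n, \Gamma^{2n}), \qquad P \longmapsto \bigl(m \mapsto Pm\bigr),
\]
is injective for every $n \geq 0$. I would prove this by induction on $n$. The base case reduces to $\KK \hookrightarrow \End_\KK(\Gamma^0)$, which holds since $\Gamma^0 \neq 0$. For the inductive step, suppose $P \in \cB^n_{A|\KK}$ with $\varphi_n(P) = 0$. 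Property (iv) of the Bernstein filtration gives $[x_i, P], [\partial_i, P] \in \cB^{n-1}_{A|\KK}$. For any $m \in \Gamma^{n-1} \subseteq \Gamma^n$, we have $Pm = 0$ and also $P(x_i m) = 0$ since $x_i m \in \cB^1_{A|\KK}\Gamma^{n-1} \subseteq \Gamma^n$; hence $[x_i, P]m = 0$, and likewise $[\partial_i, P]m = 0$. These commutators therefore lie in the kernel of $\varphi_{n-1}$, so by induction they vanish. Thus $P$ commutes with every $x_i$ and every $\partial_i$, hence lies in the center of the Weyl algebra, which in characteristic zero equals $\KK$. Since $P$ kills the nonzero space $\Gamma^0$, we conclude $P = 0$.

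From the injectivity we obtain the numerical inequality
\[
\binom{n+2d}{2d} = \dim_\KK \cB^n_{A|\KK} \leq \dim_\KK \Gamma^n \cdot \dim_\KK \Gamma^{2n}.
\]
Standard Hilbert-function theory applied to the finitely generated graded $\gr(\cB^\bullet_{A|\KK}, D_{A|\KK})$-module $\gr(\Gamma^\bullet, M)$ shows that $n \mapsto \dim_\KK \Gamma^n$ agrees, for $n \gg 0$, with a polynomial of degree $\dim_{D}(M)$. Comparing leading behavior as $n \to \infty$, the left side grows like $n^{2d}$ while the right side grows like $n^{2\dim_{D}(M)}$, forcing $2d \leq 2 \dim_{D}(M)$, i.e., $\dim_{D}(M) \geq d$.

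The main obstacle is the inductive injectivity of $\varphi_n$. The commutator trick itself is a short calculation, but it relies crucially on two inputs: the gap in degrees between source $\Gamma^n$ and target $\Gamma^{2n}$ in $\varphi_n$ (so that $[x_i,P]$ and $[\partial_i,P]$ land in the target of $\varphi_{n-1}$), and the fact — special to characteristic zero — that the Weyl algebra has trivial center. A careful statement of both is the only nonroutine ingredient of the proof.
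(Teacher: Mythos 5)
Your proof is correct and follows the standard Joseph--Coutinho argument (induction showing $\cB^n_{A|\KK}\to\Hom_\KK(\Gamma^n,\Gamma^{2n})$ is injective via the commutator trick and triviality of the center of the Weyl algebra in characteristic zero, then comparing growth of Hilbert functions), which is precisely the strategy from Coutinho's book that the paper invokes while stating the theorem without proof. No gaps: the shift to $\Gamma^0\neq 0$, the degree bookkeeping $[\cB^1,\cB^n]\subseteq\cB^{n-1}$, and the final comparison $n^{2d}\lesssim n^{2\dim_D(M)}$ are all handled correctly.
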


\begin{definition}
We say that a finitely generated  $D_{A|\KK}$-module, $M$, is \emph{holonomic} if either
$\dim_{D} (M)=d$ or $M=0$.
\end{definition}

\begin{theorem}\label{ThmHolFinLen}
Every holonomic 
$D_{A|\KK}$-module has finite length as $D_{A|\KK}$-module.
\end{theorem}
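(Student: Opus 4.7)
The plan is to attach to every nonzero holonomic module $M$ a positive integer invariant, the \emph{Bernstein multiplicity} $e(M)$, and show it is additive on short exact sequences; since $e$ strictly drops along any proper inclusion of holonomic modules, finiteness of length follows immediately.

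First I would set up the multiplicity. Fix a good filtration $\Gamma^\bullet$ on $M$. Because $G := \gr(\cB^\bullet_{A|\KK}, D_{A|\KK}) \cong \KK[x_1,\ldots,x_d,y_1,\ldots,y_d]$ is a commutative polynomial ring in $2d$ variables, the associated graded $\gr(\Gamma^\bullet, M)$ is a finitely generated graded $G$-module, and standard Hilbert theory gives a polynomial $P_\Gamma(t) \in \QQ[t]$ with $\dim_\KK \Gamma^i = P_\Gamma(i)$ for $i \gg 0$. The degree of $P_\Gamma$ equals $\dim_G(G/\sqrt{\Ann \gr(\Gamma^\bullet,M)}) = \dim_D(M)$, which, by the holonomicity hypothesis, is $d$. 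I define
\[ e(M) = d! \cdot (\text{leading coefficient of } P_\Gamma(t)) \in \ZZ_{>0}. \]
Independence of the good filtration is proved by comparing any two good filtrations $\Gamma_1^\bullet, \Gamma_2^\bullet$ via the inequalities $\Gamma_1^{i} \subseteq \Gamma_2^{i+c}$ and $\Gamma_2^{i} \subseteq \Gamma_1^{i+c}$ for some $c$, which sandwich the two Hilbert polynomials and force them to share the leading term in degree $d$.

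Next I would prove additivity. Given a short exact sequence $0 \to M' \to M \to M'' \to 0$ with $M$ holonomic, take a good filtration $\Gamma^\bullet$ on $M$ and induce $\Gamma'^{i} := \Gamma^i \cap M'$ and $\Gamma''^{i} := (\Gamma^i + M')/M'$. These are $\cB^\bullet_{A|\KK}$-compatible filtrations, and the sequence of associated gradeds
\[ 0 \to \gr(\Gamma'^\bullet, M') \to \gr(\Gamma^\bullet, M) \to \gr(\Gamma''^\bullet, M'') \to 0 \]
is exact in each degree. Hence $\Gamma''^\bullet$ is good for $M''$, and because $G$ is Noetherian, finite generation of $\gr(\Gamma^\bullet, M)$ forces finite generation of the submodule $\gr(\Gamma'^\bullet, M')$, so $\Gamma'^\bullet$ is good for $M'$. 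Degree-wise dimension counts then yield $P_{\Gamma}(t) = P_{\Gamma'}(t) + P_{\Gamma''}(t)$. Bernstein's inequality forces $\deg P_{\Gamma'}, \deg P_{\Gamma''} \leq d$ and $\geq d$ whenever $M', M'' \neq 0$, so both $M'$ and $M''$ are holonomic (or zero), and comparing leading coefficients gives $e(M) = e(M') + e(M'')$ (with the convention $e(0) = 0$).

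Finally I would conclude. Suppose, for contradiction, $M \supsetneq M_1 \supsetneq M_2 \supsetneq \cdots$ is an infinite strictly descending chain of $D_{A|\KK}$-submodules. Each quotient $M_i/M_{i+1}$ is a nonzero holonomic module by the previous step, so $e(M_i/M_{i+1}) \geq 1$. Additivity applied inductively gives $e(M) \geq e(M/M_k) = \sum_{i=0}^{k-1} e(M_i/M_{i+1}) \geq k$ for every $k$, which is absurd. Therefore $M$ is Artinian; the symmetric ascending-chain argument (or the observation that Artinian plus the existence of multiplicity suffices) yields finite length, with $\Len_{D_{A|\KK}}(M) \leq e(M)$.

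The only step that is more than bookkeeping is the well-definedness of $e(M)$ and the exactness of the associated graded with respect to induced filtrations; both rest essentially on the commutativity of $G$ guaranteed by property (iv) of the Bernstein filtration, which lets ordinary graded Hilbert-polynomial theory over a commutative polynomial ring carry the argument through.
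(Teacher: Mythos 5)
Your proof is correct and follows essentially the same route as the paper: both bound the length of any chain of submodules by the Bernstein (Hilbert--Samuel) multiplicity of $\gr(\Gamma^\bullet,M)$, using additivity of the multiplicity along induced good filtrations on subquotients together with Bernstein's inequality. The only difference is that you spell out the well-definedness and additivity of $e(M)$, which the paper simply invokes as the standard multiplicity identity.
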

\begin{proof}
Let $M_0\subsetneqq M_1\subsetneqq \cdots \subsetneqq M_t\subseteqq M$ be a proper chain of $D_{A|\KK}$-submodules.
Let $\Gamma^\bullet$ be a good filtration. We note that  $	\Gamma^i_j=\Gamma^i\cap M_j$ is a good filtration on $M_j$. In addition,  $\overline{\Gamma}^i_j=\phi_j(\Gamma^i_j)$, where $\pi:M_j\to M_j/M_{j-1}$ is the quotient map,  is a good filtration for $M_j/M_{j-1}$.
We have the following identity of Hilbert-Samuel multiplicities of graded $\gr(\cB^\bullet_{A|\KK},D_{A|\KK})$-modules:
$$
\e( \gr(\Gamma^\bullet,M) )
=\sum^t_{j=1}
\e(\gr(\overline{\Gamma}^\bullet_j,M_j/M_{j-1})).
$$
Since the multiplicities are positive integers, we have that $t\leq \e( \gr(\Gamma^\bullet,M) )$, and so, the length of $M$ as a 
$D_{R|\KK}$-module is at most $\e( \gr(\Gamma^\bullet,M) )$.
\end{proof}

\begin{theorem}\label{ThmAfsHol}
Given any nonzero  polynomial $f\in A$,
$A_f[s] \boldsymbol{f^s}\otimes_{\KK[s]} \KK(s) $ is a holonomic $D_{A(s)|\KK(s)}$-module.
\end{theorem}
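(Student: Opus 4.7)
The plan is to exhibit a $\cB^\bullet_{A(s)|\KK(s)}$-compatible filtration on $M := A_f[s]\fs \otimes_{\KK[s]} \KK(s) = A_f(s)\fs$ whose $\KK(s)$-dimensions grow polynomially of degree $d$, and then combine this with Bernstein's inequality and Theorem~\ref{ThmHolFinLen} to conclude that $M$ is finitely generated and holonomic. Here the base field is $\KK(s)$, and $A(s) = \KK(s)[x_1,\ldots,x_d]$ is a polynomial ring over a field of characteristic zero, so all the machinery developed in this subsection applies verbatim.

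Let $m = \deg f$ and set
\[
\Gamma^i = \left\{\, \tfrac{g}{f^i}\,\fs \;:\; g \in A(s),\ \deg g \leq (m+1)i \,\right\} \subseteq M.
\]
Clearly $\dim_{\KK(s)} \Gamma^i = \binom{(m+1)i+d}{d}$, a polynomial in $i$ of degree $d$, and $\bigcup_i \Gamma^i = M$ since every element of $M$ is of the form $g/f^k\,\fs$ for some $k$. The essential verification is that $\cB^1_{A(s)|\KK(s)} \Gamma^i \subseteq \Gamma^{i+1}$: multiplication by $x_j$ increases the numerator degree by $1$, while the identity
\[
\partial_j\!\left(\tfrac{g}{f^i}\fs\right) = \tfrac{f\,\partial_j(g) + (s-i)\,g\,\partial_j(f)}{f^{i+1}}\,\fs
\]
shows the resulting numerator has degree at most $(m+1)i + m \leq (m+1)(i+1)$. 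Iterating gives $\cB^j \Gamma^i \subseteq \Gamma^{i+j}$ for all $i,j$.

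Now for each $i$ let $M_i = D_{A(s)|\KK(s)} \cdot \Gamma^i$, equipped with the good filtration $\Sigma^j_i := \cB^j \cdot \Gamma^i$. By the inclusion $\Sigma^j_i \subseteq \Gamma^{i+j}$, we have $\dim_{\KK(s)} \Sigma^j_i = O(j^d)$, and hence $\dim_D M_i \leq d$. By Bernstein's inequality $\dim_D M_i = d$, so each $M_i$ is holonomic and therefore of finite length by Theorem~\ref{ThmHolFinLen}. Moreover, the multiplicity $\e(\gr(\Sigma^\bullet_i, M_i))$ that bounds the length in the proof of Theorem~\ref{ThmHolFinLen} is controlled by the leading coefficient of $\dim_{\KK(s)}\Gamma^{i+j}$ as a polynomial in $j$, which equals $(m+1)^d/d!$ and is independent of $i$. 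Thus the lengths of the $M_i$ are uniformly bounded, so the ascending chain $M_0 \subseteq M_1 \subseteq \cdots$ stabilizes at some $M_N$; since $M = \bigcup_i \Gamma^i \subseteq \bigcup_i M_i = M_N$, we conclude $M = M_N$ is finitely generated and holonomic.

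The main delicate point, and what I expect to be the step requiring the most care when written out in full, is the uniform length bound on the submodules $M_i$: one must track carefully that the bound on $\e(\gr(\Sigma^\bullet_i, M_i))$ coming from $\Sigma^j_i \subseteq \Gamma^{i+j}$ really is independent of $i$, using that the leading coefficient of a shifted polynomial $p(i+j)$ in $j$ matches that of $p(j)$. Without this uniformity the ascending union of the $M_i$ need not stabilize, and one could not bootstrap from holonomicity of the finitely generated submodules $M_i$ to holonomicity of $M$ itself.
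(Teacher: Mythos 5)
Your proof is correct and follows essentially the same route as the paper: the paper's proof (following Coutinho) consists precisely of exhibiting the filtration $\Gamma_i=\frac{1}{f^i}\{g\in A(s)\mid \deg g\le (\deg f+1)i\}\fs$ and noting it is compatible with the Bernstein filtration with $\dim_{\KK(s)}\Gamma_i$ growing like a degree-$d$ polynomial. Your additional bootstrapping through the submodules $M_i=D_{A(s)|\KK(s)}\Gamma^i$ with uniformly bounded multiplicity is exactly the standard argument behind the criterion the paper invokes implicitly (a compatible filtration of polynomial growth of degree $d$ forces holonomicity), so it fills in, rather than departs from, the paper's proof.
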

\begin{proof}
Let $t=\deg(f)$.
We set a filtration 
$$
\Gamma_i=\frac{1}{f^i}\{g\in A(s) \; | \;  \deg(g)\leq (t+1)i  \}  \boldsymbol{f^s}.
$$
We note that $\Gamma_i$ is a good filtration such that the associated graded of $A_f[s] \boldsymbol{f^s}\otimes_{\KK[s]} \KK(s) $ has dimension $d$. 
\end{proof}

\begin{corollary}[{\cite{Ber72}}]\label{CorExistenceBSPoly}
Given any nonzero  polynomial $f\in A$, the Bernstein-Sato polynomial of $f$ exists.
\end{corollary}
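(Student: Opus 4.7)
The plan is to assemble the three ingredients that have just been put in place: holonomicity of the relevant $D$-module (Theorem \ref{ThmAfsHol}), finite length of holonomic modules (Theorem \ref{ThmHolFinLen}), and the equivalence in Lemma \ref{PropEquivExistence} between existence of a Bernstein-Sato polynomial and finite generation of $A_f[s]\boldsymbol{f^s}\otimes_{\KK[s]}\KK(s)$ as a $D_{A(s)|\KK(s)}$-module. The argument is essentially mechanical once these are on the table; there is no real obstacle to overcome at this stage.

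More concretely, I would first invoke Theorem \ref{ThmAfsHol} to conclude that $M := A_f[s]\boldsymbol{f^s}\otimes_{\KK[s]}\KK(s)$ is a holonomic $D_{A(s)|\KK(s)}$-module. Since $A(s) = \KK(s)[x_1,\ldots,x_d]$ is again a polynomial ring over a field of characteristic zero, Theorem \ref{ThmHolFinLen} applies and shows that $M$ has finite length as a $D_{A(s)|\KK(s)}$-module. In particular $M$ is finitely generated, which is condition (iii) of Lemma \ref{PropEquivExistence}. The implication (iii)\,$\Rightarrow$\,(i) of that lemma then produces a nonzero Bernstein-Sato functional equation for $f$. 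The ideal of polynomials $b(s) \in \KK[s]$ appearing as the right-hand side of such an equation is therefore nonzero, and its minimal monic generator is, by definition, $b_f(s)$.

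For the reader's benefit, I would also indicate the underlying mechanism that is hidden inside the Lemma: the descending chain of $D_{A(s)|\KK(s)}$-submodules
\[
D_{A(s)|\KK(s)}\boldsymbol{f^s} \;\supseteq\; D_{A(s)|\KK(s)} f\,\boldsymbol{f^s} \;\supseteq\; D_{A(s)|\KK(s)} f^2\,\boldsymbol{f^s} \;\supseteq\; \cdots
\]
must stabilize by finite length, so there exist $n \in \NN$ and $\delta(s) \in D_{A(s)|\KK(s)}$ with $\delta(s)\, f^{n+1}\boldsymbol{f^s} = f^n\boldsymbol{f^s}$. Applying the shift $s \mapsto s-n$ (as in the proof of Lemma \ref{PropEquivExistence}) and clearing denominators in $s$ produces the required equation inside $D_{A|\KK}[s]$ with $b(s) \in \KK[s]\setminus\{0\}$, completing the proof.
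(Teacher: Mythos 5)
Your proposal is correct and follows exactly the paper's route: invoke Theorem~\ref{ThmAfsHol} for holonomicity of $A_f[s]\boldsymbol{f^s}\otimes_{\KK[s]}\KK(s)$, Theorem~\ref{ThmHolFinLen} (applied over the polynomial ring $A(s)$ in characteristic zero) for finite length, and then condition (iii)$\Rightarrow$(i) of Lemma~\ref{PropEquivExistence} to produce the nonzero functional equation. Your added explanation of the stabilizing chain and the shift is a faithful unpacking of the mechanism already contained in the cited results, so nothing further is needed.
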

\begin{proof}
This follows from  Proposition \ref{PropEquivExistence} and Theorems \ref{ThmHolFinLen} \& \ref{ThmAfsHol}.
\end{proof}

\subsection{Existence of Bernstein-Sato polynomials for differentiably admissible algebras via homological methods}\label{SubSecDifAd}

In this subsection we prove the existence of Bernstein-Sato polynomials of differentiably admissible $\KK$-algebras (see Subsection~\ref{SubSecPreDiffAdm}). 
We assume that $\KK$ is a field of characteristic zero.

\begin{definition} 
Let  $A$  be  a differentiably admissible $\KK$-algebra.
Let $M\neq 0$ be a finitely generated  $D_{A|\KK}$-module. We define
$$
\Grade_{D_{A|\KK}} (M)
=\hbox{inf}\{ j \ | \ \Ext^j_{D_{A|\KK}} (M,D_{A|\KK})\neq 0\}.
$$ 
We note that 
$\Grade_{D_{A|\KK}} (M)\leq \gd (D_{R|\KK})=d.$
\end{definition}

\begin{remark}
Given a finitely generated  $D_{A|\KK}$-module, we can define the
filtrations compatible with the order filtration $D^\bullet_{A|\KK}$, good filtrations, and dimension as in Subsection~\ref{subsec:ExistencePoly}.
\end{remark}

\begin{proposition}[{\cite[Ch 2., Theorem 7.1]{Bjork79}}]\label{ThmHomologicalBernsteinIneq}
Let  $A$  be a  differentiably admissible $\KK$-algebra.
Let $M\neq 0$ be a finitely generated  $D_{A|\KK}$-module. 
Then,
$$\dim_{D_{A|\KK}}(M)+\Grade_{D} (M)=2d.$$ In particular, 
$$
\dim_{D_{A|\KK}}(M)\geq d.
$$ 
\end{proposition}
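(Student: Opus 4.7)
The plan is to reduce the statement to a graded Cohen--Macaulay computation on the associated graded ring $G := \gr_{D^\bullet}(D_{A|\KK})$ via a good filtration. By Theorem \ref{ThmDifType}, $G$ is a regular ring of pure graded dimension $2d$, hence Cohen--Macaulay and equidimensional. Fix a good filtration $\Gamma^\bullet$ on $M$, so that $\gr_\Gamma M$ is a nonzero finitely generated graded $G$-module. By the definition of $\dim_{D_{A|\KK}}$ (extended verbatim from Subsection \ref{subsec:ExistencePoly} using the order filtration $D^\bullet_{A|\KK}$ in place of the Bernstein filtration, with Proposition \ref{PropAnnGooFil} supplying independence from $\Gamma^\bullet$) one has
$$\dim_{D_{A|\KK}}(M) \;=\; \dim_G(\gr_\Gamma M).$$
Moreover, the Cohen--Macaulay and equidimensional property of $G$ gives the classical formula $\Grade_G(N) + \dim_G(N) = 2d$ for every nonzero finitely generated graded $G$-module $N$. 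Applied with $N = \gr_\Gamma M$, the entire theorem reduces to the grade comparison
$$\Grade_{D_{A|\KK}}(M) \;=\; \Grade_G(\gr_\Gamma M).$$

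To establish the grade comparison, I would build a filtered free resolution $F^\bullet \twoheadrightarrow M$ over $D_{A|\KK}$: each $F^i$ is a finitely generated free $D_{A|\KK}$-module equipped with a shifted order filtration, chosen so that the passage to associated graded yields a finite graded free resolution $\gr(F^\bullet) \twoheadrightarrow \gr_\Gamma M$ over $G$. Such a lift exists because $D_{A|\KK}$ is Noetherian of finite global dimension (Theorem \ref{ThmDifType}) and good filtrations on kernels of filtered surjections of finitely generated modules exist by the standard Artin--Rees style argument for the filtered ring $(D_{A|\KK}, D^\bullet_{A|\KK})$.

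Applying $\Hom_{D_{A|\KK}}(-, D_{A|\KK})$ to $F^\bullet$ produces a complex computing $\Ext^i_{D_{A|\KK}}(M, D_{A|\KK})$; the order filtration of $D_{A|\KK}$ induces filtrations on each term whose associated graded is $\Hom_G(\gr F^i, G)$. The resulting spectral sequence (equivalently, a Rees-algebra comparison between $D_{A|\KK}$-modules and their Rees modules) exhibits $\gr$ of $\Ext^i_{D_{A|\KK}}(M, D_{A|\KK})$ as a subquotient of $\Ext^i_G(\gr_\Gamma M, G)$. Hence $\Ext^i_G(\gr_\Gamma M, G) = 0$ forces $\Ext^i_{D_{A|\KK}}(M, D_{A|\KK}) = 0$, and a careful inspection of the smallest index at which $\Ext^i_G(\gr_\Gamma M, G)$ does not vanish shows that this nonvanishing is inherited by $\Ext^i_{D_{A|\KK}}(M, D_{A|\KK})$ as well. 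This yields the desired equality of grades.

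Combining the two ingredients gives $\Grade_{D_{A|\KK}}(M) + \dim_{D_{A|\KK}}(M) = 2d$. The ``in particular'' assertion is then immediate: by Theorem \ref{ThmDifType}, $\Grade_{D_{A|\KK}}(M) \leq \gd(D_{A|\KK}) = d$, so $\dim_{D_{A|\KK}}(M) \geq d$. The principal obstacle is the spectral sequence step: one must verify both that filtrations stay good through the construction of $F^\bullet$ and that the associated graded of the minimal nonvanishing $\Ext^{j_0}_{D_{A|\KK}}(M, D_{A|\KK})$ is genuinely nonzero — here the regularity of $G$ together with the purity of its graded dimension is crucial, as it rules out the pathological possibility that the filtration on $\Ext^{j_0}$ collapses to zero after taking $\gr$.
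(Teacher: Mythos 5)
The paper gives no proof of this proposition --- it is quoted from Bj\"ork [Ch.~2, Thm.~7.1] (see also Mebkhout--Narv\'aez-Macarro and \cite{NBDT}) --- and your skeleton is exactly the standard one from that source: identify $\dim_{D_{A|\KK}}(M)$ with $\dim_G(\gr_\Gamma M)$ for a good filtration, use that $G=\gr_{D^\bullet}(D_{A|\KK})$ is regular of pure graded dimension $2d$ (Theorem~\ref{ThmDifType}), hence Cohen--Macaulay, so that $\Grade_G(\gr_\Gamma M)+\dim_G(\gr_\Gamma M)=2d$, and then transfer the grade across the filtration via a filtered free resolution and the associated spectral sequence. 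The easy half of that transfer is fine as you state it: since a good filtration on $\Ext^i_{D_{A|\KK}}(M,D_{A|\KK})$ is exhaustive and separated, $\Ext^i_G(\gr_\Gamma M,G)=0$ forces $\Ext^i_{D_{A|\KK}}(M,D_{A|\KK})=0$, so $\Grade_{D_{A|\KK}}(M)\geq \Grade_G(\gr_\Gamma M)$.

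The genuine gap is the reverse inequality, which you compress into ``a careful inspection of the smallest index at which $\Ext^i_G(\gr_\Gamma M,G)$ does not vanish shows that this nonvanishing is inherited.'' That is precisely the point that does not follow from regularity and purity of $G$ by inspection: in the spectral sequence of the filtered complex $\Hom_{D_{A|\KK}}(F^\bullet,D_{A|\KK})$, classes in $E_1^{k_0}=\Ext^{k_0}_G(\gr_\Gamma M,G)$ at the lowest nonvanishing index $k_0$ cannot be hit from below, but they can fail to be cycles for the higher differentials mapping into (subquotients of) $\Ext^{k_0+1}_G(\gr_\Gamma M,G)$, so nothing you have said rules out $E_\infty^{k_0}=0$ while $E_1^{k_0}\neq 0$. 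The standard way to close this (Bj\"ork's actual argument) is different: from the subquotient property one first gets, for every $k$, the bound $\dim_{D_{A|\KK}}\Ext^k_{D_{A|\KK}}(M,D_{A|\KK})\leq 2d-k$ together with the vanishing $\Ext^k_{D_{A|\KK}}(N,D_{A|\KK})=0$ for $k<2d-\dim_{D_{A|\KK}}(N)$; then one invokes the biduality spectral sequence $\Ext^p_{D_{A|\KK}}\bigl(\Ext^q_{D_{A|\KK}}(M,D_{A|\KK}),D_{A|\KK}\bigr)\Rightarrow M$, convergent because $\gd(D_{A|\KK})=d<\infty$, which filters $M$ by subquotients of the double-Ext modules and yields $\dim_{D_{A|\KK}}(M)\leq 2d-\Grade_{D_{A|\KK}}(M)$, i.e.\ the missing inequality $\Grade_{D_{A|\KK}}(M)\leq 2d-\dim_{D_{A|\KK}}(M)=\Grade_G(\gr_\Gamma M)$. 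Without this double-duality (Auslander-condition) step, or an equivalent substitute, your proof is incomplete at its central point; the remaining steps, including the deduction $\dim_{D_{A|\KK}}(M)\geq d$ from $\Grade_{D_{A|\KK}}(M)\leq \gd(D_{A|\KK})=d$, are correct.
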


We stress that the conclusion of the previous result is satisfied for rings of differentiable type \cite{MNM,NBDT}.

\begin{definition}\label{DefBernsteinClass}
Let  $A$  be  a  differentiably admissible $\KK$-algebra.
Let $M$ be a finitely generated  left (right) $D_{A|\KK}$-module. 
We say that $M$ is in the  \emph{left (right)  Bernstein class} if either $M=0$ or
if $\dim_{D}(M)=d$.
\end{definition}

Let $M$ be a finitely generated $D_{A|\KK}$-module.
If $M$ is in the Bernstein class of $D_{A|\KK}$, then
$\Ext^i_{D_{A|\KK}}(M,D_{A|\KK})\neq 0$ if and only if $i= d$ \cite{Bjork79}.
Then, the functor 
that sends
$M$ to $\Ext^d_{D_{A|\KK}}(M,D_{A|\KK})$
is an exact contravariant functor that interchanges the left Bernstein
class and the right Bernstein class.
 Furthermore, $M\cong \Ext^d_{D_{A|\KK}}( \Ext^d_{D_{A|\KK}}(M,D_{A|\KK}),D_{A|\KK})$ for modules in the Bernstein class.
Since $D_{R|\KK}$ is left and right Noetherian, the modules in the Bernstein class 
are both Noetherian and Artinian.  We conclude that the modules in the Bernstein class have finite length as $D_{A|\KK}$-modules
\cite[Proposition 1.2.5]{MNM})

This class of Bernstein modules is an analogue of the class of holonomic modules.
In particular, it 
is closed under submodules, quotients,  extensions, and localizations \cite[Proposition 1.2.7]{MNM}).

\begin{theorem}\label{ThmExistenceHomological}
Let  $A$  be  a differentiably admissible $\KK$-algebra of dimension $d$.
Given any nonzero  element $f\in A$,  the Bernstein-Sato polynomial of $f$ exists.
\end{theorem}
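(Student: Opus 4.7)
The plan is to reduce via Lemma~\ref{PropEquivExistence}: the existence of a Bernstein--Sato polynomial for $f$ is equivalent to the finite generation of the $D_{A(s)|\KK(s)}$-module $M := A_f[s]\boldsymbol{f^s} \otimes_{\KK[s]} \KK(s)$, where $A(s) := A \otimes_\KK \KK(s)$. A routine verification shows that $A(s)$ is itself a differentiably admissible $\KK(s)$-algebra of dimension $d$: the three defining axioms transfer along the field extension $\KK \subseteq \KK(s)$, using in particular that $\Der_{A(s)|\KK(s)} \cong \Der_{A|\KK} \otimes_A A(s)$ remains projective of rank $d$. By Theorem~\ref{ThmDifType}, $D_{A(s)|\KK(s)}$ is then a ring of differentiable type, and the Bernstein-class machinery of Subsection~\ref{SubSecDifAd} applies.

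The aim is to show that $M$ lies in the Bernstein class of $D_{A(s)|\KK(s)}$, i.e., $\dim_{D_{A(s)|\KK(s)}}(M) = d$. Granting this, $M$ has finite length over $D_{A(s)|\KK(s)}$ by the finite-length property of the Bernstein class recorded after Definition~\ref{DefBernsteinClass}, hence is finitely generated, completing the proof via Lemma~\ref{PropEquivExistence}. To produce the dimension bound I would mimic the good-filtration strategy of Theorem~\ref{ThmAfsHol}, except that the Bernstein filtration on $D$ is unavailable for general differentiably admissible algebras. Instead I would use the order filtration on $D := D_{A(s)|\KK(s)}$ and set
\[ \Gamma_n := A(s) \cdot D^n \cdot \bigl(f^{-n}\boldsymbol{f^s}\bigr) \subseteq M. \]
Remark~\ref{RmkDiffAdGenDer} guarantees that $D^n$ is finitely generated as an $A(s)$-module, so each $\Gamma_n$ is finitely generated over $A(s)$; the identities $f^{-n}\boldsymbol{f^s} = f \cdot f^{-(n+1)}\boldsymbol{f^s}$ and $D^n \cdot A \subseteq D^n$ yield $\Gamma_n \subseteq \Gamma_{n+1}$ and $D^j \cdot \Gamma_n \subseteq \Gamma_{n+j}$, while $\bigcup_n \Gamma_n = M$ is clear. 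Thus $\Gamma_\bullet$ is a good filtration and $\gr_\Gamma M$ is a finitely generated graded module over $\gr D \cong \Sym_{A(s)} \Der_{A(s)|\KK(s)}$.

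The main obstacle is the dimension estimate for $\gr_\Gamma M$ as a module over the $2d$-dimensional graded ring $\gr D$. The $A(s)$-rank of $\Gamma_n$ is bounded above by the $A(s)$-rank of $D^n$, which grows polynomially of degree $d$ in $n$; combined with the $d$-dimensional base $\Spec A(s)$, this polynomial growth in the fibre directions must be translated into the assertion that the support of $\gr_\Gamma M$ inside $\Spec \gr D$ has Krull dimension exactly $d$. Coupled with the reverse inequality provided by Bernstein's inequality (Proposition~\ref{ThmHomologicalBernsteinIneq}), this places $M$ in the Bernstein class. The remainder is routine once this dimensional bookkeeping, analogous to the one appearing in the polynomial case of Theorem~\ref{ThmAfsHol}, is made precise in the framework of dimension relative to the order filtration.
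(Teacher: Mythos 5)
Two steps of your argument do not hold up. First, the claim that ``a routine verification shows that $A(s)$ is itself a differentiably admissible $\KK(s)$-algebra'' is false in general: condition (ii) of the definition can fail after base change, since $A\otimes_\KK\KK(s)$ may acquire maximal ideals whose residue fields are transcendental over $\KK(s)$ (the same phenomenon as the example $\n=(xy-1)\subseteq \KK\llbracket x\rrbracket[y]$ recalled before Proposition~\ref{PropPowerSeriesDiffAd}). The paper's proof explicitly flags this and instead uses the base-change results of Mebkhout and Narv\'aez-Macarro to conclude only that $D_{A(s)|\KK(s)}$ still satisfies the conclusions of Theorem~\ref{ThmDifType}, so that Proposition~\ref{ThmHomologicalBernsteinIneq} and the Bernstein-class formalism remain available. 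This part is repairable, but not by the route you state.

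Second, and more seriously, the heart of your proof---that $M=A_f[s]\boldsymbol{f^s}\otimes_{\KK[s]}\KK(s)$ lies in the Bernstein class because the $A(s)$-ranks of $\Gamma_n=D^n\cdot\bigl(f^{-n}\boldsymbol{f^s}\bigr)$ grow like $n^d$---is precisely the step that cannot be obtained by mimicking the Bernstein-filtration argument. There are two independent problems. (a) $\Gamma_\bullet$ need not be a good filtration: to get $D^1\Gamma_n=\Gamma_{n+1}$ for $n\gg 0$, or even finite generation of $\gr_\Gamma M$ over $\gr D$, you would need $f^{-(n+1)}\boldsymbol{f^s}\in D^{n+1}\cdot f^{-n}\boldsymbol{f^s}$, which is essentially the functional equation you are trying to prove; without goodness, $\gr_\Gamma M$ cannot be used to define or bound $\dim_D(M)$. (b) Even granting goodness, generic rank is not a substitute for $\KK$-vector-space dimension: it only controls the support of $\gr_\Gamma M$ over the generic point of $\Spec A(s)$ and says nothing about components of the characteristic variety lying over proper closed subsets of $\Spec A(s)$, whose dimensions can exceed $d$; moreover rank is blind to torsion, so the Hilbert-polynomial bookkeeping from Coutinho's argument (which relies on finite $\KK$-dimensionality of the filtration pieces to control all submodules) does not transfer. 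This failure is exactly why the paper, following \cite{MNM}, argues homologically: one produces a $D_{A(s)|\KK(s)}$-submodule $N$ of $M$ lying in the Bernstein class with $N_f=M$ (via the $\Ext^d$ characterization and closure of the Bernstein class under localization), chooses $\ell$ with $f^\ell\boldsymbol{f^s}\in N$, and uses the finite length of $N$ to stabilize the chain $D_{A(s)|\KK(s)}f^{\ell+k}\boldsymbol{f^s}$, which yields the functional equation after clearing denominators and shifting. Your reduction through Lemma~\ref{PropEquivExistence} is fine, but the dimension estimate you defer to ``routine bookkeeping'' is the actual content of the theorem and is missing.
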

\begin{proof}[Sketch of proof]
In this sketch we follow the ideas of Mebkhout and Narv\'{a}ez-Macarro \cite{MNM} (see also \cite{NBDT}). In particular, we refer the interested reader to their work on the base change $\KK$ to $\KK(s)$ regarding differentiably admissible algebras  \cite[Section 2]{MNM}.
Let $A(s)=A\otimes_\KK \KK(s)$.
We observe that  $A(s)$ is not always a differentiably admissible $\KK(s)$-algebra. Specifically, the residue fields of $A(s)$  might not be always algebraic.
However, $D_{A(s)|\KK(s)}$ satisfies the conclusions of Theorem \ref{ThmDifType}. In particular, the conclusions of Theorem~\ref{ThmHomologicalBernsteinIneq} hold, and its Bernstein class is well defined. 
We have that the dimension and global dimension of $D_{A(s)|\KK(s)}$ and $D_{A|\KK}$
are equal.
One can show that $A_f[s] \boldsymbol{f^s}\otimes_{\KK[s]} \KK(s) $   has a $D_{A(s)|\KK(s)}$-submodule $N$ is in the Bernstein class of  $D_{A(s)|\KK(s)}$ such that $N_f=A_f[s] \boldsymbol{f^s}\otimes_{\KK[s]} \KK(s) $
\cite[Proposition 1.2.7 and Proof of Theorem 3.1.1]{MNM}.
Then,  there exists $\ell\in\NN$ such that $f^\ell \boldsymbol{f^s}  \in N$. Since $N$ has finite length as  $D_{A(s)|\KK(s)}$-module
 the chain
 $$
 D_{A(s)|\KK(s)} f^\ell \boldsymbol{f^s} \supseteq  D_{A(s)|\KK(s)} f^{\ell+1} \boldsymbol{f^s} \supseteq  D_{A(s)|\KK(s)} f^{\ell+2} \boldsymbol{f^s} \supseteq \ldots
 $$
 stabilizes.
 Then, there exists  $m\in\NN$ and a differential operator $\delta(s)\in  D_{A(s)|\KK(s)}$ such that
 $$
 \delta(s) f^{\ell+m+1} \boldsymbol{f^s} = f^{\ell+m} \boldsymbol{f^s} .
 $$
 After clearing denominators and a shifting, there exists $\widetilde{\delta}(s)\in  D_{A|\KK}[s]$ such that
 $$
\widetilde{ \delta}(s) f \boldsymbol{f^s} =  \boldsymbol{f^s} .
 $$
\end{proof}

\subsection{First properties of the Bernstein-Sato polynomial} \label{First properties}

A first observation about the Bernstein-Sato polynomial is that $s+1$ is always a factor.

\begin{lemma} \label{s+1}
	For $f\in A$, we have $(s+1) \,| \,b_f(s)$ if and only if $f$ is not a unit.
\end{lemma}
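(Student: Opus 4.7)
My plan is to handle the two directions separately, using the module $A_f[s]\boldsymbol{f^s}$ and the specialization map $\theta_{-1}$ from Subsection~\ref{Subsec:modules}.

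If $f$ is a unit, I would exhibit a trivial functional equation by taking $\delta(s) = f^{-1}\in A\subseteq D_{A|\KK}[s]$, which gives $f^{-1}\cdot f^{s+1}=1\cdot f^s$ for all $s\in\NN$. This places the constant $1$ in the ideal of admissible $b(s)$'s, forcing $b_f(s)=1$, which is not divisible by $s+1$.

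Conversely, assume $f$ is a non-unit. I would rewrite the Bernstein-Sato functional equation $\delta(s)f^{s+1}=b_f(s)f^s$ as the equality
\[\delta(s)\cdot f\boldsymbol{f^s}=b_f(s)\boldsymbol{f^s}\]
in $A_f[s]\boldsymbol{f^s}$ (valid by the characterization in Remark~\ref{rem:specialization}), then apply the specialization $\theta_{-1}$ coordinate-wise along with $\pi_{-1}$ on the operator side. The left-hand side becomes $\delta(-1)(f\cdot f^{-1})=\delta(-1)(1)\in A$, and the right-hand side becomes $b_f(-1)f^{-1}\in A_f$. Clearing the denominator $f$ yields $b_f(-1)=f\cdot\delta(-1)(1)\in fA$, so $b_f(-1)\in fA\cap \KK$. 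Since $f$ is a non-unit, $fA$ sits inside some maximal ideal $\mathfrak{m}$ of $A$, and the induced ring map $\KK\to A/\mathfrak{m}$ from a field to a nonzero ring is injective; hence $\mathfrak{m}\cap\KK=0$, forcing $b_f(-1)=0$, i.e.\ $(s+1)\mid b_f(s)$.

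The only mild subtlety is the legitimacy of specializing at $s=-1$: the functional equation is nominally only asserted for $s\in\NN$, but once lifted to the $D_{A|\KK}[s]$-module $A_f[s]\boldsymbol{f^s}$ via Remark~\ref{rem:specialization}, the compatibility $\theta_n(\delta(s)\cdot \xi)=\pi_n(\delta(s))\cdot\theta_n(\xi)$ established in Subsection~\ref{Subsec:modules} permits evaluation at any $n\in\ZZ$, and in particular at $n=-1$. With that step justified, everything else is formal.
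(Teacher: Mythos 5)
Your argument is correct and follows essentially the same route as the paper: the unit case is handled by the same trivial equation $f^{-1}f^{s+1}=f^s$, and the converse specializes the equality $\delta(s)f\boldsymbol{f^s}=b_f(s)\boldsymbol{f^s}$ at $n=-1$ via Remark~\ref{rem:specialization} to force $b_f(-1)=0$. Your final step (clearing the denominator and noting $fA\cap\KK=0$ through a maximal ideal) is just a slightly more explicit version of the paper's observation that $\delta(-1)1\in A$ while $b_f(-1)f^{-1}\notin A$ unless $b_f(-1)=0$.
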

\begin{proof}
	If $f$ is a unit, then we can take $f^{-1} f^{s+1} = 1 f^s$ as a functional equation, so $b(s)=1$ is the Bernstein-Sato polynomial of $f$.  
	
	For the converse, by definition, we have $\delta(s) f \boldsymbol{f^s} = b_f(s) \boldsymbol{f^s}$ in $A_f[s] \boldsymbol{f^s}$. By Remark~\ref{rem:specialization}, $\delta(n) f^{n+1} = b_f(n) f^n$ in $A_f$ for all $n\in \ZZ$. In particular, for $n=-1$, we get $\delta(-1) 1 = b_f(-1) f^{-1}$. As $\delta(-1)\in D_{A|\KK}$, we have $\delta(-1) 1\in A$. Thus, $b_f(-1)=0$, so $s+1$ divides $b_f(s)$.
	\end{proof}
	
Quite nicely, the factor $(s+1)$ characterizes the regularity of $f$.

\begin{proposition}[\cite{BM_Rabida}]
For $f\in A$,  we have $A/fA$ is smooth if and only if $b_f(s)=s+1$.
\end{proposition}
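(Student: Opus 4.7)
The plan is to prove the two directions separately. The forward direction reduces to an explicit construction via the Jacobian criterion, while the reverse direction is the substantive part and requires a localization argument together with deeper structural input.

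For $A/fA$ smooth $\Rightarrow b_f(s)=s+1$, I will invoke the Jacobian criterion: smoothness of $A/fA$ over $\KK$ is equivalent to $(f,\partial_1 f,\ldots,\partial_d f)=A$. Given $g_0,\ldots,g_d\in A$ with $g_0 f+\sum_{i=1}^d g_i\,\partial_i f=1$, set
\[ \delta(s) := (s+1)\,g_0 + \sum_{i=1}^d g_i\,\partial_i \in D_{A|\KK}[s].\]
Using the power rule $\partial_i f^{s+1}=(s+1) f^s\,\partial_i f$, one computes directly
\[ \delta(s)f^{s+1}=(s+1)\,g_0 f^{s+1}+(s+1)f^s\sum_{i=1}^d g_i\,\partial_i f=(s+1)f^s,\]
so $b_f(s)\mid (s+1)$. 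Combined with Lemma~\ref{s+1} (which yields $(s+1)\mid b_f(s)$ whenever $f$ is a nonunit), this forces $b_f(s)=s+1$.

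For the converse I argue contrapositively: assume $A/fA$ is not smooth, so there exists a maximal ideal $\mathfrak{m}$ of $A$ containing $f$ and every $\partial_i f$. After localizing and completing at $\mathfrak{m}$, we may assume $A$ is a complete regular local $\KK$-algebra with $f\in\mathfrak{m}^2$. Since the local/completed Bernstein--Sato polynomial divides the global one and still has $(s+1)$ as a factor, a putative equality $b_f(s)=s+1$ persists. The task is then to show that no $\delta(s)\in D_{A|\KK}[s]$ satisfies $\delta(s)\,f^{s+1}=(s+1)\,f^s$ when $f\in\mathfrak{m}^2$.

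The cleanest route to this contradiction passes through a log-resolution $\pi\colon X'\to X$ of the pair $(X,V(f))$ and the Kashiwara--Lichtin description of the root set of $b_f$: each root has the form $-(k_i+1+j)/N_i$ for some $j\in\NN$ and some component $E_i$ of $F_\pi+E$ meeting $\pi^{-1}(\mathfrak{m})$. Because $V(f)$ is singular at $\mathfrak{m}$, any log-resolution introduces an exceptional divisor with $N_i\ge 2$ and $k_i\ge d-1$, so the available candidate roots cannot collapse to produce only a simple root at $-1$. Thus $b_f$ must either acquire a root different from $-1$ or acquire $-1$ with multiplicity greater than one, contradicting $b_f(s)=s+1$. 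The main obstacle is certifying that at least one such non-$-1$ candidate is actually attained, since the Kashiwara--Lichtin list is only a superset; this can be handled either via a standard specialization in the $V$-filtration framework or by comparison with the log-canonical threshold, which is always a root (so when $\mathrm{lct}(f)<1$ one is done immediately, and when $\mathrm{lct}(f)=1$ but $V(f)$ is singular the analysis forces $-1$ to appear with higher multiplicity).
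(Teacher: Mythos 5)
Your forward implication is fine and is the standard argument: smoothness gives $1=g_0f+\sum_i g_i\partial_i(f)$ by the Jacobian criterion, the operator $(s+1)g_0+\sum_i g_i\partial_i$ yields $b_f(s)\mid (s+1)$, and Lemma~\ref{s+1} gives the reverse divisibility (you are implicitly assuming $f$ is not a unit, which is the relevant case). Note that the paper does not prove this proposition at all --- it only cites Brian\c{c}on--Maisonobe --- and the converse is the genuinely deep direction.

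That converse is where your proposal has a real gap, exactly where you flag it, and the patch you offer does not close it. Theorem~\ref{Kashiwara_Lichtin} only gives a superset of the roots, so the numerical data of a resolution over a singular point (some $N_i\geq 2$, $k_i\geq d-1$) says nothing by itself about which candidates are attained. Your fallback works when $\lct(f)<1$, since $-\lct(f)$ is then a root different from $-1$ by Theorem~\ref{ELSV}; but when $\lct(f)=1$ the claim that singularity ``forces $-1$ to appear with higher multiplicity'' is not a valid mechanism: for $f=x_1^2+x_2^2+x_3^2+x_4^2$ one has $\lct(f)=1$, $f$ singular, and $b_f(s)=(s+1)(s+2)$, so $-1$ is a simple root and the failure of $b_f(s)=s+1$ comes from a different root that your analysis does not produce. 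In the case $\lct(f)=1$ you are therefore left needing precisely the statement to be proved (that $\tilde b_f\neq 1$ for singular $f$), and neither the vague appeal to ``a standard specialization in the $V$-filtration framework'' nor the lct comparison supplies it. The known proofs of this direction rest on substantially deeper input --- for instance Malgrange's theorem relating the roots of $\tilde b_f$ to the monodromy eigenvalues on the reduced cohomology of the Milnor fiber, combined with A'Campo's result that the Lefschetz number of the monodromy vanishes at a singular point (so the reduced cohomology cannot vanish), or Brian\c{c}on--Maisonobe's geometric characterization cited in the paper --- and an argument at the level of resolution numerics and jumping numbers in $(0,1]$ alone cannot reach it.
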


\begin{definition}
	The \emph{reduced Bernstein-Sato polynomial} of a nonunit $f\in A$ is \[\tilde{b}_f(s) =b_f(s)/(s+1).\]
\end{definition}

The analogue of Proposition~\ref{prop:othercharsBS} for the reduced Bernstein-Sato polynomial is as follows.

\begin{proposition}\label{prop:othercharsrelativeBS}
	The following are equal:
	\begin{enumerate}
		\item $\tilde{b}_f(s)$,
		\item The minimal polynomial of the action of $s$ on $\displaystyle (s+1)\frac{D_{A|\KK}[s]\boldsymbol{f^s}}{D_{A|\KK}[s] f \boldsymbol{f^s}}$,
		\item The monic element of smallest degree in $$\KK[s] \cap (\mathrm{Ann}_{D[s]}\big(\boldsymbol{f^s}) + D_{A|\KK}[s] ( f, \partial_1(f), \dots , \partial_n(f) \big)).$$
	\end{enumerate}
\end{proposition}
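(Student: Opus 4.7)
The plan is to reduce both equivalences to \Cref{prop:othercharsBS} by exploiting the factorization $b_f(s) = (s+1)\tilde b_f(s)$ together with the Leibniz-rule identity
\[
\partial_i(f\boldsymbol{f^s}) \;=\; \partial_i(f)\boldsymbol{f^s} + f\cdot s\tfrac{\partial_i(f)}{f}\boldsymbol{f^s} \;=\; (s+1)\partial_i(f)\boldsymbol{f^s},
\]
which shows that, modulo $\operatorname{Ann}_{D_{A|\KK}[s]}(\boldsymbol{f^s})$, one has the central-element relation $(s+1)\partial_i(f)\equiv \partial_i f$ in $D_{A|\KK}[s]$.

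For (i) $\Leftrightarrow$ (ii), set $M = D_{A|\KK}[s]\boldsymbol{f^s}/D_{A|\KK}[s] f\boldsymbol{f^s}$; by \Cref{prop:othercharsBS}(ii) the minimal polynomial of $s$ on $M$ is $b_f(s)$. Since $s+1$ is central, a polynomial $q(s)\in\KK[s]$ annihilates $(s+1)M$ if and only if $(s+1)q(s)$ annihilates $M$, i.e.\ $b_f(s)=(s+1)\tilde b_f(s)\mid(s+1)q(s)$, i.e.\ $\tilde b_f(s)\mid q(s)$. Hence $\tilde b_f(s)$ is the minimal polynomial of $s$ on $(s+1)M$.

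For (i) $\Leftrightarrow$ (iii), write $I = \operatorname{Ann}_{D_{A|\KK}[s]}(\boldsymbol{f^s}) + D_{A|\KK}[s]\cdot f$ and $J = \operatorname{Ann}_{D_{A|\KK}[s]}(\boldsymbol{f^s}) + D_{A|\KK}[s]\cdot (f,\partial_1(f),\dots,\partial_d(f))$, so that \Cref{prop:othercharsBS}(iv) identifies $b_f(s)$ as the generator of $I\cap\KK[s]$. One direction is easy: the Leibniz identity above gives $(s+1)\partial_i(f)\in I$, and centrality of $s+1$ then yields $(s+1)J\subseteq I$. Consequently, for any $q(s)\in J\cap\KK[s]$ we have $(s+1)q(s)\in I\cap\KK[s]$, so $b_f(s)\mid (s+1)q(s)$ and hence $\tilde b_f(s)\mid q(s)$. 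For the reverse containment I must exhibit $\tilde b_f(s)\in J\cap\KK[s]$. Starting from a functional equation $\delta(s) f\boldsymbol{f^s} = b_f(s)\boldsymbol{f^s}$, the specialization $\theta_{-1}$ combined with $b_f(-1)=0$ (\Cref{s+1}) gives $\delta(-1)\cdot 1 = 0$ in $A$; that is, $\delta(-1)$ kills the constant $1\in A$. By \Cref{RmkDiffAdGenDer} every such operator lies in the left ideal $\sum_i D_{A|\KK}\partial_i$, so $\delta(-1) = \sum_i \delta'_i \partial_i$ for some $\delta'_i\in D_{A|\KK}$. Taylor-expanding $\delta(s) = \delta(-1) + (s+1)\eta(s)$ with $\eta(s)\in D_{A|\KK}[s]$ and substituting gives
\[
(s+1)\tilde b_f(s)\boldsymbol{f^s} \;=\; \sum_i \delta'_i\,\partial_i f\,\boldsymbol{f^s} + (s+1)\eta(s)f\boldsymbol{f^s} \;=\; (s+1)\!\left(\sum_i \delta'_i\,\partial_i(f)\boldsymbol{f^s} + \eta(s) f\boldsymbol{f^s}\right)\!,
\]
where the last equality applies the Leibniz identity to each $\partial_i f\boldsymbol{f^s}$. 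Since $A_f[s]\boldsymbol{f^s}$ is free over $A_f[s]$, it is $\KK[s]$-torsion free, so cancelling $(s+1)$ produces $\tilde b_f(s)\boldsymbol{f^s} = \sum_i \delta'_i\,\partial_i(f)\boldsymbol{f^s} + \eta(s) f\boldsymbol{f^s}$, which exhibits $\tilde b_f(s)\in J\cap\KK[s]$ and completes the proof.

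The main obstacle is the Taylor-expansion step: one must correctly transfer the functional equation to $s=-1$ and identify the annihilator of $1\in A$ inside $D_{A|\KK}$ with the left ideal generated by the derivations $\partial_1,\dots,\partial_d$. The first point is the specialization compatibility recorded in \Cref{rem:specialization}, while the second is where one uses the structural fact (\Cref{RmkDiffAdGenDer}) that $D_{A|\KK}=A\langle\operatorname{Der}_{A|\KK}\rangle$; this ensures the argument goes through not only for polynomial rings but for any differentiably admissible $\KK$-algebra.
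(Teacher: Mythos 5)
Your proof is correct and follows essentially the same route as the paper: both arguments rest on the identity $\partial_i(f\boldsymbol{f^s})=(s+1)\partial_i(f)\boldsymbol{f^s}$, the decomposition $\delta(s)=(s+1)\delta'(s)+\delta(-1)$ with $\delta(-1)$ annihilating $1\in A$ and hence lying in the left ideal $\sum_i D_{A|\KK}\partial_i$, and cancellation of the factor $(s+1)$. The only differences are cosmetic: you spell out the (i)$\Leftrightarrow$(ii) step that the paper dismisses as ``by definition,'' and you justify the cancellation via $\KK[s]$-torsion-freeness of $A_f[s]\boldsymbol{f^s}$, which the paper leaves implicit.
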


\begin{proof}
Once again, the first two are equivalent by definition. 

Given a functional equation $\delta(s) f \boldsymbol{f^s} = (s+1) \tilde{b}(s) \boldsymbol{f^s}$, we have that $\delta(-1)\in D_{A|\KK}$ with $\delta(-1) \cdot 1=0$. We can write $\delta(s)=(s+1) \delta'(s) + \delta(-1)$ for some $\delta'(s)\in D_{A|\KK}[s]$, so $\delta(s)=(s+1)\delta'(s) + \sum_{i=1}^d \delta_i \partial_i$ for some $\delta_i\in D_{A|\KK}$. Then, using that $\partial_i (f \boldsymbol{f^s}) = (s+1) \partial_i(f) \boldsymbol{f^s}$, we have
\[(s+1) \tilde{b}(s) \boldsymbol{f^s} =(s+1)  \delta'(s)f \boldsymbol{f^s} + \sum_{i=1}^d \delta_i \partial_i f \boldsymbol{f^s} = (s+1) (\delta'(s) f +\sum_{i=1}^d \delta_i \partial_i(f) )\boldsymbol{f^s}. \]
Thus, such a functional equation implies that $\tilde{b}(s) \boldsymbol{f^s}\in D_{A|\KK}[s] (f, \partial_1(f),\dots,\partial_d(f))$. Conversely, if $\tilde{b}(s) \boldsymbol{f^s}\in D_{A|\KK}[s] (f, \partial_1(f),\dots,\partial_d(f))$, again using that $\partial_i (f \boldsymbol{f^s}) = (s+1) \partial_i(f) \boldsymbol{f^s}$, we can write $(s+1)\tilde{b}(s) \boldsymbol{f^s} \in D_{A|\KK}[s] f \boldsymbol{f^s}$. This implies the equivalence of the first and the last characterizations.
\end{proof}

We may also be interested in the characteristic polynomial of the action of $s$. Traditionally, with the convention of a sign change,  the roots of the characteristic polynomial are known as the $b$-exponents of $f$.

\begin{definition}
The $b$-exponents of $f\in A$ are the roots of the characteristic polynomial of the action of $-s$ on $\displaystyle (s+1)\frac{D_{A|\KK}[s]\boldsymbol{f^s}}{D_{A|\KK}[s] f \boldsymbol{f^s}}$.
\end{definition}

So far we have considered Bernstein-Sato polynomials over different regular rings $A$ but, a priori, it is not clear how they are related.
Our next goal is to address this issue.
We start considering $A=\KK[x_1,\dots,x_d]$, a polynomial ring over a field $\KK$ of characteristic zero and denote by $b_f^{\KK[x]}(s)$ the Bernstein-Sato polynomial of $f\in A$. Given any maximal ideal $\mathfrak{m}\subseteq A$ we also consider the Bernstein-Sato polynomial over the localization $A_\mathfrak{m}$ that we denote $b_f^{\KK[x]_\mathfrak{m}}(s)$.

\begin{proposition} \label{local_global_BS}
We have:
\[ b_f^{\KK[x]}(s) = {\rm lcm} \{b_f^{\KK[x]_\mathfrak{m}}(s) \; | \; \mathfrak{m} \subseteq A  \hskip 2mm {\rm maximal} \hskip 2mm {\rm ideal} \}. \]
\end{proposition}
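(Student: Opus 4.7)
Set $b(s):=b_f^{\KK[x]}(s)$ and $b_{\mathfrak{m}}(s):=b_f^{\KK[x]_{\mathfrak{m}}}(s)$. Since $\KK[s]$ is a PID, the plan is to verify divisibility in both directions, using the characterization of the Bernstein--Sato polynomial from Proposition~\ref{prop:othercharsBS}. First I would handle the easy direction: any functional equation $\delta(s) f^{s+1}=b(s) f^s$ with $\delta(s)\in D_{A|\KK}[s]$ remains valid after the inclusion $D_{A|\KK}[s]\hookrightarrow D_{A_{\mathfrak{m}}|\KK}[s]$, so by minimality $b_{\mathfrak{m}}(s)\mid b(s)$ for every maximal $\mathfrak{m}$. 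Consequently only finitely many distinct $b_{\mathfrak{m}}(s)$ occur (they all have degree at most $\deg b(s)$), so $L(s):=\mathrm{lcm}_{\mathfrak{m}}\, b_{\mathfrak{m}}(s)$ is well-defined and divides $b(s)$.

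For the reverse divisibility $b(s)\mid L(s)$, I would use Proposition~\ref{prop:othercharsBS}(ii), which identifies $b(s)$ with the minimal polynomial of the action of $s$ on $\fs$ in the quotient
$$N:=\frac{D_{A|\KK}[s]\fs}{D_{A|\KK}[s] f\fs}.$$
Thus it suffices to prove that $L(s)\fs=0$ in $N$. For each maximal ideal $\mathfrak{m}$, the divisibility $b_{\mathfrak{m}}(s)\mid L(s)$ yields $L(s)\fs=0$ in the analogous localized quotient
$$N'_{\mathfrak{m}}:=\frac{D_{A_{\mathfrak{m}}|\KK}[s]\fs}{D_{A_{\mathfrak{m}}|\KK}[s] f \fs}.$$
The key technical step is the identification $A_{\mathfrak{m}}\otimes_A N\cong N'_{\mathfrak{m}}$ as $A$-modules, which relies on the fact, recorded in the preliminaries, that $D_{A_{\mathfrak{m}}|\KK}=A_{\mathfrak{m}}\otimes_A D_{A|\KK}$ for algebras essentially of finite type; applying this to the numerator and denominator submodules of $A_f[s]\fs$ and using exactness of localization gives the isomorphism. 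Granting this, $L(s)\fs$ is an element of the $A$-module $N$ whose image in every localization at a maximal ideal is zero, so by the standard local-global principle for vanishing, $L(s)\fs=0$ in $N$, hence $b(s)\mid L(s)$.

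The main obstacle is precisely this compatibility of the quotient $N$ with localization at maximal ideals of $A$. It amounts to verifying that both $D_{A|\KK}[s]\fs$ and $D_{A|\KK}[s] f\fs$, viewed as $A$-submodules of $A_f[s]\fs$, extend scalars along $A\to A_{\mathfrak{m}}$ to their $A_{\mathfrak{m}}$-counterparts. This is essentially bookkeeping once one observes that $D_{A|\KK}[s]\fs$ is the $A[s]$-span of the elements $\partial^{\alpha}\fs$, and that extending scalars produces the $A_{\mathfrak{m}}[s]$-span of the same elements, which is $D_{A_{\mathfrak{m}}|\KK}[s]\fs$. Combining the two divisibilities in the PID $\KK[s]$ then gives the claimed equality of monic polynomials.
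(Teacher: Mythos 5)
Your argument is correct and follows essentially the same route as the paper: both reduce the statement to the fact that annihilation of the cyclic module $\frac{D_{A|\KK}[s]\boldsymbol{f^s}}{D_{A|\KK}[s] f \boldsymbol{f^s}}$ by a polynomial $b(s)\in\KK[s]$ is a local condition, using that its localization at $\mathfrak{m}$ is $\frac{D_{A_\mathfrak{m}|\KK}[s]\boldsymbol{f^s}}{D_{A_\mathfrak{m}|\KK}[s] f \boldsymbol{f^s}}$ via $D_{A_\mathfrak{m}|\KK}=A_\mathfrak{m}\otimes_A D_{A|\KK}$. You merely spell out details (the two divisibilities, finiteness of the set of local Bernstein--Sato polynomials, flatness of localization) that the paper leaves implicit.
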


\begin{proof}
Let $b(s) \in \KK[s]$ be a polynomial. 
 The module $\displaystyle b(s)\frac{D_{A|\KK}[s]\boldsymbol{f^s}}{D_{A|\KK}[s] f \boldsymbol{f^s}}$ vanishes if and only if it vanishes locally. The localization at a maximal ideal $\mathfrak{m}\subseteq A$ is 
$$b(s) \frac{D_{A_\mathfrak{m}|\KK}[s]\boldsymbol{f^s}}{D_{A_\mathfrak{m}|\KK}[s] f \boldsymbol{f^s}}$$ and the result follows. 
\end{proof}

For a polynomial $f\in A$ we may also consider the Bernstein-Sato polynomial $b_f^{\KK\llbracket x\rrbracket}(s)$ in the formal power series ring $\KK\llbracket x_1,\dots,x_d\rrbracket$.

\begin{proposition} \label{local_C}
Let $\mathfrak{m}= (x_1,\dots, x_d) \subseteq A$ be the homogeneous maximal ideal. We have:
\[ b_f^{\KK[x]_\mathfrak{m}}(s)  =   b_f^{\KK\llbracket x\rrbracket}(s).\]
\end{proposition}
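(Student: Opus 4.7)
I would prove both divisibilities separately, using that $\KK[x]_\mathfrak{m}\hookrightarrow \KK\llbracket x\rrbracket$ is faithfully flat and that both rings are covered by Definition~\ref{def:WEyl}, so their rings of differential operators have the form $D^N = \bigoplus_{|\alpha|\leq N}A\cdot\partial^\alpha$ with the \emph{same} basis $\{\partial^\alpha\}$ over $A$.

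\emph{Easy direction: $b_f^{\KK\llbracket x\rrbracket}(s)\mid b_f^{\KK[x]_\mathfrak{m}}(s)$.} Given a functional equation $\delta(s)f^{s+1}=b_f^{\KK[x]_\mathfrak{m}}(s)f^s$ over $\KK[x]_\mathfrak{m}$, one simply views $\delta(s)$ inside $D_{\KK\llbracket x\rrbracket|\KK}[s]$ by extending scalars of the coefficients in the $\partial^\alpha$-basis. The same equation then holds in $\KK\llbracket x\rrbracket_f[s]\boldsymbol{f^s}$, so $b_f^{\KK\llbracket x\rrbracket}(s)$ divides $b_f^{\KK[x]_\mathfrak{m}}(s)$.

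\emph{Harder direction: $b_f^{\KK[x]_\mathfrak{m}}(s)\mid b_f^{\KK\llbracket x\rrbracket}(s)$.} Set $b(s):=b_f^{\KK\llbracket x\rrbracket}(s)$ and fix a functional equation $\widetilde\delta(s)f^{s+1}=b(s)f^s$ in $\KK\llbracket x\rrbracket_f[s]\boldsymbol{f^s}$, with $\widetilde\delta(s)=\sum_{|\alpha|\leq N}a_\alpha(s)\partial^\alpha$ where $a_\alpha(s)\in\KK\llbracket x\rrbracket[s]$. Since $f\in\KK[x]$, each $\partial^\alpha(f^{s+1})$ can be written as $P_\alpha(s)f^{s-|\alpha|+1}$ with $P_\alpha(s)\in\KK[x][s]$ (this is just iterated Leibniz in $s$). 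Clearing denominators by multiplying by $f^N$, the functional equation becomes the polynomial identity
\[\sum_{|\alpha|\leq N} a_\alpha(s)\,P_\alpha(s)\,f^{N-|\alpha|+1} \;=\; b(s)\,f^{N}\]
in $\KK\llbracket x\rrbracket[s]$. This is a \emph{linear} equation in the unknowns $(a_\alpha(s))$ whose coefficients and right-hand side lie in $\KK[x][s]\subseteq\KK[x]_\mathfrak{m}[s]$.

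\emph{Descent.} Because $\KK[x]_\mathfrak{m}\to\KK\llbracket x\rrbracket$ is faithfully flat (completion of a Noetherian local ring, with $\mathfrak{m}$ generating the maximal ideal on the target), so is $\KK[x]_\mathfrak{m}[s]\to\KK\llbracket x\rrbracket[s]$. A linear system over $R$ that admits a solution after faithfully flat base change already admits a solution over $R$: indeed, if $\phi\colon N\to M$ is an $R$-linear map of $R$-modules and $m\in M$, then $m\in\phi(N)$ iff $m\otimes 1\in(\phi\otimes B)(N\otimes B)$, because faithful flatness makes $M/\phi(N)\hookrightarrow (M/\phi(N))\otimes_R B$ injective. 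Applying this to our linear equation produces coefficients $a'_\alpha(s)\in\KK[x]_\mathfrak{m}[s]$ solving the same system, hence an operator $\delta'(s)\in D_{\KK[x]_\mathfrak{m}|\KK}[s]$ with $\delta'(s)f^{s+1}=b(s)f^s$ over the localization. This yields $b_f^{\KK[x]_\mathfrak{m}}(s)\mid b(s)$, and combined with the first direction both monic generators coincide.

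\emph{Main obstacle.} The real content is the descent step. The argument hinges on the fact that after clearing $f$'s we obtain a linear system whose coefficients live in $\KK[x]_\mathfrak{m}[s]$, which uses crucially that $f$ is a polynomial (so the $P_\alpha$ and the powers of $f$ already sit in the small ring); the ``formal'' operator coefficients $a_\alpha(s)$ are the only things actually in $\KK\llbracket x\rrbracket[s]$, and these are precisely the unknowns that faithful flatness allows us to replace by elements of $\KK[x]_\mathfrak{m}[s]$.
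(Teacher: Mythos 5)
Your argument is correct: the easy containment by extension of scalars is fine, and the descent step works — the functional equation over $\KK\llbracket x\rrbracket$ is indeed equivalent (after multiplying by $f^N$) to an inhomogeneous linear system over $\KK[x]_\mathfrak{m}[s]$ in the unknown coefficients $a_\alpha(s)$, and solvability of such a system descends along the faithfully flat map $\KK[x]_\mathfrak{m}[s]\to\KK\llbracket x\rrbracket[s]$ exactly as you say. The paper proves the statement with the same key input, faithful flatness of $\KK[x]_\mathfrak{m}\to\KK\llbracket x\rrbracket$, but packages it differently: using the characterization of $b_f(s)$ as the minimal polynomial of $s$ acting on $D[s]\boldsymbol{f^s}/D[s]f\boldsymbol{f^s}$ (Proposition~\ref{prop:othercharsBS}), it observes that
\[ B \otimes_{A_\mathfrak{m}} b(s)\,\frac{D_{A_\mathfrak{m}|\KK}[s]\boldsymbol{f^s}}{D_{A_\mathfrak{m}|\KK}[s] f \boldsymbol{f^s}} \;=\; b(s)\,\frac{D_{B|\KK}[s]\boldsymbol{f^s}}{D_{B|\KK}[s] f \boldsymbol{f^s}}, \qquad B=\KK\llbracket x\rrbracket,\]
so that, since a module over $A_\mathfrak{m}$ vanishes if and only if its faithfully flat base change vanishes, the annihilating polynomials $b(s)$ on both sides coincide and both divisibilities drop out at once. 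Your route is more elementary in that it never needs this base-change isomorphism of cyclic $D$-module quotients (which implicitly uses $D_{B|\KK}[s]=B\otimes_{A_\mathfrak{m}}D_{A_\mathfrak{m}|\KK}[s]$ and the compatibility of the $\boldsymbol{f^s}$-modules with completion); instead you trade it for explicit bookkeeping with the free basis $\{\partial^\alpha\}$ and the standard fact that membership in the image of a linear map is detected after faithfully flat base change. The paper's proof is shorter and treats both divisibilities simultaneously; yours isolates exactly where faithful flatness is used and makes clear that only the operator coefficients, not $f$ or the $P_\alpha(s)$, live in the completed ring.
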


\begin{proof}
$B=\KK\llbracket x_1,\dots,x_d\rrbracket$ is faithfully flat over $ A_\mathfrak{m}=\KK [ x_1,\dots,x_d ]_{\mathfrak{m}}$. Since 
$$ B \otimes_{A_\mathfrak{m}} b(s) \frac{D_{A_\mathfrak{m}|\KK}[s]\boldsymbol{f^s}}{D_{A_\mathfrak{m}|\KK}[s] f \boldsymbol{f^s}} = b(s) \frac{D_{B|\KK}[s]\boldsymbol{f^s}}{D_{B|\KK}[s] f \boldsymbol{f^s}}$$ the result follows.
\end{proof}

When $\KK=\CC$ we may also consider the ring $\CC\{x_1-p_1,\dots,x_d-p_d\}$ of convergent power series   in a neighborhood of a point $p =(p_1,\dots, p_d)\in\CC^d$.

\begin{corollary}
We have 
\begin{enumerate}
\item   $b_f^{\CC[x]}(s) = {\rm lcm} \{b_f^{\CC\{ x-p\}}(s) \; | \; p \in \CC^d \}$.
\item   $b_f^{\CC\{ x-p\}}(s) = b_f^{\CC\llbracket x-p\rrbracket}(s)$.
\end{enumerate}

\end{corollary}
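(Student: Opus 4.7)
The plan is to reduce both assertions to the two propositions that immediately precede the corollary, via faithful flatness and a change of variables.

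I would prove part (ii) first, since it is a direct analogue of \Cref{local_C} with the pair $(A_{\mathfrak{m}}, B)$ replaced by $(\CC\{x-p\}, \CC\llbracket x-p\rrbracket)$. The key input is that $\CC\llbracket x-p\rrbracket$ is the $\mathfrak{n}$-adic completion of the convergent power series ring $\CC\{x-p\}$ at its unique maximal ideal $\mathfrak{n}=(x_1-p_1,\ldots,x_d-p_d)$, and hence is faithfully flat over it. Granted this, the same argument as in \Cref{local_C} applies: tensoring with the faithfully flat extension commutes with the formation of $\displaystyle b(s) \frac{D[s]\boldsymbol{f^s}}{D[s] f \boldsymbol{f^s}}$ (since this is a finitely-presented object built from the differential operators on each ring, and $D_{\CC\llbracket x-p\rrbracket|\CC}=\CC\llbracket x-p\rrbracket\otimes_{\CC\{x-p\}} D_{\CC\{x-p\}|\CC}$ because the module of derivations is free with the same basis $\partial_1,\ldots,\partial_d$ on both rings), so the vanishing of the relevant module is preserved and reflected by faithful flatness. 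Therefore the ideals of $\CC[s]$ defining the two Bernstein--Sato polynomials coincide.

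For part (i), I would combine \Cref{local_global_BS}, a translated form of \Cref{local_C}, and part (ii). By the Nullstellensatz, the maximal ideals of $\CC[x_1,\ldots,x_d]$ are exactly the ideals $\mathfrak{m}_p=(x_1-p_1,\ldots,x_d-p_d)$ for $p\in\CC^d$. \Cref{local_global_BS} then gives
\[
b_f^{\CC[x]}(s) = \operatorname{lcm}\{b_f^{\CC[x]_{\mathfrak{m}_p}}(s) \;|\; p\in\CC^d\}.
\]
Applying the $\CC$-algebra automorphism of $\CC[x_1,\ldots,x_d]$ sending $x_i \mapsto x_i - p_i$ reduces the identity $b_f^{\CC[x]_{\mathfrak{m}_p}}(s) = b_f^{\CC\llbracket x - p\rrbracket}(s)$ to the case $p=0$ handled by \Cref{local_C}. (The automorphism is compatible with the ring of differential operators, since the translated partial derivatives agree with the original ones, so the functional equations are transported intact.) Combining with part (ii) then yields $b_f^{\CC[x]_{\mathfrak{m}_p}}(s) = b_f^{\CC\{x-p\}}(s)$, and substituting into the lcm gives (i).

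The only genuinely non-bookkeeping ingredient is the faithful flatness of $\CC\llbracket x-p\rrbracket$ over $\CC\{x-p\}$, which I expect to be the main obstacle to pin down carefully; all the rest is formal manipulation invoking the preceding propositions. Once this ingredient is in place, both parts follow essentially by the same localization-and-completion template already used for $\CC[x]_{\mathfrak{m}}$ versus $\CC\llbracket x\rrbracket$.
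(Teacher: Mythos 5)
Your proposal is correct and follows essentially the same route as the paper: part (ii) by the faithful flatness of $\CC\llbracket x-p\rrbracket$ over the Noetherian local ring $\CC\{x-p\}$ (the same trick as in \Cref{local_C}), and part (i) by combining \Cref{local_global_BS} with the identification of maximal ideals of $\CC[x]$ with points of $\CC^d$. You merely spell out the translation step and the chain $b_f^{\CC[x]_{\mathfrak{m}_p}}=b_f^{\CC\llbracket x-p\rrbracket}=b_f^{\CC\{x-p\}}$ that the paper leaves implicit.
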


\begin{proof}
Working over $\CC$ we have that all the maximal ideals correspond to points so $(i)$ follows from \Cref{local_global_BS}.
For part $(ii)$  we use the same faithful flatness trick we used in \Cref{local_C} for  $\CC\{ x_1-p_1,\dots,x_d-p_d\}$. 
\end{proof}

Let  $f\in \KK[x_1,\dots,x_d]$ be a polynomial and  $\mathbb{L}$ a field containing $\KK$. Let $b_f^{\KK[x]}(s)$ and $b_f^{\mathbb{L}[x]}(s)$ be the Bernstein-Sato polynomial of $f$ in $\KK[x_1,\dots,x_d]$ and $\mathbb{L}[x_1,\dots,x_d]$. respectively

\begin{proposition}
We have $b_f^{\KK[x]}(s) = b_f^{\mathbb{L}[x]}(s)$.
\end{proposition}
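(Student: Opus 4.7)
The plan is to show that the Bernstein--Sato ideal over $\mathbb{L}$ is just the extension of the one over $\KK$, which forces the monic generators to agree. Set $A=\KK[x_1,\dots,x_d]$, $B=\mathbb{L}[x_1,\dots,x_d]$, and define the Bernstein--Sato ideals
\[
\mathfrak{b}_\KK=\{\,b(s)\in\KK[s] : \exists\,\delta(s)\in D_{A|\KK}[s],\ \delta(s)f\fs=b(s)\fs \text{ in } A_f[s]\fs\},
\]
and analogously $\mathfrak{b}_\mathbb{L}\subseteq\mathbb{L}[s]$. By definition $b_f^{\KK[x]}(s)$ (resp.\ $b_f^{\mathbb{L}[x]}(s)$) is the monic generator. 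The plan is to prove that $\mathfrak{b}_\mathbb{L}=\mathfrak{b}_\KK\cdot\mathbb{L}[s]$.

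First I would record the base change formulas for polynomial rings: $B=A\otimes_\KK\mathbb{L}$, $D_{B|\mathbb{L}}=D_{A|\KK}\otimes_\KK\mathbb{L}$ (this is immediate from the Weyl algebra presentation in Definition~\ref{def:WEyl}), and consequently
\[
B_f[s]\fs \;\cong\; \bigl(A_f[s]\fs\bigr)\otimes_\KK\mathbb{L}
\]
as $D_{B|\mathbb{L}}[s]$-modules. The inclusion $\mathfrak{b}_\KK\cdot\mathbb{L}[s]\subseteq\mathfrak{b}_\mathbb{L}$ is immediate, since any functional equation over $\KK$ is also one over $\mathbb{L}$, and $\mathfrak{b}_\mathbb{L}$ is an $\mathbb{L}[s]$-ideal.

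The main step is the reverse inclusion. Choose a $\KK$-basis $\{e_\alpha\}_{\alpha\in I}$ of $\mathbb{L}$ with $e_{0}=1$. Given $b(s)\in\mathfrak{b}_\mathbb{L}$ and $\delta(s)\in D_{B|\mathbb{L}}[s]$ with $\delta(s)f\fs=b(s)\fs$, write
\[
\delta(s)=\sum_{\alpha}\delta_\alpha(s)\,e_\alpha, \qquad b(s)=\sum_{\alpha}b_\alpha(s)\,e_\alpha,
\]
with $\delta_\alpha(s)\in D_{A|\KK}[s]$ and $b_\alpha(s)\in\KK[s]$, only finitely many nonzero. Using the direct sum decomposition $A_f[s]\fs\otimes_\KK\mathbb{L}=\bigoplus_\alpha e_\alpha\cdot A_f[s]\fs$, the equation separates into one equation per component, yielding $\delta_\alpha(s)f\fs=b_\alpha(s)\fs$ in $A_f[s]\fs$ for every $\alpha$. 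Therefore each $b_\alpha(s)\in\mathfrak{b}_\KK$, and $b(s)=\sum_\alpha b_\alpha(s)e_\alpha\in\mathfrak{b}_\KK\cdot\mathbb{L}[s]$.

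This gives $\mathfrak{b}_\mathbb{L}=\mathfrak{b}_\KK\cdot\mathbb{L}[s]$; since $b_f^{\KK[x]}(s)$ is a monic generator of $\mathfrak{b}_\KK$, it is also a monic generator of $\mathfrak{b}_\mathbb{L}$, hence equal to $b_f^{\mathbb{L}[x]}(s)$. The only place where any care is needed is the separation of the functional equation into components, which requires the flatness (in fact, freeness) of $\mathbb{L}$ over $\KK$ so that distinct $e_\alpha$-components in the tensor product are $\KK$-linearly independent; this is the crux of the argument but poses no real obstacle once the base change identifications for $D_{B|\mathbb{L}}$ and $B_f[s]\fs$ are in place.
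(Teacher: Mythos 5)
Your proof is correct and takes essentially the same approach as the paper: the nontrivial divisibility $b_f^{\KK[x]}(s)\,|\,b_f^{\mathbb{L}[x]}(s)$ is obtained by choosing a $\KK$-basis $\{e_\alpha\}$ of $\mathbb{L}$ and separating the data into basis components, which is exactly the paper's key step. The only cosmetic difference is that you split the functional equation inside $B_f[s]\boldsymbol{f^s}\cong A_f[s]\boldsymbol{f^s}\otimes_\KK\mathbb{L}$, whereas the paper splits the quotient module $D_{A|\mathbb{L}}[s]\boldsymbol{f^s}/D_{A|\mathbb{L}}[s]f\boldsymbol{f^s}\cong \mathbb{L}\otimes_\KK\bigl(D_{A|\KK}[s]\boldsymbol{f^s}/D_{A|\KK}[s]f\boldsymbol{f^s}\bigr)$ and uses the annihilator characterization of $b_f(s)$.
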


\begin{proof}
Notice that $b_f^{\mathbb{L}[x]}(s) \hskip 2mm | \hskip 2mm b_f^{\KK[x]}(s)$ so we have to prove the other divisibility condition.
Let $\{e_i\}_{i\in I}$ be a basis of $\mathbb{L}$  as a $\KK$-vector space. We have 
\[ \frac{D_{A|\mathbb{L}}[s]\boldsymbol{f^s}}{D_{A|\mathbb{L}}[s] f \boldsymbol{f^s}} = \mathbb{L} \otimes_{\KK}  \frac{D_{A|\KK}[s]\boldsymbol{f^s}}{D_{A|\KK}[s] f \boldsymbol{f^s}} =  \bigoplus_{i\in I} \left( \frac{D_{A|\mathbb{K}}[s]\boldsymbol{f^s}}{D_{A|\mathbb{K}}[s] f \boldsymbol{f^s}} \right) e_i.\]
Let $b(s) \in \mathbb{L}[s]$ be such that $b(s) \frac{D_{A|\mathbb{L}}[s]\boldsymbol{f^s}}{D_{A|\mathbb{L}}[s] f \boldsymbol{f^s}}=0$. 
Then $b(s)=\sum b_i(s)$ with only finitely many nonzero $b_i(s)\in \KK[s]$ such that $b_i(s) \frac{D_{A|\mathbb{K}}[s]\boldsymbol{f^s}}{D_{A|\mathbb{K}}[s] f \boldsymbol{f^s}}=0$. Since $b_f^{\KK[x]}(s) \hskip 2mm | \hskip 2mm b_i(s)$ for all $i$ it follows that $b_f^{\KK[x]}(s) \hskip 2mm | \hskip 2mm b_f^{\mathbb{L}[x]}(s)$.
\end{proof}

\begin{remark}
Let $f\in \KK[x_1,\dots,x_d]$ be a polynomial with an isolated singularity at the origin, where $\KK$ is a subfield of $\mathbb{C}$. Then we have 
$b_f^{\KK[x]}(s)= b_f^{\KK\llbracket x \rrbracket}(s) = b_f^{ \CC\{x\}}(s)$.
\end{remark}

Combining all the results above with the following fundamental result of Kashiwara \cite{KashiwaraRationality},  we conclude that the Bernstein-Sato polynomial of $f\in \KK[x_1,\dots , x_d]$ is a polynomial $b_f(s) \in \QQ[s]$.

\begin{theorem}[{\cite{KashiwaraRationality, BernsteinRationalMalgrange}}] \label{rationality_BS}
The Bernstein-Sato polynomial of an element $f \in \CC \{ x_1,\dots,x_d \}$, or $f\in \KK[x_1,\dots,x_d]$ for $\KK\subseteq \CC$, factors completely over $\QQ$, and all of its roots are negative rational numbers.
\end{theorem}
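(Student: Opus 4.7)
The plan is to combine the reduction results just established with resolution of singularities and a direct computation for monomials, and then to transfer the information back through the resolution using $V$-filtrations.

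First, by the corollary preceding the statement, $b_f^{\CC[x]}(s) = \mathrm{lcm}_{p\in\CC^d} b_f^{\CC\{x-p\}}(s)$, and the preceding proposition relating $\KK$ and $\mathbb{L}$ coefficients lets me base-change so that it suffices to prove the theorem in the convergent power series ring $\CC\{x_1,\ldots,x_d\}$ centered at a point where $f$ vanishes. So I am reduced to the local analytic case, where resolution of singularities is available.

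Second, I apply Hironaka's theorem to obtain a log-resolution $\pi\colon X'\to X$ of $V(f)$, as in Subsection~\ref{Log-resolutions}. Around each point of $X'$ I can choose coordinates $y_1,\ldots,y_d$ in which $\pi^* f = u\cdot y_1^{N_1}\cdots y_r^{N_r}$ with $u$ a unit and the relative canonical divisor is locally $\sum_{i=1}^r k_i\{y_i=0\}$. For a pure monomial $g = y_1^{N_1}\cdots y_r^{N_r}$ a direct computation gives the functional equation
\[
\partial_1^{N_1}\cdots\partial_r^{N_r}\, g^{s+1} \;=\; \prod_{i=1}^{r}\prod_{j=1}^{N_i}\bigl(N_i s + j\bigr)\cdot g^s ,
\]
so the Bernstein-Sato polynomial of $g$ divides $\prod_{i,j}(N_i s+j)$ and in particular has only negative rational roots. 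A small twist by the unit $u$ (using that $D_{X'|\CC}$-operators can absorb units) shows that the same conclusion holds locally for $\pi^* f$.

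Third, I transfer this information from $X'$ back to $X$. The cleanest framework is Kashiwara's theory of $V$-filtrations along the smooth hypersurface $\{t=0\}$ in $X\times\CC_t$: by Proposition~\ref{prop:iso-LC-construction}, $D_{A|\KK}[s]\boldsymbol{f^s}$ sits inside $H^1_{(f-t)}(A[t])$ with $s=-\partial_t t$, and $b_f(s)$ is the minimal polynomial of $-\partial_t t$ on the relevant quotient. The $V$-filtration on the $D$-module $i_{+}\mathcal{O}_{\Gamma_f}$ associated with the graph embedding is indexed by a discrete set of rational numbers, and these rational indices are precisely (up to shift) the roots of $b_f(s)$. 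Rationality of the $V$-filtration follows by pushing forward the $V$-filtration from $X'$, where it can be read off from the explicit monomial computation above (giving exponents in $\tfrac{1}{N_i}\ZZ_{\leq 0}$), and using the compatibility of $V$-filtrations with proper direct image for holonomic $D$-modules, together with strictness. Negativity of the roots then follows because the relevant rational numbers on $X'$ are of the form $-(k_i+1+j)/N_i$ with $k_i,j,N_i\in\NN$ and $N_i>0$.

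The main obstacle is the last paragraph: everything else is either bookkeeping or a one-line calculation, but the descent from $X'$ to $X$ requires the nontrivial machinery of $V$-filtrations, direct images of holonomic $D$-modules, and Kashiwara's strictness theorem. In a short survey this is exactly where one defers to \cite{KashiwaraRationality} and \cite{BernsteinRationalMalgrange} after stating the local monomial computation, since reproducing the full proof would require developing regular holonomicity and nearby cycles.
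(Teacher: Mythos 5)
The paper does not actually prove Theorem~\ref{rationality_BS}: it is stated as a quoted result of Kashiwara and Malgrange, used as a black box after the local--global and base-change reductions, so there is no internal argument to measure you against. Your sketch is the standard strategy of the cited works (reduce to the local analytic case, resolve, compute the functional equation for a monomial times a unit, descend), and the ingredients you list are the right ones; the local monomial identity you write is correct and matches Example~(iii) of Section~\ref{SecExamples} up to normalization.

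However, judged as a proof rather than as a roadmap, it has a genuine gap exactly where the theorem lives: the descent from $X'$ to $X$. Saying that ``rationality of the $V$-filtration follows by pushing forward the $V$-filtration from $X'$ \dots using the compatibility of $V$-filtrations with proper direct image for holonomic $D$-modules, together with strictness'' is naming the theorem you would need, not proving it; that compatibility (equivalently, Kashiwara's original induction on the dimension of the support of the direct image of $D_{X'}[s]\,(\pi^*f)^s$, or Malgrange's formulation via nearby cycles) is precisely the content of \cite{KashiwaraRationality, BernsteinRationalMalgrange}, and it is far from bookkeeping — it requires regular holonomicity, quasi-unipotency, and a strictness statement that your sketch does not establish. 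Two smaller points: absorbing the unit $u$ into the monomial needs an actual argument (it is not a formal ``twist,'' since $u^s$ must be interpreted, e.g.\ via $\exp(s\log u)$ in the analytic category, or one argues that $b_{ug}=b_g$ directly); and what your resolution argument would yield, if completed, is the stronger Lichtin-type candidate list of Theorem~\ref{Kashiwara_Lichtin}, so you should be aware you are implicitly claiming more than rationality and negativity. If your intent is to mirror the survey's treatment — state the reductions and cite Kashiwara--Malgrange for the hard step — then your write-up is consistent with the paper; as a self-contained proof it is incomplete at that step.
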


In \Cref{Complex_zeta} we will provide  a refinement of this result given by Lichtin \cite{Lic89}.

\section{Some families of examples}\label{SecExamples}

Computing explicit examples of Bernstein-Sato polynomials is a very challenging task. 
There are general algorithms based on the theory of Gr\"obner bases over rings of differential operators but they have a very high complexity so only few examples can be effectively computed \cite{Oaku97,Oaku1997, LevandovskyyMartinMorales2012}. In this section we review some of the scarce examples that we may find in the literature. 
The first systematic method of producing examples can be found in the work of Yano \cite{Yano1978} where he considered, among others, the case of  isolated quasi-homogeneous singularities (see also \cite{BGM_pre}). The case of isolated semi-quasi-homogeneous singularities was studied later on by Saito \cite{saito89} and Brian\c{c}on, Granger, Maisonobe, and Miniconi \cite{BrianconAlgorithm}. 

\vskip 2mm
  
A case that has been extensively studied is that of plane curves, see \cite{Yano1982, Kato1981, Kato1982, Cassou1986, Cassou1987, Cassou1988, HerSta, Briancon2007}.  In particular, a conjecture of Yano regarding the $b$-exponents of a generic irreducible plane curve among those in the same equisingularity class has been recently proved by Blanco \cite{Blanco_yano} (see also \cite{Cassou1988, ACNLM17, Bla19}).  Finally we want to mention 
that the case of hyperplane arrangements has been studied by Walther \cite{WaltherBS} and Saito \cite{Saito_BS_arrangements}.

\vskip 2mm

We start with some known examples where a Bernstein-Sato functional equation  $\delta(s) f^{s+1} = b(s) f^s$ can be given by hand:

\begin{itemize}
\item[i)] Let \(f=x_1^2 + \cdots + x_n^2\) be a sum of squares. Then  $$\frac{1}{4} (\partial_1^2 + \cdots + \partial_n^2) f^{s+1} = (s+1)(s+\frac{n}{2}) f^s.$$

\item[ii)] Let \(f= \det (x_{ij})\) be the determinant of an $n \times n$ generic matrix and set $\partial_{ij}:=\frac{d}{dx_{ij}}$. The classic Cayley identity states  $$\det (\partial_{ij})  f^{s+1} = (s+1)(s+2) \cdots (s+n) f^s.$$
There are similar identities for determinants of symmetric and antisymmetric matrices   \cite{CSS_Cayley}.

\vskip 2mm

\item[iii)]   Let $f=x_1^{\alpha_1} \cdots x_n^{\alpha_n}$ be a monomial. Then
$$\frac{1}{\alpha_1^{\alpha_1} \cdots \alpha_n^{\alpha_n} } (\partial_1^{\alpha_1}  \cdots  \partial_n^{\alpha_n}) f^{s+1} = \prod_{i=1}^{n} \prod_{k=1}^{\alpha_i} (s+\frac{k}{\alpha_i}) f^s.$$

\end{itemize}

\vskip 2mm 

We warn the reader that it requires some extra work to prove that the above polynomials are minimal so they are indeed  Bernstein-Sato polynomials of the corresponding $f$.

\vskip 2mm

Let $A=\CC\{x_1,\dots , x_d\}$ and assume that $f$ has an isolated singularity at the origin. In this case, Yano \cite{Yano1978}  uses the fact that the support of the holonomic $D_{A|\CC}$-module $\widetilde{\mathcal{M}}:=(s+1) \frac{D_{A|\CC}[s] \boldsymbol{f^s}}{ D_{A|\CC}[s] f\boldsymbol{f^s}}$   is the maximal ideal and thus it is isomorphic to a number of copies of  $  D_{A|\CC}/D_{A|\CC} \langle x_1, \dots , x_d\rangle\cong H_{\mathfrak{m}}^d(A)$. Dualizing this module we get the module of differential $d$-forms  $\Omega^d= D_{A|\CC}/ \langle \partial_1, \dots , \partial_d\rangle D_{A|\CC}$.

\begin{proposition} [{\cite[Theorem~3.3]{Yano1978}}]\label{isolated_singularities_Yano}
The reduced Bernstein-Sato polynomial $\tilde{b}_f(s)$ of an isolated singularity $f$ is the minimal polynomial of the action of $s$ on either
${\rm Hom}_{D_{A|\CC}}(\widetilde{\mathcal{M}}, H_{\mathfrak{m}}^d(A))$ or $\Omega ^n \otimes_{D_{A|\CC}} \widetilde{\mathcal{M}}$.
\end{proposition}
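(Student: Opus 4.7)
By the reduced version of the characterization of $b_f(s)$ (Proposition~\ref{prop:othercharsrelativeBS}), $\tilde{b}_f(s)$ is precisely the minimal polynomial of the $s$-action on $\widetilde{\mathcal{M}}$. So the goal is to show that passing from $\widetilde{\mathcal{M}}$ to either $\mathrm{Hom}_{D_{A|\CC}}(\widetilde{\mathcal{M}}, H^d_\mathfrak{m}(A))$ or $\Omega^d \otimes_{D_{A|\CC}} \widetilde{\mathcal{M}}$ preserves this minimal polynomial.

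The key structural input is the analysis of $\widetilde{\mathcal{M}}$ as a $D_{A|\CC}[s]$-module. First, since $f$ has an isolated singularity at $\mathfrak{m}$, away from $\mathfrak{m}$ the element $f$ is smooth and therefore its local Bernstein-Sato polynomial equals $s+1$ there (see Proposition~\ref{local_global_BS} and the paragraph around Proposition~\cite{BM_Rabida}); thus $\widetilde{\mathcal{M}}$ localizes to zero outside $\mathfrak{m}$, i.e.\ $\mathrm{Supp}(\widetilde{\mathcal{M}}) = \{\mathfrak{m}\}$. Second, $\widetilde{\mathcal{M}}$ is annihilated by $\tilde{b}_f(s) \in \CC[s]$, hence finitely generated over $D_{A|\CC}$ by the finitely many $s^i$-translates of a generator for $0 \le i < \deg \tilde{b}_f$. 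I would then invoke the Kashiwara-type fact that every finitely generated $D_{A|\CC}$-module supported at $\mathfrak{m}$ is a direct sum of copies of the unique simple such module $H^d_\mathfrak{m}(A) \cong D_{A|\CC}/D_{A|\CC}\langle x_1,\dots,x_d\rangle$. Since the $s$-action commutes with $D_{A|\CC}$ and $\mathrm{End}_{D_{A|\CC}}(H^d_\mathfrak{m}(A)) = \CC$, a standard multiplicity-space argument produces an isomorphism of $D_{A|\CC}[s]$-modules
\[
\widetilde{\mathcal{M}} \;\cong\; H^d_\mathfrak{m}(A) \otimes_{\CC} V
\]
for a finite-dimensional $\CC$-vector space $V$ carrying an endomorphism $T$, with $s$ acting as $\mathrm{id}\otimes T$.

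Once this decomposition is in hand, the two functors in the statement compute transparently. On the one hand, $\mathrm{Hom}_{D_{A|\CC}}(H^d_\mathfrak{m}(A) \otimes V, H^d_\mathfrak{m}(A)) = V^*$, with $s$ acting by the transpose $T^*$ via precomposition. On the other hand, a direct computation $\Omega^d \otimes_{D_{A|\CC}} H^d_\mathfrak{m}(A) = \Omega^d/\Omega^d\,\mathfrak{m} \cong \CC$ gives $\Omega^d \otimes_{D_{A|\CC}} \widetilde{\mathcal{M}} \cong V$, with $s$ acting by $T$. The minimal polynomials of $T$ and $T^*$ agree, and each equals the minimal polynomial of $\mathrm{id}\otimes T$ on $H^d_\mathfrak{m}(A)\otimes V$ (the tensor factor $H^d_\mathfrak{m}(A)$ is nonzero, so annihilators of $s$ are unchanged). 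All three minimal polynomials therefore coincide with $\tilde{b}_f(s)$.

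The heart of the argument — and the place that deserves the most care — is the decomposition $\widetilde{\mathcal{M}} \cong H^d_\mathfrak{m}(A)\otimes V$ as $D_{A|\CC}[s]$-modules. The rest consists of bookkeeping inside a semisimple category: once one knows $\widetilde{\mathcal{M}}$ is supported only at $\mathfrak{m}$ and is finitely generated over $D_{A|\CC}$, the equivalence with $\CC$-vector spaces (given by either Hom or the integration functor $\Omega^d\otimes_{D_{A|\CC}}-$) transports $s$-actions faithfully and identifies minimal polynomials. The subtler structural point is verifying that these functors are genuinely inverse equivalences on the subcategory of $D_{A|\CC}$-modules supported at $\mathfrak{m}$, which is the Kashiwara equivalence for the closed embedding of a point; given the paper's analytic setting, this can be established locally on $\widetilde{\mathcal{M}}$ by noting $\mathrm{End}_{D_{A|\CC}}(H^d_\mathfrak{m}(A))=\CC$ and that $H^d_\mathfrak{m}(A)$ has simple socle generated by the $\mathfrak{m}$-torsion element.
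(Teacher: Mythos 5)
Your argument is correct and follows essentially the same route the paper indicates (and attributes to Yano): since $f$ has an isolated singularity, $\widetilde{\mathcal{M}}$ is supported only at $\mathfrak{m}$, hence is isotypic of type $H_{\mathfrak{m}}^d(A)$, and the functors ${\rm Hom}_{D_{A|\CC}}(-,H_{\mathfrak{m}}^d(A))$ and $\Omega^d\otimes_{D_{A|\CC}}-$ then transport the $s$-action to the multiplicity space without changing its minimal polynomial. The paper itself only cites Yano and sketches these facts in a sentence, so your multiplicity-space decomposition and the Hom/tensor computations simply supply the details of that same sketch.
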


Then, Yano's method boils down to the following steps: 

\begin{itemize}
\item[(i)] Compute a free resolution of $\widetilde{\mathcal{M}}$ as a $D_{A|\CC}$-module
\[ 0 \leftarrow \widetilde{\mathcal{M}} \leftarrow (D_{A|\CC})^{\beta_0} \leftarrow (D_{A|\CC})^{\beta_1} \leftarrow \cdots \]

\item[(ii)] Apply the functor ${\rm Hom}_{D_{A|\CC}}(-, H_{\mathfrak{m}}^d(A))$
\[ 0 \rightarrow {\rm Hom}_{D_{A|\CC}}(\widetilde{\mathcal{M}}, H_{\mathfrak{m}}^d(A)) \rightarrow (H_{\mathfrak{m}}^d(A))^{\beta_0} \rightarrow (H_{\mathfrak{m}}^d(A))^{\beta_1} \rightarrow \cdots \]

\item[(iii)] Compute the matrix representation of the action of $s$ and its minimal polynomial.
\end{itemize}

\vskip 2mm

Yano could effectively work out some cases depending on the following invariant of the singularity:
\begin{eqnarray*}
L(f):=\min\{L\mid \delta(s)=s^L+\delta_{1}s^{L-1}+\cdots +\delta_L\in {\rm
  Ann}_{D[s]}(\boldsymbol{f^s})\mbox{ , }{\rm ord}(\delta_i)\leq i\}.
\end{eqnarray*} 
The existence of such a differential operator is given by Kashiwara \cite[Theorem 6.3]{KashiwaraRationality}. More precisely, he could describe step (1) in 
the cases $L(f)=1,2$, and $3$ where the case $L(f)=1$ is equivalent to having a quasi-homogeneous singularity.

\vskip 2mm

\subsection{Quasi-homogeneous singularities} Let $f=\sum_{\alpha} a_\alpha x_1^{\alpha_1} \cdots x_n^{\alpha_d} \in A$ be a quasi-homogeneous isolated singularity of degree $N$ with respect to a weight vector $w:=(w_1,\dots, w_d) \in \mathbb{Q}_{> 0}^d$. We have $\chi(f) = N f$  where 
\[\chi= \sum_{i=1}^d w_i x_i \partial_i \] is the Euler operator and  $ \chi - Ns \in {\rm Ann}_{D[s]}(\boldsymbol{f^s})$. Set $f_i' = \partial_i(f)$ for $i=1,\dots , d$. Yano's method is as follows:
\begin{enumerate}
\item We have a free resolution
\(  \xymatrix{ 0   &\widetilde{\mathcal{M}} \ar[l] & D_{A|\CC}   \ar[l] && (D_{A|\CC})^{n} \ar[ll]_{(f_1', \dots, f_d')}  &  0  \ar[l]} \).

\item We obtain a presentation 
\( {\rm Hom}_{D_{A|\CC}}(\widetilde{\mathcal{M}}, H_{\mathfrak{m}}^d(A)) =\{  v\in  H_{\mathfrak{m}}^d(A) \, | \, f_i' v = 0 \hskip 2mm \forall i  \} \).

\item The action of $s$ on $v \in {\rm Hom}_{D_{A|\CC}}(\widetilde{\mathcal{M}}, H_{\mathfrak{m}}^d(A))$ is the same as the action of $\frac{1}{N} \chi$. Notice that applying $\chi$ to a cohomology class $\big[\frac{1}{x_1^{\alpha_1} \cdots x_d^{\alpha_d} }\big]$ is nothing but multiplying by the  weight of this class.
\end{enumerate}

\begin{example} \label{Kato1}
Consider the quasi-homogeneous polynomial $f=x^7+y^5 \in \mathbb{C}\{x,y\}$ of degree $N=35$ with respect to the weight $w=(5,7)$. 
A basis of the vector space \[\{  v\in  H_{\mathfrak{m}}^2(A) \, | \, x^6 v = 0, y^4v=0  \}\] is given by the classes $\big[\frac{1}{x^iy^j}\big]$
with $1\leq i \leq 6$ and $1\leq j \leq 4$. The action of $\frac{1}{35} \chi= \frac{1}{35} ( 5x\partial_x + 7 y \partial_y)$ on these elements yields
{\small \[ \frac{1}{35} \chi \left(\frac{1}{xy}\right)= -\frac{12}{35}  \left(\frac{1}{xy}\right),  \frac{1}{35} \chi \left(\frac{1}{x^2y}\right)= -\frac{17}{35}  \left(\frac{1}{x^2y}\right),\dots, \frac{1}{35} \chi \left(\frac{1}{x^6y^4}\right)= -\frac{58}{35}  \left(\frac{1}{x^6y^4}\right).\]}
The matrix representation of the action of $s=\frac{1}{35} \chi $ has a diagonal form with distinct eigenvalues and thus the characteristic and the minimal polynomials coincide. The negatives of the roots of the reduced Bernstein-Sato polynomial $\tilde{b}_f(s)$, or equivalently, the roots of $\tilde{b}_f(-s)$ are 
{\Small \[\bigg\{ \frac{12}{35}, \frac{17}{35}, \frac{19}{35}, \frac{22}{35}, \frac{24}{35}, \frac{26}{35}, \frac{27}{35}, \frac{29}{35},  
\frac{31}{35}, \frac{32}{35}, \frac{33}{35}, \frac{34}{35} ,  \frac{36}{35}, \frac{37}{35}, \frac{38}{35}, \frac{39}{35}, \frac{41}{35}, \frac{43}{35}, \frac{44}{35}, \frac{46}{35},  
{\frac{48}{35}}, {\frac{51}{35}}, {\frac{53}{35}}, {\frac{58}{35}}        \bigg\}. \]}
\end{example}

\vskip 2mm 

\begin{remark} \label{x5y5}
In general, the diagonal form of the matrix representation of the action of $s$ has repeated eigenvalues so the minimal polynomial only counts them once. Take for example the quasi-homogeneous polynomial $f=x^5+y^5 \in \mathbb{C}[x,y]$ of degree $N=5$ with respect to the weight $w=(1,1)$.  The roots of $\tilde{b}_f(-s)$ are 
\[\bigg\{ \frac{2}{5}, \frac{3}{5}, \frac{4}{5}, 1, \frac{6}{5}, \frac{7}{5}, \frac{8}{5} \bigg\}. \]
\end{remark}

\begin{theorem}[{\cite{Yano1978, BGM_pre}}]
 Let $f \in A$ be a quasi-homogeneous isolated singularity of degree $N$ with respect to a weight vector $w:=(w_1,\dots, w_d) \in \mathbb{Q}_{> 0}^d$. Then, the Bernstein-Sato polynomial of $f$ is 
 \[ b_f(s)= (s+1) \prod_{\ell \in W} \left( s+ \frac{\ell}{N}\right)\]
 where $W$ is the set of weights, without repetition, of the cohomology classes in $\{  v\in  H_{\mathfrak{m}}^d(A) \, | \, f_i' v = 0 \hskip 2mm \forall i  \}$.
\end{theorem}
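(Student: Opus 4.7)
The plan is to execute Yano's method (Proposition 4.1) in the quasi-homogeneous setting by exploiting the Euler relation to make every step explicit.

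First I would set up the key consequences of quasi-homogeneity. The Euler relation $\chi(f)=Nf$ and the formula $\partial_i(\fs)=\frac{s f'_i}{f}\fs$ together yield $(\chi-Ns)\fs=0$, so $\chi-Ns\in \mathrm{Ann}_{D_{A|\CC}[s]}(\fs)$. Rewriting the Euler relation as $Nf=\sum w_i x_i f'_i$ shows $f\in (f'_1,\ldots,f'_d)$; combined with the isolated singularity hypothesis, the Jacobian ideal $J=(f'_1,\ldots,f'_d)$ is $\m$-primary and $A/J$ is a finite-dimensional graded $\CC$-vector space, graded by the weight $w$.

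Next I would verify the resolution used in Yano's algorithm. Using the reformulation of $\widetilde{\mathcal M}=(s+1)\tfrac{D_{A|\CC}[s]\fs}{D_{A|\CC}[s]f\fs}$ from Proposition~3.23, the identity $\partial_i(f\fs)=(s+1)f'_i\fs$ shows $f'_i\cdot[(s+1)\fs]=0$ in $\widetilde{\mathcal M}$, and combined with $f\in J$ this gives a surjection $D_{A|\CC}/D_{A|\CC}J\twoheadrightarrow\widetilde{\mathcal M}$. Applying $\Hom_{D_{A|\CC}}(-,H^d_\m(A))$ therefore yields the identification
\[
\Hom_{D_{A|\CC}}(\widetilde{\mathcal M},H^d_\m(A))\;\hookrightarrow\;V:=\{v\in H^d_\m(A)\mid f'_i v=0\ \text{for all }i\}.
\]
That this inclusion is an equality follows because both sides have $\CC$-dimension equal to $\dim_\CC A/J$ (the Milnor number), via the Matlis duality between $A/J$ and $V$.

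Third, I would track the $s$-action. Since $s$ acts on $\widetilde{\mathcal M}$ by the same formula as $\chi/N$ (by the annihilator $\chi-Ns$) and $\chi\in D_{A|\CC}$, functoriality makes $s$ act on $\Hom_{D_{A|\CC}}(\widetilde{\mathcal M},H^d_\m(A))$ by $\chi/N$. Now $\chi$ acts diagonally on $H^d_\m(A)$ with eigenvalue $-\sum w_i\alpha_i$ on the class $\bigl[\tfrac{1}{x_1^{\alpha_1}\cdots x_d^{\alpha_d}}\bigr]$. Because $f'_i$ is quasi-homogeneous of degree $N-w_i$, we have $[\chi,f'_i]=(N-w_i)f'_i$, which gives $f'_i(\chi v)=\chi(f'_iv)-(N-w_i)f'_iv=0$ for $v\in V$; hence $V$ is $\chi$-stable, and $\chi/N$ acts diagonally on $V$ with eigenvalues $\{-\ell/N\mid \ell\in W\}$.

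Finally, the minimal polynomial of a diagonal operator is the product of $(s-\lambda)$ over distinct eigenvalues $\lambda$, giving
\[
\tilde b_f(s)=\prod_{\ell\in W}\Bigl(s+\frac{\ell}{N}\Bigr),
\]
and Lemma~3.17 together with the definition of the reduced Bernstein--Sato polynomial multiplies in the factor $(s+1)$ to yield the claim. The subtlest step is the equality $\Hom_{D_{A|\CC}}(\widetilde{\mathcal M},H^d_\m(A))=V$: the surjection $D_{A|\CC}/D_{A|\CC}J\twoheadrightarrow\widetilde{\mathcal M}$ is easy, but showing it is an isomorphism (equivalently, that no extra relations are missed) needs either the Matlis-duality dimension count above or Yano's more detailed analysis of the free resolution; this is where the isolated singularity and Milnor-number input is genuinely needed.
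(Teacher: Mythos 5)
Your overall route is exactly the one the paper outlines (Proposition~\ref{isolated_singularities_Yano} plus the three steps of Yano's method for the quasi-homogeneous case), and the easy parts are fine: $\chi-Ns\in\mathrm{Ann}_{D[s]}(\fs)$, the relation $\partial_i(f\fs)=(s+1)\partial_i(f)\fs$ giving the surjection $D_{A|\CC}/D_{A|\CC}(f_1',\dots,f_d')\twoheadrightarrow\widetilde{\mathcal{M}}$, the $\chi$-stability of $V=\{v\in H^d_{\m}(A)\mid f_i'v=0\}$ via $[\chi,f_i']=(N-w_i)f_i'$, and the diagonal action of $\chi/N$ with eigenvalues $-\ell/N$.

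The genuine gap is your justification of the equality $\Hom_{D_{A|\CC}}(\widetilde{\mathcal{M}},H^d_{\m}(A))=V$. Matlis duality computes the dimension of the \emph{right}-hand side: $\dim_{\CC}V=\dim_{\CC}A/(f_1',\dots,f_d')=\mu$. It says nothing about the left-hand side. Since $\widetilde{\mathcal{M}}$ is holonomic and supported at the origin, it is isomorphic to $E^{\oplus k}$ with $E\cong H^d_{\m}(A)$, and your surjection only gives $k\leq\mu$ and the inclusion $\Hom_{D_{A|\CC}}(\widetilde{\mathcal{M}},H^d_{\m}(A))\cong\CC^{k}\hookrightarrow V$. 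The assertion $k=\mu$ --- equivalently that $\mathrm{Ann}_{D_{A|\CC}}\big((s+1)\fs\big)$ is exactly the left ideal generated by the partials, i.e.\ that the displayed presentation in step (i) of the paper is faithful --- is precisely Yano's nontrivial input (proved via his analysis of $\mathrm{Ann}_{D[s]}(\fs)$ in the case $L(f)=1$, or via Malgrange's identification of $\widetilde{\mathcal{M}}$ with the $\mu$-dimensional vanishing cohomology). Your closing sentence presents the ``Matlis-duality dimension count'' as an alternative to that input, but it is not: as written it is circular, and without the missing injectivity your argument only yields the divisibility $\tilde{b}_f(s)\mid\prod_{\ell\in W}(s+\ell/N)$, leaving open that some weights $\ell\in W$ fail to be roots. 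Either invoke Yano's resolution (as the paper does, citing \cite{Yano1978, BGM_pre}) or supply an independent proof that $\dim_{\CC}\Hom_{D_{A|\CC}}(\widetilde{\mathcal{M}},H^d_{\m}(A))=\mu$.
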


Recall from  \Cref{isolated_singularities_Yano} that the reduced Bernstein-Sato polynomial $\tilde{b}_f(s)$ of an isolated singularity $f$ is the minimal polynomial of the action of $s$ on  $\Omega^d \otimes_{D_{A|\CC}} \widetilde{\mathcal{M}}$. In the quasi-homogeneous case we have 
\[\Omega^d \otimes_{D_{A|\CC}} \widetilde{\mathcal{M}} \cong  A/ (f_1', \dots, f_d').\] 
Notice that the monomial basis of the Milnor algebra is dual, with the convenient shift, of the cohomology classes basis of $\{  v\in  H_{\mathfrak{m}}^d(A) \, | \, f_i' v = 0 \hskip 2mm \forall i  \}$. In this case, the action of $s$  is $-\frac{1}{N}(\chi + \sum_{i=1}^n w_i)$.

\vskip 2mm

\subsection{ Irreducible plane curves}  Some of the examples considered by Yano deal with the case of plane curves and his methods were used by
Kato  to compute the following  example  which is a continuation of \Cref{Kato1}.

\begin{example} [{\cite{Kato1981}}]\label{Kato2}
The roots of $\tilde{b}_f(-s)$  for $f=x^7+y^5$ are 
{\tiny \[\bigg\{ \!\underbrace{\frac{12}{35}, \frac{17}{35}, \frac{19}{35}, \frac{22}{35}, \frac{24}{35}, \frac{26}{35}, \frac{27}{35}, \frac{29}{35},  
\frac{31}{35}, \frac{32}{35}, \frac{33}{35}, \frac{34}{35} }_{\lambda },   \underbrace{\frac{36}{35}, \frac{37}{35}, \frac{38}{35}, \frac{39}{35}, \frac{41}{35}, \frac{43}{35}, \frac{44}{35}, \frac{46}{35},  
\boxed{\frac{48}{35}}, \boxed{\frac{51}{35}}, \boxed{\frac{53}{35}}, \boxed{\frac{58}{35}}}_{2- \lambda }   \!      \bigg\}. \]}
Notice that the roots are symmetric with respect to $1$ and we point out that those $\lambda <1 $ are jumping numbers of the multiplier ideals of $f$ (see \Cref{Multipliers}). Now consider a  deformation of the singularity, 
\[f_t=x^7+y^5 - t_{3,3} x^3y^3 - t_{5,2} x^5y^2 - t_{4,3} x^4y^3 - t_{5,3} x^5y^3.\] 
Then we have a stratification of the space of parameters where some of the roots  of $\tilde{b}_f(-s)$ may change. More precisely, the boxed roots may change to the same root shifted by $1$. 

\vskip 2mm

\hskip .3cm $\{ t_{3,3} = 0 , t_{5,2}=0 , t_{4,3}= 0, t_{5,3}\neq 0 \}$. The root $\frac{58}{35}$ changes to $\frac{23}{35}$.

\hskip .3cm $\{ t_{3,3} = 0 , t_{5,2}=0, t_{4,3}\neq 0 \}$. The roots $\frac{58}{35}, \frac{53}{35}$ change to $\frac{23}{35}, \frac{18}{35}$. 

\hskip .3cm $\{ t_{3,3} = 0 , t_{5,2}\neq 0, t_{4,3}= 0 \}$. The roots $\frac{58}{35}, \frac{51}{35}$ change to $\frac{23}{35}, \frac{16}{35}$. 

\hskip .3cm $\{ t_{3,3} = 0 , t_{5,2}t_{4,3}\neq 0 \}$. The roots $\frac{58}{35}, \frac{53}{35}, \frac{51}{35}$ change to $\frac{23}{35}, \frac{18}{35}, \frac{16}{35}$. 

\hskip .3cm $\{ t_{5,2} \neq 0 , 6t_{5,2} +175t_{3,3}^4=0 \}$. The roots $\frac{58}{35}, \frac{53}{35}, \frac{48}{35}$ change to $\frac{23}{35}, \frac{18}{35}, \frac{13}{35}$.

\hskip .3cm $\{ t_{5,2} \neq 0 , 6t_{5,2} +175t_{3,3}^4\neq 0 \}$. The roots $\frac{58}{35}, \frac{53}{35}, \frac{51}{35}, \frac{48}{35}$ change to $\frac{23}{35}, \frac{18}{35}, \frac{16}{35}, \frac{13}{35}$. 

\vskip 2mm

\noindent In this last stratum we have a Zariski open set where the roots are
{\tiny \[\bigg\{ \frac{12}{35}, \boxed{\frac{13}{35}}, \boxed{\frac{16}{35}}  \frac{17}{35}, \boxed{\frac{18}{35}}, \frac{19}{35}, \frac{22}{35}, \boxed{\frac{23}{35}},\frac{24}{35}, \frac{26}{35}, \frac{27}{35}, \frac{29}{35},  
\frac{31}{35}, \frac{32}{35}, \frac{33}{35}, \frac{34}{35} ,   \frac{36}{35}, \frac{37}{35}, \frac{38}{35}, \frac{39}{35}, \frac{41}{35}, \frac{43}{35}, \frac{44}{35}, \frac{46}{35}   \bigg\}, \]}
\noindent and thus they are in the interval $[ {\rm lct}(f), {\rm lct}(f) + 1 )$. We say that these are the generic roots of the Bernstein-Sato polynomial of $f_t$.
\end{example}

An interesting issue in this example is that, even though they have different Bernstein-Sato polynomials,  all the fibres of the deformation $f_t$ have the same Milnor number so they belong to the same equisingularity class. Roughly speaking, all the fibres have the same log-resolution meaning that they have the same combinatorial information, which can be encoded in weighted graphs such as  the Enriques diagram\index{Enriques diagram} \cite[\S IV.I]{EC15} \cite[\S 3.9]{Cas00}, the dual graph\index{dual graph} \cite[\S 4.4]{Cas00} \cite[\S 3.6]{Wall04} or the Eisenbud-Neumann\index{Eisenbud-Neumann diagram} diagrams \cite{EN85}.

\vskip 2mm

From now on let \( f \in \mathbb{C}\{x,y\} \) be a defining equation of the germ of an irreducible plane curve. 
A complete set of numerical invariants for the equisingularity class of $f$ is given by the \emph{characteristic exponents}
\((n, \beta_1, \dots, \beta_g) \) where $n \in \mathbb{Z}_{>0}$ is the multiplicity at the origin of $f$ and 
the integers \( n < \beta_1 < \cdots < \beta_g \) can be obtained from the Puiseux parameterization of $f$.
To describe the equisingularity class of $f$ we may also consider its \emph{semigroup} \( \Gamma:= \langle \overline{\beta}_{0}, \overline{\beta}_{1}, \dots, \overline{\beta}_{g} \rangle \) that comes from the valuation of $\mathbb{C}\{x,y\}/\langle f\rangle$ given by the Puisseux parametrization of $f$.

\vskip 2mm

A quasihomogeneous plane curve $f=x^a+y^b$ with $a<b$ and gcd$(a,b)=1$ is irreducible with semigroup \( \Gamma= \langle a,b \rangle \).
Adding higher order terms $x^iy^j$  with $bi + aj > ab$ does not change the equisingularity class but we do not need all the higher order terms. Indeed, every irreducible curve with semigroup \( \Gamma= \langle a,b \rangle \) is analytically isomorphic to one of the fibers of the miniversal deformation  $$f=x^a+y^b - \sum t_{i,j} \hskip 1mm x^i y^j,$$ where  the sum is
taken over the monomials $x^i y^j$ such that $0\leq i \leq a-2$,
$0\leq j \leq b-2$ and $bi + aj > ab$. This is the setup considered in \Cref{Kato2}.

\vskip 2mm

Cassou-Nogu\`es  \cite{Cassou1987} described the stratification by the Bernstein-Sato polynomial of any irreducible plane curve with a single characteristic exponent using analytic continuation of the complex zeta function. 

\vskip 2mm

To construct a miniversal deformation of an irreducible plane curve with $g$ characteristic exponents is much more complicated and one has to use, following Teissier \cite{Zariski_Appendix},  the monomial curve $C^\Gamma$  associated to the semigroup \( \Gamma= \langle \overline{\beta}_{0}, \overline{\beta}_{1}, \dots, \overline{\beta}_{g} \rangle \) by the parametrization \( u_i = t^{\overline{\beta}_{i}} , \, i=1,\dots, g\).  Teissier  proved the existence of a miniversal semigroup constant deformation of this monomial curve. It turns out that every irreducible plane curve with semigroup $\Gamma$ is analytically isomorphic to one of the fibres of the miniversal deformation of $C^\Gamma$. To give explicit equations in 
\( \mathbb{C}\{x,y\} \) is more complicated and we refer to the work of Blanco \cite{Bla19} for more details. For the convenience of the reader we  illustrate an example with two characteristic exponents.

\begin{example}
 The semigroup  of an irreducible plane curve 
\( f= (x^a+y^b)^c + x^iy^j\) with $bi + a j =d$ is \( \Gamma= \langle ac,bc,d \rangle \). All the fibres of the deformation 
\[ f_t= \left(x^a+y^b+ \sum_{bk+a\ell > ab} t_{k,\ell} x^ky^\ell\right)^c + x^iy^j +\sum_{bck+ac\ell +dr> cd} t_{k,\ell} x^ky^\ell\big(x^a + y^b\big)^r\]
have the same semigroup.
\end{example}

The ultimate goal would be to find a stratification by the Bernstein-Sato polynomial of all the irreducible plane curves with a fixed semigroup but this turns out to be a wild problem. However, one may ask about the roots of the Bernstein-Sato polynomial of a generic fibre of a deformation of an irreducible plane curve with a given semigroup. That is, to find the roots in a Zariski open set in the space of parameters of the deformation that we call the generic roots of the Bernstein-Sato polynomial.

\vskip 2mm

Amazingly, Yano \cite{Yano1982} conjectured a formula for the generic $b$-exponents (instead of the generic roots) of any irreducible plane curve. 
These generic $b$-exponents can be described in terms of the semigroup $\Gamma$ but we use a simple interpretation in terms of the numerical data of a log-resolution of $f$. 
Let  $\pi: X' \rightarrow \CC^n$ be a {log-resolution} of an  irreducible plane curve with $g$ characteristic exponents. Let $F_\pi$ be the total transform divisor and $K_\pi$ the relative canonical divisor. In this case we have $g$ distinguished exceptional divisors, the so-called \emph{rupture divisors} that  intersect three or more divisors in the support of $F_\pi$. For simplicity we denote them by $E_1,\dots, E_g$ with the corresponding values $N_i$ and $k_i$ in $F_\pi$ and $K_\pi$ respectively.

\begin{conjecture}[{\cite{Yano1982}}] \label{thm-yano}\index{Yano's conjecture}
Let \( f \in \mathbb{C}\{x,y\} \) be a defining equation of the germ of an irreducible plane curve with semigroup \( \Gamma = \langle \overline{\beta}_0, \overline{\beta}_1, \dots, \overline{\beta}_g \rangle \). Then, for generic curves in some \( \Gamma \)-constant deformation of \( f \), the \( b \)-exponents are
\[
{\displaystyle \bigcup_{i = 1}^g} \bigg\{ \lambda_{i, \ell} = \frac{k_i + 1 +\ell}{N_i } \ \bigg|\ 0 \leq \ell < N_i,\  \overline{\beta}_i \lambda_{i, \ell} \not\in \mathbb{Z} , e_{i-1} \lambda_{i, \ell} \not\in \mathbb{Z} \bigg\}
\] where $e_{i-1}=\gcd (\overline{\beta}_0, \overline{\beta}_1, \dots, \overline{\beta}_{i-1} )$.
\end{conjecture}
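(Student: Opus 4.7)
The plan is to exploit the characterization in Proposition~\ref{isolated_singularities_Yano}: since $f \in \CC\{x,y\}$ has an isolated singularity, the $b$-exponents are the eigenvalues of the action of $-s$ on $\Omega^2 \otimes_{D_{A|\CC}} \widetilde{\mathcal{M}} \cong A/(f_x, f_y)$. This spectrum admits a more refined description via the $V$-filtration on the Brieskorn lattice $H'' := \Omega^2_{X,0}/df \wedge d\Omega^0_{X,0}$ endowed with its Gauss--Manin connection. Because a $\Gamma$-constant deformation is $\mu$-constant, the Milnor algebra has locally constant dimension and the spectrum is upper semicontinuous on the parameter space $T$; hence the generic $b$-exponents are well-defined and can be recovered from a sufficiently explicit model of $H''$ for the generic fiber.

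First, I would produce a basis of $A/(f_x, f_y)$ by pulling back monomial $2$-forms to an embedded log-resolution $\pi\colon X' \to X$ and decomposing along the dual graph. For an irreducible plane curve with $g$ characteristic exponents, the graph has precisely $g$ rupture divisors $E_1, \dots, E_g$, and the combinatorial identity between lattice points under the Newton polygon of $f$ and admissible cohomology classes on $E_i$ shows that the candidate eigenvalues contributed by $E_i$ are exactly
\[ \lambda_{i,\ell} = \frac{k_i + 1 + \ell}{N_i}, \qquad 0 \le \ell < N_i. \]
Second, I would impose the semigroup-theoretic constraints. Elementary number theory for the rupture divisors gives $\bar\beta_i = N_i/e_i$ up to the appropriate normalization, so that the conditions $\bar\beta_i \lambda_{i,\ell} \notin \ZZ$ and $e_{i-1}\lambda_{i,\ell} \notin \ZZ$ identify the pure pieces that cannot be absorbed into a neighboring rupture's contribution.

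Third, I would deform. On the miniversal $\Gamma$-constant family $F\colon (\CC^2 \times T, 0) \to (\CC,0)$ provided by Teissier's theory of the monomial curve $C^{\Gamma}$, the Gauss--Manin connection varies holomorphically in $t \in T$, and the tangent action of a deformation parameter $t_{k,\ell}$ attached to a monomial $x^k y^\ell$ modifies the class of each basis element by a monomial of controlled weight. The key computation is to show that the generic values of these parameters produce eigenvalue shifts exactly on those $\lambda_{i,\ell}$ for which the divisibility conditions \emph{fail}, causing those candidates to disappear from the minimal polynomial while the remaining ones persist on a Zariski open subset of $T$.

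The main obstacle is the last step: establishing that the action of the deformation parameters on the $V$-filtration of $H''$ cancels precisely the candidates violating $\bar\beta_i \lambda_{i,\ell} \notin \ZZ$ or $e_{i-1}\lambda_{i,\ell} \notin \ZZ$, and no others. This requires an inductive analysis along the chain of rupture divisors, in which one descends from $E_g$ to $E_1$ and keeps track of how deformation terms of semigroup weight $> \bar\beta_g$ interact with Puiseux parametrizations at each level; the combinatorics of the semigroup $\Gamma$ and the Apéry sets of its generators enter essentially, and matching these with the Hodge-theoretic filtration on $H''$ is the delicate point. The quasi-homogeneous base case is settled by Yano's theorem cited before the conjecture, so the induction is on $g$, with the deformation argument upgrading the base computation at each rupture.
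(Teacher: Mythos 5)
You should first note that the statement you are proving is stated in the paper as a \emph{conjecture} (Yano's conjecture): the survey gives no proof of it, and records that it was only settled in full generality by Blanco \cite{Blanco_yano}, whose argument proceeds through asymptotic expansions of periods of integrals along vanishing cycles on the Milnor fiber (extending Malgrange, Varchenko, Lichtin, and Loeser), with earlier special cases (one characteristic exponent; two exponents with distinct monodromy eigenvalues) handled via analytic continuation of the complex zeta function. So there is no ``paper proof'' to compare against, and any complete argument here is necessarily a substantial research-level result, not something recoverable from the tools set up in this survey.

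Measured against that, your proposal has a genuine gap, and it sits exactly where the conjecture's content lies. Steps one and two only produce the \emph{candidate} set $\{(k_i+1+\ell)/N_i\}$ from the rupture divisors and restate the divisibility conditions; this much is essentially Lichtin's bound (Theorem~\ref{Kashiwara_Lichtin}) plus semigroup bookkeeping (note also $N_i=e_{i-1}\bar\beta_i$, not $N_i/e_i$). The entire difficulty is your third step, which you yourself label ``the main obstacle'': showing that for generic parameters in a $\Gamma$-constant deformation the candidates violating $\bar\beta_i\lambda_{i,\ell}\notin\ZZ$ or $e_{i-1}\lambda_{i,\ell}\notin\ZZ$ are exactly the ones that drop out, that the surviving ones actually occur, and that they occur with the right multiplicities in the \emph{characteristic} polynomial (the $b$-exponents), not merely as roots of $b_f(s)$ --- the distinction the paper illustrates with $\Gamma=\langle 10,15,36\rangle$, where candidates shared by two rupture divisors must appear twice as $b$-exponents but once as roots. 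No mechanism is given for this cancellation/persistence analysis; asserting it as ``the key computation'' is restating the conjecture. Two further points undermine the outline as written: the spectrum in Varchenko's sense is \emph{constant} in $\mu$-constant families while the $b$-exponents jump (see \Cref{Kato2}), so semicontinuity of ``the spectrum'' cannot be the source of genericity; and an induction on $g$ with the quasi-homogeneous case as base does not start, since already for $g=1$ the generic member of the deformation is not quasi-homogeneous and that case required Cassou-Nogu\`es's full analysis. A workable route has to control the asymptotics of $\int_{\gamma(t)}\omega$ along vanishing cycles for the deformed fibers, which is precisely what Blanco's proof supplies and what is missing here.
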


If we consider the irreducible plane curve studied by Kato in \Cref{Kato2} we see that Yano's conjecture holds true.

\begin{example}
The Yano set associated to the semigroup \( \Gamma = \langle 5,7\rangle \) is 
\[
 \bigg\{ \lambda_{1, \ell} = \frac{12 +\ell}{35 } \ \bigg|\ 0 \leq \ell < 35,\  7 \lambda_{1, \ell} \not\in \mathbb{Z} , 5\lambda_{1, \ell} \not\in \mathbb{Z} \bigg\}
\] 
which gives the generic $b$-exponents given in \Cref{Kato2}.
\end{example}

From the stratification given by Cassou-Nogu\`es \cite{Cassou1987} one gets that Yano's conjecture is true for  irreducible plane curves with a single characteristic exponent (see \cite{Cassou1988}).
Almost thirty years later, Artal-Bartolo, Cassou-Nogu\`es, Luengo, and Melle-Hern\'andez \cite{ACNLM17} proved Yano's conjecture for irreducible plane curves with two characteristic exponents with the extra assumption that the eigenvalues of the monodromy are different. Under the same extra condition, Blanco \cite{Bla19} gave a proof for any number of characteristic exponents. Both papers use the analytic continuation of the complex zeta function.
The extra condition on the eigenvalues of the monodromy being different ensures that the characteristic and the minimal polynomial of the action of $s$ on $(s+1) \frac{D_{A|\CC}[s] \boldsymbol{f^s}}{ D_{A|\CC}[s] f\boldsymbol{f^s}}$ are the same. 

\vskip 2mm

The shortcomings of the analytic continuation techniques, which deal with the Bernstein-Sato polynomial instead of the $b$-exponents, can be seen in examples such as the following.

\begin{example}
The Yano sets associated to the semigroup \( \Gamma = \langle 10,15,36\rangle \) are
\[
 \bigg\{ \lambda_{1, \ell} = \frac{5 +\ell}{30 } \ \bigg|\ 0 \leq \ell < 30,\  15 \lambda_{1, \ell} \not\in \mathbb{Z} , 10\lambda_{1, \ell} \not\in \mathbb{Z} \bigg\},
\] 
and
\[
 \bigg\{ \lambda_{2, \ell} = \frac{31 +\ell}{180 } \ \bigg|\ 0 \leq \ell < 180,\  36 \lambda_{2, \ell} \not\in \mathbb{Z} , 5\lambda_{2, \ell} \not\in \mathbb{Z} \bigg\}.
\] 
We have that $\frac{11}{30}, \frac{17}{30}, \frac{23}{30}, \frac{29}{30}$ appear in both sets. Therefore they appear with multiplicity $2$ as $b$-exponents but only once as roots of the Bernstein-Sato polynomial.
\end{example}

Blanco \cite{Blanco_yano} has recently proved Yano's conjecture in its generality. His work uses periods of integrals along vanishing cycles on the Milnor fiber as considered by Malgrange \cite{Mal74b, Mal74} and Varchenko \cite{Var80, Var81}.  In particular he extends vastly the results of Lichtin \cite{Lic89}  and Loeser \cite{Loeser88} on the expansions of these periods of integrals.

\vskip 2mm

\subsection{ Hyperplane arrangements} Let $f \in \CC[x_1,\dots ,  x_d]$ be a reduced polynomial defining an arrangement of hyperplanes so $f=f_1 \cdots f_\ell$ decomposes as a product of polynomials $f_i$ of degree one. The Bernstein-Sato polynomial of $f$ has been studied by Walther \cite{WaltherBS} under the assumptions that the arrangement is:
\begin{itemize}
\item \emph{Central:} $f$ is homogeneous so all the hyperplanes contain the origin.
\item \emph{Generic:} The intersection of any $d$ hyperplanes  is the origin.
\end{itemize}

The main result of Walther, with the assistance of Saito \cite{Saito_BS_arrangements} to compute the multiplicity of $-1$ as a root, is the following.

\begin{theorem} [{\cite{WaltherBS, Saito_BS_arrangements}}]
The Bernstein-Sato polynomial of a generic central hyperplane arrangement $f \in \CC[x_1,\dots ,  x_d]$ of degree $\ell \geq d$ is 
$$b_f(s)= (s+1)^{d-1} \prod_{j=0}^{2\ell-d-2} \left(s + \frac{j+d}{\ell}\right).$$
\end{theorem}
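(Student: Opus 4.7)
The plan is to prove the theorem in three stages: first restrict the possible roots via a log-resolution, then produce each listed root via multiplier ideals, and finally pin down the multiplicity of $-1$. The first two stages are due to Walther~\cite{WaltherBS}; the multiplicity of $-1$ is the contribution of Saito~\cite{Saito_BS_arrangements}.

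The geometric input used throughout is that a single blow-up $\pi\colon X' \to \mathbb{C}^d$ of the origin already gives a log-resolution of $f$. Because the arrangement is generic, any $d$ of the $\ell$ hyperplanes meet only at $0$, so on the exceptional divisor $E \simeq \mathbb{P}^{d-1}$ the $\ell$ induced projective hyperplanes form a generic configuration, and together with the strict transforms $S_i$ of the $H_i$ they form a simple normal crossing divisor. The numerical data are $N_E = \ell$, $k_E = d-1$, $N_{S_i} = 1$, $k_{S_i} = 0$. Applying Lichtin's refinement of \Cref{rationality_BS} (to be discussed in \Cref{Complex_zeta}) then forces the roots of $b_f(s)$ to lie in $\{-(d+j)/\ell : j \in \mathbb{Z}_{\geq 0}\} \cup \mathbb{Z}_{<0}$; combining this with the general bound that the roots lie in $(-d,0)$, sharpened to $(-2,0)$ in this homogeneous setting via a Skoda-type argument, cuts the candidate list down to precisely the linear factors appearing in the claim: $-1$ and $-(d+j)/\ell$ for $0 \leq j \leq 2\ell - d - 2$.

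Next, to show each candidate is actually a root with multiplicity at least one, I would invoke the correspondence between jumping numbers of multiplier ideals and roots of $b_f(-s)$ that will be reviewed in \Cref{Multipliers}: every jumping number $\lambda \in (0, \mathrm{lct}(f) + 1)$ of $f$ is of the form $-\alpha$ for some root $\alpha$ of $b_f(s)$. A direct computation using \Cref{Mult_ideal} on the blow-up above, where the only nontrivial order condition is $\ord_0(g) \geq \lfloor \lambda \ell - (d-1) \rfloor$, identifies the jumping numbers of $f$ in $(0,2)$ as exactly $\{(d+j)/\ell : 0 \leq j \leq 2\ell - d - 2\}$. This accounts for all the linear factors in the product, with the factor $s+1$ appearing at $j = \ell - d$.

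The main obstacle is the extra multiplicity $d-1$ of $(s+1)$ outside the product. Walther obtains the upper bound $d-1$ by a length computation on the $D$-module $D_{A|\mathbb{C}}[s]\boldsymbol{f^s}/D_{A|\mathbb{C}}[s] f\boldsymbol{f^s}$ localized near $s = -1$, exploiting the homogeneity of $f$ and the structure of the blow-up. The matching lower bound, due to Saito~\cite{Saito_BS_arrangements}, is the subtlest step of the argument: it uses the theory of mixed Hodge modules applied to the nearby cycle complex along $f$, realizes the multiplicity of $-1$ as the dimension of a graded piece of the $V$-filtration on $\cO_{X,0}$, and performs a combinatorial calculation specific to the generic arrangement identifying this dimension as $d-1$. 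This Hodge-theoretic input is precisely what makes the lower bound delicate.
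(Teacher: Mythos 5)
Your sketch should first be measured against the right target: the survey does not prove this theorem at all (it is quoted from Walther, with Saito supplying the multiplicity of $-1$), so the comparison is with their actual arguments, and your outline breaks down at exactly the two points where the real work occurs. The first gap is the reduction of Lichtin's candidate list to $(-2,0)$. There is no ``Skoda-type argument'' that confines the roots of $b_f$ of a homogeneous polynomial to $(-2,0)$: the Fermat cone $x_1^{\ell}+\cdots+x_d^{\ell}$ is homogeneous and, by the quasi-homogeneous formula, has roots as small as $-d(\ell-1)/\ell$, far below $-2$ for $d\geq 3$; and Saito's general bound recalled in the survey only gives $[-d+\lct(f),-\lct(f)]=[-d+d/\ell,\,-d/\ell]$ here. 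Discarding the candidates $\leq -2$ and the integers other than $-1$ is precisely where genericity of the arrangement must enter, and it is the hard part of Walther's paper: he uses the cohomology of the Milnor fiber of a generic arrangement together with the Kashiwara--Malgrange correspondence between roots modulo $\ZZ$ and monodromy eigenvalues (and explicit operator constructions), not a multiplier-ideal or homogeneity statement.

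The second gap is the lower bound via jumping numbers. Your computation of $\cJ(f^{\lambda})$ drops the conditions coming from the strict transforms ($N'_j=1$), which become nontrivial once $\lambda\geq 1$; by Skoda's theorem the jumping numbers in $[1,2)$ are $1$ plus those in $[0,1)$, so the jumping numbers of $f$ in $(0,2)$ are $\{(d+j)/\ell : 0\leq j\leq \ell-d\}\cup\{1+(d+j)/\ell : 0\leq j\leq \ell-d-1\}$, not the set you claim. Concretely, for $d=3$, $\ell=4$ the theorem asserts roots $-5/4$ and $-3/2$, but $5/4$ and $3/2$ are not jumping numbers (there are none in $(1,1+\lct(f))$), while $7/4$ is a jumping number and is not a root. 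Moreover Theorem~\ref{ELSV} and its refinement only certify roots coming from jumping numbers in $(0,\lct(f)+1)$, so even a correct multiplier-ideal computation can only account for the factors with $(d+j)/\ell\leq 1$, a small fraction of the product when $\ell\gg d$; this is again why Walther needs the Milnor fiber cohomology of generic arrangements rather than multiplier ideals. Finally, your division of labor in the last step is not accurate (Walther's argument left the multiplicity of $-1$ undetermined, and Saito's Hodge-theoretic/microlocal analysis computes that multiplicity outright, rather than supplying a lower bound to match an upper bound of Walther), and as written that stage is only a pointer to the literature; but this is minor compared with the two gaps above, each of which is an essential missing ingredient rather than a detail to be routinely filled in.
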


\begin{example}
The homogeneous polynomial $f=x^5+y^5 \in \CC[x,y]$ considered in \Cref{x5y5} defines an arrangement of five lines through the origin. Walther's formula gives
$$b_f(s)= (s+1)^2\left(s+\frac{2}{5}\right)\left(s+\frac{3}{5}\right)\left(s+\frac{4}{5}\right)\left(s+\frac{6}{5}\right)\left(s+\frac{7}{5}\right)\left(s+\frac{8}{5}\right).$$
\end{example}

It is an open question to determine the roots of the Bernstein-Sato polynomial of a nongeneric arrangement. In this general setting, Leykin  \cite{WaltherBS} noticed that $-1$ is the only integer root of  $b_f(s)$.

A natural question that arise when dealing with invariants of hyperplane arrangements is whether these invariants are \emph{combinatorial}, meaning that they only depend on the lattice of intersection of the hyperplanes together with the codimensions of these intersections, and it does not depend on the position of the hyperplanes.  Unfortunately this is not the case. Walther \cite{Walther_inv} provides examples of combinatorially equivalent arrangements with different Bernstein-Sato polynomial.

\begin{example} [{\cite{Walther_inv, Saito_BS_arrangements}}]
The following nongeneric arrangements have the same intersection lattice
\begin{itemize}
\item[]  \hskip -1cm $f = xyz(x+3z)(x+y+z)(x+2y+3z)(2x+y+z)(2x+3y+z)(2x+3y+4z),$
\item[]  \hskip -1cm $g = xyz(x + 5z)(x + y + z)(x + 3y + 5z)(2x + y + z)(2x + 3y + z)(2x + 3y + 4z).$
\end{itemize}
However the Bernstein-Sato polynomials differ by the root $-\frac{16}{9}$:
\begin{align*} b_f(s) &= (s+1) \prod_{j=2}^4 \left( s+ \frac{j}{3} \right) \prod_{j=3}^{16} \left( s+ \frac{j}{9} \right)\\
b_g(s) &= (s+1) \prod_{j=2}^4 \left( s+ \frac{j}{3} \right) \prod_{j=3}^{15} \left( s+ \frac{j}{9} \right).
\end{align*}
\end{example}

\section{The case of nonprincipal ideals and relative versions}\label{SecNonPrincipalRelative}

In this section we study different extensions  of  Bernstein-Sato polynomials for ideals that are not necessarily principal. Sabbah \cite{Sabbah_ideal} introduced the notion of Bernstein-Sato ideal $B_F \subseteq \KK[s_1,\dots, s_\ell]$ associated to a tuple of elements $F=f_1,\dots, f_\ell $. More recently, Budur, Musta\c{t}\u{a}, and Saito \cite{BMS2006a} defined a Bernstein-Sato polynomial $b_{\fa}(s) \in \KK[s]$ associated to an ideal $\fa \subseteq A$ which is independent of the set of generators.  The approach to Bernstein-Sato polynomials of nonprincipal ideals has been simplified by Musta\c{t}\u{a} \cite{Mustata2019}.

In order to provide a description of the $V$-filtration of a holonomic $D$-module, Sabbah introduced a relative version of Bernstein-Sato polynomials that is also considered in  the version for nonprincial ideals \cite{BMS2006a}. This relative version is also important to describe multiplier ideals (see \Cref{Multipliers}).

\subsection{Bernstein-Sato polynomial for general ideals in differentiably admissible algebras}\label{SubNonPrincipalDiffAd}

We  start studying the Bernstein-Sato polynomial for general ideals using the  recent approach given by  Musta\c{t}\u{a} \cite{Mustata2019}.  In this section we show its existence for  general ideals in differentiably admissible algebras in Theorem \ref{ThmExistinceNonPrincipal}.

\begin{definition}
Let $\KK$ a field of characteristic zero, $A$ be a regular  $\KK$-algebra, and $\fa\subseteq A$ be a nonzero ideal.
Let $F=f_1,\ldots,f_\ell$ be a set of generators for $\fa$, and $g=f_1y_1+\cdots+f_\ell y_\ell\in A[y_1,\ldots,y_\ell]$.
We denote by  $b_{F}(s)$ the monic polynomial in $\KK[s]$ of least degree among those polynomials $b(s)\in \KK[s]$  such that
	\[ \delta(s) g^{s+1} = b(s) g^s \qquad \text{for all \ } s\in \NN,\]
	where $\delta(s)\in D_{ A[y_1,\ldots,y_\ell]|\KK}[s]$ is a polynomial differential operator. That is, $b_{F}(s)$ is the Bernstein-Sato polynomial of $g$.
\end{definition}

Before we discuss properties of this notion of the Bernstein-Sato polynomial, we show that the definition of $b_{F}(s)$ does not depend on the choice of generators for $\fa$. 

\begin{proposition}[{\cite[Remark 2.1]{Mustata2019}}]\label{PropWellDefMM}
Let $\KK$ a field of characteristic zero, $A$ be a regular  $\KK$-algebra, and $\fa\subseteq A$ be a nonzero ideal.
Let $F=f_1,\ldots,f_\ell$ and $G={g}_1,\ldots,{g}_{m} $ be two sets of generators for $\fa$. Then $b_{F}(s)=b_{G}(s)$.
\end{proposition}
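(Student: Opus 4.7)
The plan is to show that adjoining a single element of $\fa$ to a generating set does not change the associated polynomial: if $F=(f_1,\ldots,f_\ell)$ generates $\fa$ and $h\in \fa$, then $b_F(s)=b_{F'}(s)$ with $F'=(f_1,\ldots,f_\ell,h)$. Applying this iteratively and using symmetry yields $b_F(s) = b_{F\cup G}(s) = b_G(s)$, since both $F$ and $G$ are obtained from $F\cup G=(f_1,\ldots,f_\ell,g_1,\ldots,g_m)$ by a sequence of deletions of elements in $\fa$.

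For the key reduction, write $h=\sum_{i=1}^\ell c_i f_i$ with $c_i\in A$, and set $g=\sum_{i=1}^\ell f_i y_i\in A[y_1,\ldots,y_\ell]$ and $g'=g+h\,y_{\ell+1}=\sum_{i=1}^{\ell+1}f_i y_i\in A[y_1,\ldots,y_{\ell+1}]$ with $f_{\ell+1}:=h$. The $A$-algebra automorphism $\phi$ of $A[y_1,\ldots,y_{\ell+1}]$ defined by $y_i\mapsto y_i-c_i y_{\ell+1}$ for $i\le \ell$ and $y_{\ell+1}\mapsto y_{\ell+1}$ sends $g'$ to $g$. Since any such automorphism induces a $\KK[s]$-algebra automorphism of $D_{A[y_1,\ldots,y_{\ell+1}]|\KK}[s]$ by conjugation (leaving $s$ fixed), it carries Bernstein--Sato functional equations to Bernstein--Sato functional equations. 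Hence $b_{F'}(s)$ coincides with the Bernstein--Sato polynomial of $g$, now regarded inside the \emph{larger} polynomial ring $A[y_1,\ldots,y_{\ell+1}]$.

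The remaining step is to show that for $g\in A[y_1,\ldots,y_\ell]\hookrightarrow A[y_1,\ldots,y_{\ell+1}]$, the Bernstein--Sato polynomial of $g$ is the same whether computed in the smaller or the larger ring. The divisibility $b^{A[y_1,\ldots,y_\ell]}_g(s)\mid b^{A[y_1,\ldots,y_{\ell+1}]}_g(s)$ is immediate because any functional equation over the smaller ring remains valid over the larger one. For the reverse, assume $\delta(s) g^{s+1}=b(s) g^s$ in $A[y_1,\ldots,y_{\ell+1}][s]$ with $\delta(s)\in D_{A[y_1,\ldots,y_{\ell+1}]|\KK}[s]$, and decompose
$$\delta(s) = \sum_{\alpha,\beta\ge 0} a_{\alpha\beta}(s)\, y_{\ell+1}^{\alpha}\partial_{y_{\ell+1}}^{\beta}, \qquad a_{\alpha\beta}(s)\in D_{A[y_1,\ldots,y_\ell]|\KK}[s].$$
Since $g$ is independent of $y_{\ell+1}$, every $\partial_{y_{\ell+1}}$ annihilates $g^{s+1}$, so only the $\beta=0$ terms contribute. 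Comparing the coefficient of $y_{\ell+1}^{0}$ on both sides (the right-hand side is $y_{\ell+1}$-free) gives $a_{00}(s) g^{s+1}=b(s) g^s$ in $D_{A[y_1,\ldots,y_\ell]|\KK}[s]$, proving the reverse divisibility.

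The one point requiring care is the legitimacy of the decomposition of $\delta(s)$: it relies on the standard fact that $D_{A[y_1,\ldots,y_{\ell+1}]|\KK}$ is a free left $D_{A[y_1,\ldots,y_\ell]|\KK}$-module with basis $\{y_{\ell+1}^\alpha\partial_{y_{\ell+1}}^\beta\}_{\alpha,\beta\ge 0}$, which holds for polynomial rings in characteristic zero and extends to $A[y_1,\ldots,y_\ell]$ over any regular base $A$ with a free module of derivations. Combining the two steps gives $b_F(s) = b_{F\cup\{h\}}(s)$ for any $h\in\fa$, and hence the result.
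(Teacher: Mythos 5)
Your proof takes essentially the same route as the paper's: reduce to adjoining a single element of $\fa$ to the generating set and apply the linear change of variables $y_i \mapsto y_i \pm c_i y_{\ell+1}$, under which the Bernstein--Sato polynomial is invariant. The only difference is that you explicitly verify the step the paper leaves implicit---that the Bernstein--Sato polynomial of $f_1y_1+\cdots+f_\ell y_\ell$ is the same whether computed in $A[y_1,\ldots,y_\ell]$ or in $A[y_1,\ldots,y_{\ell+1}]$---and your coefficient-of-$y_{\ell+1}^0$ argument for this is correct, granted the decomposition of operators of $A[y_1,\ldots,y_{\ell+1}]$ in terms of $D_{A[y_1,\ldots,y_\ell]|\KK}$, $y_{\ell+1}$, and $\partial_{y_{\ell+1}}$ that you cite (which holds in the settings where the paper actually establishes existence, e.g.\ polynomial rings and differentiably admissible algebras).
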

\begin{proof}
It suffices to show that
$b_{F}(s)=b_{G}(s)=b_{H}(s),$ where $H=F \cup G$.
This follows from showing that $b_{F}(s)=b_{G}(s)$ when $G=F\cup g$
for $g\in \fa$.
Let $r_1,\ldots, r_\ell$ such that $g=r_1 f_1+\cdots+r_\ell f_\ell$.
We have that 
 \begin{align*}
f_1y_1+\cdots+f_\ell y_\ell+g y_{\ell+1}&=
f_1y_1+\cdots+f_\ell y_\ell+(r_1 f_1+\cdots+r_\ell f_\ell) y_{\ell+1}\\
& f_1(y_1+r_1 y_{\ell+1})+\cdots+f_\ell (y_\ell+r_\ell y_{\ell+1}).
\end{align*}
After a change of variables $y_i\mapsto y_i+r_i y_{\ell+1}$, this polynomial becomes $f$.
Since the Bernstein-Sato polynomial does not change by change of variables, we conclude that  $b_{F}(s)=b_{G}(s)$.
\end{proof}

Given the previous result, we can define the Bernstein-Sato polynomial of a nonprincipal ideal. Notice that  $f_1y_1+\cdots+f_\ell y_\ell$ is not a unit in $A[y_1,\ldots,y_\ell]$ so we may consider its  reduced Bernstein-Sato polynomial $\tilde{b}_{F}(s) = \frac{b_{F}(s)}{s+1}$.

\begin{definition}
Let $\KK$ a field of characteristic zero, $A$ be a regular  $\KK$-algebra, and $\fa\subseteq A$ be a nonzero ideal.
Let $F=f_1.\ldots,f_\ell$ be a set of generators for $\fa$.
We define the Bernstein-Sato polynomial of $\fa$ as  the reduced Bernstein-Sato polynomial of $f_1y_1+\cdots+f_\ell y_\ell$. That is $$b_{\fa}(s) :=\tilde{b}_{F}(s).$$
\end{definition}

We point out that the previous definition is not the original given by  Budur, Musta\c{t}\u{a}, and Saito \cite{BMS2006a}, which we discuss in the next subsection. 
This approach  given by Musta\c{t}\u{a} \cite{Mustata2019} has a couple of differences. First, the existence of Bernstein-Sato polynomials for nonprincipal ideals would follow from the existence of certain Bernstein-Sato polynomials for a single element. This way in particular gives the existence of Bernstein-Sato polynomials for nonprincipal ideals in any differentiably admissible 
algebras (see Subsection~\ref{SubSecDifAd}) such as power series rings over a field of characteristic zero.
Second, the treatment given by Musta\c{t}\u{a} \cite{Mustata2019} can be done without using $V$-filtrations.

We now focus on showing the existence of Bernstein-Sato polynomial for nonprincipal ideals in differentiably admissible algebras.
We start recalling a theorem from Matsumura's book \cite{Matsumura}.

\begin{theorem}[{\cite[Theorem 99]{Matsumura}}]\label{99}
Let $(A,\m,\KK)$ be a regular local commutative Notherian ring with unity of dimension $d$ containing a field $\KK_0$. Suppose that $\KK$ is
an algebraic separable extension of $\KK_0$. Let $\hat{A}$ denote the completion of $A$ with respect to~$\m$.
Let $x_1,\ldots,x_d$ be a regular system of parameters of $A$. Then, $\widehat{A}=\KK\llbracket x_1,\dots,x_d\rrbracket$ is the power 
series ring with coefficients in $\KK$, and $\Der_{\hat{A}|\KK}$ is a free $\widehat{A}$-module with
basis $\partial_1, \ldots,\partial_d$. Moreover, the following conditions are equivalent:
\begin{enumerate}
\item $\partial_i$ ($i=1,\ldots , d$) maps $A$ into $A$, equivalently, $\partial_i\in Der_{A|\KK_0}$;
\item there exist derivations $\delta_1,\ldots, \delta_d \in \Der_{A|\KK_0} $ and elements $f_1,\ldots, f_d\in A$ such that $\delta_i f_j=1$ if $i=j$ and 
$0$ otherwise;
\item there exist derivations $\delta_1,\ldots, \delta_d \in \Der_{A|\KK_0}$ and elements $f_1\ldots, f_d\in R$ such that $\det(\delta_i f_j) \not\in \m$;
\item $\Der_{A|\KK_0}$ is a free module of rank $d$ (with basis $\delta_1,\ldots, \delta_d$);
\item $\Rank(\Der_{A|\KK_0})=d$.
\end{enumerate}
\end{theorem}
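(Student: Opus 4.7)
The plan is to first establish the structural claims and then prove the chain of equivalences, using $\widehat{A}$ as a bridge. Cohen's structure theorem is available because $\KK/\KK_0$ is algebraic separable: one can lift $\KK$ to a coefficient field of $\widehat{A}$, and since $x_1,\ldots,x_d$ project to a $\KK$-basis of $\m_{\widehat{A}}/\m_{\widehat{A}}^2$, the continuous substitution $X_i\mapsto x_i$ gives $\widehat{A}\cong\KK\llbracket X_1,\ldots,X_d\rrbracket$. Inside this model, a $\KK$-linear derivation is determined by its values on the $x_i$ (via the power-series expansion and $\m$-adic continuity), and conversely any prescription $x_i\mapsto a_i\in\widehat{A}$ extends uniquely. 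This gives $\Der_{\widehat{A}|\KK}=\bigoplus_i\widehat{A}\partial_i$ with $\partial_i(x_j)=\delta_{ij}$.

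The next step is the bridge lemma: every $\delta\in\Der_{A|\KK_0}$ extends uniquely to an $\m$-adically continuous $\KK_0$-derivation $\widehat{\delta}$ of $\widehat{A}$, and separable-algebraicity of $\KK/\KK_0$ forces $\widehat{\delta}|_\KK=0$, so $\widehat{\delta}\in\Der_{\widehat{A}|\KK}$; under the resulting injection $\Der_{A|\KK_0}\hookrightarrow\Der_{\widehat{A}|\KK}$ we have $\widehat{\delta}=\sum_j\delta(x_j)\partial_j$. With this in hand, (i)$\Rightarrow$(ii)$\Rightarrow$(iii) is immediate: take $\delta_i=\partial_i|_A$ and $f_j=x_j$ to get the identity Jacobian. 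For (iii)$\Rightarrow$(i), the matrix identity $\bigl(\delta_i(f_k)\bigr)=\bigl(\delta_i(x_j)\bigr)\cdot\bigl(\partial_j(f_k)\bigr)$ forces $\det\bigl(\delta_i(x_j)\bigr)$ to be a unit in $A$; inverting yields $\partial_j=\sum_i c_{ji}\delta_i$ with $c_{ji}\in A$, so the $\partial_j$ preserve $A$. For (i)$\Rightarrow$(iv), once $\partial_i$ restricts, the identity $D=\sum_i D(x_i)\partial_i$ (valid for any $D\in\Der_{A|\KK_0}$ via the bridge) exhibits $\partial_1,\ldots,\partial_d$ as a free basis, with independence from duality against $x_j$. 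The implication (iv)$\Rightarrow$(v) is trivial.

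The main obstacle is (v)$\Rightarrow$(iii): rank $d$ by itself only gives $d$ derivations $\delta_i$ independent over $\Frac(A)$, which yields $\det\bigl(\delta_i(x_j)\bigr)\neq 0$ but not necessarily outside $\m$. To upgrade this, I would use the separable-algebraic hypothesis a second time through the conormal sequence: since $\Omega_{\KK|\KK_0}=0$, the natural map $\m/\m^2\to\Omega_{A|\KK_0}\otimes_A\KK$ is surjective, and dualizing gives an injection of the $\KK$-vector space $\Der_{A|\KK_0}\otimes_A\KK$ into $(\m/\m^2)^{\ast}$. The rank-$d$ hypothesis, combined with the bridge embedding $\Der_{A|\KK_0}\hookrightarrow\Der_{\widehat{A}|\KK}$ and Nakayama applied to the cokernel, forces this embedding to have image of dimension $d$, so the evaluation map $\delta\mapsto(\delta(x_1)\bmod\m,\ldots,\delta(x_d)\bmod\m)$ is surjective onto $\KK^d$. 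Choosing $\delta_1,\ldots,\delta_d$ whose images form a $\KK$-basis and setting $f_j=x_j$ then produces a Jacobian whose determinant is a unit modulo $\m$, which is (iii). The delicate point here is controlling the saturation of $\Der_{A|\KK_0}$ inside $\Der_{\widehat{A}|\KK}$ well enough to run Nakayama; this is the step I expect to require the most care.
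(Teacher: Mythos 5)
The paper itself contains no argument for this statement---it is quoted verbatim from Matsumura's Theorem~99---so your proposal has to be judged against the standard proof rather than anything in the text. Most of your outline is sound: the Cohen structure theorem step (choosing the coefficient field over $\KK_0$, which is possible by separability), the bridge lemma (any $\delta\in\Der_{A|\KK_0}$ satisfies $\delta(\m^n)\subseteq\m^{n-1}$, hence extends continuously to $\widehat{A}$, and separable algebraicity kills the coefficient field, so $\widehat{\delta}=\sum_j\delta(x_j)\partial_j$), and the implications (i)$\Rightarrow$(ii)$\Rightarrow$(iii), (iii)$\Rightarrow$(i), (i)$\Rightarrow$(iv)$\Rightarrow$(v) all go through essentially as you describe.

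The genuine gap is in (v)$\Rightarrow$(iii), exactly where you flagged it, and the mechanism you propose does not close it. First, dualizing the surjection $\m/\m^2\to\Omega_{A|\KK_0}\otimes_A\KK$ gives an injection of $\Hom_A(\Omega_{A|\KK_0},\KK)=\Der_{\KK_0}(A,\KK)$, the derivations into the residue field, into $(\m/\m^2)^{*}$---not of $\Der_{A|\KK_0}\otimes_A\KK$. Injectivity of the natural map $\Der_{A|\KK_0}\otimes_A\KK\to\Der_{\KK_0}(A,\KK)$ is precisely the assertion that every $\delta$ with $\delta(\m)\subseteq\m$ lies in $\m\,\Der_{A|\KK_0}$, which is essentially the conclusion you are after. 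Second, Nakayama does not run on the cokernel of the embedding $N=\Der_{A|\KK_0}\hookrightarrow A^d$, $\delta\mapsto(\delta(x_j))_j$: rank $d$ only says $A^d/N$ is a finitely generated torsion module, and nothing gives $A^d/N=\m(A^d/N)$; a rank-$d$ submodule such as $\m\oplus A^{d-1}$ has nonzero cokernel. Note moreover that since $\delta(f)\equiv\sum_k\overline{\delta(x_k)}\,\overline{\partial_k(f)}$ mod $\m$, the freedom to choose arbitrary $f_j$ in (iii) buys nothing: (iii) holds if and only if the evaluation map $\delta\mapsto(\delta(x_j)\bmod\m)_j$ is onto $\KK^d$, which by Nakayama is equivalent to (ii) itself, so your intended route is essentially circular. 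The standard way to finish is more elementary and uses tools you already deployed for (iii)$\Rightarrow$(i): as you note, rank $d$ yields $\delta_1,\dots,\delta_d$ with $D:=\det(\delta_i(x_j))\neq0$; multiplying the relations $\delta_i(f)=\sum_j\delta_i(x_j)\,\partial_j(f)$ by the adjugate of $(\delta_i(x_j))$ gives $D\,\partial_j(f)\in A$ for every $f\in A$, and since $D\widehat{A}\cap A=DA$ by faithful flatness of $\widehat{A}$ over $A$ and $\widehat{A}$ is a domain, this forces $\partial_j(f)\in A$, i.e.\ (i), hence (ii) and (iii).
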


We now show that a power series ring over a differentiably admissible $\KK$-algebra is also a differentiably admissible $\KK$-algebra.
We point out that this fact does not hold for polynomial rings, as the residue field can be a transcendental extension of $R$. A example of this is $A=\KK\llbracket x\rrbracket$, where $\n=(xy-1)\subseteq A[y]$ is a maximal ideal with residue field $\Frac(A)$.

\begin{proposition}\label{PropPowerSeriesDiffAd}
Let $A$ be a differentiably admissible $\KK$-algebra of dimension $d$.
Then, the power series ring $A\llbracket y\rrbracket$ is also a differentiably admissible $\KK$-algebra of dimension $d+1$.
\end{proposition}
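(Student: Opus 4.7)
The strategy is to verify the three defining conditions for $B := A\llbracket y\rrbracket$ to be a differentiably admissible $\KK$-algebra. The ring $B$ is Noetherian by Cohen's theorem and regular of Krull dimension $d+1$ by the standard fact that power series over Noetherian regular rings preserve regularity.

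The first substantive step is to describe the maximal ideals of $B$. The key observation is that $y$ lies in the Jacobson radical of $B$, since $1 + yg$ is a unit in $B$ with inverse $\sum_{n \geq 0}(-yg)^n$ for every $g \in B$. Combined with $B/yB \cong A$, this forces every maximal ideal of $B$ to contain $y$, hence to have the form $\n = \m B + yB$ for a unique maximal ideal $\m \subseteq A$. Condition (i) follows: $\n B_\n$ is generated by $y$ together with $d$ generators of $\m A_\m$, so $\dim B_\n \leq d+1$ by Krull's height theorem, while the reverse inequality is obtained by lifting a chain of primes through the quotient $B_\n/yB_\n \cong A_\m$. Condition (ii) is immediate from $B/\n \cong A/\m$ and the corresponding condition for $A$.

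For condition (iii), I would construct an explicit decomposition $\Der_{B|\KK} \cong (B \otimes_A \Der_{A|\KK}) \oplus B \cdot \partial_y$. Each $\delta \in \Der_{A|\KK}$ extends coefficientwise to a $\KK$-derivation $\tilde\delta$ of $B$ via $\tilde\delta(\sum a_n y^n) := \sum \delta(a_n) y^n$, and $\partial_y := d/dy$ is a derivation of $B$; together they induce a $B$-linear map
\[ \phi : (B \otimes_A \Der_{A|\KK}) \oplus B \cdot \partial_y \longrightarrow \Der_{B|\KK}. \]
I would show $\phi$ is an isomorphism by using that every $\KK$-derivation of $B$ satisfies $\delta(y^k B) \subseteq y^{k-1} B$ by Leibniz, and hence is automatically $y$-adically continuous and thus determined by its restriction to $A$ together with $\delta(y)$. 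The required localization compatibility $B_\n \otimes_B \Der_{B|\KK} \cong \Der_{B_\n|\KK}$ is then verified by combining this decomposition with the analogous property for $A$ and Theorem~\ref{99} applied to $B_\n$ (with regular parameters $x_1,\ldots,x_d,y$ lifted from $A_\m$ together with $y$, and dual derivations $\tilde\delta_1,\ldots,\tilde\delta_d,\partial_y$).

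The main obstacle is proving surjectivity of $\phi$, equivalently the base-change identification $\Der_\KK(A,B) \cong B \otimes_A \Der_{A|\KK}$. Projectivity of $\Der_{A|\KK}$ is essential here: working Zariski-locally on $\Spec A$ where $\Der_{A|\KK}$ is free with basis $\delta_1,\ldots,\delta_d$, Theorem~\ref{99} (applied at any maximal ideal in the open and then clearing denominators) produces $f_1,\ldots,f_d$ with $\delta_i(f_j) = \delta_{ij}$. Any $\KK$-derivation $\eta: A \to B$ must then coincide with $\sum_i \eta(f_i)\tilde\delta_i$, a computation that reduces to the fact that a derivation of a differentiably admissible $\KK$-algebra is determined by its values on such a dual system of parameters. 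These local expressions patch to yield a globally defined preimage of $\eta$ under $\phi$.
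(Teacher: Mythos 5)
Your plan follows essentially the same skeleton as the paper's proof: every maximal ideal of $B=A\llbracket y\rrbracket$ has the form $\n=\m B+(y)$, conditions (i) and (ii) are checked there, and $\Der_{B|\KK}$ is analyzed by combining the coefficientwise extensions $\tilde\delta$ of elements of $\Der_{A|\KK}$ with $\partial_y$, using Matsumura's Theorem~\ref{99}. The $y$-adic continuity observation ($\delta(y^kB)\subseteq y^{k-1}B$, so a derivation of $B$ is determined by $\delta|_A$ and $\delta(y)$) is correct and is a nice way to package injectivity and the reduction to $\delta|_A$.

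The gap is exactly at the step you call the main obstacle. You deduce that a $\KK$-derivation $\eta\colon A\to B$ equals $\sum_i\eta(f_i)\tilde\delta_i$ from ``the fact that a derivation of a differentiably admissible $\KK$-algebra is determined by its values on such a dual system of parameters.'' That fact is only available for derivations with values in $A$ itself, where it is just local freeness of $\Der_{A|\KK}$ with dual basis $\delta_1,\dots,\delta_d$; for derivations with values in an arbitrary $A$-module it is false (for $A=\KK\llbracket x\rrbracket$ the universal derivation $d\colon A\to\Omega_{A|\KK}$ and $\partial_x(\cdot)\,dx$ agree on $x$ but differ, since $\Omega_{A|\KK}$ is not generated by $dx$), and for derivations with values in $B$ it is precisely the base-change statement $\Der_{\KK}(A,B)\cong B\otimes_A\Der_{A|\KK}$ you are trying to prove, so as written the crux is circular. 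The repair is short but needs to be said: write $\eta(a)=\sum_n\eta_n(a)y^n$ and note that each coefficient map $\eta_n\colon A\to A$ is a $\KK$-derivation; then locally $\eta_n=\sum_i\eta_n(f_i)\delta_i$, whence $\eta=\sum_i\eta(f_i)\tilde\delta_i$, and the gluing over a trivializing cover of $\Spec A$ goes through (equivalently, since $B\cong\prod_n Ay^n$ as an $A$-module and $\Der_{A|\KK}$ is finitely generated projective, $B\otimes_A\Der_{A|\KK}\cong\prod_n\Der_{A|\KK}\cong\Der_{\KK}(A,B)$). Note that the paper sidesteps the global decomposition altogether: it only needs the statement after localizing at each maximal ideal $\n$ of $B$, where the determinant criterion of Theorem~\ref{99} applied to $B_\n$ with $\delta_1,\dots,\delta_d,\partial_y$ and $f_1,\dots,f_d,y$ shows these derivations generate $\Der_{B_\n|\KK}$; since the admissibility conditions are local, that suffices, and it is somewhat lighter than establishing your global isomorphism.
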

\begin{proof}
Since every regular Noetherian ring is product of regular domains, we assume without loss of generality that $A$ is a domain. 
Let $\n$ be a maximal ideal in $A\llbracket y\rrbracket$. Then, there exists a maximal ideal $\m\subseteq A$ such that $\n=\m A\llbracket y\rrbracket+(y)$. It follows that
$\n$ is generated by a regular sequence of $d+1$ elements. We conclude that $(A\llbracket y\rrbracket)_\n$ is a regular ring of dimension $d+1$.
We also have that $A\llbracket y\rrbracket/\n \cong A/\m$ is an algebraic extension of $\KK$.
 
 It remains to show that $\Der_{A\llbracket y\rrbracket|K}$ is a projective module of rank $d+1$ and it behaves well with localization.
 We note that every derivation $\delta$ in $A$ can be extended to a derivation  $A\llbracket y\rrbracket$ by $\delta(\sum^{\infty}_{n=0} f_n y^n)=\sum^{\infty}_{n=0} \delta(f_n) y^n$.  
 Let $M=A\llbracket y\rrbracket\otimes_A  \Der_{A|K} \oplus A\llbracket y\rrbracket \partial_y\subseteq \Der_{A\llbracket y\rrbracket|K}$.
We note that the natural maps
$$
M_\n\to A\llbracket y\rrbracket_\n \otimes_A \Der_{A\llbracket y\rrbracket | \KK} \to \Der_{A\llbracket y\rrbracket_\n|\KK}
$$ 
 are  injective.
 We fix $\n\subseteq A \llbracket y\rrbracket$  a maximal ideal and a maximal ideal $\m\subseteq R$ such that $\n=\m A\llbracket y\rrbracket+(y)$.
We fix $\delta_1,\ldots,\delta_d\in \Der_{A_\m |\KK}$ and elements $f_1,\ldots, f_n\in \m A_\m$ such that $\delta_i f_j=1$ if $i=j$ and 
$0$ otherwise. We can do this by Theorem \ref{99}.
Then, $\delta_1,\ldots,\delta_d, {\partial_y}$ satisfy Theorem \ref{99}(3). We conclude that 
$\delta_1,\ldots,\delta_d, {\partial_y}$ generate $\Der_{A\llbracket y\rrbracket_\n|\KK}$. Then, the composition of the maps
$$M_\n\to A\llbracket y\rrbracket_\n \otimes_A \Der_{A\llbracket y\rrbracket | \KK} \to \Der_{A\llbracket y\rrbracket_\n|\KK}$$
 is surjective.
 We conclude that they are isomorphic.
Since 
$$
M_\m=\big(A\llbracket y\rrbracket_\n\otimes_{A_\m}  (\Der_{A|\KK})_\m\big) \oplus  A\llbracket y\rrbracket_\n {\partial_y}
$$ 
 is free of rank $d+1$, we have that
 $$
 (M_\m)_\n=M_\n\cong \Der_{A\llbracket y\rrbracket_\n|\KK}
 $$
 is free of rank $d+1$.
\end{proof}

\begin{theorem}\label{ThmExistinceNonPrincipal}
Let $A$ be differentiably admissible, and $\fa\subseteq A$.
Then, the Bernstein-Sato polynomial of $\fa$ exists.
\end{theorem}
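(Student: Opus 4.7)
The plan is to reduce the existence of $b_\fa(s)$ to Theorem~\ref{ThmExistenceHomological} applied to the single element $g := f_1 y_1 + \cdots + f_\ell y_\ell$ inside a suitable differentiably admissible algebra, and then descend the resulting functional equation back to $A[y_1, \ldots, y_\ell]$. The polynomial ring $A[y_1, \ldots, y_\ell]$ is not in general differentiably admissible (see the discussion preceding Proposition~\ref{PropPowerSeriesDiffAd}), but its power series analogue $B := A\llbracket y_1, \ldots, y_\ell \rrbracket$ is, by iterated application of Proposition~\ref{PropPowerSeriesDiffAd}.

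Applying Theorem~\ref{ThmExistenceHomological} to $g \in B$ produces a nonzero $b(s) \in \KK[s]$ and some $\delta(s) \in D_{B|\KK}[s]$ satisfying $\delta(s) g^{s+1} = b(s) g^s$ in $B_g[s]\boldsymbol{g^s}$; equivalently, $\delta(n) g^{n+1} = b(n) g^n$ in $B$ for every $n \in \NN$, by Remark~\ref{rem:specialization}. The main task is then to realize the same $b(s)$ using an operator $\tilde{\delta}(s) \in D_{A[y]|\KK}[s]$ whose coefficients are polynomials in $y$ rather than formal power series.

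This descent exploits the $y$-homogeneity of $g$. Equip $B$ with the grading in which each $y_i$ has weight $1$ and every element of $A$ has weight $0$, so that $g^n$ is homogeneous of weight $n$, extensions $\theta$ of elements of $\Der_{A|\KK}$ to $B$ (sending $y_i \mapsto 0$) have weight $0$, and each $\partial_{y_j}$ has weight $-1$. Since such extensions of $A$-derivations commute with every $y_j$ and $\partial_{y_j}$, Remark~\ref{RmkDiffAdGenDer} together with the description of $\Der_{B|\KK}$ from the proof of Proposition~\ref{PropPowerSeriesDiffAd} permits a finite decomposition
\[
\delta(s) \;=\; \sum_i Q_i(s, y)\, E_i\, \partial_y^{\beta_i},
\]
where $Q_i \in B[s]$, each $E_i$ is a word in extensions of elements of $\Der_{A|\KK}$, and each $\partial_y^{\beta_i}$ is a monomial in the $\partial_{y_j}$. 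Writing $Q_i = \sum_m Q_i^{[m]}$ for the $y$-homogeneous decomposition and using
\[
\partial_y^\beta(g^{n+1}) \;=\; \frac{(n+1)!}{(n+1-|\beta|)!}\, f^\beta\, g^{n+1-|\beta|},
\]
the summand $Q_i^{[m]}(n, y)\, E_i\, \partial_y^{\beta_i}(g^{n+1})$ is $y$-homogeneous of weight $m + n + 1 - |\beta_i|$. Extracting the weight-$n$ part of $\delta(n) g^{n+1} = b(n) g^n$ gives
\[
\sum_{i\,:\, |\beta_i| \geq 1} Q_i^{[|\beta_i|-1]}(n, y)\, E_i\, \partial_y^{\beta_i}(g^{n+1}) \;=\; b(n)\, g^n.
\]
Setting $\tilde{\delta}(s) := \sum_{i\,:\, |\beta_i| \geq 1} Q_i^{[|\beta_i|-1]}(s, y)\, E_i\, \partial_y^{\beta_i}$, each coefficient is a polynomial in $s$ and $y$ of $y$-degree $|\beta_i|-1$, so $\tilde{\delta}(s) \in D_{A[y]|\KK}[s]$; since $\tilde{\delta}(n) g^{n+1} = b(n) g^n$ for every $n \in \NN$, Remark~\ref{rem:specialization} promotes this to $\tilde{\delta}(s) g^{s+1} = b(s) g^s$ in $A[y]_g[s]\boldsymbol{g^s}$. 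Hence $b_F(s)$ exists and divides $b(s)$; since $g$ is a non-unit, $(s+1) \mid b_F(s)$, and $b_\fa(s) = b_F(s)/(s+1)$ is the Bernstein-Sato polynomial of $\fa$.

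The main obstacle is the descent itself: the functional equation produced in $B$ has power-series-in-$y$ coefficients, and nothing in Theorem~\ref{ThmExistenceHomological} controls their formal parts. The $y$-homogeneity of $g$ decouples the equation into weight components, and the fact that extensions of $A$-derivations commute cleanly with the $y_j$ and $\partial_{y_j}$ ensures that the single $y$-graded piece of each coefficient that contributes to the weight-$n$ component is automatically polynomial, yielding the required operator in $D_{A[y]|\KK}[s]$.
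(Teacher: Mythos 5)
Your proposal is correct and follows essentially the same route as the paper: pass to the differentiably admissible power series ring $A\llbracket y_1,\ldots,y_\ell\rrbracket$ (iterating Proposition~\ref{PropPowerSeriesDiffAd}), obtain a functional equation for $g=f_1y_1+\cdots+f_\ell y_\ell$ from Theorem~\ref{ThmExistenceHomological}, write the operator via Remark~\ref{RmkDiffAdGenDer} with power-series coefficients in $y$, and extract the $y$-degree-matching piece (your weight argument is exactly the paper's comparison of degrees in $y$, selecting the terms with $|\alpha|=|\beta|-1$) to land in $D_{A[y_1,\ldots,y_\ell]|\KK}[s]$. The only differences are presentational (grading language and the explicit derivative formula for $g^{n+1}$), not mathematical.
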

\begin{proof}
Let $f_1,\ldots,f_\ell$ be a set of generators for $\fa$.
Let $f=f_1 y_1+\cdots+f_\ell y_\ell\in A \llbracket y_1,\ldots, y_\ell\rrbracket $.
There exists $b(s)\in\KK[s]\smallsetminus\{0\}$ and $\delta(s)\in A\llbracket y_1,\ldots, y_\ell\rrbracket [s]$ such that
$$
\delta(s) f  \boldsymbol{f^s}=b(s) \boldsymbol{f^s}
$$
in $A_f[s] \boldsymbol{f^s}$ by Proposition \ref{PropPowerSeriesDiffAd} and Theorem \ref{ThmExistenceHomological}.
There exist finitely many $\beta\in\NN^\ell$, $j\in \NN$, $\delta_{\beta,j}[s]\in D_{A | \KK }[s]$, and $g_{\beta, j}\in  A\llbracket y_1,\ldots, y_\ell\rrbracket$ such that
$$
\delta(s) =\sum_{\beta,j} g_{\beta,j} \delta_{\beta,j}(s) \frac{\partial^\beta}{\partial y^\beta}
$$
because $D_{A\llbracket y_1,\ldots, y_\ell\rrbracket | \KK }$ is generated by derivations by Remark~\ref{RmkDiffAdGenDer}, and by the description of 
$\Der_{A\llbracket y_1,\ldots, y_\ell\rrbracket|\KK}$ in the proof of Proposition \ref{PropPowerSeriesDiffAd}.
Then, there exists $h_{\alpha,\beta,j}\in A$ such that $ g_{\beta,j}=\sum_{\alpha\in\NN^\ell} h_{\alpha,\beta,j}y^\alpha$.
Then,
$$
\delta(s) =\sum_{\beta,j}  \sum_{\alpha\in\NN^\ell} h_{\alpha,\beta,j}  \delta_{\beta,j}(s)  y^\alpha    \frac{\partial^\beta}{\partial y^\beta}.
$$
We have that
\begin{align*}
b(s) \boldsymbol{f^s}&=\delta(s) f  \boldsymbol{f^s}\\
&=    \sum_{\beta,j}  \sum_{\alpha\in\NN^\ell} h_{\alpha,\beta,j}y^\alpha \delta_{\beta,j}(s)   \frac{\partial^\beta}{\partial y^\beta}     f  \boldsymbol{f^s}\\
&=  \sum_{\beta,j}  \sum_{\alpha\in\NN^\ell} h_{\alpha,\beta,j}\delta_{\beta,j}(s)  y^\alpha \frac{\partial^\beta}{\partial y^\beta}     f  \boldsymbol{f^s}.
\end{align*}
After specializing for $t\in \NN$,
we have that
$$
b(t) f^t=  \sum_{\beta,j}  \sum_{\alpha\in\NN^\ell} h_{\alpha,\beta,j}\delta_{\beta,j}(t)  y^\alpha \frac{\partial^\beta}{\partial y^\beta}     f^{t+1}.
$$
Then,
$$
 \sum_{\beta,j}  \sum_{|\alpha|\neq |\beta|-1} h_{\alpha,\beta,j}\delta_{\beta,j}(t)  y^\alpha \frac{\partial^\beta}{\partial y^\beta}     f^{t+1}=0.
 $$
 by comparing the degree in $y_1,\ldots,y_\ell$.
 Then,
 $$
  \sum_{\beta,j}  \sum_{|\alpha|\neq |\beta|-1} h_{\alpha,\beta,j}\delta_{\beta,j}(s)  y^\alpha \frac{\partial^\beta}{\partial y^\beta}     f  \boldsymbol{f^s}=0.
 $$
 We have that 
$$
 \tilde{\delta}(s)  =   \sum_{\beta,j}  \sum_{|\alpha|= |\beta|-1} h_{\alpha,\beta,j}\delta_{\beta,j}(s)  y^\alpha \frac{\partial^\beta}{\partial y^\beta}.
 $$
 satisfies the functional equation and belongs to $D_{A[y_1,\ldots,y_\ell ] | \KK}[s]$.
 Then, the Bernstein-Sato polynomial of $\fa$ exists.
 \end{proof}
 
 \subsection{Bernstein-Sato polynomial of general ideals revisited}\label{SubGeneralIdeals2}
 In this subsection we  review the original definition of Bernstein-Sato polynomial of an ideal given by Budur, Musta\c{t}\u{a}, and Saito \cite{BMS2006a}. Indeed they provide two equivalent approaches depending on the ring of differential operators we are working with.

Let $\KK$ a field of characteristic zero, $A$ be a regular  $\KK$-algebra, and  let $F=f_1,\dots, f_\ell$ be a set of generators of an ideal $\fa\subseteq A$. Let 
$S=\{s_{ij}\}_{1\leq i,j\leq \ell}$ be a new set of variables satisfying the following relations:
\begin{itemize}
\item[(i)] $s_{ii}=s_i$ for $i=1,\dots, \ell$.
\item[(ii)] $[s_{ij}, s_{k\ell}] = \delta_{jk}s_{i\ell} - \delta_{i\ell}s_{kj}$,
\end{itemize}
where $\delta_{ij}$ is the Kronecker's delta function. Then we consider the ring $\KK\langle S \rangle$ generated by $S$ and $D_{A|\KK}\langle S \rangle:= D_{A|\KK} \otimes_{\KK} \KK \langle S \rangle$.

In this setting we have the following Bernstein-Sato type functional equation.


\begin{definition}
Let $\KK$ be a field of characteristic zero and $A$ a regular $\KK$-algebra.
	A \emph{Bernstein-Sato functional equation} in $D_{A|\KK}\langle S \rangle$ for  $F= f_1,\dots, f_\ell$ is an equation of the form
	\[ \sum_{i=1}^\ell \delta_i(S) f_i  f_1^{s_1} \cdots f_\ell^{s_\ell}= b(s_1+ \cdots + s_\ell) f_1^{s_1} \cdots f_\ell^{s_\ell} \]
	where $\delta_i(S)\in D_{A|\KK}\langle S \rangle$  and $b(s)\in \KK[s]$. 
\end{definition}

\begin{definition}
Let $\KK$ be a field of characteristic zero and $A$ a regular $\KK$-algebra.  Let $F=f_1,\dots, f_\ell$ be a set of generators of an ideal $\fa\subseteq A$. The Bernstein-Sato polynomial $b_{\mathfrak{a}} (s)$ of $\mathfrak{a}$ is the monic polynomial of smallest degree satisfying a Bernstein-Sato functional equation in $D_{A|\KK}\langle S \rangle$.
\end{definition}

 Budur, Musta\c{t}\u{a}, and Saito  proved the existence of such Bernstein-Sato polynomial. 
 Moreover, they also proved that it does not depend on the set of generators of the ideal so it 
is well-defined (see \cite[Theorem 2.5]{BMS2006a}).

After a convenient shifting we can define the Bernstein-Sato polynomial of an algebraic variety.

\begin{theorem}[{\cite{BMS2006a}}]
 Let $Z(\mathfrak{a}) \subseteq  \CC^d$ be the closed variety defined by an ideal
$\mathfrak{a}\subseteq A$ and $c$ be the codimension of $Z(\mathfrak{a})$ in $\CC^d$. Then 
$$b_{Z(\mathfrak{a})} (s) := b_{\mathfrak{a}} (s - c)$$ depends only on the affine scheme $Z(\mathfrak{a})$ and not on $\mathfrak{a}$.
\end{theorem}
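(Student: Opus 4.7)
The claim is that if $\mathfrak{a}_1 \subseteq A_1 := \CC[x_1,\dots,x_{d_1}]$ and $\mathfrak{a}_2 \subseteq A_2 := \CC[y_1,\dots,y_{d_2}]$ define isomorphic affine schemes $Z$ with codimensions $c_1,c_2$, then $b_{\mathfrak{a}_1}(s-c_1) = b_{\mathfrak{a}_2}(s-c_2)$. My plan is to pass to a common refinement, namely the graph embedding $Z \hookrightarrow \CC^{d_1+d_2}$ with defining ideal
\[ \mathfrak{a}_{12} \;=\; \mathfrak{a}_1(A_1 \otimes A_2) + (y_1 - h_1(x),\dots,y_{d_2} - h_{d_2}(x)) \]
in $A_1 \otimes A_2$, where $h_j \in A_1$ lifts the image of $y_j$ under the isomorphism $A_2/\mathfrak{a}_2 \cong A_1/\mathfrak{a}_1$. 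The polynomial change of variables $y_j \mapsto y_j + h_j(x)$, together with the well-definedness in Proposition~\ref{PropWellDefMM}, reduces $\mathfrak{a}_{12}$ to $\mathfrak{a}_1 A_1[y_1,\dots,y_{d_2}] + (y_1,\dots,y_{d_2})$, which is obtained from $\mathfrak{a}_1$ by adjoining $d_2$ vanishing indeterminates; symmetrically it is obtained from $\mathfrak{a}_2$ by adjoining $d_1$ vanishing indeterminates.

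The theorem then reduces to the following single-variable key lemma: for any ideal $\mathfrak{a} \subseteq A$ and fresh indeterminate $y$,
\[ b_{\mathfrak{a} A[y] + (y)}(s) \;=\; b_{\mathfrak{a}}(s+1). \]
Granting this and iterating yields $b_{\mathfrak{a}_{12}}(s) = b_{\mathfrak{a}_1}(s + d_2) = b_{\mathfrak{a}_2}(s + d_1)$. Since the codimension of $Z$ inside $\CC^{d_1+d_2}$ is $c_{12} = c_1 + d_2 = c_2 + d_1$, substituting $s \mapsto s - c_{12}$ gives
\[ b_{\mathfrak{a}_1}(s - c_1) \;=\; b_{\mathfrak{a}_{12}}(s - c_{12}) \;=\; b_{\mathfrak{a}_2}(s - c_2), \]
which is the statement of the theorem.

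To prove the key lemma, I would invoke Musta\c{t}\u{a}'s formulation from Subsection~\ref{SubNonPrincipalDiffAd}: $b_\mathfrak{a}(s) = \tilde{b}_f(s)$ for $f = \sum_{i=1}^\ell f_i z_i$, the universal linear form in generators $f_1,\dots,f_\ell$ of $\mathfrak{a}$. Augmenting the generators by $y$ produces the universal form $F := f + y z_{\ell+1}$, so the key lemma becomes the suspension-type identity $\tilde{b}_{f + yz}(s) = \tilde{b}_f(s+1)$, where $y$ and $z$ are variables not occurring in $f$. The backbone is the elementary computation
\[
 \partial_y \partial_z (f+yz)^{s+1} \;=\; (s+1)^2 (f+yz)^s \,-\, s(s+1)\, f\, (f+yz)^{s-1},
\]
obtained by differentiating $(f+yz)^{s+1}$ and substituting $yz = F - f$; this lets one trade powers of $f$ for powers of $F$ at the cost of an $s$-shift. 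Combining this trade with a functional equation for $f$ produces a functional equation for $F$ whose $b$-polynomial divides $b_f(s+1)$, and specializing a functional equation for $F$ to the slice $y = z = 0$ gives the reverse divisibility. The principal technical obstacle lies in this suspension identity, most delicately in the reverse divisibility, where one must control the $\partial_y, \partial_z$-terms upon specialization while tracking the factor of $s+1$ coming from $\partial_y\partial_z(yz) = 1$; a conceptually cleaner but heavier route passes through the Thom-Sebastiani behavior of the Kashiwara-Malgrange $V$-filtration along the hypersurface $F=0$ under adding a rank-$2$ Morse quadratic, which is essentially the viewpoint of the original Budur-Musta\c{t}\u{a}-Saito argument.
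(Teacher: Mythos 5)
The paper itself states this theorem without proof, quoting \cite{BMS2006a}, so the comparison is with the original argument of Budur--Musta\c{t}\u{a}--Saito. Your reduction is exactly their skeleton, and it is correct as far as it goes: the graph ideal $\mathfrak{a}_{12}$ is the kernel of the canonical surjection $A_1\otimes A_2\to \mathcal{O}(Z)$, hence admits both presentations you use; the coordinate changes $y_j\mapsto y_j+h_j(x)$, $x_i\mapsto x_i+g_i(y)$ are ambient automorphisms under which $b_{\mathfrak{a}}$ is invariant; and the codimension bookkeeping $c_{12}=c_1+d_2=c_2+d_1$ turns the iterated key lemma $b_{\mathfrak{a}A[y]+(y)}(s)=b_{\mathfrak{a}}(s+1)$ into the statement of the theorem. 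Via the survey's Musta\c{t}\u{a}-style definition, that lemma is indeed equivalent to the rank-two suspension identity $\tilde{b}_{g+uv}(s)=\tilde{b}_g(s+1)$ for $g=\sum_i f_iz_i$ and fresh variables $u,v$.

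The genuine gap is that this key lemma --- the entire content of the theorem --- is not proved, and the elementary route you sketch does not close as described. The identity $\partial_u\partial_v F^{s+1}=(s+1)^2F^s-s(s+1)fF^{s-1}$ (with $F=g+uv$) is correct, but ``combining this trade with a functional equation for $g$'' has no mechanism behind it: the equation $\delta(s)g^{s+1}=b_g(s)g^s$ lives in $A'_g[s]\boldsymbol{g^s}$, and there is no map transporting it into $A'[u,v]_F[s]\boldsymbol{F^s}$; when operators of $D_{A'}[s]$ act on $\boldsymbol{F^s}$, the $x$- and $z$-derivatives produce powers of $F^{-1}$, not of $g^{-1}$, so the functional equation for $g$ cannot be ``applied'' to $\boldsymbol{F^s}$. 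What your identity actually yields, after shifting the exponent, is a relation such as $(s+1)(s+2)\,g\,\boldsymbol{F^s}\in D[s]\,F\boldsymbol{F^s}$, which is far from either divisibility $\tilde{b}_F(s)\mid\tilde{b}_g(s+1)$ or its converse; and the ``specialization to the slice $u=v=0$'' in the reverse direction is not a $D$-linear operation on these modules and does not by itself produce the required shift by one. So both directions of the suspension identity remain unproven in your write-up. The identity is a true theorem --- it follows from Saito's microlocal $b$-function (equivalently the Thom--Sebastiani behaviour of the $V$-filtration under adding a nondegenerate rank-two quadratic), or one can instead cite directly the lemma of \cite{BMS2006a} that adjoining a vanishing coordinate shifts $b_{\mathfrak{a}}$ by one, which is proved there with the $V$-filtration/$s_{ij}$ formalism and would make your detour through the principal-ideal reduction unnecessary. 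With such a citation your argument is complete; without it, the proposal has a hole precisely at its crux.
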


In this setting we also have that the Bernstein-Sato functional equation in $D_{A|\KK}\langle S \rangle$ is  an equality in $A_f[s_1,\dots, s_p] \boldsymbol{f^s}$.  The $D_{A|\KK}\langle S \rangle$-module structure on this module is given by
$$ s_{ij} \cdot a(s_1,\dots , s_p) \boldsymbol{f^s} := s_i a(s_1,\dots, s_i-1, \dots, s_j +1, \dots , s_p) \frac{f_j}{f_i}\boldsymbol{f^s}$$
where $ a(s_1,\dots , s_p) \in A_f[s_1,\dots, s_p]$. The $D_{A|\KK}\langle S \rangle$-submodule generated by  $\boldsymbol{f^s}$ has a presentation
\[ {D_{A|\KK}\langle S \rangle\boldsymbol{f^s}} \cong  \frac{D_{A|\KK}\langle S \rangle}{\mathrm{Ann}_{D\langle S \rangle}(\boldsymbol{f^s}) }, \] 
and thus
\[ \frac{D_{A|\KK}\langle S \rangle\boldsymbol{f^s}}{D_{A|\KK}\langle S \rangle(f_1,\dots, f_p) \boldsymbol{f^s}} \cong  \frac{D_{A|\KK}\langle S \rangle}{\mathrm{Ann}_{D\langle S \rangle}(\boldsymbol{f^s}) + D_{A|\KK}\langle S \rangle (f_1,\dots, f_p) }. \]
We have an analogue of \Cref{prop:othercharsBS} that is used in order to provide algorithms for the computations of these Bernstein-Sato polynomials \cite{AndresLevMM}.

\begin{proposition}\label{prop:othercharsBSI}
The Bernstein-Sato polynomial of an ideal  $\mathfrak{a}\subseteq A$ generated by $F=f_1,\dots, f_\ell$  is the monic generator of the ideal $$\big( b_{\fa}(s_1 +\dots + s_p)\big)= \KK[s_1 +\dots + s_p] \cap \big(\mathrm{Ann}_{D\langle S \rangle}(\boldsymbol{f^s}) + D_{A|\KK}\langle S \rangle (f_1,\dots, f_p)\big).$$
\end{proposition}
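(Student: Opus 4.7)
The plan is to mimic the proof of Proposition~\ref{prop:othercharsBS} from the principal case, working in $D_{A|\KK}\langle S\rangle$ and using the presentation $D_{A|\KK}\langle S\rangle\boldsymbol{f^s} \cong D_{A|\KK}\langle S\rangle/\mathrm{Ann}_{D\langle S\rangle}(\boldsymbol{f^s})$ that was recorded just before the statement. Write $s := s_1+\cdots+s_\ell$ and let
\[ J \;:=\; \KK[s]\,\cap\,\bigl(\mathrm{Ann}_{D\langle S\rangle}(\boldsymbol{f^s}) + D_{A|\KK}\langle S\rangle(f_1,\ldots,f_\ell)\bigr). \]
First I would observe that the set
\[ I \;:=\; \{b(s)\in\KK[s] \mid \exists\,\delta_1(S),\ldots,\delta_\ell(S)\in D_{A|\KK}\langle S\rangle \text{ with } \textstyle\sum_i \delta_i(S) f_i\boldsymbol{f^s} = b(s)\boldsymbol{f^s}\} \]
is an ideal of $\KK[s]$: given two functional equations, a $\KK[s]$-linear combination of them is again a functional equation, exactly as in the paragraph preceding Definition~\ref{DefBSPoly}. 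By definition $b_{\fa}(s)$ is the monic generator of $I$, so it suffices to prove $I=J$.

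For $I\subseteq J$, take $b(s)\in I$ with witnesses $\delta_i(S)$. Then $b(s)\boldsymbol{f^s} - \sum_i\delta_i(S)f_i\boldsymbol{f^s}=0$ in $D_{A|\KK}\langle S\rangle\boldsymbol{f^s}$, which under the presentation above translates to $b(s) - \sum_i \delta_i(S) f_i \in \mathrm{Ann}_{D\langle S\rangle}(\boldsymbol{f^s})$, giving $b(s)\in J$. For $J\subseteq I$, pick $b(s)\in J$ and write $b(s) = P + \sum_i Q_i f_i$ with $P\in\mathrm{Ann}_{D\langle S\rangle}(\boldsymbol{f^s})$ and $Q_i\in D_{A|\KK}\langle S\rangle$; applying both sides to $\boldsymbol{f^s}$ yields $b(s)\boldsymbol{f^s} = \sum_i Q_i f_i\boldsymbol{f^s}$, which is a Bernstein-Sato functional equation with $\delta_i(S):=Q_i$, so $b(s)\in I$.

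The only point that needs independent checking is that the definition of $J$ really does make sense as an ideal of $\KK[s]$, i.e., that $s=s_1+\cdots+s_\ell$ is central in $D_{A|\KK}\langle S\rangle$. This follows from the defining relations of the $s_{ij}$: elements of $\KK\langle S\rangle$ commute with $D_{A|\KK}$ by the tensor construction, and for the interaction of $s$ with $s_{k\ell}$ one computes
\[ [s,s_{k\ell}] \;=\; \sum_{i=1}^\ell [s_i,s_{k\ell}] \;=\; \sum_{i=1}^\ell \bigl(\delta_{ik}s_{i\ell}-\delta_{i\ell}s_{ki}\bigr) \;=\; s_{k\ell}-s_{k\ell}\;=\;0. \]
Apart from this essentially formal verification, the proof is entirely analogous to the principal-variable case; I do not expect any genuine obstacle, since all the heavy lifting (existence of $b_\fa$ and well-definedness independent of generators) has already been carried out in Subsection~\ref{SubNonPrincipalDiffAd}.
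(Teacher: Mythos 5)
Your argument is correct and is exactly the route the paper intends: the paper states this proposition without proof, as an immediate consequence of the presentation $D_{A|\KK}\langle S\rangle\boldsymbol{f^s}\cong D_{A|\KK}\langle S\rangle/\mathrm{Ann}_{D\langle S\rangle}(\boldsymbol{f^s})$ and the displayed isomorphism for the quotient by $D_{A|\KK}\langle S\rangle(f_1,\dots,f_\ell)\boldsymbol{f^s}$, which is precisely the two-inclusion argument you spell out. Your extra check that $s_1+\cdots+s_\ell$ commutes with the $s_{k\ell}$ is a correct (and harmless) verification that the statement in terms of the ideal of $\KK[s_1+\cdots+s_\ell]$ makes sense.
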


Budur, Musta\c{t}\u{a}, and Saito \cite[Section 2.10]{BMS2006a} gave an equivalent definition of Bernstein-Sato polynomial of $\fa$ using a functional equation in $D_{A|\KK}[s_1,\dots, s_\ell]$  instead of $D_{A|\KK}\langle S \rangle$.

\begin{theorem}[\cite{BMS2006a}]
Let $\KK$ a field of characteristic zero, $A$ be a regular  $\KK$-algebra, and $\fa\subseteq A$ be a nonzero ideal.
Let $F=f_1,\ldots,f_\ell$ be a set of generators for $\fa$.
Then, $b_{\fa}(s)\in\KK[s]$ is the monic polynomial of least degree, $b(s)$ such that 
$$
b(s_1+\cdots+s_\ell) f^{s_1}_1\cdots f^{s_\ell}_\ell\in \sum_{|\alpha|=1} D_{R|\KK}[s_1,\ldots,s_\ell] \cdot \prod_{\alpha_i} \binom{s_i}{-\alpha_i} 
f^{s_1+\alpha_1}_1\cdots f^{s_\ell+\alpha_\ell}_\ell,
$$
where $\alpha=(\alpha_1,\ldots,\alpha_\ell)\in\ZZ^\ell$, $|\alpha|=\alpha_1+\cdots+\alpha_\ell,$
$\binom{s_i}{m}= \frac{1}{m!} \prod^{m-1}_{j=0}(s_i-j).$
\end{theorem}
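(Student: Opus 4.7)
The plan is to show that the two formulations define the same ideal of admissible polynomials $b(s)\in\KK[s]$, by translating back and forth via the action of the off-diagonal generators $s_{ij}$ on the common module $A_f[s_1,\ldots,s_\ell]\boldsymbol{f^s}$ described in Subsection \ref{SubGeneralIdeals2}. The main technical input is an explicit computation of how a monomial in the $s_{ij}$'s acts on $\boldsymbol{f^s}$.

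First, I would compute the action of a word $s_{i_1j_1}\cdots s_{i_kj_k}$ on $\boldsymbol{f^s}$. Using the rule $s_{ij}\cdot a(s)\boldsymbol{f^s} = s_i\, a(\ldots,s_i-1,\ldots,s_j+1,\ldots)\,(f_j/f_i)\boldsymbol{f^s}$ and iterating, one obtains a product of descending factorials $\prod_p s_p(s_p-1)\cdots(s_p-m_p+1) = \prod_p m_p!\binom{s_p}{m_p}$, where $m_p$ counts how many times the index $p$ appears as the first subscript, multiplied by a rational factor $\prod_p f_p^{\alpha_p}$ with $\alpha_p = n_p - m_p$ (and $n_p$ counts appearances of $p$ as the second subscript), so that $|\alpha|=0$. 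These descending factorials are exactly the ingredients producing the binomial factors $\binom{s_p}{-\alpha_p}$ appearing in the statement, up to absorbing excess into a $D[s]$-coefficient via $\binom{s_p}{m_p}=\binom{s_p}{-\alpha_p}\cdot P(s_p)$ whenever $m_p \geq -\alpha_p$.

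Second, I would use this to translate the $D_{A|\KK}\langle S\rangle$-functional equation $\sum_i\delta_i(S)f_i\boldsymbol{f^s} = b(s_1+\cdots+s_\ell)\boldsymbol{f^s}$ into the stated form. Expanding each $\delta_i(S)$ as a $D_{A|\KK}[s_1,\ldots,s_\ell]$-linear combination of monomials in the $s_{jk}$'s and applying the calculation above, together with the additional $+1$ shift coming from $f_i$, places $b(\sum s_p)\boldsymbol{f^s}$ inside $\sum_{|\alpha|=1} D_{A|\KK}[s_1,\ldots,s_\ell]\cdot\prod_p\binom{s_p}{-\alpha_p}\prod_p f_p^{s_p+\alpha_p}\boldsymbol{f^s}$. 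Conversely, given $\alpha\in\ZZ^\ell$ with $|\alpha|=1$, I would choose an index $k^*$ with $\alpha_{k^*}\geq 1$, write $\prod_p f_p^{s_p+\alpha_p} = f_{k^*}\prod_p f_p^{s_p+\alpha'_p}$ with $|\alpha'|=0$, and realize the shift $\alpha'$ by an explicit product of $s_{ij}$'s acting on $f_{k^*}\boldsymbol{f^s}$. Because each $s_p = s_{pp}$ lies in $D\langle S\rangle$, any polynomial $D_{A|\KK}[s_1,\ldots,s_\ell]$-coefficient is automatically in $D\langle S\rangle$, so the resulting element lies in $D\langle S\rangle(f_1,\ldots,f_\ell)\boldsymbol{f^s}$.

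The main obstacle is the combinatorial bookkeeping in the converse direction: for an arbitrary $\alpha$ with $|\alpha|=1$, one must choose a sequence of $s_{pq}$'s whose iterated action produces exactly the binomial combination $\prod_p\binom{s_p}{-\alpha_p}$ with the correct shift. A natural recipe is to apply $s_{pq}$ once for each unit of decrement desired at position $p$ (paired with a corresponding increment at $q$), then invoke the commutation relations $[s_{ij},s_{kl}]=\delta_{jk}s_{il}-\delta_{il}s_{kj}$ to rearrange the result into a normal form; the lower-order correction terms introduced by these commutators involve products with strictly fewer $s_{ij}$'s and can be handled by induction on the total number of factors, with the remainder absorbed into the $D[s]$-coefficient.
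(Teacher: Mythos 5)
The survey does not actually prove this theorem; it is quoted from \cite[Section~2.10]{BMS2006a}, so your proposal can only be measured against that source, and your overall route---translating between the $D_{A|\KK}\langle S\rangle$-functional equation and the binomial-shift membership by computing how words in the $s_{ij}$ act on $A_f[s_1,\ldots,s_\ell]\boldsymbol{f^s}$---is exactly the intended translation. Your converse direction is in fact simpler than you fear: given $\alpha\in\ZZ^\ell$ with $|\alpha|=1$, pick $k^*$ with $\alpha_{k^*}\geq 1$, set $\alpha'=\alpha-e_{k^*}$, and realize $\alpha'$ by a word that uses only operators $s_{pq}$ with $\alpha'_p<0$ and $\alpha'_q>0$. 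Since then no index ever occurs both as a first and as a second subscript, the iterated action is exact: applied to $f_{k^*}\boldsymbol{f^s}$ the word gives precisely $\prod_{\alpha_p<0}(-\alpha_p)!\binom{s_p}{-\alpha_p}\,f_1^{s_1+\alpha_1}\cdots f_\ell^{s_\ell+\alpha_\ell}$, with no commutator corrections, so the normal-form induction in your last paragraph is unnecessary; placing the coefficient $\delta_\alpha(s)\in D_{A|\KK}[s_1,\ldots,s_\ell]\subseteq D_{A|\KK}\langle S\rangle$ in front then puts each summand into $D_{A|\KK}\langle S\rangle(f_1,\ldots,f_\ell)\boldsymbol{f^s}$.

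The genuine gap is in the forward direction: your key formula for the iterated action is false as stated. The coefficient of a word is not $\prod_p m_p!\binom{s_p}{m_p}$; already $s_{12}s_{21}\cdot\boldsymbol{f^s}=s_1(s_2+1)\boldsymbol{f^s}$, not $s_1s_2\boldsymbol{f^s}$. In general the factor contributed by an occurrence of $s_{i_tj_t}$ is $s_{i_t}$ shifted by $-1$ for each later-applied operator with first subscript $i_t$ and by $+1$ for each later-applied operator with second subscript $i_t$; when an index plays both roles the pure descending factorial disappears, and with it your absorption step $\binom{s_p}{m_p}=\binom{s_p}{-\alpha_p}P(s_p)$. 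What you actually need, and what is true, is the divisibility statement: if the word has net exponent shift $\alpha$ (so $|\alpha|=0$), its polynomial coefficient is divisible by $\prod_{\alpha_p<0}\binom{s_p}{-\alpha_p}$. To see this, fix $p$ with $\alpha_p<0$ and write the $m_p$ factors involving $s_p$, in order of application, as $s_p-d_1,\ldots,s_p-d_{m_p}$: one checks $d_1\geq m_p-n_p-1$, $d_{m_p}\leq 0$, and $d_{r+1}\geq d_r-1$, so the offsets sweep through every integer $0,1,\ldots,-\alpha_p-1$ and the product is divisible by $(-\alpha_p)!\binom{s_p}{-\alpha_p}$. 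Since in the functional equation the word acts on $f_i\boldsymbol{f^s}$, the total shift is $\alpha+e_i$ with $|\alpha+e_i|=1$ and only shorter descending factorials are required, so the membership follows. With this corrected lemma (together with the routine normal-ordering of an element of $D_{A|\KK}\langle S\rangle$ as a $D_{A|\KK}[s_1,\ldots,s_\ell]$-combination of words in the off-diagonal $s_{ij}$, which you use implicitly), your argument goes through.
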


Musta\c{t}\u{a} \cite[Theorem 1.1]{Mustata2019} uses this characterization to show that $b_{\fa}(s)$ coincides with the reduced Bernstein-Sato polynomial of $f_1y_1+\cdots+f_\ell y_\ell \in A[y_1,\dots , y_\ell]$.

One may be tempted to consider a general element $\lambda_1f_1+\cdots+ \lambda_\ell f_\ell \in \fa$ whose log-resolution has the same numerical data as the log-resolution of the ideal $\fa$. 

\begin{example}
Let $\fa=(x^4,xy^2,y^3) \subseteq \CC[x,y]$ be a monomial ideal and consider a general element of the ideal $g= x^4+xy^2+y^3$. 
The roots of the Bernstein-Sato polynomial  $b_\fa(s)$ are
$$\left\{ -\frac{5}{8}, -\frac{2}{3}, -\frac{3}{4}, -\frac{7}{8}, -1, -\frac{9}{8}, -\frac{5}{4}, -\frac{4}{3}, -\frac{11}{8}, -\frac{3}{2}\right\}, $$
with $-1$ being a root with multiplicity $2$.
Meanwhile, the roots of the reduced Bernstein-Sato polynomial $\tilde{b}_g(s)$  are 
$$\left\{ -\frac{5}{8},  -\frac{7}{8}, -1, -\frac{9}{8},  -\frac{11}{8}\right\} $$
The exceptional part of the log-resolution divisor $F_\pi$ in both cases is of the form $3 E_1 + 4 E_2 + 8 E_3$. The roots of $\tilde{b}_g(s)$ are only contributed by the rupture divisor $E_3$ but this is not the case for $b_\fa(s)$.
\end{example}

\subsubsection{Monomial ideals}  Let $\mathfrak{a} \subseteq \CC[x_1,\dots, x_d]$ be a monomial ideal. Let  $P_\mathfrak{a} \subseteq \RR^d_{\geq 0}$ be the Newton polyhedron associated to  $\mathfrak{a}$ which is the convex hull of the semigroup
$$\Gamma_{\mathfrak{a}} =\{ a=(a_1,\dots, a_d )\in \NN^d \hskip 2mm | \hskip 2mm x_1^{a_1} \cdots x_d^{a_d} \in \mathfrak{a}\}.$$
For any face $Q$ of $P_\mathfrak{a}$ we define:
\begin{itemize}
\item[(i)] $M_Q$ the subsemigroup of $\ZZ^d$ generated  by $a-b$ with $a \in \Gamma_{\mathfrak{a}}$ and $b\in \Gamma_{\mathfrak{a}} \cap Q$.
\item[(ii)] $M'_Q:= c+ M_Q$ for $c\in \Gamma_{\mathfrak{a}} \cap Q$. 
\end{itemize}
 $M'_Q$ is a subset of $M_Q$ that is independent of the choice of $c$. For a face $Q$ of $P_\mathfrak{a} $ not contained in a coordinate hyperplane we consider a function $L_Q: \RR^d \rightarrow \RR$ with rational coefficients such that $L_Q=1$ on $Q$. Set 
 $$R_Q=\{ L_Q(a) \hskip 2mm | \hskip 2mm  a \in ((1,\dots , 1) + (M_Q \smallsetminus M'_Q) ) \cap V_Q \},$$
 where $V_Q$ is the linear subspace generated by $Q$. 
 
 Budur, Musta\c{t}\u{a}, and Saito \cite{BMS2006c} gave a closed formula for the roots of the Bernstein-Sato polynomial of $\mathfrak{a}$ in terms of these sets $R_Q$. 
 
 \begin{theorem}[\cite{BMS2006c}]
  Let $\mathfrak{a} \subseteq \CC[x_1,\dots, x_d]$ be a monomial ideal. Let $\rho_{\mathfrak{a}}$ be the set of roots of $b_{\mathfrak{a}}(-s)$. Then
$$\rho_{\mathfrak{a}} = \bigcup_Q R_Q$$  where the union is over the faces $Q$ of $P_\mathfrak{a}$ not contained in coordinate hyperplanes.
 \end{theorem}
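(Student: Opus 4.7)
My plan is to use the characterization in \Cref{prop:othercharsBSI}: if we take $F = f_1, \ldots, f_\ell$ to be the minimal monomial generators $f_i = x^{a_i}$ of $\mathfrak{a}$, then $b_{\mathfrak{a}}(s_1+\cdots+s_\ell)$ is the monic generator of $\KK[s_1+\cdots+s_\ell] \cap (\mathrm{Ann}_{D\langle S\rangle}(\boldsymbol{f^s}) + D\langle S\rangle(f_1,\ldots,f_\ell))$, so the roots of $b_{\mathfrak{a}}(-s)$ are exactly the eigenvalues of the action of $-(s_1+\cdots+s_\ell)$ on the cokernel
\[
M \ =\ \frac{D_{A|\CC}\langle S\rangle \boldsymbol{f^s}}{D_{A|\CC}\langle S\rangle(f_1,\ldots,f_\ell)\boldsymbol{f^s}}.
\]
The core idea is to exploit the torus $T = (\CC^*)^d$, which acts on $\CC^d$ preserving $\mathfrak{a}$ and hence acting equivariantly on $A_f[s_1,\ldots,s_\ell]\boldsymbol{f^s}$ and on $M$. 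This produces a $\ZZ^d$-grading on $M$, which reduces the computation of eigenvalues of $s_1+\cdots+s_\ell$ to a weight-by-weight analysis.

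First I would compute, for each weight $b \in \ZZ^d$, the graded piece $M_b$. A direct computation shows that the weight-$b$ part of $D\langle S\rangle \boldsymbol{f^s}$ is generated by $x^b \boldsymbol{f^s}$ provided the vector $b$ lies (up to a shift by $(1,\ldots,1)$) in the semigroup generated by the exponent vectors of $\mathfrak{a}$, and is zero otherwise; the interchange operators $s_{ij}$ only shift the $s_i$ and divide/multiply by ratios $f_j/f_i$ of monomials, so they act within a single $T$-weight. Next I would show that $x^b \boldsymbol{f^s}$ vanishes modulo $D\langle S\rangle(f_1,\ldots,f_\ell)\boldsymbol{f^s}$ precisely when $b - (1,\ldots,1) \in M'_Q$, where $Q$ is the unique face of $P_{\mathfrak{a}}$ whose relative interior contains $b - (1,\ldots,1)$ (inside $V_Q$); in the opposite case, $b - (1,\ldots,1) \in M_Q \setminus M'_Q$ and $x^b \boldsymbol{f^s}$ represents a nonzero class in $M_b$.

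Finally I would analyze the action of $s_1+\cdots+s_\ell$ on each surviving $M_b$. The Euler-type relation coming from $L_Q$ (the affine form that equals $1$ on the face $Q$) gives, on the quotient $M$, an identity of the form
\[
(s_1+\cdots+s_\ell+1)\, x^b \boldsymbol{f^s} \ \equiv\ L_Q(b)\, x^b \boldsymbol{f^s} \qquad \pmod{D\langle S\rangle(f_1,\ldots,f_\ell)\boldsymbol{f^s}},
\]
because each $x_i\partial_i$ acting on $\boldsymbol{f^s}$ contributes $\sum_j (a_j)_i s_j$, which when paired against $L_Q$ collapses to multiplication by $L_Q$ evaluated at $b - (1,\ldots,1)$, shifted by $1$. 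Consequently, $-L_Q(b)$ is a root of $b_{\mathfrak{a}}(-s)$, and conversely every root arises this way, proving $\rho_{\mathfrak{a}} = \bigcup_Q R_Q$.

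The main obstacle is the combinatorial bookkeeping in the second step, where one must precisely identify when $x^b\boldsymbol{f^s}$ becomes a boundary and exhibit an explicit $D\langle S\rangle$-expression witnessing this exactly on the subsemigroup $M'_Q$; one must also verify minimality (not merely that each $L_Q(b)$ appears as some eigenvalue) by showing that distinct weights in $(1,\ldots,1) + (M_Q\setminus M'_Q) \cap V_Q$ contribute linearly independent generalized eigenvectors, so the minimal polynomial of $s_1+\cdots+s_\ell$ on $M$ is genuinely $\prod_Q \prod_{\lambda \in R_Q}(s+\lambda)$ without extra multiplicities that would shrink the root set.
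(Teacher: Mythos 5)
The survey itself states this theorem without proof, quoting it from Budur--Musta\c{t}\u{a}--Saito \cite{BMS2006c}, so there is no in-paper argument to compare against; your outline is in the spirit of the equivariant, weight-by-weight analysis that the original proof carries out, but as written it has genuine gaps rather than being a proof. First, your grading claim is off: for monomial generators $f_i=x^{a_i}$ one has $s_{ij}\cdot x^c\boldsymbol{f^s}=s_i\,x^{c+a_j-a_i}\boldsymbol{f^s}$, so $s_{ij}$ does \emph{not} act within a single $T$-weight; it is homogeneous of degree $a_j-a_i$. The $\ZZ^d$-grading on $M=D_{A|\CC}\langle S\rangle\boldsymbol{f^s}/D_{A|\CC}\langle S\rangle(f_1,\dots,f_\ell)\boldsymbol{f^s}$ survives, but this error feeds into your description of the graded pieces: the degree-$b$ part of $D_{A|\CC}\langle S\rangle\boldsymbol{f^s}$ is not the cyclic module on $x^b\boldsymbol{f^s}$ (nor zero otherwise); applying $\partial^u x^v$ and the $s_{ij}$ produces elements $q(s_1,\dots,s_\ell)\,x^b\boldsymbol{f^s}$ with nontrivial polynomial coefficients, so each weight piece has the form $I_b\,x^b\boldsymbol{f^s}$ for an ideal $I_b\subseteq\CC[s_1,\dots,s_\ell]$, the quotient is $M_b\cong I_b/J_b$, and the minimal polynomial of $s_1+\cdots+s_\ell$ on such a quotient is not read off from a single ``eigenvalue at $x^b\boldsymbol{f^s}$''. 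In particular weights outside your semigroup condition (e.g.\ negative ones reached by the $\partial_i$) must be shown not to contribute.

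Second, the key identity $(s_1+\cdots+s_\ell+1)\,x^b\boldsymbol{f^s}\equiv L_Q(b)\,x^b\boldsymbol{f^s}$ modulo $D_{A|\CC}\langle S\rangle(f_1,\dots,f_\ell)\boldsymbol{f^s}$ is asserted, and the heuristic you give for it does not work: if $L_Q(v)=\sum_i c_iv_i$, the associated Euler operator $E=\sum_i c_ix_i\partial_i$ satisfies $E\cdot x^b\boldsymbol{f^s}=\bigl(L_Q(b)+\sum_j L_Q(a_j)s_j\bigr)x^b\boldsymbol{f^s}$, and $L_Q(a_j)=1$ only for those generators whose exponents lie on the face $Q$; the terms with $L_Q(a_j)>1$ do not ``collapse'' to $\sum_j s_j$, and $E\cdot x^b\boldsymbol{f^s}$ is not visibly in the submodule in any case. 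Identifying exactly when $x^b\boldsymbol{f^s}$ dies in $M$ (your claim ``precisely when $b-(1,\dots,1)\in M'_Q$'') and extracting the eigenvalue $L_Q(b)$ on the complement $(1,\dots,1)+(M_Q\smallsetminus M'_Q)\cap V_Q$, together with the converse bound showing no other roots occur, is precisely the combinatorial core of \cite{BMS2006c}; you have flagged it as ``the main obstacle'' rather than resolved it. So what you have is a plausible plan whose decisive steps are missing, and two of its stated intermediate claims are false as written.
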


\subsubsection{Determinantal varieties} 
The theory of equivariant $D$-modules has been successfully used in recent years to study local cohomology modules of determinantal varieties.
These techniques have also been used by L\H{o}rincz, Raicu, Walther, and Weyman \cite{LRWW} to determine the Bernstein-Sato polynomial of the ideal of maximal minors of a generic matrix.

\begin{theorem}[\cite{LRWW}]
Let $X=(x_{ij})$ be a generic $m\times n$ matrix with $m \geq  n$.  Let $\mathfrak{a}_n \subseteq A = \CC[x_{ij}]$ be the ideal generated by the $n\times n$ minors of $X$. The Bernstein-Sato polynomials of the ideal $\mathfrak{a}_n$ and the corresponding variety are
$$b_{\mathfrak{a}_n}(s) =\prod_{\ell=m-n+1}^m\left( s+\ell \right).$$
$$b_{Z(\mathfrak{a}_n)}(s) =\prod_{\ell=0}^{n-1}\left( s+\ell \right).$$
\end{theorem}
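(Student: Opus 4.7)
The plan is to exploit the action of $G = GL_m(\CC) \times GL_n(\CC)$ on $\mathrm{Mat}_{m\times n}(\CC) = \Spec(A)$ by row and column multiplication. This action has finitely many orbits --- the rank loci $\{X : \mathrm{rank}(X) = k\}$ for $0 \leq k \leq n$, whose closures $Z_k$ have codimension $(m-k)(n-k)$ --- and each $n\times n$ minor is a $G$-semi-invariant, so $\mathfrak{a}_n$ is $G$-stable. Consequently the $D_{A|\CC}$-modules appearing in Proposition~\ref{prop:othercharsBSI} carry compatible $G$-equivariant structures, and the computation of $b_{\mathfrak{a}_n}(s)$ becomes a question about a quotient of $G$-equivariant holonomic $D_{A|\CC}$-modules, whose simple objects are classified by combinatorial data supported on the $Z_k$.

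First I would establish the divisibility $b_{\mathfrak{a}_n}(s) \,\big|\, \prod_{\ell = m-n+1}^{m}(s+\ell)$ by producing an explicit functional equation in $D_{A|\CC}\langle S\rangle$, modeled on the Cayley identity $\det(\partial_{ij})\cdot \det(X)^{s+1} = \prod_{\ell=1}^{n}(s+\ell)\det(X)^{s}$, which already settles the case $m=n$. For the general case I would form, for every size-$n$ row subset $I \subseteq \{1,\ldots,m\}$, the determinantal operator $\partial_I$ in the corresponding $\partial_{ij}$'s and pair it with the minor $f_I$; a Capelli-type identity expressing $\sum_I c_I(S)\,\partial_I\,f_I\,\boldsymbol{f^s} = \prod_{\ell=m-n+1}^{m}(s+\ell)\boldsymbol{f^s}$ would then produce the needed operator, with the extra factors $(s+\ell)$ for $\ell > n$ arising from the combinatorics of the $m-n$ unused rows in a Cauchy--Binet expansion. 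Granting this divisibility, the codimension shift $c = m - n + 1$ together with the identity $b_{Z(\mathfrak{a}_n)}(s) = b_{\mathfrak{a}_n}(s - c)$ immediately confines the roots of $b_{Z(\mathfrak{a}_n)}(s)$ to $\{0, -1, \ldots, -(n-1)\}$.

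The main obstacle is the opposite direction: showing that each $-\ell$ with $\ell \in \{m-n+1, \ldots, m\}$ is actually a simple root of $b_{\mathfrak{a}_n}(s)$, so that no factor in the product above can be cancelled. For this I would analyze the $D_{A|\CC}\langle S\rangle$-module $M = D_{A|\CC}\langle S\rangle \boldsymbol{f^s}/D_{A|\CC}\langle S\rangle \mathfrak{a}_n \boldsymbol{f^s}$, whose minimal polynomial of $s = s_1 + \cdots + s_\ell$ equals $b_{\mathfrak{a}_n}(s)$. Using the classification of simple $G$-equivariant $D_{A|\CC}$-modules on matrix space, I would decompose $M$ as a successive extension of simple equivariant factors indexed by the strata $Z_0, Z_1, \ldots, Z_{n-1}$, and verify --- via an equivariant character calculation, or by exhibiting a BGG-type resolution of each simple factor --- that $s$ acts on the factor attached to $Z_k$ by a distinct integer in $\{-m,-m+1,\ldots,-(m-n+1)\}$, producing the $n$ required simple roots. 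The upper bound is a concrete determinantal calculation once the correct Capelli identity is in hand; the lower bound is the heart of the matter, since it demands the full classification of simple $G$-equivariant $D$-modules on matrix space together with a nonvanishing argument for each composition factor of $M$.
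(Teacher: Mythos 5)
First, a point of comparison: this survey does not actually prove the statement --- it records the theorem with a citation to \cite{LRWW}, so there is no in-paper argument to measure you against. Your outline is broadly in the spirit of what the cited paper does (exploit the $GL_m\times GL_n$-action with finitely many orbits and the classification of simple equivariant $D$-modules on the space of matrices), so the overall strategy is reasonable; but as written it is a plan rather than a proof, and both of its pivotal steps are left unestablished.

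Concretely: (1) the upper bound rests on a ``Capelli-type identity'' $\sum_I c_I(S)\,\partial_I f_I\,\boldsymbol{f^s}=\prod_{\ell=m-n+1}^m(s+\ell)\,\boldsymbol{f^s}$ that you only posit. The Cayley identity settles $m=n$, but producing a functional equation in $D_{A|\CC}\langle S\rangle$ (or in the $D_{A|\KK}[s_1,\dots,s_\ell]$ formulation with the binomial shifts) for the full set of maximal minors, with the shifted factors $(s+\ell)$, $\ell>n$, appearing correctly, is precisely the computation that has to be carried out; ``arising from the combinatorics of the unused rows in a Cauchy--Binet expansion'' is not an argument. (2) The lower bound, which you yourself call the heart of the matter, is entirely missing. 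The module $M=D_{A|\CC}\langle S\rangle\boldsymbol{f^s}/D_{A|\CC}\langle S\rangle\mathfrak{a}_n\boldsymbol{f^s}$ is a module over $D\langle S\rangle$, not a priori an object of the category of equivariant holonomic $D_{A|\CC}$-modules, so it is not automatic that it has a finite equivariant composition series indexed by \emph{all} the rank strata $Z_0,\dots,Z_{n-1}$, nor that $s=s_1+\cdots+s_\ell$ acts by a scalar (rather than with nontrivial nilpotent part) on each factor, nor that those scalars are exactly $-(m-n+1),\dots,-m$. Each of these is a nonvanishing/identification statement that requires the explicit character and composition-factor computations of \cite{LRWW} (or an equivalent substitute); asserting ``verify via an equivariant character calculation'' does not discharge it. Note also that once the upper bound is in place, you only need each $-\ell$ to be a root (the candidate polynomial is squarefree), so ``simple root'' is a red herring; but even this weaker statement is exactly what your sketch does not prove.
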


They also provided a formula for sub-maximal Pfaffians.

\begin{theorem}[\cite{LRWW}]
Let $X=(x_{ij})$ be a generic $(2n+1)\times(2n+1)$ skew-symmetric matrix, i.e $x_{ii} = 0, x_{ij} = -x_{ji}$.  Let $\mathfrak{b}_{2n} \subseteq A = \CC[x_{ij}]$ be the ideal generated by the $2n \times 2n$ Pfaffians of $X$. The Bernstein-Sato polynomials of the ideal $\mathfrak{b}_{2n} $ and the corresponding variety are
$$b_{\mathfrak{b}_{2n} }(s) =\prod_{\ell=0}^{n-1}\left( s+ 2\ell +3 \right).$$
$$b_{Z(\mathfrak{b}_{2n} )}(s) =\prod_{\ell=0}^{n-1}\left( s+2\ell \right).$$
\end{theorem}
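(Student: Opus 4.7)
The plan is to follow the approach of the preceding theorem on maximal minors, adapted to the sub-maximal Pfaffians of a generic $(2n+1)\times(2n+1)$ skew-symmetric matrix. First I would exploit the equivariant structure: the group $G = GL_{2n+1}(\CC)$ acts on $A = \CC[x_{ij}]$, viewed as the coordinate ring of the space $V$ of skew-symmetric matrices, and the variety $Z(\mathfrak{b}_{2n})$ is $G$-stable. In fact, since skew-symmetric matrices have even rank, the variety cut out by the $2n\times 2n$ Pfaffians is precisely the orbit closure of matrices of rank $\leq 2n-2$, sitting one orbit below the unique open orbit (the rank $2n$ locus). This makes $V$ a prehomogeneous vector space under $G$ in the sense of Sato, so the module $A[s_1,\ldots,s_{2n+1}]\boldsymbol{f^s}$ from Subsection \ref{SubGeneralIdeals2} and Proposition \ref{prop:othercharsBSI} carries a rich equivariant $D$-module structure that can be analyzed representation-theoretically.

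For the upper bound, I would produce an explicit functional equation of Capelli type. Let $f_i = \operatorname{Pf}_i(X)$ denote the $2n\times 2n$ Pfaffian obtained by deleting row and column $i$, so that $f_1,\ldots,f_{2n+1}$ generate $\mathfrak{b}_{2n}$. There is a classical Pfaffian analogue of the Cayley identity stating that the differential operator $\operatorname{Pf}_i(\partial)$ applied to $\operatorname{Pf}_j(X)\cdot f_1^{s_1}\cdots f_{2n+1}^{s_{2n+1}}$ produces, after collecting the contributions prescribed by the $G$-equivariance, exactly $\prod_{\ell=0}^{n-1}(s + 2\ell + 3)\cdot \boldsymbol{f^s}$, where $s = s_1+\cdots+s_{2n+1}$. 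Combined with the characterization in Proposition \ref{prop:othercharsBSI}, this shows that $b_{\mathfrak{b}_{2n}}(s)$ divides the displayed product; the shift $b_{Z(\mathfrak{b}_{2n})}(s) = b_{\mathfrak{b}_{2n}}(s - c)$ with codimension $c = 3$ (the codimension of the rank $\leq 2n-2$ locus in $V$) then yields the second formula from the first.

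For the lower bound I would use the classification of simple $G$-equivariant $D$-modules on $V$. Each root of $b_{\mathfrak{b}_{2n}}(-s)$ should correspond to the $s$-eigenvalue on a composition factor of $A[s]\boldsymbol{f^s}\otimes_{\KK[s]}\KK(s)$, viewed as a finitely generated $D_{A(s)|\CC(s)}$-module via Lemma \ref{PropEquivExistence}. Using the $G$-orbit stratification $V \supset \overline{O_{2n}} \supset \overline{O_{2n-2}} \supset \cdots$ (one closed orbit per even rank), one identifies exactly $n$ simple equivariant $D$-modules that can appear with support contained in the sub-maximal Pfaffian locus, and the $s$-action on each one can be pinned down from the weights of the corresponding irreducible $G$-representations. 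Matching these eigenvalues against the candidate roots $-3, -5, \ldots, -(2n+1)$ forces each of these values to actually occur.

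The main obstacle is this lower bound: producing the functional equation is a concrete though elaborate calculation, but certifying that no smaller polynomial works requires the full force of the equivariant classification and a precise identification of the $s$-action on each simple composition factor. This is the technical heart of the LRWW argument, and it is where the representation theory of $GL_{2n+1}$, together with the fact that the stabilizer of a generic rank-$2n$ skew-symmetric form is a symplectic-type subgroup, enters essentially.
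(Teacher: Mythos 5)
First, a point of context: the survey states this result purely as a citation to \cite{LRWW} and contains no proof of it, so there is no in-paper argument to measure your sketch against; what can be assessed is whether your outline would actually deliver the theorem. Your framing of the setting is correct: the rank stratification of $(2n+1)\times(2n+1)$ skew-symmetric matrices under $g\cdot X=gXg^{T}$ makes this a prehomogeneous situation, the sub-maximal Pfaffian locus is the rank $\leq 2n-2$ orbit closure of codimension $\binom{3}{2}=3$, and the shift $b_{Z(\mathfrak{b}_{2n})}(s)=b_{\mathfrak{b}_{2n}}(s-3)$ is consistent with the two displayed formulas. The two-pronged strategy (an equivariant functional equation for the divisibility bound, equivariant $D$-module/representation theory for the lower bound) is indeed the spirit of the argument in \cite{LRWW}.

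However, as written the proposal has a genuine gap at both of its load-bearing steps. For the upper bound, the ``classical Pfaffian analogue of the Cayley identity'' you invoke is the identity $\mathrm{Pf}(\partial)\,\mathrm{Pf}(X)^{s+1}=(s+1)(s+3)\cdots(s+2n-1)\,\mathrm{Pf}(X)^{s}$ for the \emph{principal} Pfaffian of a generic even-size matrix; there is no off-the-shelf classical identity in the nonprincipal setting of the ideal $\mathfrak{b}_{2n}=(f_1,\dots,f_{2n+1})$, where the functional equation must be of the Budur--Musta\c{t}\u{a}--Saito form in $D_{A|\CC}\langle S\rangle$ (or, via Musta\c{t}\u{a}'s reduction, for $f_1y_1+\cdots+f_{2n+1}y_{2n+1}$). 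Saying that applying $\mathrm{Pf}_i(\partial)$ and ``collecting the contributions prescribed by the $G$-equivariance'' produces exactly $\prod_{\ell=0}^{n-1}(s+2\ell+3)$ assumes the answer; constructing the operators $\delta_i(S)$ and verifying this product is precisely the nontrivial computation. For the lower bound, your plan to match roots with $s$-eigenvalues on composition factors of $A_f[s]\fs\otimes_{\KK[s]}\KK(s)$ is only a plan: you would need the actual classification of the $n+1$ simple $GL_{2n+1}$-equivariant $D$-modules on this space, their characters, and a mechanism that converts ``this simple module occurs as a composition factor'' into ``$-(2\ell+3)$ is a root of $b_{\mathfrak{b}_{2n}}(s)$,'' including the multiplicity-one assertion. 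None of this is supplied, and it cannot be waved through by the eigenvalue bookkeeping you describe, since a priori a composition factor of the generic-$s$ module need not witness a root of the reduced polynomial without a concrete nonvanishing statement (compare the care needed already in the principal case in Proposition~\ref{uli}). So the outline points at the right machinery but does not yet constitute a proof.
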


\subsection{Bernstein-Sato ideals}

In this subsection we consider the theory of Bernstein-Sato ideals associated to a tuple of elements $F=f_1,\dots, f_\ell$ developed by Sabbah \cite{Sabbah_ideal}.

\begin{definition}
Let $\KK$ be a field of characteristic zero and $A$  a  regular $\KK$-algebra.
	A \emph{Bernstein-Sato functional equation} for a tuple $F=f_1,\dots, f_\ell$ of elements of $A$ is an equation of the form
	\[ \delta(s_1,\dots, s_\ell) f_1^{s_1+1} \cdots f_\ell^{s_\ell+1}= b(s_1,\dots, s_\ell) f_1^{s_1} \cdots f_\ell^{s_\ell} \]
	where $\delta(s_1,\dots, s_\ell)\in D_{A|\KK}[s_1,\dots, s_\ell]$  and $b(s_1,\dots, s_\ell)\in \KK[s_1,\dots, s_\ell]$. 
\end{definition}

All the polynomials $b(s_1,\dots, s_\ell)$ satisfying a Bernstein-Sato functional equation form an ideal $B_F \subseteq \KK[s_1,\dots, s_\ell]$ that we refer to as the \emph{Bernstein-Sato ideal}. 

\begin{remark}
More generally, given $a=(a_1,\dots, a_\ell)\in \ZZ^\ell_{\geq 0}$,  we may also consider the functional equations
\[ \delta(s_1,\dots, s_\ell) f_1^{s_1+a_1} \cdots f_\ell^{s_\ell+a_\ell}= b(s_1,\dots, s_\ell) f_1^{s_1} \cdots f_\ell^{s_\ell} \hskip 2mm \text{for all \ } s_i\in \NN, \]
leading to other Bernstein-Sato ideals $B^{a}_F \subseteq \KK[s_1,\dots, s_\ell]$.
\end{remark}

 As in the case $\ell=1$ we first wonder about the existence of such functional equations.

\begin{theorem}[\cite{Sabbah_ideal}]\label{exists-BSI}
Let $\KK$ be a field of characteristic zero, and let $A$ be either 
 $\KK[x_1,\dots,x_d]$ or $\CC\{x_1,\dots,x_d\}$.
Any nonzero tuple $F=f_1,\dots, f_\ell$ of elements of $A$ satisfies a nonzero Bernstein-Sato functional equation and thus $B_F \neq 0$.
\end{theorem}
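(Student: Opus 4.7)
The plan is to mirror the single-variable proofs of Corollary~\ref{CorExistenceBSPoly} and Theorem~\ref{ThmExistenceHomological} by building a natural multivariable analogue of the module $A_f[s]\boldsymbol{f^s}$. Set $f = f_1\cdots f_\ell$ and introduce the free rank-one $A_f[s_1,\ldots,s_\ell]$-module
\[ M = A_f[s_1,\ldots,s_\ell] \cdot \fslbig \]
with $D_{A|\KK}[s_1,\ldots,s_\ell]$-action defined by the rule
\[ \partial_i\bigl(a(\ubar{s}) \fslbig\bigr) = \Bigl( \partial_i(a(\ubar{s})) + a(\ubar{s})\sum_{j=1}^{\ell} \frac{s_j\, \partial_i(f_j)}{f_j} \Bigr) \fslbig, \]
which is the unique $A_f[\ubar{s}]$-extension compatible with the power-rule specializations $s_j \mapsto n_j \in \NN$. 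By Lemma~\ref{PropEquivExistence} (applied to the sum $s_1+\cdots+s_\ell$ and the element $f$) it suffices, after passing to $M' = M \otimes_{\KK[\ubar{s}]} \KK(\ubar{s})$, to show that $M'$ is a finitely generated (indeed, finite length) $D_{A(\ubar{s})|\KK(\ubar{s})}$-module: once this is known, the descending chain
\[ M' \supseteq D_{A(\ubar{s})|\KK(\ubar{s})} \cdot f \fslbig \supseteq D_{A(\ubar{s})|\KK(\ubar{s})} \cdot f^2 \fslbig \supseteq \cdots \]
must stabilize, yielding some $\delta(\ubar{s})$ with $\delta(\ubar{s}) f^{N+1} \fslbig = f^N \fslbig$. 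A shift $s_i \mapsto s_i - N$ and clearing denominators in $\ubar{s}$ produces $\tilde\delta(\ubar{s}) \in D_{A|\KK}[\ubar{s}]$ and $0 \neq b(\ubar{s}) \in \KK[\ubar{s}]$ with $\tilde\delta(\ubar{s}) f_1^{s_1+1}\cdots f_\ell^{s_\ell+1} = b(\ubar{s}) f_1^{s_1}\cdots f_\ell^{s_\ell}$, which is the desired functional equation.

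The core of the proof therefore consists in establishing holonomicity of $M'$. In the polynomial case $A = \KK[x_1,\ldots,x_d]$, I would adapt the filtration used in Theorem~\ref{ThmAfsHol}: letting $t = \deg(f)$, define the $\KK(\ubar{s})$-vector subspaces
\[ \Gamma_n = \frac{1}{f^n}\bigl\{\, g \in A(\ubar{s}) \ : \ \deg_x(g) \leq (t+1)n \,\bigr\}\cdot \fslbig \subseteq M'. \]
One checks directly that $\Gamma_n$ is Bernstein-compatible (stable under multiplication by $x_i$ and under $\partial_i$, using the explicit formula above), exhausts $M'$, and has dimension $\dim_{\KK(\ubar{s})}\Gamma_n = \binom{(t+1)n+d}{d}$, a polynomial of degree $d$ in $n$. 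Combined with Bernstein's inequality this forces $\dim_{D}(M') = d$, so $M'$ is holonomic and has finite length by Theorem~\ref{ThmHolFinLen}. In the convergent case $A = \CC\{x_1,\ldots,x_d\}$, $A$ is differentiably admissible (Example~\ref{ExDifAd}(iii)) and the argument parallels Theorem~\ref{ThmExistenceHomological}: by the same reasoning as in that proof, combined with the fact that $f$-saturation and scalar extension to $\KK(\ubar{s})$ preserve the Bernstein class, one extracts a Bernstein submodule $N \subseteq M'$ with $N_f = M'$ and applies the stabilization argument on the chain $\{D_{A(\ubar{s})|\KK(\ubar{s})} f^n \fslbig\}$ inside $N$.

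The main obstacle is verifying that $\Gamma_n$ is actually stable under the $D_{A(\ubar{s})|\KK(\ubar{s})}$-action, since each application of a $\partial_i$ introduces a factor $1/f_j$ and a coefficient $s_j \in \KK(\ubar{s})$: one must check that sweeping such terms into the common denominator $f^n$ does not raise the $x$-degree beyond $(t+1)n$. This is a bookkeeping computation analogous to the single-variable case but requires care because there are now $\ell$ distinct factors $f_j$ with different partial derivatives; writing $\partial_i/f_j = \partial_i f/(f_j \cdot f) \cdot (\text{correction})$ and tracking degrees carefully should yield the bound. A secondary subtlety, as in Theorem~\ref{ThmExistenceHomological}, is that $A(\ubar{s})$ is not itself differentiably admissible because its residue fields need not be algebraic over $\KK(\ubar{s})$; one invokes the extension of the theory developed by Mebkhout and Narv\'aez-Macarro \cite{MNM} to handle this in the convergent case.
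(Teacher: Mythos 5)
Your polynomial-ring argument is essentially the paper's own: the paper handles the case $A=\KK[x_1,\dots,x_d]$ by declaring it ``completely analogous'' to the $\ell=1$ filtration proof of Subsection~\ref{subsec:ExistencePoly}, and your filtration $\Gamma_n$ is exactly the right analogue. The degree bookkeeping you flag does close up: each term produced by $\partial_i$, including the $s_j\partial_i(f_j)/f_j$ terms once rewritten over the common denominator $f^{n+1}$ via the cofactor $f/f_j$, has numerator degree at most $(t+1)n+t-1\le (t+1)(n+1)$, so $\Gamma_\bullet$ is compatible with the Bernstein filtration, its dimension grows like a degree-$d$ polynomial in $n$, and Bernstein's criterion gives holonomicity (hence finite length) of $M'$ over $D_{A(\ubar{s})|\KK(\ubar{s})}$. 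The chain-stabilization-plus-shift step then produces the functional equation; note that Lemma~\ref{PropEquivExistence} as stated concerns a single $s$ and a single $\fs$, so you cannot literally invoke it, but you in effect re-prove the relevant implication in the multi-parameter setting, which is fine.

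Where you diverge from the paper is the convergent case $A=\CC\{x_1,\dots,x_d\}$: the paper does not prove it at all but cites Sabbah \cite{Sabbah_ideal}, whereas you propose to rerun the homological argument of Theorem~\ref{ThmExistenceHomological} with $\KK(s)$ replaced by $\KK(s_1,\dots,s_\ell)$. This is the one genuinely thin point of your write-up: every ingredient you lean on there --- the base-change theory of \cite{MNM}, the existence of a Bernstein-class submodule $N\subseteq M'$ with $N_f=M'$ (\cite[Proposition~1.2.7 and Theorem~3.1.1]{MNM}), and the equivalence of finite generation with a functional equation --- is stated for a single parameter $s$, and the assertion that these extend ``by the same reasoning'' to $\ell$ parameters is precisely the content that requires proof; this multi-parameter finiteness in the analytic category is what Sabbah's paper supplies, by a more involved argument. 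So either carry out the extension of the Mebkhout--Narv\'aez-Macarro machinery to the base field $\KK(\ubar{s})$ explicitly, or do as the paper does and defer the analytic case to \cite{Sabbah_ideal}.
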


Sabbah \cite{Sabbah_ideal} proved this result in the local analytic case $A=\CC\{x_1,\dots, x_d\}$. 
The proof in  the polynomial ring  case $A=\KK[x_1,\dots, x_d]$ is completely analogous to the one given in \Cref{subsec:ExistencePoly} for the case $\ell=1$. 


The Bernstein-Sato functional equation is an equality in 
$A_f[s_1,\dots, s_\ell] \boldsymbol{f^s}$ where  $f=f_1\cdots f_\ell$ and $\boldsymbol{f^s}:= \boldsymbol{f_1^{s_1}} \cdots  \boldsymbol{f_\ell^{s_\ell}}$.  We also have that the $D_{A|\KK}[s_1,\dots, s_\ell]$-submodule generated by  $\boldsymbol{f^s}$ has a presentation
\[ {D_{A|\KK}[s_1,\dots, s_\ell]\boldsymbol{f^s}} \cong  \frac{D_{A|\KK}[s_1,\dots, s_\ell]}{\mathrm{Ann}_{D[s_1,\dots, s_\ell]}(\boldsymbol{f^s}) }, \] 
%
%
and, given the fact that
\[ \frac{D_{A|\KK}[s_1,\dots, s_\ell]\boldsymbol{f^s}}{D_{A|\KK}[s_1,\dots, s_\ell] f \boldsymbol{f^s}} \cong  \frac{D_{A|\KK}[s_1,\dots, s_\ell]}{\mathrm{Ann}_{D[s_1,\dots, s_\ell]}(\boldsymbol{f^s}) + D_{A|\KK}[s_1,\dots, s_\ell] f}. \]
we get an analogue of \Cref{prop:othercharsBS} that reads as

\begin{proposition}\label{prop:othercharsBSI}
The Bernstein-Sato ideal of $F=f_1,\dots, f_\ell$ is $$B_F= \KK[s_1,\dots, s_\ell] \cap (\mathrm{Ann}_{D_{A|\KK}[s_1,\dots, s_\ell]}(\boldsymbol{f^s}) + D_{A|\KK}[s_1,\dots, s_\ell] f).$$
\end{proposition}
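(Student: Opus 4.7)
The plan is to mimic the argument of Proposition \ref{prop:othercharsBS}(iv), now carried out for the polynomial ring $\KK[s_1,\ldots,s_\ell]$ in place of $\KK[s]$. By definition, a polynomial $b(s_1,\ldots,s_\ell)\in \KK[s_1,\ldots,s_\ell]$ belongs to $B_F$ precisely when there exists $\delta(s_1,\ldots,s_\ell)\in D_{A|\KK}[s_1,\ldots,s_\ell]$ with
\[
\delta(s_1,\ldots,s_\ell)\, f_1^{s_1+1}\cdots f_\ell^{s_\ell+1}
= b(s_1,\ldots,s_\ell)\, f_1^{s_1}\cdots f_\ell^{s_\ell}
\qquad \text{for all } (s_1,\ldots,s_\ell)\in\NN^\ell.
\]
First I would reinterpret this as a single identity in $A_f[s_1,\ldots,s_\ell]\boldsymbol{f^s}$, namely $\delta\cdot f\boldsymbol{f^s}=b\cdot\boldsymbol{f^s}$, where $f=f_1\cdots f_\ell$. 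Equivalently, I would show $b\cdot\boldsymbol{f^s}\in D_{A|\KK}[s_1,\ldots,s_\ell]\,f\boldsymbol{f^s}$.

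This reinterpretation requires the multivariate analogue of Remark \ref{rem:specialization}: an element of $A_f[s_1,\ldots,s_\ell]\boldsymbol{f^s}$ vanishes if and only if every specialization $(s_1,\ldots,s_\ell)\mapsto(n_1,\ldots,n_\ell)\in\NN^\ell$ gives $0$ in $A_f$. This is a Zariski-density argument applied one variable at a time, crucially using characteristic zero so that a polynomial vanishing on $\NN^\ell$ is identically zero. With this in hand, the functional equation displayed above is equivalent to the single module-theoretic identity in $A_f[s_1,\ldots,s_\ell]\boldsymbol{f^s}$.

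Next I would invoke the presentation recorded immediately before the statement,
\[
\frac{D_{A|\KK}[s_1,\ldots,s_\ell]\boldsymbol{f^s}}{D_{A|\KK}[s_1,\ldots,s_\ell]\,f\boldsymbol{f^s}}
\cong
\frac{D_{A|\KK}[s_1,\ldots,s_\ell]}{\mathrm{Ann}_{D[s_1,\ldots,s_\ell]}(\boldsymbol{f^s}) + D_{A|\KK}[s_1,\ldots,s_\ell]\,f}.
\]
Under this isomorphism, the condition $b\cdot\boldsymbol{f^s}\in D_{A|\KK}[s_1,\ldots,s_\ell]\,f\boldsymbol{f^s}$ translates into the statement that the image of $b$ in the right-hand quotient is zero; that is, $b\in \mathrm{Ann}_{D[s_1,\ldots,s_\ell]}(\boldsymbol{f^s}) + D_{A|\KK}[s_1,\ldots,s_\ell]\,f$. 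Since $b$ was taken from $\KK[s_1,\ldots,s_\ell]$ to begin with, intersecting with $\KK[s_1,\ldots,s_\ell]$ is automatic, and I obtain the claimed equality of ideals.

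I do not anticipate any serious obstacle: the argument is entirely formal once the specialization fact above is granted. If that fact is not considered to have been established in the excerpt, the only step requiring care is its short verification, which reduces via a finite $\KK$-basis of $A_f$ in each fixed $\boldsymbol{f^s}$-coefficient to the elementary statement that a nonzero element of $\KK[s_1,\ldots,s_\ell]$ cannot vanish on $\NN^\ell$.
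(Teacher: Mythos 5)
Your proposal is correct and follows essentially the same route as the paper, which states this proposition as the direct analogue of Proposition~\ref{prop:othercharsBS} after interpreting the functional equation as an identity in $A_f[s_1,\dots,s_\ell]\boldsymbol{f^s}$ and invoking the presentation of $D_{A|\KK}[s_1,\dots,s_\ell]\boldsymbol{f^s}/D_{A|\KK}[s_1,\dots,s_\ell]f\boldsymbol{f^s}$. The only ingredient the paper leaves implicit is the multivariate specialization fact (a polynomial vanishing on $\NN^\ell$ is zero in characteristic zero), which you correctly identify and verify.
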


Some properties of Bernstein-Sato ideals are the natural extension of those satisfied by Bernstein-Sato polynomials. We start with the ones considered  in \Cref{First properties}. The analogue of  \Cref{s+1} is the following result.

\begin{lemma} [\cite{May, BriMay}]
Let  $F=f_1,\dots, f_\ell$ be a tuple where the $f_i$ are pairwise without common factors. Then 
$$B_F \subseteq \Big( (s_1+1)\cdots (s_\ell+1)\Big).$$  Equality is achieved if and only if $A/(f_1,\dots, f_\ell)$ is smooth.
\end{lemma}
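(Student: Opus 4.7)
The plan is to prove two claims: first, that every $b(\underline s) \in B_F$ is divisible by $\prod_{i=1}^\ell(s_i+1)$; second, that the product itself lies in $B_F$ precisely when $A/(f_1,\ldots,f_\ell)$ is smooth.

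\textbf{Containment.} I generalize the proof of Lemma~\ref{s+1} by specializing one exponent at a time. Given any witnessing equation $\delta(\underline s)\prod f_i^{s_i+1} = b(\underline s)\prod f_i^{s_i}$ in $A_f[\underline s]\boldsymbol{f^s}$, the multivariable version of Remark~\ref{rem:specialization} yields the identities $\delta(\underline n)\prod f_i^{n_i+1} = b(\underline n)\prod f_i^{n_i}$ in $A_f$ for all $\underline n \in \ZZ^\ell$. Fix an index $i$, set $n_i = -1$, and multiply through by $f_i$:
\[
f_i \cdot \delta(n_1,\ldots,-1,\ldots,n_\ell)\!\Big(\prod_{j\neq i} f_j^{n_j+1}\Big) = b(n_1,\ldots,-1,\ldots,n_\ell) \prod_{j\neq i} f_j^{n_j} \quad \text{in } A
\]
for every $(n_j)_{j\neq i} \in \NN^{\ell-1}$. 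The left-hand side lies in $f_i A$, while $b(n_1,\ldots,-1,\ldots,n_\ell)$ is a scalar in $\KK$; the pairwise coprimality hypothesis forces $f_i$ to divide that scalar. Since $f_i$ is a nonunit, the scalar vanishes for every such $(n_j)$, so the polynomial $b(s_1,\ldots,-1,\ldots,s_\ell) \in \KK[s_1,\ldots,\hat{s_i},\ldots,s_\ell]$ is identically zero and $(s_i+1) \mid b(\underline s)$. Iterating over $i$ produces $\prod_{i=1}^\ell(s_i+1) \mid b(\underline s)$.

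\textbf{Smoothness implies equality.} Assume $A/(f_1,\ldots,f_\ell)$ is smooth. For each maximal ideal $\m \supseteq (f_1,\ldots,f_\ell)$, the elements $f_1,\ldots,f_\ell$ form part of a regular system of parameters of $A_\m$, so Matsumura's Theorem~\ref{99} supplies derivations $\delta_1,\ldots,\delta_\ell \in \Der_{A_\m|\KK}$ with $\delta_i(f_j) = \delta_{ij}$. A direct Leibniz-rule computation shows that
\[
(\delta_1 \circ \cdots \circ \delta_\ell)\!\Big(\prod_i f_i^{s_i+1}\Big) = \prod_{i=1}^\ell(s_i+1)\prod_i f_i^{s_i},
\]
exhibiting $\prod(s_i+1)$ in the localized Bernstein-Sato ideal at $\m$. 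A local-to-global patching argument analogous to Proposition~\ref{local_global_BS} then produces a global operator in $D_{A|\KK}[\underline s]$ realizing this identity.

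\textbf{Equality implies smoothness.} This is the direction I expect to be the main obstacle. Localizing at a maximal ideal $\m \supseteq (f_1,\ldots,f_\ell)$, the containment $\prod(s_i+1)\in B_F$ descends, providing a functional equation $\delta(\underline s)\prod f_i^{s_i+1} = \prod(s_i+1)\prod f_i^{s_i}$ in $A_\m$. Comparing the coefficients of the top monomial $s_1\cdots s_\ell$ after expanding the left side by the Leibniz rule, the only surviving contribution comes from order-$\ell$ summands of $\delta$ in which each factor $f_i^{s_i+1}$ is differentiated exactly once; this contribution equals a permanent-type sum $\sum \det(\tau_i(f_j))_{ij}$ taken over the derivations appearing in $\delta$. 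For the identity to hold, this sum must equal $1$ modulo $(f_1,\ldots,f_\ell)$, which means that the Jacobian ideal of $(f_1,\ldots,f_\ell)$ contains a unit modulo $(f_1,\ldots,f_\ell)$. By the Jacobian criterion, this forces $A_\m/(f_1,\ldots,f_\ell)$ to be smooth, and since this holds at every relevant $\m$, so is $A/(f_1,\ldots,f_\ell)$. The delicate point is organizing the bookkeeping of the $s$-graded pieces to rigorously isolate this leading coefficient and show that cross-terms from higher-order derivatives either contribute to lower $s$-degrees or land in $(f_1,\ldots,f_\ell)$.
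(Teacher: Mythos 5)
The paper does not actually prove this lemma---it is quoted from \cite{May,BriMay}---so there is no argument in the text to compare yours against; I judge it on its own terms. Your containment step is correct and is the expected multivariable version of Lemma~\ref{s+1}: specializing $s_i\mapsto -1$ and $s_j\mapsto n_j\in\NN$ (the several-variable analogue of Remark~\ref{rem:specialization}), using that $A$ is a UFD, that the $f_i$ are pairwise coprime nonunits, and that $\NN^{\ell-1}$ is Zariski dense in characteristic zero, does give $(s_i+1)\mid b$ for every $i$ and hence divisibility by the product.

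Both equality directions have genuine gaps. For ``smooth $\Rightarrow$ equality'': smoothness of $A/(f_1,\dots,f_\ell)$ does \emph{not} imply that the $f_i$ form part of a regular system of parameters at a maximal ideal containing them---the codimension can be smaller than $\ell$ (take $x,y,x+y$ in $\KK[x,y]$: the quotient is $\KK$, yet by the Maisonobe--Bath formula quoted at the end of this subsection \cite{Mai16b,Bath} the ideal $B_F$ is strictly smaller than $\bigl((s_1+1)(s_2+1)(s_3+1)\bigr)$). So your construction needs the stronger reading ``smooth complete intersection of codimension $\ell$,'' which is what \cite{May,BriMay} actually assume. More seriously, your patching is incomplete: the analogue of Proposition~\ref{local_global_BS} intersects over \emph{all} maximal ideals, while you only produce local witnesses at $\m\supseteq(f_1,\dots,f_\ell)$; at points where only a proper subtuple vanishes, smoothness of $A/(F)$ gives no control. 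In fact, in the polynomial ring the implication is false as literally stated: for $f_1=x^2-y^3$, $f_2=x-1$ in $\CC[x,y]$ the quotient is smooth and the $f_i$ are coprime, but setting $s_2=0$ in a putative equation $\delta(s_1,s_2)f_1^{s_1+1}f_2^{s_2+1}=(s_1+1)(s_2+1)f_1^{s_1}f_2^{s_2}$ produces the operator $\delta(s_1,0)\circ f_2$ witnessing $b_{f_1}(s_1)=s_1+1$, contradicting \cite{BM_Rabida} since $b_{x^2-y^3}(s)=(s+1)\bigl(s+\frac{5}{6}\bigr)\bigl(s+\frac{7}{6}\bigr)$. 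The statement is really the local-analytic one of the cited papers (germs at a common zero defining a smooth complete intersection), where your regular-system-of-parameters computation does apply; no global patching argument can repair the step as you set it up.

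For ``equality $\Rightarrow$ smooth'' you give only a sketch, as you acknowledge, and the gap is real: already the case $\ell=1$ is the nontrivial geometric theorem of Brian\c{c}on--Maisonobe cited here as \cite{BM_Rabida}, so extracting the coefficient of $s_1\cdots s_\ell$ and invoking the Jacobian criterion cannot be a few lines---your claim that only the ``each $f_i$ differentiated exactly once'' terms survive, and that their contribution is a sum of Jacobian minors equal to a unit modulo $(F)$, is not established, and it is precisely the content of the theorem. A cheaper partial reduction you could use instead: specializing $s_j\mapsto 0$ for $j\neq i$ turns the functional equation into one showing $b_{f_i}(s_i)=s_i+1$, hence each hypersurface $A/f_iA$ is smooth by \cite{BM_Rabida}; but this does not yet give smoothness of the intersection (it does not rule out tangencies), so an additional transversality argument is still missing there.
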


 We summarize the relations between the Bernstein-Sato ideals when we change the ring $A$ in the following lemma. For the convenience of the reader we use temporally the same notation as in \Cref{First properties}.

\begin{lemma} [\cite{BriMai02}] We have:
\begin{enumerate}
\item $B_F^{\KK[x]} = \bigcap_{\mathfrak{m} \hskip 1mm {\rm max} \hskip 1mm {\rm ideal}}  B_F^{\KK[x]_{\mathfrak{m}}}$.
\item $B_F^{\KK[x]_\mathfrak{m}} = B_F^{\KK[[x]]}$, where $\mathfrak{m}$ is the homogeneous maximal ideal.
\item $B_F^{\CC\{x-p\}} = B_F^{\CC[[x-p]]}$,  where $p\in \CC^d$.
\item $ B_F^{\mathbb{L}[x]} = \mathbb{L} \otimes_{\KK} {B_F^{\KK[x]}} $ where $\mathbb{L}$ is a field containing $\KK$.
\end{enumerate}
\end{lemma}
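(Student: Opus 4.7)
All four parts generalize the single-element statements \ref{local_global_BS}, \ref{local_C}, and the propositions immediately following them from $\ell=1$ to arbitrary $\ell$, and I would follow exactly the same strategy in each case. The unifying tool is Proposition \ref{prop:othercharsBSI}: $b(s_1,\ldots,s_\ell)\in B_F^A$ if and only if $b$ annihilates the $D_{A|\KK}[s]$-module $M_A := \frac{D_{A|\KK}[s]\boldsymbol{f^s}}{D_{A|\KK}[s]\, f\,\boldsymbol{f^s}}$, where $f=f_1\cdots f_\ell$. So each statement reduces to a statement about annihilators of $M_A$ under a base change $A\to B$.

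For (i), I would use that the formation of $M_A$ commutes with localization (as in Proposition \ref{local_global_BS}, using $D_{\KK[x]_{\mathfrak{m}}|\KK}=(D_{\KK[x]|\KK})_{\mathfrak{m}}$), so $(M_{\KK[x]})_{\mathfrak{m}}\cong M_{\KK[x]_{\mathfrak{m}}}$. A (finitely generated) module over $\KK[x][s]$ is zero iff each of its localizations at a maximal ideal of $\KK[x]$ is zero, hence $b\,M_{\KK[x]}=0$ iff $b\,M_{\KK[x]_{\mathfrak{m}}}=0$ for all maximal $\mathfrak{m}$, which is exactly the asserted intersection.

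For (ii) and (iii), I would repeat the faithful-flatness argument of Proposition \ref{local_C}: $\KK\llbracket x\rrbracket$ is faithfully flat over $\KK[x]_{\mathfrak{m}}$ and $\CC\llbracket x-p\rrbracket$ is faithfully flat over $\CC\{x-p\}$. In each of these smooth situations one has $D_{B|\KK}\cong B\otimes_A D_{A|\KK}$, so $M_B \cong B\otimes_A M_A$; by faithful flatness, $b\,M_A=0$ iff $b\,M_B=0$, giving equality of the Bernstein--Sato ideals. For (iv), I would pick a $\KK$-basis $\{e_i\}_{i\in I}$ of $\mathbb{L}$ and decompose
\[M_{\mathbb{L}[x]} \;=\; \mathbb{L}\otimes_{\KK} M_{\KK[x]} \;=\; \bigoplus_{i\in I} M_{\KK[x]}\cdot e_i.\]
A polynomial $b(s)=\sum_j b_j(s)\,e_j\in\mathbb{L}[s]$ annihilates the left-hand side iff each $b_j\in\KK[s]$ annihilates $M_{\KK[x]}$, which says exactly that $B_F^{\mathbb{L}[x]}=\mathbb{L}\otimes_{\KK}B_F^{\KK[x]}$.

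The one nontrivial input on which everything leans is the compatibility of the module $M_A$ with the base change $A\to B$ in parts (ii) and (iii); this in turn rests on the isomorphism $D_{B|\KK}\cong B\otimes_A D_{A|\KK}$ for $B$ the completion of a regular local $\KK$-algebra, or the convergent power series ring. This is the step I expect to require the most care, since $D$-modules generally do not commute with arbitrary base change, but here it holds by the explicit description of differential operators in the regular/smooth setting recorded in Subsection \ref{SubSecPreDiffAdm} (see Remark \ref{RmkDiffAdGenDer}). Once this is in hand, all four parts follow essentially formally from the $\ell=1$ proofs already given.
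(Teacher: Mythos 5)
Your proposal is correct and takes essentially the same route as the paper: the lemma itself is only cited to Brian\c{c}on--Maisonobe, and the paper's proofs of the $\ell=1$ analogues (localization over all maximal ideals, faithful flatness of the completion and of the formal over the convergent power series, and decomposition along a $\KK$-basis of $\mathbb{L}$) are exactly the arguments you extend, using the characterization of $B_F$ as the elements of $\KK[s_1,\dots,s_\ell]$ killing $D_{A|\KK}[s_1,\dots,s_\ell]\boldsymbol{f^s}/D_{A|\KK}[s_1,\dots,s_\ell]f\boldsymbol{f^s}$ with $f=f_1\cdots f_\ell$. No gaps beyond the base-change compatibility you already flag, which holds here just as in the $\ell=1$ case.
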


The first rationality result for Bernstein-Sato ideals is given by Gyoja \cite{Gyo} and Sabbah \cite{Sabbah_ideal} where they proved the existence of  an element of $B_F$ which is a product of polynomials of degree one of the form $a_1 s_1+ \cdots + a_\ell s_\ell + a , $ with $a_i \in \QQ_{\geq 0}$ and $a \in \QQ_{> 0}$. This fact prompted Budur \cite{Bud15} to make the following:

\begin{conjecture}  \label{conj_Budur}
The Bernstein-Sato ideal of  a tuple $F=f_1,\dots, f_\ell $ of elements in $\CC\{x_1,\dots, x_d\}$ is generated by products of polynomials of degree one 
$$a_1 s_1+ \cdots + a_\ell s_\ell + a , $$ with $a_i \in \QQ_{\geq 0}$ and $a \in \QQ_{> 0}$
\end{conjecture}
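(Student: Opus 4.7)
The statement is an open conjecture rather than a theorem with a known proof, so what follows is a strategy I would attempt rather than a confident roadmap. The natural starting point is the Gyoja--Sabbah theorem, which already produces a single element of $B_F$ of the required factorized form. The issue is purely ideal-theoretic: one must show that \emph{every} generator of $B_F$ arises as a $\KK[\seq[\ell]{s}]$-combination of such products, which is equivalent to controlling the associated primes and the multiplicity structure of $B_F$.

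The plan is to proceed in three stages. First, I would reduce to the local analytic setting $A=\CC\{x_1,\dots,x_d\}$ using the compatibility of Bernstein--Sato ideals with completion, localization, and extension of scalars listed in the BriMa\-i\-so\-no\-be lemma. Second, I would take a log-resolution $\pi:X'\to X$ of $f_1\cdots f_\ell$ and encode the combinatorial data of the exceptional and strict-transform divisors $E_i$ with multiplicities $(N^1_i,\dots,N^\ell_i,k_i)$. The candidate linear forms would then be the hyperplanes
\[ H_{i,j} : N^1_i s_1 + \cdots + N^\ell_i s_\ell + k_i + 1 + j = 0, \qquad j\in \NN, \]
coming from the relative canonical and total-transform divisors, exactly mirroring the Kashiwara--Lichtin theorem in the principal case. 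Third, I would use a multi-variable $V$-filtration along the normal crossing divisor $\pi^*(f_1\cdots f_\ell)$ on $X'$ together with Sabbah's specialization functor for holonomic $D$-modules to compare the annihilator of $\fslbig$ upstairs with its pushforward.

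The main obstacle lies in the third step. Passing from the set-theoretic statement (which is essentially the content of the recent results of Budur--Van der Veer--Wu--Zhou identifying $V(B_F)$ with a union of hyperplanes of the prescribed form, via cohomology support loci of rank-one local systems) to the ideal-theoretic statement requires controlling the multiplicities with which each linear form appears, and in particular ruling out embedded primary components not supported on these hyperplanes. This is where the analogue of Kashiwara's argument breaks down: a multi-variable $V$-filtration is not canonical in the absence of a single defining function, and the Hodge-theoretic machinery of Saito that underwrites the principal case becomes substantially more delicate when several parameters $s_i$ act independently. Concretely, I would expect the hard part to be proving that the $D_{A|\KK}[\seq[\ell]{s}]$-module
\[ \frac{A_f[\seq[\ell]{s}]\,\fslbig}{\sum_{i=1}^\ell D_{A|\KK}[\seq[\ell]{s}]\, f_i \,\fslbig} \]
is \emph{relative holonomic} in Sabbah's sense and, moreover, that its associated characteristic cycle has components only along hyperplanes of the stated form, with multiplicities matching those contributed by the exceptional divisors in $\pi$.

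A more modest intermediate goal, which I would try first as a test case, is the conjecture for $\ell=2$ with $f_1,f_2$ defining a pair of smooth hypersurfaces meeting transversely along a normal crossing, where the $V$-filtration can be made fully explicit and the result should reduce to a direct computation in the Weyl algebra. Establishing the conjecture for monomial tuples, extending the Budur--Musta\c{t}\u{a}--Saito monomial-ideal formula, would provide another sanity check and likely suggest the correct multiplicity formula in general.
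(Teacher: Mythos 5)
You have correctly recognized that this statement is an open conjecture: the paper offers no proof of it, and in fact states explicitly that the known results ``are not enough to settle'' it. The only ingredients the paper records are the Gyoja--Sabbah theorem, which produces \emph{one} element of $B_F$ that is a product of linear forms of the prescribed shape, Maisonobe's theorem that every codimension-one component of $Z(B_F)$ is a hyperplane $a_1s_1+\cdots+a_\ell s_\ell+a=0$ with $a_i\in\QQ_{\geq 0}$, $a\in\QQ_{>0}$ (higher-codimension components being integer translates of such), and the results of Budur, van der Veer, Wu, and Zhou on zero loci via local systems. So there is no argument in the paper to compare your strategy against; what you wrote is a research program, and you frame it as such.

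That said, your diagnosis of where the difficulty sits agrees with the paper's own discussion. The available results are essentially set-theoretic statements about $Z(B_F)$, whereas the conjecture is ideal-theoretic; and since $B_F$ need not be principal --- the paper records the Brian\c{c}on--Maynadier/Bahloul--Oaku examples such as $F=z,\,x^4+y^4+zx^2y^2$ in $\CC\{x,y,z\}$ --- one cannot reduce to tracking a single generator as in the one-variable Kashiwara--Lichtin argument, which is exactly the multiplicity/embedded-component issue you isolate in your third step. Your candidate hyperplanes coming from the numerical data of a log-resolution are also consistent with the closed formulas the paper does state in special cases (e.g.\ the Maisonobe--Bath formula for generic central arrangements, whose factors $s_1+\cdots+s_\ell+j+d$ have that shape). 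The one caution is that relative holonomicity of the module you write down, while expected to be provable, is by itself far from yielding the generation statement: even in the principal case the passage from the characteristic cycle to the $b$-function's multiplicities is not formal, so your proposal should be read as a plausible line of attack on a problem the paper leaves open, not as a proof.
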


Notice that this would imply that the irreducible components of the zero locus $Z(B_F)$ are linear. 
The best result so far towards this conjecture is the following. 

\begin{theorem}[{\cite{Mai16a}}]
Every irreducible component of $Z(B_F)$ of codimension $1$ is a hyperplane of type $a_1 s_1+ \cdots + a_\ell s_\ell + a , $ with $a_i \in \QQ_{\geq 0}$ and $a \in \QQ_{> 0}$. Every irreducible component of $Z(B_F)$ of codimension $> 1$ can be translated by an element of $\ZZ^\ell$ inside a component of codimension $1$.
\end{theorem}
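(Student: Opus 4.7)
The plan is to deduce both assertions from the Gyoja--Sabbah factorization result (stated immediately before the theorem) together with a translation argument exploiting the $D_{A|\KK}[s_1,\ldots,s_\ell]$-module structure on $M := A_f[s_1,\ldots,s_\ell]\fs$. I would begin with the first assertion: by Gyoja--Sabbah, there exists a nonzero element
\[ b(s) = c \cdot \prod_{j=1}^{N} (a_{1,j} s_1 + \cdots + a_{\ell, j} s_\ell + a_j) \in B_F \]
with $a_{i,j} \in \QQ_{\geq 0}$ and $a_j \in \QQ_{>0}$. In particular $Z(B_F) \subseteq Z(b) = \bigcup_j H_j$, the union of hyperplanes defined by the linear factors. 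An irreducible component of $Z(B_F)$ of codimension one is a hypersurface, and being contained in a finite union of hyperplanes it must coincide with some $H_j$, which is of the required linear form.

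For the second assertion, the strategy is to analyze the shifted Bernstein--Sato ideals $B_F^a$ for $a \in \NN^\ell$ introduced in the remark following the definition of $B_F$. Applying Gyoja--Sabbah to each $B_F^a$ yields a factored element of the same linear type, and the functional equations for different $a$ are related by the translation $s \mapsto s+a$ on the polynomial ring together with multiplication by $\prod_i f_i^{a_i}$ on the module side. Passing to the generic fiber $M \otimes_{\KK[s]} \KK(s)$, multiplication by $f_i$ corresponds to the shift $s_i \mapsto s_i - 1$, so the union $\bigcup_{a \in \ZZ^\ell}(Z(B_F) + a)$ is stable under integer translations. Its codimension-one part is then a locally finite union of hyperplanes of the required linear type, and each codimension-$\geq 2$ component $W$ of $Z(B_F)$, upon suitable translation by some $n \in \ZZ^\ell$, must land in this union --- hence inside one of the codimension-one components $H_j$ of $Z(B_F)$ itself.

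The main obstacle is the multivariable refinement of Kashiwara's $V$-filtration argument needed to make this last step rigorous. In the single-variable case one uses the $V$-filtration on $A_f[s]\fs$ along $\{f-t = 0\}$ together with the action of $-\partial_t t$; here one needs a relative $V$-filtration indexed by $\ZZ^\ell$, compatible with all $\ell$ shift directions simultaneously, and a precise description of the characteristic variety of $M \otimes_{\KK[s]} \KK(s)$ under the commuting shift operators induced by multiplication by $f_1,\ldots,f_\ell$. The delicate point is to exclude \emph{spontaneous} codimension-$\geq 2$ components of $Z(B_F)$: one must show that every such component arises as an integer translate of a genuine codimension-one stratum, rather than as a new lower-dimensional phenomenon forced by interaction between several shift directions. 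This is the technical heart of Maisonobe's theorem and occupies the bulk of the argument in \cite{Mai16a}.
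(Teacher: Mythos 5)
Your argument for the first assertion is fine, and it is the standard deduction: the Gyoja--Sabbah element $b=\prod_j L_j\in B_F$ gives $Z(B_F)\subseteq\bigcup_j Z(L_j)$, and an irreducible codimension-one component, being contained in a finite union of hyperplanes, lies in a single $Z(L_j)$ and equals it by dimension count. (Note the survey itself gives no proof of this theorem --- it is quoted from Maisonobe --- so there is no in-paper argument to compare against; the assessment is of your proposal on its own terms.)

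For the second assertion, however, what you have written is not a proof but a restatement of the claim plus an acknowledgment that the essential step is missing. The observation that $\bigcup_{a\in\ZZ^\ell}(Z(B_F)+a)$ is stable under integer translation is vacuous, and the sentence asserting that each codimension-$\geq 2$ component $W$ ``upon suitable translation by some $n\in\ZZ^\ell$, must land \ldots inside one of the codimension-one components $H_j$ of $Z(B_F)$ itself'' is exactly the statement to be proved; nothing in the shift formalism you set up (the ideals $B_F^a$, the generic-fiber isomorphisms under which multiplication by $f_i$ corresponds to $s_i\mapsto s_i-1$) forces a translate of $W$ into the codimension-one part rather than into translates of other low-dimensional strata, nor does it rule out the ``spontaneous'' components you mention. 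You concede this is ``the technical heart'' occupying the bulk of \cite{Mai16a}, which is precisely the point: Maisonobe's proof rests on his theory of relative holonomic $D_{A|\KK}[s_1,\dots,s_\ell]$-modules, the relative filtration, and the structure of the characteristic variety of $D[s]\fsll$ over $\KK[s_1,\dots,s_\ell]$ (its support being controlled by finitely many hyperplane directions, the ``pentes''), and none of that machinery, nor a workable substitute, appears in your sketch. As it stands the proposal establishes only the (comparatively easy) codimension-one statement and leaves the actual content of the theorem unproven.
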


Recall that the work of Kashiwara and Malgrange relates the roots of the Bernstein-Sato polynomials to the eigenvalues of the monodromy
and these eigenvalues are roots of unity by the monodromy theorem.  An extension  to the case of Bernstein-Sato ideals  of Kashiwara and Malgrange result has been given recently by Budur \cite{Bud15}  and Budur, van der Veer, Wu, and Zhou \cite{BvVWZ}.
There is also an extension of the Monodromy theorem in this setting given by Budur and Wang \cite{BudWang}  and  Budur, Liu, Saumell, and Wang \cite{BLSW}.  Unfortunately these results are not enough to settle \Cref{conj_Budur}.

The main difference with the classical case is that Bernstein-Sato ideals are not necessarily principally generated. 
Brian\c{c}on and Maynadier \cite{BriMay} gave a theoretical proof of this fact for the following example.  The  explicit computation was given by Balhoul and Oaku \cite{BahloulOaku}.

\begin{example} [\cite{BriMay, BahloulOaku}]
Let $F=z,x^4+y^4+zx^2y^2$ be a pair of elements in $\CC\{x,y,z\}$.
The local Bernstein-Sato ideal is nonprincipal
\begin{align*}
B_{F}^{\CC\{x\}}= \Big( & (s_1 + 1)(s_2 + 1)^2(2s_2 + 1)(4s_2 + 3)(4s_2 + 5)(s_1 + 2), \\
& (s_1 + 1)(s_2 + 1)^2(2s_2 + 1)(4s_2 + 3)(4s_2 + 5)(2s_2 + 3) \Big). 
\end{align*}
However, when we consider $F$ in $\CC[x,y,z]$ the global Bernstein-Sato ideal is 
$$B_F^{\CC[x]} = \Big( (s_1 + 1)(s_2 + 1)^2(2s_2 + 1)(2s_2 + 3)(4s_2 + 3)(4s_2 + 5)\Big).$$
\end{example}

The following example is also given by Balhoul and Oaku.

\begin{example} [\cite{ BahloulOaku}]
Let $F=z,x^5+y^5+zx^2y^3 $ be a pair of elements in $\CC[x,y,z]$. Then the local and the global Bernstein-Sato ideals coincide and are nonprincipal. Specifically, $B_F$ is generated by $(s_1 + 1)(s_2 + 1)^2(5s_ 2 + 2)(5s_2 + 3)(5s_2 + 4)(5s_2 + 6)(s_1 + 2)(s_1 + 3)(s_1 + 4)(s_1 + 5)$,  $(s_1 + 1)(s_2 + 1)^2(5s_2 + 2)(5s_2 + 3)(5s_2 + 4)(5s_2 + 6)(5s_2 + 7)(s_1 + 2)$, and 
 $(s_1 + 1)(s_2 + 1)^2(5s_2 + 2)(5s_2 + 3)(5s_2 + 4)(5s_2 + 6)(5s_2 + 7)(5s_2 + 8)$.
\end{example}

There are interesting examples worked out in several computational articles by  Balhoul \cite{Bahloul}, Balhoul and Oaku  \cite{BahloulOaku},  Castro-Jim\'enez and Ucha-Enr\'iquez  \cite{UchaCastro}, Andres, Levandovskyy, and Mart\'in-Morales \cite{AndresLevMM}. However, we cannot find many closed formulas for families of examples.  Maynadier \cite{May} studied the case of quasi-homogeneous isolated complete intersection singularities and we highlight the case of hyperplane arrangements.

\subsubsection {Hyperplane arrangements:} Let $f \in \CC[x_1,\dots ,  x_d]$ be a reduced polynomial defining an arrangement of hyperplanes. 
The most natural tuple $F=f_1, \cdots, f_\ell$ associated to $f$ is the one given by its  degree one components. The following result is an extension of Walther's work to this setting.  It  was first obtained by Maisonobe \cite{Mai16b} for the case $\ell=d+1$ and further extended by Bath \cite{Bath} for $\ell\geq d+1$. We point out that Bath also provides a formula for other tuples  associated to different decompositions of the arrangement $f$. 

\begin{theorem}[\cite{Mai16b, Bath}]
Let $f=f_1\cdots f_\ell \in \CC[x_1,\dots ,  x_d]$, with $\ell \geq d +1$, be the decomposition of  a generic central hyperplane arrangement as a product of linear forms.
The Bernstein-Sato ideal of  the tuple $F=f_1,\dots , f_\ell$ is
$${B}_F= \left( \prod_{i=1}^\ell (s_i+1) \prod_{j=0}^{2\ell-d-2} \left(  s_1+\cdots +s_\ell + j + d \right) \right).$$
\end{theorem}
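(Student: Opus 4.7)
The plan is to prove both inclusions of ideals separately, using the geometric observation that the two families of factors arise from different loci: the per-variable factors $(s_i+1)$ come from the smooth generic points of each individual hyperplane $V(f_i)$, while the diagonal factors $s_1+\cdots+s_\ell+j+d$ come from the common intersection of all $\ell$ hyperplanes at the origin. This is the multi-parameter upgrade of Walther's classical computation for $b_f(s)$ stated earlier.

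For the containment "$\supseteq$" (that the displayed product lies in $B_F$), the plan is to exhibit explicit operators. Since each $f_i$ is homogeneous of degree one, the Euler operator $\chi=\sum_{k=1}^d x_k\partial_k$ acts on $\boldsymbol{f^s}$ by the scalar $s_1+\cdots+s_\ell$. Following Walther's strategy adapted to the multi-parameter setting, one iteratively applies $\chi$ together with constant-coefficient differential operators to produce, for each $j\in\{0,\ldots,2\ell-d-2\}$, a relation yielding the factor $s_1+\cdots+s_\ell+j+d$. The factors $(s_i+1)$ are extracted by applying, for each index $i$, a derivation $\partial_{v_i}$ with $v_i(f_i)\neq 0$, which when applied to $f_i\boldsymbol{f^s}$ produces a leading term $(s_i+1)v_i(f_i)\boldsymbol{f^s}$; the correction terms can be absorbed because genericity of the arrangement guarantees the necessary local units exist outside the origin.

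For the opposite inclusion "$\subseteq$", the plan is to combine local and specialization arguments. First, localizing $F$ at a generic point $p$ of $V(f_i)$ (where, by genericity, $f_i$ is the only form vanishing at $p$), the other $f_j$ become units near $p$ and the local Bernstein-Sato computation reduces to a single-variable calculation, forcing $(s_i+1)\mid b$ for every $b\in B_F$ via Proposition~\ref{prop:othercharsBSI}. Second, for the diagonal factors, one restricts to the diagonal $s_1=\cdots=s_\ell=s$: the specialization $b(s,\ldots,s)$ must be a multiple of the classical polynomial $b_f(s)=(s+1)^{d-1}\prod_{j=0}^{2\ell-d-2}(s+\tfrac{j+d}{\ell})$ determined by Walther-Saito. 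Combined with Maisonobe's rationality theorem asserting that codimension-one components of $Z(B_F)$ are rational hyperplanes of the form $\sum a_is_i+a=0$, this constrains the diagonal factors of $b$ to be exactly those stated.

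The main obstacle is controlling the precise range $j=0,\ldots,2\ell-d-2$ and ensuring no extraneous off-diagonal linear factors appear. In the classical case this rests on a delicate vanishing argument involving the dimension of $D$-modules supported on the complement of the arrangement (Walther), together with Saito's computation of the multiplicity of $-1$. The multi-parameter extension additionally requires Maisonobe's hyperplane rationality theorem to restrict the shape of codimension-one components of $Z(B_F)$, and a separate verification that $B_F$ is principal in the generic case --- this last point uses that, for generic arrangements, all higher-codimension components of $Z(B_F)$ lie in the union of the codimension-one components already identified, so that no extra generators beyond the displayed product are needed.
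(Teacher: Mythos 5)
Your outline correctly isolates the two easy necessary conditions: localizing at a smooth point of $V(f_i)$ where no other form vanishes does force $(s_i+1)$ to divide every element of $B_F$, and specializing $s_1=\cdots=s_\ell=s$ in a functional equation shows that $b_f(s)$ divides $b(s,\dots,s)$. But these do not come close to pinning down $B_F$, and the way you propose to close the gap would fail. First, Maisonobe's linearity theorem (\cite{Mai16a}) only says that codimension-one components of $Z(B_F)$ are hyperplanes $a_1s_1+\cdots+a_\ell s_\ell+a=0$ with nonnegative rational $a_i$; combined with the diagonal divisibility by the Walther--Saito polynomial this does \emph{not} force those hyperplanes to be the symmetric ones $s_1+\cdots+s_\ell+j+d=0$, because infinitely many hyperplanes through the diagonal point $-\frac{j+d}{\ell}(1,\dots,1)$ have the same diagonal trace, and the divisibility constraint only sees that trace. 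Identifying the actual components is exactly where the work of \cite{Mai16b} (relative characteristic variety and slopes) and \cite{Bath} (generation of $\mathrm{Ann}_{D[s_1,\dots,s_\ell]}(\fsll)$ by logarithmic data for free/tame arrangements, plus a multivariable Malgrange--Kashiwara-type input) lies; your sketch replaces none of it. Second, even full knowledge of $Z(B_F)$ does not determine $B_F$: your final step, that higher-codimension components of $Z(B_F)$ lie inside the codimension-one ones ``so no extra generators are needed,'' is a statement about varieties, not ideals, and in this survey the examples of Brian\c{c}on--Maynadier and Bahloul--Oaku show that $B_F$ can fail to be principal; principality (and the multiplicity one of each factor) is itself a theorem requiring proof, not a consequence of knowing the zero locus.

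The inclusion that the displayed product lies in $B_F$ is also not established. One must exhibit an operator $\delta(s_1,\dots,s_\ell)\in D_{A|\CC}[s_1,\dots,s_\ell]$ with $\delta\cdot f_1^{s_1+1}\cdots f_\ell^{s_\ell+1}=b\cdot f_1^{s_1}\cdots f_\ell^{s_\ell}$ for the stated $b$, and ``iteratively apply $\chi$ with constant-coefficient operators, absorbing correction terms by genericity'' is not an argument: already in the one-parameter case Walther's construction for generic arrangements \cite{WaltherBS} is delicate (and the multiplicity of $-1$ required Saito's separate analysis \cite{Saito_BS_arrangements}), and the multivariable construction is precisely the content of the cited papers. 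Note finally that the survey itself only quotes this theorem from \cite{Mai16b,Bath} without proof, so there is no in-paper argument for your outline to shortcut; as it stands, your proposal records correct necessary conditions but proves neither inclusion.
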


\subsection{Relative versions}

In this section we discuss a more general version of the Bernstein-Sato polynomials in which the functional equation includes an element of a $D$-module $M$ \cite{Sabbah87, Mebkhout_book}. As in the classical case, we consider this functional equation as an equality in a given module that we define next.

%

\begin{definition}
Let $A$ be a differentiably admissible $\KK$-algebra, and $M$ a left $D_{A|\KK}$-module. For $f\in A\smallsetminus\{0\}$, we define 
the left $D_{A_f|\KK}[s]$-module $M_f[s] \boldsymbol{f^s}$ as follows:
	\begin{enumerate}
	\item As an $A_f[s]$-module, $M_f[s] \boldsymbol{f^s}$ is isomorphic to  $M_f[s]$.
	\item Each partial derivative $\partial\in \Der_{A|\KK}$ acts by the rule
	\[ \partial (a(s)v \boldsymbol{f^s}) = \left(a(s)\partial (v) +  \frac{s a(s) \partial(f)}{f}\right) \boldsymbol{f^s} \]
	for $a(s)\in A_f[s]$.
	\end{enumerate}
\end{definition} 

Alternative descriptions can be given analogously to Subsection~\ref{Subsec:modules}, but we do not need them here.

\begin{theorem}[{\cite[Theorem 3.1.1]{MNM}, \cite{Sabbah87}}]
Let $A$ be a differentiably admissible $\KK$-algebra, $M$ a left $D_{A|\KK}$-module in the Bernstein class, and $f\in A\smallsetminus\{0\}$.
For any element $v\in M$ there exists $\delta(s)\in D_{A|\KK}[s]$ and $b(s)\in\KK[s]\smallsetminus\{0\}$ such that
$$
\delta(s) v f  \boldsymbol{f^s}=b(s)v \boldsymbol{f^s}.
$$
\end{theorem}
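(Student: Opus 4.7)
The plan is to mimic the proof of \Cref{ThmExistenceHomological}, but with the module $A$ replaced by $M$. The key leverage will come from the fact that the Bernstein class is closed under localization and under taking submodules, and that every module in the Bernstein class has finite length as a $D_{A|\KK}$-module.

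First, I would establish a relative analogue of \Cref{PropEquivExistence}: the existence of the desired functional equation $\delta(s) v f \boldsymbol{f^s} = b(s) v\boldsymbol{f^s}$ is equivalent to $v\boldsymbol{f^s}$ generating the $D_{A(s)|\KK(s)}$-submodule $D_{A(s)|\KK(s)} \cdot v\boldsymbol{f^s}$ of $M_f[s]\boldsymbol{f^s}\otimes_{\KK[s]}\KK(s)$ in such a way that the shifts $\tfrac{1}{f^m} v \boldsymbol{f^s}$ all lie in this cyclic submodule. The argument of Lemma~\ref{PropEquivExistence} carries over unchanged, since only the $A(s)$-module and $D_{A(s)|\KK(s)}$-module structures on $M_f[s]\boldsymbol{f^s}\otimes\KK(s)$ are used, and these satisfy the same shift isomorphism $\psi_m$ sending $\tfrac{r(s)}{f^{\alpha}} v \boldsymbol{f^s}$ to $\tfrac{r(s-m)}{f^{\alpha+m}} v \boldsymbol{f^s}$.

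Second, I would construct a $D_{A(s)|\KK(s)}$-submodule $N\subseteq M_f[s]\boldsymbol{f^s}\otimes_{\KK[s]}\KK(s)$ in the Bernstein class such that $N_f$ recovers the entire module. By hypothesis $M$ lies in the Bernstein class of $D_{A|\KK}$; since the Bernstein class is stable under localization, $M_f$ is in the Bernstein class of $D_{A_f|\KK}$. After base change from $\KK$ to $\KK(s)$, following the constructions in \cite[\S2 and \S3.1]{MNM}, one endows $M\otimes_{\KK}\KK(s)$ (with the appropriate twist of the derivations by $\boldsymbol{f^s}$) with a good filtration whose associated graded has dimension $d$. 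This produces the sought submodule $N$ containing some shift $v f^{\ell}\boldsymbol{f^s}$, with $N_f = M_f[s]\boldsymbol{f^s}\otimes_{\KK[s]}\KK(s)$.

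Third, once $N$ is in hand, the argument concludes exactly as in \Cref{ThmExistenceHomological}: $N$ has finite length as a $D_{A(s)|\KK(s)}$-module, so the descending chain
$$D_{A(s)|\KK(s)} v f^{\ell} \boldsymbol{f^s} \supseteq D_{A(s)|\KK(s)} v f^{\ell+1} \boldsymbol{f^s} \supseteq D_{A(s)|\KK(s)} v f^{\ell+2} \boldsymbol{f^s} \supseteq \cdots$$
stabilizes. This produces $m\in\NN$ and an operator $\delta(s)\in D_{A(s)|\KK(s)}$ with $\delta(s) v f^{\ell+m+1}\boldsymbol{f^s}= vf^{\ell+m}\boldsymbol{f^s}$. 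Clearing denominators in $\KK(s)$ to get a polynomial $b(s)\in\KK[s]\smallsetminus\{0\}$ and shifting $s\mapsto s-\ell-m$ yields $\widetilde{\delta}(s)\in D_{A|\KK}[s]$ with $\widetilde{\delta}(s)v f\boldsymbol{f^s} = b(s) v\boldsymbol{f^s}$, as required.

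The main obstacle is the second step, namely producing the Bernstein submodule $N$. For $M=A$ this is the content of the key technical result in \cite[Prop.~1.2.7, proof of Thm.~3.1.1]{MNM}, and the adaptation to an arbitrary $M$ in the Bernstein class requires verifying that the construction of a good filtration on $M_f[s]\boldsymbol{f^s}\otimes\KK(s)$ still yields associated graded dimension exactly $d$ — this should follow from combining a good filtration on $M$ (whose existence and dimension $d$ are guaranteed by the Bernstein class hypothesis) with the explicit filtration by powers of $f$ used in \Cref{ThmAfsHol}, but the bookkeeping with the twisted action of $\Der_{A|\KK}$ on $v\boldsymbol{f^s}$ is the delicate point that needs care.
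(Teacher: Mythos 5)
This statement is not proved in the paper---it is quoted from \cite[Theorem~3.1.1]{MNM} and \cite{Sabbah87}---and your proposal follows precisely the strategy of the paper's sketch of Theorem~\ref{ThmExistenceHomological} (and of the cited source, whose Theorem~3.1.1 is exactly this relative statement): base change to $\KK(s)$, produce a Bernstein-class $D_{A(s)|\KK(s)}$-submodule $N$ of $M_f[s]\boldsymbol{f^s}\otimes_{\KK[s]}\KK(s)$ with $N_f$ the whole module and containing some $vf^{\ell}\boldsymbol{f^s}$, use finite length to stabilize the descending chain of cyclic submodules, then clear denominators and shift. Your deferral of the construction of $N$ to \cite[Proposition~1.2.7 and the proof of Theorem~3.1.1]{MNM} matches the level of detail the paper itself gives at the corresponding step of its sketch, so the proposal is correct and essentially the same approach.
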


There are not many explicit examples of Bernstein-Sato polynomials in this generality that we may find in the literature. Torrelli \cite{Torrelli02, Torrelli03} has some results in the case that $M$ is the local cohomology module of a complete intersection or a hypersurface with isolated singularities. Reichelt, Sevenheck, and Walther \cite{RSW18} studied the case of hypergeometric systems.

In the case of $M$ being the ring itself, we find the Bernstein-Sato polynomial of $f$ relative to an element $h\in A$. Of course, when $h=1$ we recover the classical version.

\begin{corollary}
Let $A$ be a differentiably admissible $\KK$-algebra and $f\in A\smallsetminus\{0\}$.
For any element $h\in A$ there exists $\delta(s)\in D_{A|\KK}[s]$ and $b(s)\in\KK[s]\smallsetminus\{0\}$ such that
$$
\delta(s) h f  \boldsymbol{f^s}=b(s)h \boldsymbol{f^s}.
$$
\end{corollary}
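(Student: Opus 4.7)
The natural approach is to invoke the preceding theorem directly, with the choice $M = A$ (viewed as a left $D_{A|\KK}$-module via its tautological action) and $v = h \in A$. The conclusion of the theorem in this case is an equation $\delta(s)\,h f\,\fs = b(s)\,h\,\fs$ in $A_f[s]\fs$, which is precisely the statement of the corollary. Thus the entire problem reduces to verifying that $A$ itself lies in the Bernstein class of $D_{A|\KK}$.

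To check this hypothesis, I would equip $A$ with the trivial filtration $\Gamma^i := A$ for all $i \geq 0$. This is compatible with the order filtration $D^\bullet_{A|\KK}$ (because $D^i \cdot A = A$), each $\Gamma^i$ is cyclic over $A$, and $A = \bigcup_i \Gamma^i$, so it is a good filtration. The associated graded module $\gr^\bullet_\Gamma(A)$ is isomorphic to $A$ concentrated in degree $0$. Since $\gr^1(\partial)$ sends $\Gamma^0 = A$ into $\Gamma^1/\Gamma^0 = 0$ for every derivation $\partial \in \Der_{A|\KK}$, the annihilator of $\gr^\bullet_\Gamma(A)$ in $\gr^\bullet_{D^\bullet}(D_{A|\KK}) \cong \Sym_A \Der_{A|\KK}$ contains the augmentation ideal generated by the principal symbols of derivations; the corresponding quotient is isomorphic to $A$, which has Krull dimension $d$. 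Therefore $\dim_{D_{A|\KK}}(A) = d$, and $A$ lies in the Bernstein class by Definition~\ref{DefBernsteinClass}.

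With this verification in place, applying the preceding theorem with $(M,v) = (A, h)$ immediately yields the required $\delta(s) \in D_{A|\KK}[s]$ and $b(s) \in \KK[s] \setminus \{0\}$. There is no genuine obstacle: the corollary is a direct specialization of the relative existence theorem, once one observes that the tautological $D$-module structure on $A$ is already of Bernstein type. The element $h$ plays no role in the structural verification, only in selecting the vector of the $D_{A|\KK}$-module $A$ to which the theorem is then applied.
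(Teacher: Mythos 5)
Your proof is correct and follows the same route the paper intends: the corollary is exactly the preceding theorem applied with $M=A$ (the tautological $D_{A|\KK}$-module) and $v=h$, the only hypothesis to check being that $A$ lies in the Bernstein class. Your verification of that hypothesis via the trivial filtration, giving $\gr(D_{A|\KK})/\Ann \cong A$ of dimension $d$, is a correct filling-in of the detail the paper leaves implicit.
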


\begin{definition}
Let $A$ be a differentiably admissible $\KK$-algebra, $M$ a left $D_{A|\KK}$-module in the Bernstein class, $f\in A\smallsetminus\{0\}$, and $v\in M$. We define the relative Bernstein-Sato polynomial $b_{f,v}(s)$ to be the monic polynomial of minimal degree for which there is a nonzero functional equation
\[\delta(s) v f  \boldsymbol{f^s}=b_{f,v}(s)v \boldsymbol{f^s}. \]
\end{definition}

A basic example shows that $s=-1$ need not always be a root of the relative Bernstein-Sato polynomial $b_{f,g}(s)$.

\begin{example}
	Let $A=\CC[x]$, and take $f=g=x$. We have a functional equation
	\[ \partial_x x^{s+1} x = (s+2) x^s x \ \text{for all} \ s,\]
	so $s=-1$ is not a root of $b_{x,x}(s)$. It follows from the next proposition that $b_{x,x}(s)=s+2$.
\end{example}

We record a basic property of relative Bernstein-Sato polynomials that may be considered as an analogue to Lemma~\ref{s+1}.

\begin{lemma}\label{s+n}
	Let $A$ be a differentially admissible $\KK$-algebra, and $f,g\in A\smallsetminus \{0\}$. If $g\in (f^{n-1})\smallsetminus(f^{n})$, then $s=-n$ is a root of $b_{f,g}(s)$.
\end{lemma}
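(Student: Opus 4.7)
The plan is to specialize an arbitrary functional equation at $s = -n$ and extract a divisibility obstruction from the hypothesis $g\in (f^{n-1})\smallsetminus (f^n)$. Write $g = h\,f^{n-1}$ with $h\in A$; the choice $h\notin (f)$ is available precisely because $g\notin (f^n)$. Given any functional equation
\[\delta(s)\,g f\,\boldsymbol{f^s} \;=\; b(s)\,g\,\boldsymbol{f^s}\qquad\text{in }A_f[s]\boldsymbol{f^s},\]
my goal is to conclude $b(-n)=0$; applied to the minimal choice $b = b_{f,g}$, this yields $(s+n)\mid b_{f,g}(s)$ as required.

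For the main step I would apply the specialization map $\theta_{-n}\colon A_f[s]\boldsymbol{f^s}\to A_f$ from Subsection~\ref{Subsec:modules}, which is compatible with the accompanying specialization $\pi_{-n}\colon D_{A_f|\KK}[s]\to D_{A_f|\KK}$. This converts the functional equation into the identity
\[\delta(-n)\bigl(gf^{-n+1}\bigr) \;=\; b(-n)\bigl(gf^{-n}\bigr) \qquad\text{in }A_f.\]
Substituting $g = hf^{n-1}$ collapses the powers of $f$ on the left and leaves a single $f$ in the denominator on the right; clearing denominators then gives
\[f\cdot \delta(-n)(h) \;=\; b(-n)\,h,\]
and, noting that $\delta(-n)(h)\in A$, this descends to an equality in $A$ provided $f$ is a nonzerodivisor. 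That condition is automatic when $n\geq 2$, since on any direct factor of the regular ring $A$ where $f$ vanished the hypothesis would force $g\in (f^{n-1})=0$ and simultaneously $g\notin (f^n)=0$, which is absurd; for $n=1$ one reduces to a domain factor actually witnessing $g\notin (f)$, using the splitting of $A$ into its finitely many regular domain factors.

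From $f\cdot \delta(-n)(h) = b(-n)\,h$ the conclusion is a one-line divisibility argument: if $b(-n)$ were a nonzero scalar, it would be a unit of $\KK$ and the displayed identity would force $h\in (f)$, contradicting the choice of $h$. Hence $b(-n)=0$. I expect the main source of friction to be bookkeeping rather than mathematical substance, in particular verifying that the specialization formalism of Subsection~\ref{Subsec:modules}, developed there for the cyclic generator $\boldsymbol{f^s}$, applies verbatim to the element $g\,\boldsymbol{f^s}$\textemdash which is immediate, since $g\,\boldsymbol{f^s}$ lies inside the classical module $A_f[s]\boldsymbol{f^s}$\textemdash and carefully passing between $A$ and $A_f$ in the zerodivisor case.
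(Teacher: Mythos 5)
Your argument is correct and is essentially the paper's own proof: the paper also specializes the functional equation at $s=-n$, observes that $\delta(-n)(g/f^{n-1})$ lies in $A$ while $g/f^n$ does not, and concludes $b(-n)=0$. Your extra bookkeeping (writing $g=hf^{n-1}$ with $h\notin(f)$ and worrying about $f$ being a zerodivisor on a factor of $A$) is harmless but not part of the paper's two-line proof, which tacitly treats $f$ as a nonzerodivisor.
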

\begin{proof} Evaluating the functional equation at $s=-n$, we have
	\[ \delta(-n) f f^{-n} g = b(-n) f^{-n} g.\]
	Since $g/f^{n-1}\in R$, and $g/f^n\notin R$, we must have $b(-n)=0$.
	\end{proof}

We make another related observation.

\begin{lemma}\label{shift}
	Let $A$ be a differentially admissible $\KK$-algebra, and $f,g\in A\smallsetminus \{0\}$. Then $b_{f,f^ng}(s) = b_{f,g}(s+n)$ for all $n$.
\end{lemma}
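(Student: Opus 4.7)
The plan is to exploit a natural shift automorphism of the $D_{A|\KK}$-module $A_f[s]\boldsymbol{f^s}$ that translates $s\mapsto s+n$. For each $n \in \ZZ$, I would define the $\KK$-linear map
$$\sigma_n\colon A_f[s]\boldsymbol{f^s} \longrightarrow A_f[s]\boldsymbol{f^s}, \qquad a(s)\boldsymbol{f^s} \longmapsto a(s+n)\, f^n\, \boldsymbol{f^s}.$$
A direct Leibniz calculation using the defining relation $\partial \cdot \boldsymbol{f^s} = (s\,\partial(f)/f)\,\boldsymbol{f^s}$ for $\partial \in \Der_{A|\KK}$ shows that $\partial \circ \sigma_n = \sigma_n \circ \partial$: the contribution $n\, a(s+n)\,\partial(f)\, f^{n-1}$ from differentiating $f^n$ combines with the $s\,a(s+n)\,\partial(f)/f \cdot f^n$ term coming from $\boldsymbol{f^s}$ to produce precisely $(s+n)\,a(s+n)\,\partial(f)/f \cdot f^n$ on the other side. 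Since $D_{A|\KK} = A\langle \Der_{A|\KK}\rangle$ by Remark~\ref{RmkDiffAdGenDer}, this shows $\sigma_n$ is $D_{A|\KK}$-linear. Because $\sigma_n$ sends multiplication by $s$ to multiplication by $s+n$, one obtains $\sigma_n \circ \delta(s) = \delta(s+n) \circ \sigma_n$ for every $\delta(s) \in D_{A|\KK}[s]$; evidently $\sigma_{-n}$ is its inverse.

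I would then apply $\sigma_n$ to a minimal functional equation
$$\delta(s)\,g f\,\boldsymbol{f^s} = b_{f,g}(s)\,g\,\boldsymbol{f^s}.$$
Using the identity $\sigma_n(h\,\boldsymbol{f^s}) = h f^n\,\boldsymbol{f^s}$ for $h \in A_f$, this produces
$$\delta(s+n)\,(f^n g)\, f\,\boldsymbol{f^s} = b_{f,g}(s+n)\,(f^n g)\,\boldsymbol{f^s},$$
a valid functional equation for the pair $(f, f^n g)$ with polynomial $b_{f,g}(s+n)$; minimality of $b_{f,f^n g}(s)$ gives $b_{f,f^n g}(s) \mid b_{f,g}(s+n)$. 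Applying $\sigma_{-n}$ to a minimal functional equation for $(f, f^n g)$ yields, by the same argument, a functional equation for $(f,g)$ with polynomial $b_{f,f^n g}(s-n)$, so $b_{f,g}(s) \mid b_{f,f^n g}(s-n)$, equivalently $b_{f,g}(s+n) \mid b_{f,f^n g}(s)$. Since both polynomials are monic, they coincide.

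The only step requiring real care is verifying that $\sigma_n$ commutes with derivations; this is a short Leibniz computation, so I anticipate no serious obstacle beyond bookkeeping. One should also note that to accommodate negative $n$ (where $f^n g \notin A$) one interprets $b_{f,v}$ via the definition applied to $v$ in the Bernstein-class $D_{A|\KK}$-module $A_f$, which is covered by the definitional framework in the excerpt.
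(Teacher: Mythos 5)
Your proof is correct and follows essentially the same route as the paper: the paper's argument is exactly to ``shift'' a minimal functional equation for $(f,g)$ by $n$ and a minimal one for $(f,f^ng)$ by $-n$, obtaining the two divisibilities $b_{f,f^ng}(s)\mid b_{f,g}(s+n)$ and $b_{f,g}(s+n)\mid b_{f,f^ng}(s)$. Your explicit construction of the automorphism $\sigma_n$ of $A_f[s]\boldsymbol{f^s}$ intertwining $\delta(s)$ with $\delta(s+n)$ simply makes rigorous the shift the paper performs implicitly (cf.\ its Remark on the equivalence $\delta(s+t)f^{t+1}\boldsymbol{f^s}=b(s)f^t\boldsymbol{f^s}$), so there is no substantive difference.
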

\begin{proof}
	Given a functional equation
	\[ \delta(s)   g f\boldsymbol{f^s}= b_{f,g}(s) g \boldsymbol{f^s}, \]
	shifting by $n$ yields
		\[ \delta(s+n)   g f^n f\boldsymbol{f^s}= b_{f,g}(s+n) g f^n \boldsymbol{f^s}, \]
		so $b_{f,g}(s+n) \ | \ b_{f,f^ng}(s)$. Similarly,
		given a functional equation
		\[ \delta'(s)   g f^n f\boldsymbol{f^s}= b_{f,f^ng}(s) g f^n \boldsymbol{f^s}, \]
		we also have
			\[ \delta'(s-n)   g f\boldsymbol{f^s}= b_{f,f^ng}(s-n) g \boldsymbol{f^s}, \]
			from which the equality follows.
	\end{proof}

This notion of relative Bernstein-Sato polynomials has been extended to the case of nonprincipal ideals by Budur, Musta\c{t}\v{a} and Saito \cite{BMS2006a} following the approach given in Subsection~\ref{SubGeneralIdeals2}. 

\begin{theorem}[\cite{BMS2006a}]
Let $\KK$ a field of characteristic zero, $A$ be a regular  finitely generated $\KK$-algebra, and $\fa\subseteq A$ be a nonzero ideal.
Let $F=f_1,\ldots,f_\ell$ be a set of generators for $\fa$ and consider an element $h\in A$.
Then, $b_{\fa,h}(s)\in\KK[s]$ is the monic polynomial of least degree, $b(s)$ such that 
$$
b(s_1+\cdots+s_\ell) h f^{s_1}_1\cdots f^{s_\ell}_\ell\in \sum_{|\alpha|=1} D_{R|\KK}[s_1,\ldots,s_\ell] \cdot \prod_{\alpha_i} \binom{s_i}{-\alpha_i} 
h f^{s_1+\alpha_1}_1\cdots f^{s_\ell+\alpha_\ell}_\ell,
$$
where $\alpha=(\alpha_1,\ldots,\alpha_\ell)\in\ZZ^\ell$, $|\alpha|=\alpha_1+\cdots+\alpha_\ell,$
$\binom{s_i}{m}= \frac{1}{m!} \prod^{m-1}_{j=0}(s_i-j).$
\end{theorem}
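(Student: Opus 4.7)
My plan is to mimic the strategy used in Theorem~\ref{ThmExistinceNonPrincipal} (which in turn follows Musta\c{t}\u{a}'s approach), reducing the existence and characterization to the relative Bernstein--Sato polynomial for a single auxiliary element. Specifically, I would introduce new variables $y_1,\ldots,y_\ell$, form $g = f_1 y_1 + \cdots + f_\ell y_\ell \in A[y_1,\ldots,y_\ell]$, and apply the already established existence of a relative Bernstein--Sato polynomial for $g$ with respect to $h \in A \subseteq A[y_1,\ldots,y_\ell]$. This yields $\delta(s)\in D_{A[y_1,\ldots,y_\ell]|\KK}[s]$ and $b(s)\in \KK[s]\smallsetminus\{0\}$ with
\[
\delta(s)\,h\,g\,\boldsymbol{g^s} \;=\; b(s)\,h\,\boldsymbol{g^s}\qquad \text{in }\ A[y]_g[s]\,\boldsymbol{g^s}.
\]

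Next I would decompose the operator. Writing derivations on $A[y_1,\ldots,y_\ell]$ in the basis coming from $\Der_{A|\KK}\oplus \bigoplus_i A\partial_{y_i}$ (as in Proposition~\ref{PropPowerSeriesDiffAd} and Remark~\ref{RmkDiffAdGenDer}), I would express
\[
\delta(s) \;=\; \sum_{\beta,\gamma, j} a_{\beta,\gamma,j}\, \delta_{j}(s)\, y^{\beta}\,\partial_y^{\gamma}
\]
with $a_{\beta,\gamma,j}\in A$ and $\delta_j(s)\in D_{A|\KK}[s]$. After specializing $s\mapsto n\in\NN$, expand both sides via the multinomial formula $g^n = \sum_{|\alpha|=n}\binom{n}{\alpha}f^\alpha y^\alpha$ and compare the coefficient of $y_1^{s_1}\cdots y_\ell^{s_\ell}$ (with $s_1+\cdots+s_\ell = n$) on both sides. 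The action of $y^\beta\partial_y^\gamma$ shifts the exponent vector by $\beta-\gamma$ and introduces a falling factorial in the exponents; after dividing by the common multinomial coefficient this turns precisely into the factors $\prod_i \binom{s_i}{-\alpha_i}$ attached to the shifted monomials $h f_1^{s_1+\alpha_1}\cdots f_\ell^{s_\ell+\alpha_\ell}$. Since polynomial identities agreeing on all $(s_1,\ldots,s_\ell)\in\NN^\ell$ with $\sum s_i = n$ for all $n$ hold identically in $\KK[s_1,\ldots,s_\ell]$, extracting the coefficient of $y_1^{s_1}\cdots y_\ell^{s_\ell}$ yields exactly a functional equation of the form claimed in the statement.

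For minimality, suppose $b'(s)\in\KK[s]$ has smaller degree and satisfies the same membership condition in $D_{A|\KK}[s_1,\ldots,s_\ell]$, i.e.
\[
b'(s_1+\cdots+s_\ell)\, h\, f_1^{s_1}\cdots f_\ell^{s_\ell} \;\in\; \sum_{|\alpha|=1} D_{A|\KK}[s_1,\ldots,s_\ell]\,\prod_i\binom{s_i}{-\alpha_i}\, h\, f_1^{s_1+\alpha_1}\cdots f_\ell^{s_\ell+\alpha_\ell}.
\]
Multiplying each side by $y_1^{s_1}\cdots y_\ell^{s_\ell}$ and summing over all $(s_1,\ldots,s_\ell)\in\NN^\ell$ with $s_1+\cdots+s_\ell=n$, the RHS reassembles (after recognizing the shifts as the action of $y_i\partial_{y_i}$, $y_i\partial_{y_j}$, and related operators on $g^{n+1}$) into a genuine functional equation $\widetilde\delta(s)\,h\,g^{n+1} = b'(n)\,h\,g^n$. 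This would contradict the minimality of the relative Bernstein--Sato polynomial of $g$ with respect to $h$, which agrees with $b_{\fa,h}(s)$ by the same coincidence argument used to pass from Sabbah/BMS Bernstein--Sato polynomials of ideals to those of the auxiliary principal element (cf.\ Proposition~\ref{PropWellDefMM} and Theorem~\ref{ThmExistinceNonPrincipal}).

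The main obstacle will be the bookkeeping in the coefficient extraction: tracking how the noncommutative monomials $y^\beta \partial_y^\gamma$ translate, after matching a specific monomial $y_1^{s_1}\cdots y_\ell^{s_\ell}$, into the precise combinatorial factors $\prod_i\binom{s_i}{-\alpha_i}$ with $|\alpha|=1$. In particular, one must verify that no nontrivial cancellation loses information and that both the forward direction (construction of the operator witnessing the functional equation in the form of the theorem) and the reverse direction (reconstruction of a $D_{A[y]|\KK}[s]$-operator from such data) are lossless; this is the step where the identity $\binom{s_i}{-\alpha_i} = s_i$ for $\alpha_i=-1$ becomes critical, matching the single-variable derivative $\partial_{y_i}$ applied to $y_i^{s_i}$.
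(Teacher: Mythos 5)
The paper itself offers no proof of this statement: it is quoted from Budur--Musta\c{t}\u{a}--Saito \cite{BMS2006a}, whose argument works directly with the $s_{ij}$-operators of Subsection~\ref{SubGeneralIdeals2} (equivalently, with the $V$-filtration), by unwinding the action $s_{ij}\cdot a\,\boldsymbol{f^s}=s_i\,a(\dots,s_i-1,\dots,s_j+1,\dots)\frac{f_j}{f_i}\boldsymbol{f^s}$ into the shifted terms with binomial coefficients. Your route through the auxiliary element $g=f_1y_1+\cdots+f_\ell y_\ell$ is therefore genuinely different; it is the relative analogue of Musta\c{t}\u{a}'s comparison between $b_{\fa}(s)$ and $\tilde b_g(s)$. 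The forward coefficient extraction you sketch is essentially sound, and the degree-in-$y$ bookkeeping is the same as in the proof of Theorem~\ref{ThmExistinceNonPrincipal}: only the part of $\delta(s)$ of $y$-degree $-1$ survives, which is exactly why the shifts $\alpha$ satisfy $|\alpha|=1$, and the falling factorials produced by $\partial_{y_i}^{k}$ acting on $y_i^{s_i}$ account for the factors $\binom{s_i}{-\alpha_i}$.

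The genuine gap is in your minimality step, which rests on the assertion that the relative Bernstein--Sato polynomial of $g$ with respect to $h$ ``agrees with $b_{\fa,h}(s)$.'' Neither Proposition~\ref{PropWellDefMM} nor Theorem~\ref{ThmExistinceNonPrincipal} says anything of the sort, and the assertion is false as stated already for $h=1$: the relative polynomial of $g$ with respect to $1$ is $b_g(s)$, whereas $b_{\fa,1}(s)=b_{\fa}(s)=\tilde b_g(s)=b_g(s)/(s+1)$. The discrepancy is intrinsic to your translation: comparing the coefficient of $y_1^{s_1}\cdots y_\ell^{s_\ell}$ in $\delta(n)\,h\,g^{n+1}=b(n)\,h\,g^{n}$ produces the ratio of multinomial coefficients $\binom{n+1}{s+\alpha}\big/\binom{n}{s}$, which carries an extra factor $s_1+\cdots+s_\ell+1$, and conversely reassembling a membership relation into an honest equation for $g$ reintroduces that factor. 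So a $b'(s)$ of smaller degree satisfying the membership only threatens the minimality of something like $(s+1)b'(s)$ on the $g$-side, not of $b_{g,h}(s)$, and no contradiction follows as written. Moreover, since $b_{\fa,h}(s)$ is defined (here and in \cite{BMS2006a}) via the $D_{A|\KK}\langle S\rangle$ functional equation, a complete proof must connect your $g$-side quantity to that definition; the only bridge in the literature, Musta\c{t}\u{a}'s comparison theorem, is itself proved using the characterization you are trying to establish (in the case $h=1$), so invoking ``the same coincidence argument'' is circular. To repair the argument you would either have to prove the relative comparison (identifying $b_{\fa,h}$ with a suitably normalized relative polynomial of $g$ with respect to $h$, keeping track of the $(s+1)$-type shift, which for general $h$ is delicate -- compare Lemma~\ref{shift}) independently of the statement, or follow \cite{BMS2006a} and derive the membership directly from the $s_{ij}$-action.
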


\subsection{$V$-filtrations}

In this subsection, we give a quick overview of the $V$-filtration and its relationship with the relative versions of Bernstein-Sato polynomials. For further details regarding $V$-filtrations we refer to Budur's survey on this subject \cite{SurveyBudur}.

\begin{definition}\label{DefVfil}
Suppose that $\KK$ has  characteristic zero. Let $A$ be a regular Noetherian    $\KK$-algebra. Let $T=t_1,\ldots,t_\ell$ be a sequence of variables, and let $A[t_1,\ldots,t_\ell]$ be a polynomial ring over $A$.
The $V$-filtration along the ideal $(T)$ on the ring of differential operators  $D_{A[T]|\KK}$ is the filtration indexed by integers $i\in \mathbb{Z}$ defined by
   \begin{equation*}
   V^{i}_{(T)} {D_{A[T]|\KK}} = \{ \delta \in D_{A[T]|\KK} :  \delta \act (T)^j \subseteq  (T)^{j+i} \ \text{for all} \ j \in \ZZ \},
\end{equation*}
   where $(T)^j = A[T]$ for  $j \leq 0$.
\end{definition}

\begin{remark}
We consider  $D_{A[T]|\KK}$ as a graded ring where $\deg(t_i)=1$ and $\deg(\partial_{t_i})=-1$.
Then,
   \begin{equation*}
      V^{i}_{(T)} {D_{A[T]|\KK}}
      = \bigoplus_{\substack{\pt{a},\pt{b} \in \NN^\ell \\ |\pt{a}|-|\pt{b}|\,\geq \, i}} D_{A|\KK} \cdot  t_1^{a_1}\cdots t_\ell^{a_\ell} \pd{t_1}^{b_1} \cdots \pd{t_\ell}^{b_\ell}.
   \end{equation*}
\end{remark}

The $V$-filtration along the ideal $(T)$ on a $D_{A[T]|\KK}$-module $M$ is defined as follows.

\begin{definition} \label{VfilM}
Suppose that $\KK$ has  characteristic zero. Let $A$ be a regular Noetherian    $\KK$-algebra. Let $T=t_1,\ldots,t_\ell$ be a sequence of variables, and let $A[t_1,\ldots,t_\ell]$ be a polynomial ring over $A$.
   Let $M$ be a $D_{A[T]|\KK}$-module.
   A \emph{$V$-filtration on $M$ along the ideal $(T)=(t_1,\ldots,t_\ell)$} is a decreasing filtration $\{V^{\alpha}_{(T)} M\}_\alpha$ on $M$, indexed by $\alpha\in \QQ$, satisfying the following conditions.
   \begin{enumerate}
      \item For all $\alpha\in \QQ$, $V^{\alpha}_{(T)} M$ is a Noetherian  $V^{0}_{(T)} {D_{A[T]|\KK}}$-submodule of $M$.
      \item The union of the $V^{\alpha}_{(T)} M$, over all $\alpha \in \QQ$, is $M$.
      \item  $V^{\alpha}_{(T)} M = \bigcap_{\gamma < \alpha} V^{\gamma}_{(T)} M$ for all $\alpha$, and the set $J$  consisting of all $\alpha\in \QQ$ for which $V^{\alpha}_{(T)} M \neq \bigcup_{\gamma > \alpha} V^{\gamma}_{(T)} M$ is discrete.
      \item For all $\alpha \in \QQ$ and all $1 \leq i \leq \ell$,
      \[ t_i \act V^\alpha_{(T)} M \subseteq V^{\alpha+1}_{(T)} M \, \text{ and } \,\pd{t_i} \act V^\alpha_{(T)} M \subseteq V^{\alpha-1}_{(T)} M, \] i.e., the filtration is compatible with the $V$-filtration on $D_{A[T]|\KK}$.
      \item For all $\alpha\gg 0$,  $\sum^\ell_{i=1}\left( t_i \act V^\alpha_{(T)} M\right) = V^{\alpha+1}_{(T)} M$.
      \item For all $\alpha \in \QQ$,
      \[\sum_{i=1}^\ell \pd{t_i} t_i - \alpha\]
      acts nilpotently on $V^\alpha_{(T)} M / (\bigcup_{\gamma > \alpha} V^{\gamma}_{(T)} M)$.
   \end{enumerate}
\end{definition}

\begin{proposition}[{\cite{SurveyBudur}}]
Suppose that $\KK$ has  characteristic zero. Let $A$ be a regular  Noetherian  $\KK$-algebra. Let $T=t_1,\ldots,t_\ell$ be a sequence of variables, and let $A[t_1,\ldots,t_\ell]$ be a polynomial ring over $A$.
Let $M$ be a finitely generated $D_{A[T]|\KK}$-module. If a $V$-filtration on $M$ along $(T)$ exists, then it is unique.
\end{proposition}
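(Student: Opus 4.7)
The plan is to compare any two candidate $V$-filtrations $U^\bullet$ and $V^\bullet$ on $M$ through the action of the Euler-type operator
\[ \theta := \sum_{i=1}^\ell \pd{t_i} t_i, \]
which lies in $V^0_{(T)} D_{A[T]|\KK}$. By axiom (i) every piece of either filtration is a $V^0_{(T)} D_{A[T]|\KK}$-submodule of $M$, hence is $\theta$-stable; in particular, $U^\alpha$, $U^{>\alpha} := \bigcup_{\gamma > \alpha} U^\gamma$, and their $V$-analogues are all preserved by $\theta$. Axiom (vi) then tells us that $\theta - \alpha$ acts nilpotently on both $U^\alpha/U^{>\alpha}$ and $V^\alpha/V^{>\alpha}$.

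The conceptual heart of the argument is to consider, for each pair $(\alpha, \beta) \in \QQ^2$, the bigraded piece
\[ G^{\alpha,\beta} \;:=\; \frac{U^\alpha \cap V^\beta}{(U^{>\alpha} \cap V^\beta) + (U^\alpha \cap V^{>\beta})}. \]
A direct second-isomorphism-theorem computation identifies $G^{\alpha,\beta}$ simultaneously with the $U$-graded piece at level $\alpha$ of $V^\beta/V^{>\beta}$, and with the $V$-graded piece at level $\beta$ of $U^\alpha/U^{>\alpha}$. The first presentation shows that $\theta - \beta$ acts nilpotently on $G^{\alpha,\beta}$, while the second shows that $\theta - \alpha$ acts nilpotently on the same module. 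When $\alpha \neq \beta$, the decomposition $\theta - \beta = (\alpha - \beta) + (\theta - \alpha)$ writes $\theta - \beta$ as a nonzero scalar plus a nilpotent operator, hence as an invertible operator. Since $\theta - \beta$ is simultaneously nilpotent and invertible on $G^{\alpha,\beta}$, we conclude $G^{\alpha,\beta} = 0$ whenever $\alpha \neq \beta$.

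It remains to upgrade this bigraded vanishing to the desired equality $U^\alpha = V^\alpha$. By symmetry it is enough to show $U^\alpha \subseteq V^\alpha$, and by axiom (iii) this reduces to showing $U^\alpha \subseteq V^\beta$ for every $\beta < \alpha$. Suppose for contradiction that some $u \in U^\alpha$ lies in $V^{\beta_0} \setminus V^{>\beta_0}$ with $\beta_0 < \alpha$. Since $G^{\alpha,\beta_0} = 0$, one may write $u = u' + u''$ with $u' \in U^{>\alpha} \cap V^{\beta_0}$ and $u'' \in U^\alpha \cap V^{>\beta_0}$; iterating the decomposition on the successive $U^{>\alpha_n}$-component, which by the discreteness of the jump set in axiom (iii) forces $\alpha_n$ to increase strictly, one obtains elements $w_n \in V^{>\beta_0}$ with $u - w_n \in U^{\alpha_n}$ and $\alpha_n \to \infty$. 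Applying axiom (v) to see that $U^{\alpha_n} \subseteq (T)^{n'}M$ for some $n' \to \infty$, and combining with an Artin--Rees--type control on $V^{\beta_0}$ coming from axiom (i), the remainder is forced into $V^{>\beta_0}$, contradicting $\ord_V(u) = \beta_0$.

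The main obstacle will be this last bookkeeping step: the bigraded vanishing is purely layer-by-layer information, and upgrading it to a global equality of filtrations requires combining axiom (v) with the Noetherianity of each $V^\alpha$ in an Artin--Rees fashion. The conceptual substance of the proof is nevertheless contained in the middle paragraph: the $\theta$-action rigidly pins every element's $U$-order to its $V$-order, so that the two filtrations must coincide.
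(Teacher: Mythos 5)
The paper itself offers no proof of this statement (it is quoted from Budur's survey), so I am judging your argument on its own terms. Your middle paragraph is exactly the classical Kashiwara--Malgrange argument: every piece of either filtration is a $V^0_{(T)}D_{A[T]|\KK}$-module, hence stable under $\theta=\sum_i\partial_{t_i}t_i$, and on the bigraded piece $G^{\alpha,\beta}$ both $\theta-\alpha$ and $\theta-\beta$ act nilpotently by axiom (vi), forcing $G^{\alpha,\beta}=0$ for $\alpha\neq\beta$. That part is correct and is the heart of the standard proof.

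The gap is where you yourself flag it, and as written it does not close. Knowing $U^{\alpha_n}\subseteq (T)^{n'}M$ gives nothing about the $V$-filtration: $M=\bigcup_\gamma V^\gamma$ is exhaustive but is not contained in any single $V^\gamma$, so $(T)^{n'}M$ need not lie in $V^{>\beta_0}$, and there is no Artin--Rees lemma available here, since neither $M$ nor $V^{\beta_0}$ is a finitely generated $A[T]$-module (axiom (i) only gives Noetherianity over $V^0_{(T)}D_{A[T]|\KK}$). The correct way to finish is: first use axiom (i) together with exhaustiveness of $V^\bullet$ to see that each $U^\gamma$ lies in a single $V^\delta$ (the chain $U^\gamma\cap V^\delta$, increasing as $\delta$ decreases, stabilizes because $U^\gamma$ is a Noetherian $V^0_{(T)}D_{A[T]|\KK}$-module). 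Then either push with axioms (iv) and (v): for $\gamma\gg0$ one has $U^{\gamma+n}=\sum_{|a|=n}t^aU^\gamma\subseteq V^{\delta+n}\subseteq V^{>\beta_0}$ for $n\gg0$, which is precisely the containment your iteration needs for the remainder term; or, more cleanly, avoid element-wise bookkeeping altogether: choose $\beta_0$ maximal with $U^\alpha\subseteq V^{\beta_0}$ (it exists by (iii) and the Noetherian step), assume $\beta_0<\alpha$, and consider the composite map $U^\alpha\to V^{\beta_0}/\bigcup_{\gamma>\beta_0}V^\gamma$. This map kills $\sum_i t_iU^\gamma$ outright, because $t_iV^{\beta_0}\subseteq V^{\beta_0+1}$ by (iv); your bigraded vanishing, applied at the finitely many $U$-jumps between $\alpha$ and any large level, shows its image equals the image of $U^\gamma$ for arbitrarily large $\gamma$, and axiom (v) then forces the image to be zero. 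Hence $U^\alpha\subseteq V^{>\beta_0}$, contradicting maximality of $\beta_0$, and the symmetric argument gives equality. With either repair your proof is complete and coincides with the standard one.
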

		
We now define the $V$-filtration on a $D_{A |\KK}$-module $M$ along $F= f_1,\ldots, f_\ell \in A$, where $M$ is a  $D_{R|\KK}$-module.
For this, we need the  direct image of $M$ under the graph embedding $i_{\seq{f}}$. We recall that this is the local cohomology module $H^\ell_{(T-F)} (M[T])$, where $(T-F)= (t_1-f_1,\ldots,t_\ell-f_\ell)$.

\begin{definition}\label{DefVfilIdeal}
Suppose that $\KK$ has  characteristic zero. 
Let $A$ be a regular Noetherian $\KK$-algebra. 
Given indeterminates $T = t_1,\ldots,t_\ell$, and $F=f_1,\ldots,f_\ell\in A$, consider the ideal 
 $(T-F)$ of the polynomial ring $A[T]$ generated by $t_1-f_1,\ldots, t_\ell-f_\ell$. 
For a $D_{A|\KK}$-module  $M$, let $M'$ denote the $D_{A[T]|\KK}$-module $H^{\ell}_{(T-F)}(M[T])$, and 
 identify $M$ with the isomorphic module $0 :_{M'} (T-F) \subseteq M'$. Suppose that $M'$ admits a $V$-filtration along $(T)$ over $A[T]$.
Then the \emph{$V$-filtration on $M$ along $(T-F)$} is defined, for $\alpha \in \QQ$, as
 \[V^{\alpha}_{(F)} M\coloneqq V^{\alpha}_{(T)} M' \cap M = ( 0 :_{V^{\alpha}_{(T)} M'} (T-F)).\]
\end{definition}

We point out that  $V$-filtration over $A$ along $F$ only depends on the ideal $\fa=(F)$ and not on the generators chosen.

We now give a result that guarantees the existence of $V$-filtrations. We point out that we have not defined regular or quasi-unipotent $D_{A|\KK}$-modules. 
We omit these definitions, but we mention that all principal localizations $A_f$ and all local cohomology modules $H^i_{\fa}(A)$ of the ring $A$ satisfy these properties.

\begin{theorem}[{\cite{KashiwaraVfil,MalgrangeVfil}}]
Suppose that $\KK$ has  characteristic zero. 
Let $A=\KK[x_1,\ldots, x_d]$ be a polynomial ring and $M$ be a quasi-unipotent regular holonomic left $D_{A|\KK}$-module.
Then, $M$ has a $V$-filtration  along $F= f_1,\ldots, f_\ell \in A$.
\end{theorem}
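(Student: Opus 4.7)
The plan is to proceed in two stages, following Definition~\ref{DefVfilIdeal}: first construct a $V$-filtration on the direct image $M' = H^{\ell}_{(T-F)}(M[T])$ along the ideal $(T)$ inside the larger ring $A[T]$, and then take the restriction $V^\alpha_{(F)} M := V^\alpha_{(T)} M' \cap M$. So the real work is to construct a $V$-filtration on a regular holonomic, quasi-unipotent $D_{A[T]|\KK}$-module along the ideal generated by some of the variables.

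For this first reduction, I would check that $M'$ inherits the hypotheses: the graph embedding $x \mapsto (x, f_1(x),\ldots, f_\ell(x))$ is a closed embedding, and the direct image under a closed embedding preserves regular holonomicity (a standard fact in $D$-module theory) and also preserves the quasi-unipotence condition. Hence $M'$ is a regular holonomic, quasi-unipotent left $D_{A[T]|\KK}$-module, and it suffices to produce $V^{\bullet}_{(T)}M'$.

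To build $V^{\bullet}_{(T)}M'$, I would first exploit holonomicity to choose a coherent $V^{0}_{(T)}D_{A[T]|\KK}$-submodule $U \subseteq M'$ that generates $M'$ as a $D_{A[T]|\KK}$-module; such a $U$ exists because $M'$ is finitely generated over $D_{A[T]|\KK}$ and the Rees-style subring $V^{0}_{(T)}D_{A[T]|\KK}$ is Noetherian. Setting $\theta := \sum_{i=1}^{\ell} \partial_{t_i} t_i$ (which is central in the graded ring attached to the $V$-filtration on $D_{A[T]|\KK}$), a relative Bernstein--Sato argument in the spirit of Theorem~\ref{ThmExistinceNonPrincipal} and Sabbah's result produces a nonzero $b(s)\in \KK[s]$ satisfying
\[
b(\theta)\cdot U \;\subseteq\; (T)\cdot V^{0}_{(T)}D_{A[T]|\KK}\cdot U.
\]
The regularity and quasi-unipotence assumptions force the roots of $b(s)$ to be rational (this is the Kashiwara--Malgrange rationality theorem, playing the role here that Theorem~\ref{rationality_BS} plays for the usual Bernstein--Sato polynomial).

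Given rationality of the roots, I would decompose (a sufficiently large shift of) $U$ into generalized $\theta$-eigenspaces $U^\beta$ with $\beta \in \QQ$ in the finite union of classes of roots of $b(s)$ modulo $\ZZ$, and then define
\[
V^{\alpha}_{(T)} M' \;:=\; \sum_{\beta \geq \alpha}\, V^{0}_{(T)}D_{A[T]|\KK}\cdot U^{\beta}.
\]
The axioms of Definition~\ref{VfilM} would then be checked one by one: coherence over $V^{0}_{(T)}D_{A[T]|\KK}$ comes from the coherence of $U$; exhaustiveness and discreteness of jumps come from there being only finitely many residue classes of roots of $b(s)$ modulo $\ZZ$; compatibility with the $V$-filtration on $D_{A[T]|\KK}$ and the saturation condition (v) are built into the construction via the action of the $t_i$ and $\partial_{t_i}$ shifting $\theta$-eigenvalues by $\pm 1$; and nilpotence of $\theta - \alpha$ on the graded pieces is immediate from the generalized eigenspace decomposition. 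Taking intersections with $M$ as in Definition~\ref{DefVfilIdeal} concludes the proof.

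The main obstacle is the second paragraph: producing the relative functional equation $b(\theta)U \subseteq (T)V^{0}U$ and, above all, establishing the rationality of its roots. Rationality is exactly what makes the filtration index set $\QQ$ sensible, and it is here that both hypotheses (regularity and quasi-unipotence) of $M$ are indispensable; without them, one only obtains a polynomial with complex roots, from which no discrete $\QQ$-indexed filtration can be extracted.
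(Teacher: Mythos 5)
This theorem is not proved in the paper at all: it is quoted from Kashiwara and Malgrange, so there is no internal argument to compare with, and your sketch is an outline of the standard construction from the literature (graph embedding, good $V$-filtration, $b$-function for $\theta=\sum_i\partial_{t_i}t_i$, refinement of the filtration by eigenvalues, then intersection with $M$ as in Definition~\ref{DefVfilIdeal}). The architecture is right, but two steps, as written, are genuine gaps rather than routine verifications.

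First, the eigenspace step is not literally available. A coherent $V^0_{(T)}D_{A[T]|\KK}$-submodule $U$ (or any shift of it) does not decompose as a direct sum of generalized $\theta$-eigenspaces: on $U$ the operator $\theta$ has eigenvalues in infinitely many integer translates of the finitely many residue classes of roots of $b$, and no such splitting of $U$ exists in general. What the functional equation actually gives is $b(\theta-k)\,U_k\subseteq U_{k+1}$ for the $\ZZ$-indexed good filtration $U_k$ generated by $U$, hence $b(\theta-k)$ annihilates each graded piece $U_k/U_{k+1}$, and it is these quotients that split into finitely many generalized $\theta$-eigenspaces. The $\QQ$-indexed filtration $V^\alpha_{(T)}M'$ is then obtained by pulling those splittings back to $U_k$, i.e.\ by refining the $\ZZ$-indexed filtration; one must also check that the result is independent of the choice of $U$ (here the uniqueness statement recorded in the paper just before the theorem is what rescues you), and only then verify the axioms of Definition~\ref{VfilM}. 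Second, the key input you invoke --- that regularity and quasi-unipotence force the roots of $b$ to be rational --- cannot be cited as a black box while proving this theorem: it is precisely the deep content of Kashiwara's and Malgrange's work being attributed here, established via the Riemann--Hilbert correspondence and the quasi-unipotence of the nearby-cycle monodromy (or resolution of singularities in the basic case $M=A$), and without it nothing in your construction produces a discrete $\QQ$-indexed filtration. So your proposal correctly identifies where the difficulty lies, but at that point it defers to the very result being proved; the remaining reductions (preservation of regular holonomicity and quasi-unipotence under the closed graph embedding, existence of a coherent generating $V^0$-submodule, existence of the $b$-function for holonomic modules) are standard and correctly placed.
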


Once we ensure the existence of $V$-filtrations we have the following characterization in terms of relative Bernstein-Sato polynomials.

\begin{theorem}[{\cite{Sabbah87,BMS2006a}}]
Suppose that $\KK$ has  characteristic zero. 
Let $A=\KK[x_1,\ldots, x_d]$ be a polynomial ring and $M$ be  a quasi-unipotent regular holonomic left $D_{A|\KK}$-module.
Then,
$$
V^\alpha_{(F)} M=\{v\in M \; | \: \alpha\leq c \hbox{ if }b_{(F),v}(-c)=0\}.
$$
\end{theorem}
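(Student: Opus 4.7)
The plan is to invoke uniqueness of the $V$-filtration: I would define the candidate filtration
$$U^\alpha M := \{v \in M \mid b_{(F),v}(-c)=0 \implies c \geq \alpha\}$$
and show it satisfies the six axioms of \Cref{VfilM} for $M$ viewed (via the identification $M \cong 0:_{M'}(T-F)$) as a piece of $M' = H^\ell_{(T-F)}(M[T])$, so that by uniqueness $U^\alpha M = V^\alpha_{(F)} M$.

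First I would translate the relative Bernstein-Sato functional equation into the direct image $M'$. Under the graph embedding, an element $v \in M$ corresponds to the "delta class" $v\delta_F \in M'$, on which $t_i$ acts as $f_i$. The key dictionary is that the Euler-type operator $\theta := -\sum_{i=1}^\ell \pd{t_i} t_i$ on $M'$ plays the role of the single variable $s = s_1+\cdots+s_\ell$ appearing in $b_{(F),v}(s)$, because the BMS equation only involves this sum. Concretely, a relative BS equation expressing $b(s_1+\cdots+s_\ell)\, v\,\fslbig$ as a sum of terms in $\sum_{|\alpha|=1}D[s_1,\dots,s_\ell]\cdot\prod \binom{s_i}{-\alpha_i} v f_1^{s_1+\alpha_1}\cdots f_\ell^{s_\ell+\alpha_\ell}$ becomes, after substituting $s_i\mapsto -\pd{t_i}t_i$ and translating the shift operators $\prod f_i^{\alpha_i}$ into actions of $t_i$, a statement that $b(\theta)\,v\delta_F \in (t_1,\ldots,t_\ell)\cdot V^0_{(T)}D_{A[T]|\KK}\cdot v\delta_F$. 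This lands in the piece $V^{\geq 1}$ of any $V$-filtration on $M'$.

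Next I would verify the six axioms for $U^\bullet$. Axioms (i)--(iii) follow from holonomicity of $M'$ together with existence (\Cref{ThmExistinceNonPrincipal}) and rationality of roots of relative Bernstein-Sato polynomials, extended from \Cref{rationality_BS} to the BMS setup. Compatibility with the $V$-filtration on $D_{A[T]|\KK}$ (axiom (iv)) reduces to a multi-variable analog of \Cref{shift}: multiplying a functional equation for $v$ by $f_i$ (i.e., applying $t_i$) shifts all the roots of $b_{(F),v}$ by $+1$, while applying $\pd{t_i}$ shifts by $-1$; axiom (iv) then follows immediately from the definition of $U^\alpha$. The generation axiom (v) comes from a Skoda-type argument using the reduced polynomial factor of $b_{(F),v}$: for $\alpha$ large enough, the equation forces $v\delta_F \in \sum_i t_i M'$. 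Finally, the nilpotence axiom (vi) is precisely the content of $b_{(F),v}$ being the \emph{minimal} polynomial: $\prod(\theta+c)$ over roots $-c$ of $b_{(F),v}$ annihilates $v\delta_F$ modulo $U^{>\alpha}$, and this product has a unique factor $(\theta+\alpha)$ controlling the graded piece $\mathrm{Gr}^\alpha_U M$.

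The main obstacle will be axioms (v) and (vi), which require carefully matching the algebraic content of the minimal polynomial $b_{(F),v}$ with the spectral data of $\theta$ on graded pieces. One must argue both inclusions: that roots of $b_{(F),v}$ produce generalized eigenvectors of $\theta$ at the prescribed filtration level, and conversely that no extraneous roots appear (the \emph{minimality} of $b_{(F),v}$ is what forces the filtration bound to be sharp rather than merely an upper bound). A secondary technical point is the conversion between the multi-variable BMS formulation (with $\ell$ formal parameters $s_i$) and the single variable $s = s_1+\cdots+s_\ell$ tracked by $\theta$; the proof of \cite[Theorem~2.5]{BMS2006a} that $b_{\fa}(s)$ depends only on the sum is exactly the ingredient that makes this conversion possible.
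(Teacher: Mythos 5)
Your plan has a structural gap that cannot be patched by more care in the axiom checks: the uniqueness statement you want to invoke is about $V$-filtrations, in the sense of \Cref{VfilM}, on the $D_{A[T]|\KK}$-module $M'=H^\ell_{(T-F)}(M[T])$ along $(T)$, whereas your candidate $U^\alpha$ is only a filtration of $M\cong 0:_{M'}(T-F)$ (see \Cref{DefVfilIdeal}). Conditions (i)--(vi) are conditions on a filtration of all of $M'$, so before uniqueness applies you must extend $U^\bullet$ to $M'$ and verify the axioms there; producing such an extension with (i) (Noetherianity over $V^0_{(T)}D_{A[T]|\KK}$), (iii) (discreteness), and (v) (the generation $\sum_i t_i\act V^\alpha=V^{\alpha+1}$ for $\alpha\gg 0$) is essentially the Kashiwara--Malgrange existence theorem itself, and ``holonomicity plus rationality of roots of relative Bernstein--Sato polynomials'' gives none of these finiteness statements. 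Your sketch treats precisely these points as routine, but they are the hard content.

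There is also a concrete false step in the compatibility argument (your axiom (iv)). The claim that applying $t_i$ shifts every root of $b_{(F),v}$ by $+1$ is not a multi-variable analogue of \Cref{shift}: that lemma concerns powers of a single $f$, and for an $\ell$-generated ideal multiplying $v$ by one generator $f_i$ need not translate the whole root set; what one actually needs (and what is true, but requires proof) is a divisibility of the form $b_{(F),f_iv}(s)\mid b_{(F),v}(s+1)$, coming from the relation $\theta t_i=t_i(\theta+1)$ for $\theta=\sum_j\partial_{t_j}t_j$ together with an argument rewriting $t_i\,V^1_{(T)}D_{A[T]|\KK}\cdot v$ in terms of $V^1_{(T)}D_{A[T]|\KK}\cdot t_iv$. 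Moreover ``applying $\partial_{t_i}$'' does not even act on your $U$-filtration, since $\partial_{t_i}$ does not preserve $0:_{M'}(T-F)$. Note that the survey gives no proof of this theorem (it cites Sabbah and Budur--Musta\c{t}\u{a}--Saito), and the arguments in those sources run in the opposite direction from yours: they take the existence and uniqueness of $V^\bullet_{(T)}M'$ as given and prove the two inclusions directly --- if every root of $b_{(F),v}$ is at most $-\alpha$, a generalized-eigenvalue argument on the graded piece containing $v$ shows $v\in V^\alpha_{(F)}M$; conversely, for $v\in V^\alpha_{(F)}M$ one manufactures a functional equation with roots at most $-\alpha$ using the nilpotence of $\theta-\gamma$ on $\mathrm{Gr}^\gamma$, discreteness, and an Artin--Rees/coherence comparison between $V^\bullet_{(T)}M'$ and the good filtration $V^\bullet_{(T)}D_{A[T]|\KK}\cdot v$. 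Reorganizing your argument along those lines, rather than re-deriving the axioms for a filtration defined by $b$-functions, is what would close the gap.
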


\section{Bernstein-Sato theory in prime characteristic}\label{sec:positivechar}

We now discuss Bernstein-Sato theory in positive characteristic. Throughout this section, $\KK$ is a perfect field of characteristic $p>0$, and $A=\KK[x_1,\dots,x_d]$ is a polynomial ring. The main purpose of this section is to discuss the theory developed by Musta\c{t}\u{a} \cite{MustataBSprime}, Bitoun \cite{BitounBSpos}, and Quinlan-Gallego \cite{EamonBSpos}.

Before we do so, as motivation, we briefly discuss the notion of the Bernstein-Sato functional equation in positive characteristic.
Note that for $b(s)\in \KK[s]$, we have $b(s) f^s= c(s) f^s$ for all $s\in \NN$ if and only if $b$ and $c$ determine the same function from $\FF_p$ to $\KK$. This gives a recipe for many unenlightening functional equations: we can take $b(s)$ to be a function identically zero on $\FF_p$, e.g., $s^p-s$, and $\delta(s)$ to be some operator that annihilates every power of $f$, e.g., the zero operator. For this reason, the notion of Bernstein-Sato polynomial in characteristic zero is not as well-suited for consideration in positive characteristic.

Instead, we return to an alternative characterization of the Bernstein-Sato polynomial discussed in Subsection~\ref{Subsec:modules}. As a consequence of Proposition~\ref{prop:othercharsBS}, for polynomial rings in characteristic zero, we can characterize the roots of the Bernstein-Sato polynomial of $f$ as the eigenvalues of the action of $-\partial_t t$ on $[\frac{1}{f-t}]$ in \[\frac{D_{A|\KK}[-\partial_t t] \cdot [\frac{1}{f-t}]}{D_{A|\KK}[-\partial_t t] f \cdot  [\frac{1}{f-t}]}.\]
In characteristic $p>0$, we consider the eigenvalues of a sequence of operators that are closely related to $-\partial_t t$.

\begin{definition} Consider $D_{A[t]|\KK}$ as a graded ring, with grading induced by giving each $x_i$ degree zero, and $t$ degree $1$. We set $[D_{A[t]|\KK}]_0$ to be the subring of homogeneous elements of degree zero, and $[D_{A[t]|\KK}]_{\geq 0}$ to be the subring spanned by elements of nonnegative degree.
\end{definition}

We note that $[D_{A[t]|\KK}]_{\geq 0}$ is also characterized by the $V$-filtration as $V^0_{(t)} D_{A[t]|\KK}$.

\begin{lemma}
	$[D_{A[t]|\KK}]_0=D_{A|\KK} [s_0, s_1,\dots]$, where $s_e=-\frac{\partial_t^{p^e}}{p^e !} t^{p^e}$. In this ring, the operators $s_i$ commute with one another and elements of $D_{A|\KK}$, and $s_i^{p}=s_i$ for each~$i$.
\end{lemma}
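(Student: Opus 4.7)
The plan is to identify $[D_{A[t]|\KK}]_0$ explicitly as a left $D_{A|\KK}$-module and then show the subalgebra generated by the $s_e$ exhausts it, via Lucas' theorem. First I would use the presentation of differential operators on a polynomial ring from the discussion after Definition~\ref{def:WEyl}: $D_{A[t]|\KK}$ is a free left $D_{A|\KK}$-module on the monomials $t^a \partial_t^{(b)}$, where $\partial_t^{(b)} := \partial_t^b/b!$ is the $b$-th Hasse derivative (a well-defined operator on $A[t]$ even when $b!$ vanishes in $\KK$). Since $t$ has degree $1$ and $\partial_t^{(1)}$ has degree $-1$ in the given grading, the degree-$0$ piece is the free left $D_{A|\KK}$-module spanned by the operators
\[ T_e \;:=\; t^e \partial_t^{(e)}, \qquad e\geq 0. \]

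The key computation is the scalar action $T_e(at^k) = \binom{k}{e}\, a t^k$ for $a\in A$, $k\in \NN$. Since $D_{A[t]|\KK}\subseteq \End_\KK(A[t])$, this at once implies that the $T_e$ commute with one another (the scalars $\binom{k}{e}$ and $\binom{k}{e'}$ commute) and with every element of $D_{A|\KK}$ (the scalar is independent of $a$). To pass from $T_{p^e}$ to $s_e$ I would use the Leibniz rule to compute
\[ \partial_t^{(p^e)} t^{p^e} \;=\; \sum_{i=0}^{p^e}\binom{p^e}{i}\, t^{p^e-i}\partial_t^{(p^e-i)} \;=\; T_{p^e} + 1, \]
using that the middle coefficients $\binom{p^e}{i}$ ($0<i<p^e$) vanish modulo $p$. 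Hence $s_e = -T_{p^e}-1$, so the $s_e$ and the $T_{p^e}$ generate the same commutative $D_{A|\KK}$-subalgebra, which gives all the commutation statements. The relation $s_e^p=s_e$ then follows from Fermat's little theorem: on $at^k$, $s_e$ acts by the scalar $-k_e-1 \pmod p$ (where $k=\sum_j k_j p^j$ is the base-$p$ expansion), and $(-k_e-1)^p \equiv -k_e-1 \pmod p$.

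The core of the argument is the final step: showing every $T_e$ is a $D_{A|\KK}$-polynomial in the $s_i$, via Lucas' theorem. Writing $e=\sum_i e_i p^i$ with $0\le e_i<p$, set
\[ P_e \;:=\; \prod_i \binom{T_{p^i}}{e_i}\;=\;\prod_i \binom{-s_i-1}{e_i}, \qquad \binom{z}{m} \;:=\; \frac{z(z-1)\cdots(z-m+1)}{m!}, \]
a well-defined polynomial in the $s_i$ with $\FF_p$-coefficients since each $e_i!\in \FF_p^\times$. On $at^k$ the factor $\binom{T_{p^i}}{e_i}$ acts as the scalar $\binom{k_i}{e_i}$, so $P_e$ acts as $\prod_i \binom{k_i}{e_i}$, which by Lucas' theorem equals $\binom{k}{e} \pmod p$---exactly the scalar by which $T_e$ acts. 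Therefore $P_e=T_e$ in $D_{A[t]|\KK}$, so every $T_e$ lies in $D_{A|\KK}[s_0,s_1,\ldots]$, completing the identification. The only genuinely nontrivial step is this Lucas-theorem reduction; everything else amounts to careful bookkeeping with divided powers and the identity $\partial_t^{(p^e)} t^{p^e} = T_{p^e}+1$ in characteristic $p$.
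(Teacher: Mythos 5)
Your proof is correct, and for the claims the paper actually proves (commutation of the $s_i$ with each other and with $D_{A|\KK}$, and $s_i^p=s_i$) your computation is essentially the paper's: both rest on the fact that $s_e$ acts on the summand $At^k\subseteq A[t]$ by the scalar $-(k_e+1)$, obtained from Lucas' lemma, together with the fact that elements of $D_{A[t]|\KK}\subseteq \End_{\KK}(A[t])$ are determined by their action. The genuine difference is that the paper explicitly omits the proof that the $s_e$ generate $[D_{A[t]|\KK}]_0$ over $D_{A|\KK}$, whereas you supply it: you identify $[D_{A[t]|\KK}]_0$ as the free left $D_{A|\KK}$-module on the operators $T_e=t^e\partial_t^{(e)}$ (using the divided-power basis of differential operators on a polynomial ring and the grading), observe $s_e=-T_{p^e}-1$ from the commutation identity in characteristic $p$, and then prove the digit factorization $T_e=\prod_i\binom{T_{p^i}}{e_i}$ for $e=\sum_i e_ip^i$ by checking equal scalar action on each $At^k$ and invoking Lucas; since both sides are $\KK$-linear and the elements $at^k$ span $A[t]$, equality of operators follows, so every $T_e$ lies in $D_{A|\KK}[s_0,s_1,\dots]$. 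This is the standard and correct way to get the generation statement, and it is a worthwhile addition to what the paper records; the only point left implicit is the trivial reverse containment $D_{A|\KK}[s_0,s_1,\dots]\subseteq[D_{A[t]|\KK}]_0$, which is immediate since each $s_i$ and all of $D_{A|\KK}$ sit in degree zero.
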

\begin{proof}
	We omit the proof that these elements generate. It is clear that each $s_i$ commutes with elements of $D_{A|\KK}$. For an element $f(t)=\sum_j a_j t^j \in A[t]$, with $a_j\in A$, using Lucas' Lemma, we compute
	\[ s_i f(t) = \sum_j -\binom{j + p^i}{p^i} a_j t^j = \sum_j -([j]_{i}+1) a_j t^j , \]
	where $[j]_i$ is the $i$th digit in the base $p$ expansion of $j$; our convention that the unit digit is the $0$th digit. The other claims follow from this computation.
\end{proof}

We can interpret the computation in the previous lemma as saying that the $\alpha_i$-eigenspace of $s_i$ on $A[t]$ is spanned by the homogeneous elements such that the $i$th base $p$ digit of the degree is $\alpha_i-1$. By way of terminology, we say that the $(\alpha_0,\alpha_1,\alpha_2,\dots)$-multieigenspace of $(s_0,s_1,s_2,\dots)$ is the intersection of the $\alpha_i$-eigenspace of $s_i$ for all $i$. Then, the $(\alpha_0,\alpha_1,\alpha_2,\dots)$-multieigenspace of $(s_0,s_1,s_2,\dots)$ on $A[t]$ is the collection of homogeneous elements of degree $\sum_i (\alpha_i-1) p^i$ for a tuple with $\alpha_i=0$ for $i\gg 0$. This motivates the idea that a ``Bernstein-Sato root'' in positive characteristic should be determined by a multieigenvalue of the action of $(s_0,s_1,s_2,\dots)$
 on $[\frac{1}{f-t}]$ in \[\frac{[D_{A[t]|\KK}]_{\geq 0} \cdot [\frac{1}{f-t}]}{[D_{A[t]|\KK}]_{\geq 0} f \cdot  [\frac{1}{f-t}]}.\]
 
Based on this motivation, we give two closely related notions of Bernstein-Sato roots appearing in the literature.

\subsection{Bernstein-Sato roots: $p$-adic version}\label{subsec:p-adic}

The first definition of Bernstein-Sato roots that we present follows the treatment of Bitoun \cite{BitounBSpos}. To each element $\mathbf{\alpha}=(\alpha_0,\alpha_1,\alpha_2,\dots)\in \FF_p^{\NN}$ we associate the $p$-adic integer $I(\alpha)=\alpha_0 + p\alpha_1 + p^2 \alpha_2 + \cdots$.

\begin{theorem}[\cite{BitounBSpos}]\label{Bitountheorem} For any $f\in A$, the module
	\[\frac{[D_{A[t]|\KK}]_{\geq 0} \cdot [\frac{1}{f-t}]}{[D_{A[t]|\KK}]_{\geq 0} f \cdot  [\frac{1}{f-t}]}\]
	decomposes as a finite direct sum of multieigenspaces of $(s_0,s_1,s_2,\dots)$. The image of each multieigenvalue under $I$ is negative, rational, and at least negative one. Moreover, the map $I$ induces a bijection between multieigenvalues and the set of negatives of the $F$-jumping numbers in the interval $(0,1]$ with denominator not divisible by $p$.
	\end{theorem}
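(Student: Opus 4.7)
The plan is to establish the multieigenspace decomposition as a purely algebraic consequence of the relations among the $s_i$'s, and then to identify each multieigenspace with an explicit quotient built from the theory of test ideals, from which the bijection with $F$-jumping numbers follows.

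For the decomposition, the preceding lemma shows that the operators $s_0, s_1, s_2, \dots$ pairwise commute and each satisfies $s_e^p = s_e$, so each $s_e$ acts semisimply with eigenvalues in $\FF_p$ on any module on which it acts. The commuting family then yields a simultaneous decomposition into multieigenspaces indexed by tuples $\alpha \in \FF_p^{\NN}$. To see that only finitely many multieigenvalues actually occur and that each $I(\alpha)$ lies in $[-1, 0) \cap \QQ$, I would argue that the module in question is finitely generated (as a quotient of $[D_{A[t]|\KK}]_{\geq 0} \cdot [\tfrac{1}{f-t}]$, it sits inside a Noetherian framework over the ring $[D_{A[t]|\KK}]_0$), and then bound the reachable eigenvalue tuples starting from the generator $[\tfrac{1}{f-t}]$ by tracking the action of degree-$0$ operators.

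The heart of the argument is the identification of the $\alpha$-multieigenspace with a test ideal subquotient. I would exploit two complementary dictionaries. First, a direct computation on the local cohomology realization $[\tfrac{a}{(f-t)^n}]$ shows that $s_e = -\tfrac{1}{p^e!}\partial_t^{p^e} t^{p^e}$ contributes a binomial factor $\binom{n-1}{p^e} \bmod p$, which by Lucas's theorem is the $e$-th base-$p$ digit of $n-1$; so the multieigenvalue $\alpha$ records the base-$p$ expansion of the relevant exponent of $f$. Second, the decomposition $\cC^e_A \otimes_A \mathcal{F}^e_A \cong D^{(e)}_A$ recalled in the preliminaries translates the action of order-$p^e$ differential operators into the language of Cartier operators, at which point the characterization $\tau_A(f^{\lambda}) = \bigcup_e \cC^e_A\, f^{\lceil p^e \lambda \rceil}$ enters. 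Combining these dictionaries, the $\alpha$-multieigenspace is identified with a quotient of the form $\tau_A(f^{\lambda})/\tau_A(f^{\lambda - \epsilon})$ for $\lambda = -I(\alpha)$ and small $\epsilon > 0$, which is nonzero precisely when $\lambda$ is an $F$-jumping number. The restriction to denominators coprime to $p$ falls out because negative rationals in $[-1,0)$ of this kind are exactly those whose $p$-adic expansions are eventually periodic.

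The main obstacle is carrying out this identification rigorously: the formal multieigenspace decomposition and the finiteness are routine, but translating the action of the $s_e$'s into statements about test ideals requires the binomial-coefficient computation above together with the nontrivial bridge $\cC^e_A \otimes_A \mathcal{F}^e_A \cong D^{(e)}_A$ between Cartier and differential operators. Once this dictionary is in place, the bijection with $F$-jumping numbers, the rationality of each $I(\alpha)$, and the lower endpoint $-1$ (corresponding to $\lambda = 1$, always an $F$-jumping number when $f$ is a nonunit) follow with relative ease.
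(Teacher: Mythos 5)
The survey itself gives no proof of this theorem---it is quoted from Bitoun---but the mechanism it points to (the dictionary $\cC^e_A \otimes_A \cF^e_A \cong D^{(e)}_A$ between Cartier and differential operators, together with the level-$e$ identification of multieigenspaces with quotients of the form $D^{(e)}_A\cdot f^{r}/D^{(e)}_A\cdot f^{r+1}$, as in the proposition closing Section~\ref{sec:positivechar}) is exactly the bridge you propose, so your overall architecture agrees with the known arguments. Two points in your sketch, however, do not hold up as written. First, the finiteness of the set of multieigenvalues is not ``routine'': semisimplicity of the commuting $s_e$'s gives, at each finite level $e$, a decomposition indexed by $\FF_p^{e}$, but nothing formal prevents infinitely many infinite tuples from occurring in the limit, and the appeal to ``a Noetherian framework over $[D_{A[t]|\KK}]_0$'' does not bound them. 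In the actual proofs, finiteness---as well as rationality, the bound $I(\alpha)\geq -1$, and the eventual periodicity of the digit strings that you invoke to get denominators prime to $p$---comes out of the identification with test ideals together with the discreteness and rationality of $F$-jumping numbers of Blickle--Musta\c{t}\u{a}--Smith; so this belongs to the ``heart'' of your argument, not to the preliminaries.

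Second, your test-ideal quotient is written backwards: since $\tau_A(f^{\lambda})\subseteq \tau_A(f^{\lambda-\epsilon})$, the object detecting a jump at $\lambda$ is $\tau_A(f^{\lambda-\epsilon})/\tau_A(f^{\lambda})$, not $\tau_A(f^{\lambda})/\tau_A(f^{\lambda-\epsilon})$ (the latter vanishes). More precisely, the digit computation you describe yields, for each $e$, a quotient $D^{(e)}_A\cdot f^{r}/D^{(e)}_A\cdot f^{r+1}$ with $r$ determined by $(\alpha_0,\dots,\alpha_{e-1})$, whose nonvanishing is equivalent to the existence of an $F$-jumping number in an interval of length $1/p^{e+1}$ (this is exactly Theorem~\ref{Mirceatheorem}); Bitoun's statement is then obtained by a limiting argument over $e$, and it is at this stage that the restriction to $F$-jumping numbers in $(0,1]$ with denominator not divisible by $p$ genuinely enters. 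With these repairs---carrying the finiteness and rationality through the identification rather than asserting them up front, and running the level-$e$ comparison before passing to the limit---your outline matches the proof strategy in the literature the paper cites.
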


In this context, we consider the image of the multieigenvaues under the map $I$ as the set of \textit{Bernstein-Sato roots} of $f$.
Moreover, Bitoun constructs a notion of a Bernstein-Sato polynomial as an ideal in a certain ring; however, this yields equivalent information to the set of Bernstein-Sato roots just defined.

\begin{example}[{\cite{BitounBSpos}}]
	\begin{enumerate}
		\item Let $f=x_1^2 + \cdots + x_n^2$, with $n\geq 2$, and $p>2$. Then the set of Bernstein-Sato roots of $f$ is $\{-1\}$. Contrast this with the situation in characteristic zero, where $-n/2$ is also a root.
		\item Let $f=x^2+y^3$, and $p>3$. If $p\equiv 1 \ \mathrm{mod} \, 3$, then the set of Bernstein-Sato roots is $\{ -1,-5/6\}$, and if $p\equiv 2 \ \mathrm{mod} \, 3$, then the set of Bernstein-Sato roots is $\{ -1\}$.
	\end{enumerate}
\end{example}

\subsection{Bernstein-Sato roots: base $p$ expansion version}

The second definition of Bernstein-Sato roots that we present is historically the first, following the treatment of Musta\c{t}\u{a}. To each element $\mathbf{\alpha}=(\alpha_0,\alpha_1,\alpha_2,\dots,\alpha_e)\in \FF_p^{e+1}$ we associate the real number $E(\mathbf{\alpha})=\frac{1}{p^{e+1}} \alpha_0 + \frac{1}{p^{e}} \alpha_1 + \cdots + \frac{1}{p} \alpha_e$.

\begin{theorem}[\cite{MustataBSprime}]\label{Mirceatheorem} For $\mathbf{\alpha}\in \FF_p^{e+1}$, we have that $\mathbf{\alpha}$ is a multieigenvalue of 
	\[\frac{[D^{(e)}_{A[t]|\KK}]_{\geq 0} \cdot [\frac{1}{f-t}]}{[D^{(e)}_{A[t]|\KK}]_{\geq 0} f \cdot  [\frac{1}{f-t}]}\]
if and only if there is an $F$-jumping number of $f$ contained in the interval $(E(\mathbf{\alpha}),E(\mathbf{\alpha})+1/p^{e+1} ]$.
\end{theorem}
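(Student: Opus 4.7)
The plan is to decompose the module into joint eigenspaces of the commuting operators $s_0,\ldots,s_e$ and identify each $\alpha$-multieigenspace with an explicit quotient of Cartier-generated ideals, whose nontriviality then translates into the existence of an $F$-jumping number in the prescribed interval.

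First, I would analyze how $(s_0,\ldots,s_e)$ act on the ambient module $H^1_{(f-t)}(A[t])$. Using the characteristic-$p$ identity $(f-t)^{p^j}=f^{p^j}-t^{p^j}$, every cohomology class decomposes canonically in terms of residues $[g\cdot(f-t)^{-n}]$ with $g\in A$ and $n\geq 1$. A direct computation, analogous to the one carried out earlier for the action of $s_i$ on $t^j$ via base-$p$ digits, shows that $s_i$ acts on $[g\cdot(f-t)^{-n}]$ diagonally with eigenvalue determined by the $i$th base-$p$ digit of $n-1$. Since $s_i^p=s_i$ and the $s_i$ commute, we obtain a direct sum decomposition of the ambient module into multieigenspaces indexed by $\FF_p^{e+1}$; this decomposition is preserved by $[D^{(e)}_{A[t]|\KK}]_0=D^{(e)}_{A|\KK}[s_0,\ldots,s_e]$, which acts semisimply through its $s_i$-factors.

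Next, set $m=p^{e+1}E(\alpha)=\alpha_0+p\alpha_1+\cdots+p^e\alpha_e$, so $0\leq m\leq p^{e+1}-1$. The key step is to show that the $\alpha$-multieigenspace of the numerator $[D^{(e)}_{A[t]|\KK}]_{\geq 0}\cdot[\frac{1}{f-t}]$ can be identified, via extraction of the coefficient $g$ in a residue $[g\cdot(f-t)^{-(m+1)}]$, with the ideal $\cC^{e+1}_A f^m\subseteq A$, and similarly that the $\alpha$-multieigenspace of $[D^{(e)}_{A[t]|\KK}]_{\geq 0}f\cdot[\frac{1}{f-t}]$ corresponds to $\cC^{e+1}_A f^{m+1}$. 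This identification rests on two inputs: the decomposition $\cC^{e+1}_A\otimes_A\cF^{e+1}_A\cong D^{(e+1)}_A$ in the regular $F$-finite setting (Remark~\ref{RemCorresCartier}), which allows us to interpret Cartier maps through differential operators; and the observation that the divided-power operators $\partial_t^{[p^j]}$ for $j\leq e$ on the cohomology class $[\frac{1}{f-t}]$, combined with multiplications on $A[t]$, suffice to realize all level-$(e+1)$ Cartier operations on $A$ applied to $f^m$. Consequently, the $\alpha$-multieigenspace of $M$ is identified with $\cC^{e+1}_A f^m/\cC^{e+1}_A f^{m+1}$.

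Finally, I would translate to jumping numbers. By Proposition~\ref{PropBasicsTestIDeals} and the stabilization of the increasing chain $\cC^{e'}_A f^{\lceil p^{e'}\lambda\rceil}$ to $\tau(f^\lambda)$, the strict containment $\cC^{e+1}_A f^m\supsetneq\cC^{e+1}_A f^{m+1}$ is equivalent to $\tau(f^{m/p^{e+1}})\supsetneq\tau(f^{(m+1)/p^{e+1}})$, which by right-continuity of the test ideal filtration is equivalent to the existence of an $F$-jumping number in $(m/p^{e+1},(m+1)/p^{e+1}]=(E(\alpha),E(\alpha)+1/p^{e+1}]$. The main obstacle is the middle step: establishing the precise bijection between the $\alpha$-multieigenspace and the Cartier ideal $\cC^{e+1}_A f^m$. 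This requires careful combinatorial bookkeeping to match base-$p$ digits of $m$ with the tuple $\alpha$, and a technical argument showing that the seemingly weaker level-$e$ operator algebra $[D^{(e)}_{A[t]|\KK}]_{\geq 0}$, once the extra eigenvalue information carried by $s_e$ is extracted, genuinely recovers the full level-$(e+1)$ Cartier structure on $A$. The remaining steps then follow formally from standard properties of test ideals.
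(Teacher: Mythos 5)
The survey itself does not prove this theorem: it is quoted from \cite{MustataBSprime}, and the closest the paper comes to a proof is the final proposition of Section~\ref{sec:positivechar} (from \cite[Section~6]{MustataBSprime} and \cite[Theorem~3.11]{EamonBSpos}), which identifies the multieigenspaces with quotients $D^{(e)}_A \fa^{m}/D^{(e)}_A \fa^{m+1}$, combined with the equivalence $D^{(e)}_A\fa=D^{(e)}_A\fb \Leftrightarrow \cC^e_A\fa=\cC^e_A\fb$ and the description of $F$-jumping numbers via the $\nu$-invariants. Your overall strategy is exactly this one, so the route is right; but as a proof it has a genuine gap at its center.

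The first step is false as stated: the operators $s_i$ do \emph{not} act diagonally on the classes $[g(f-t)^{-n}]$. Using $t^{p^i}=f^{p^i}-(f-t)^{p^i}$ one computes
\[ s_i\, [g(f-t)^{-n}] \;=\; \binom{n-1}{p^i}\, g\,[(f-t)^{-n}] \;-\; \binom{n+p^i-1}{p^i}\, g f^{p^i}\,[(f-t)^{-n-p^i}], \]
so only the first coefficient is the base-$p$ digit of $n-1$, and the off-diagonal term does not vanish (for instance $s_i[\tfrac{1}{f-t}]=-f^{p^i}[(f-t)^{-p^i-1}]$); moreover $D^{(e)}_A$, being only $(f-t)^{p^e}$-linear on the localization, does not preserve the $A$-span of a fixed power of $(f-t)$ either. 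Hence there is no canonical eigenspace decomposition of the ambient module with these residues as eigenvectors; the decomposition of the quotient follows only formally from $s_i^p=s_i$, and \emph{identifying} those eigenspaces is where all the content lies. That is exactly your middle step — the identification of the $\alpha$-eigenspace with $\cC^{e+1}_A f^m$ (equivalently with $D$-generated ideals of powers of $f$), including the level bookkeeping needed to pass from the level-$e$ algebra $[D^{(e)}_{A[t]|\KK}]_{\geq 0}$ to level-$(e+1)$ data on $A$ (note $s_e\notin D^{(e)}_{A[t]|\KK}$, so even the $(e+1)$-st eigenvalue has to be set up with care) — and it is asserted rather than proved; you flag it yourself as the main obstacle, but it is precisely the theorem of Musta\c{t}\u{a} being quoted. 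Finally, your closing equivalence between $\cC^{e+1}_A f^m\neq\cC^{e+1}_A f^{m+1}$ and the existence of a jumping number in $(m/p^{e+1},(m+1)/p^{e+1}]$ is correct, but it does not follow from ``stabilization of the chain'' or right-continuity alone; one should use that $\cC^{e+1}_A f^m$ is the smallest ideal $J$ with $f^m\in J^{[p^{e+1}]}$ together with the description of the $\nu$-invariants in terms of $F$-jumping numbers (as in the remark closing the section on $F$-thresholds).
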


For each level $e$, one then obtains a set of \textit{Bernstein-Sato roots}, given as the image of the multieigenvalues under the map $E$.

Relative versions of the above result, for an element in a unit $F$-module, were considered by Stadnik \cite{Stadnik} and Blickle and St\"abler \cite{BliSta}.

\subsection{Nonprincipal case}

Both of the approaches above were extended to the nonprincipal case by Quinlan-Gallego \cite{EamonBSpos}. To state these generalizations, for an $n$-generated ideal $\fa=(f_1,\dots,f_n)$, we consider the following.

\begin{definition}
	Consider $D_{A[t_1,\dots,t_n]|\KK}$ as a graded ring, with grading induced by giving each $x_i$ degree zero, and each $t_i$ degree one. We set $[D_{A[t_1,\dots,t_n]|\KK}]_{\geq 0}$ to be the subring spanned by homogeneous elements of nonnegative degree. We also set
	\[ s_e = -\sum_{a_1+\cdots+a_n=p^e} \frac{\partial_1^{a_1}}{a_1!} \cdots \frac{\partial_n^{a_n}}{a_n!} t_1^{a_1} \cdots t_n^{a_n}. \]
\end{definition}
	
	 Theorems~\ref{Bitountheorem} and ~\ref{Mirceatheorem} have analogues in this setting; we state the former here and refer the reader to \cite{EamonBSpos} for the latter.
	
	\begin{theorem}
		Let $\fa=(f_1,\dots,f_n)$, and let 
		\[\eta= \left[\frac{1}{(f_1-t_1)\cdots(f_n-t_n)}\right] \in H^n_{(f_1-t_1,\dots,f_n-t_n)}(A[t_1,\dots,t_n]).\]
		Then, the module 	\[\frac{[D_{A[t_1,\dots,t_n]|\KK}]_{\geq 0} \cdot \eta}{[D_{A[t_1,\dots,t_n]|\KK}]_{\geq 0} \fa \cdot  \eta}\]
		decomposes as a finite direct sum of multieigenspaces of $(s_0,s_1,s_2,\dots)$. The image of each multieigenvalue under the map $I$ from Subsection~\ref{subsec:p-adic} is rational and negative. Moreover, there is an equality of cosets in $\QQ/\ZZ$:
		\begin{align*} &\{ I(\mathbf{\alpha}) \ | \ \alpha \text{ is a multieigenvalue of }(s_0,s_1,s_2,\dots)\} + \ZZ =\\  \{ &\text{negatives of $F$-jumping numbers of } \fa \text{ with denominator not a multiple of }p\} + \ZZ. \end{align*}
	\end{theorem}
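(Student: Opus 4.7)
The plan is to adapt Bitoun's proof of Theorem~\ref{Bitountheorem} from the principal setting, with the element $\eta$ in place of $[\tfrac{1}{f-t}]$, and to merge this with the test-ideal-theoretic strategy of Theorem~\ref{Mirceatheorem} as extended by Quinlan-Gallego. First I would verify the structural relations on the operators $s_e$: they commute with one another and with $D_{A|\KK}$, and each satisfies $s_e^p = s_e$ as an operator on the module generated by $\eta$. The key identity $s_e^p = s_e$ reduces, by $p^e$-linearity (Theorem~\ref{thm:opscharp}) combined with a Lucas' Lemma computation on monomials in $t_1,\dots,t_n$ of total degree $p^e$, to the assertion that the multinomial coefficient reads the $e$-th base-$p$ digit of the total degree, just as in the principal case. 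This gives that each $s_e$ is diagonalizable with eigenvalues in $\FF_p$, and commuting diagonalizability yields a multieigenspace decomposition.

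Second, I would establish finiteness of this decomposition. Following the principal case, the fact that $\eta$ generates, combined with the relation
\[ t_i \cdot \eta = f_i \cdot \eta \quad \text{in } H^n_{(t_1-f_1,\dots,t_n-f_n)}(A[t_1,\dots,t_n]), \]
forces every element of the quotient module to be represented by a $D_{A|\KK}$-combination of $\eta$ modulo $\fa\cdot \eta$, hence bounded in graded degree. Boundedness of degrees implies that $s_e$ acts as the identity for all $e$ sufficiently large, reducing the decomposition to a finite direct sum indexed by tuples in $\FF_p^{e+1}$ for some $e$. From the graded structure together with the $V$-filtration description $[D_{A[\underline{t}]|\KK}]_{\geq 0} = V^0_{(\underline{t})}D_{A[\underline{t}]|\KK}$, one reads off that a nontrivial multieigenvalue $\mathbf{\alpha}$ forces the corresponding piece to sit in a negative graded degree of bounded size, whence $I(\mathbf{\alpha})$ is a negative rational.

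The main step, and the principal obstacle, is matching the set of multieigenvalues with the negatives of the $F$-jumping numbers modulo $\ZZ$. For each level $e$, I would use Theorem~\ref{thm:opscharp} to identify $[D^{(e)}_{A[\underline{t}]|\KK}]_0$-stable subquotients with modules built from Cartier operators on $A[\underline{t}]$, and then push these to $A$ along the graph embedding. Under this identification, the condition that $\mathbf{\alpha}=(\alpha_0,\dots,\alpha_e)$ occurs as a multieigenvalue translates into a strict inclusion between consecutive test ideals $\tau_A(\fa^\lambda) \supsetneq \tau_A(\fa^{\lambda-\epsilon})$ with $\lambda$ determined by $E(\mathbf{\alpha}) + 1/p^{e+1}$. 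Taking $e \to \infty$, a $p$-adic Bernstein-Sato root $I(\mathbf{\alpha})$ corresponds to an infinite base-$p$ expansion; the condition that this expansion encodes an element of $\QQ$ is equivalent to eventual periodicity, which is equivalent to the corresponding $F$-jumping number having denominator coprime to $p$. This forces the desired equality of cosets in $\QQ/\ZZ$.

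The chief obstacle lies in this last identification: in the nonprincipal case one must sum over all multi-indices $(a_1,\dots,a_n)$ with $a_1+\cdots+a_n = p^e$ rather than using a single $p^e$-th power, and one must verify that the identification between multieigenspaces and graded pieces of the test-ideal filtration is symmetric across the generators $f_1,\dots,f_n$. The use of $\eta$ as a generator is precisely what makes this possible: its annihilator encodes $\fa$-power information in a generator-independent way (since the local cohomology module $H^n_{(t_1-f_1,\dots,t_n-f_n)}(A[\underline{t}])$ depends only on the ideal), allowing the comparison between multieigenvalues and $F$-jumping numbers of $\fa$ to be carried out uniformly.
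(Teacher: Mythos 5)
The paper does not prove this statement---it is quoted from Quinlan-Gallego \cite{EamonBSpos}, with the key mechanism recorded in the proposition stated immediately after the theorem---so your proposal is measured against that argument, and it has a genuine gap in your second step. The module $[D_{A[t_1,\dots,t_n]|\KK}]_{\geq 0}\cdot\eta\,/\,[D_{A[t_1,\dots,t_n]|\KK}]_{\geq 0}\,\fa\cdot\eta$ is not graded: the grading on $D_{A[t_1,\dots,t_n]|\KK}$ does not descend to $H^n_{(f_1-t_1,\dots,f_n-t_n)}(A[t_1,\dots,t_n])$, since $f_i-t_i$ is inhomogeneous and $\eta$ is not a homogeneous element, so ``bounded in graded degree'' has no meaning here and nothing about the $s_e$ can be read off from degree bounds. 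Worse, the intended conclusion is false: on the multieigenspace attached to a root such as $-4/3$ (which occurs for $\fa=(x^2,y^3)$, $p=2$), the eigenvalue of $s_e$ is the $e$-th digit of the $p$-adic expansion of $-4/3$, which is eventually \emph{periodic} but not eventually constant, so $s_e$ does not act by a fixed scalar for $e\gg 0$. Since each $s_e$ satisfies $s_e^p=s_e$ as an element of $D_{A[t_1,\dots,t_n]|\KK}$, one does get a multieigenspace decomposition for every \emph{finite} set of levels; but the assertion that the whole module is a finite direct sum of simultaneous multieigenspaces for all $e$ at once is precisely the nontrivial finiteness claim, and it cannot be obtained this way. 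For the same reason, negativity and rationality of $I(\mathbf{\alpha})$ cannot be ``read off'' from the $V$-filtration: a priori $I(\mathbf{\alpha})$ is just a $p$-adic integer, and its rationality is equivalent to eventual periodicity of the digit string, which is not a formal consequence of anything in your first two steps.

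The logical order must be reversed. As in \cite{MustataBSprime,BitounBSpos,EamonBSpos}, one first proves the level-$e$ statement identifying the $(\alpha_0,\dots,\alpha_{e-1})$-multieigenspace with the direct sum of subquotients $D^{(e)}_A\cdot\fa^{I(\alpha)+sp^e}/D^{(e)}_A\cdot\fa^{I(\alpha)+sp^e+1}$ for $s=0,\dots,n-1$ (the proposition following the theorem in the paper), using Theorem~\ref{thm:opscharp} and the graph embedding; only then do finiteness of the decomposition, negativity and rationality of the roots, and the coset equality in $\QQ/\ZZ$ follow, by invoking discreteness and rationality of $F$-jumping numbers and the stabilization $\tau_A(\fa^{\lambda})=\cC^e_A\fa^{\lceil p^e\lambda\rceil}$ for $e\gg 0$ \cite{BMS2008}, together with the equivalence $\cC^e_A\fa=\cC^e_A\fb$ if and only if $D^{(e)}_A\fa=D^{(e)}_A\fb$. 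Your third step gestures at this identification, but it has to carry the entire weight of the theorem rather than being invoked after finiteness is (incorrectly) claimed; it is also exactly where the specifically nonprincipal feature enters---the extra shifts by multiples of $p^e$ indexed by $s=0,\dots,n-1$---which is why the conclusion is only an equality of cosets modulo $\ZZ$ rather than a bijection with $F$-jumping numbers in an interval as in Theorem~\ref{Bitountheorem}; your proposal does not engage with this point.
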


In this setting, we consider the image of the set of multieigenvalues under the map $I$ as the set of \textit{Bernstein-Sato roots} of $\fa$.

\begin{example}[{\cite{EamonBSmon}}] Let $\fa = (x^2, y^3)$. Then, for $p=2$, the set of Bernstein-Sato roots is $\{-4/3 ,-5/3, -2\}$. For $p=3$, the set of roots is $\{-3/2,-2\}$. For $p\gg0$, by \cite[Theorem~3.1]{EamonBSmon}, the set of roots is $\{-5/6,-7/6,-4/3,-3/2,-5/3,-2\}$.
\end{example} 

The connection between Bernstein-Sato roots and $F$-jumping numbers largely stems from the following proposition, and the fact that $\cC^e_A \fa=\cC^e_A\fb$ if and only $D^{(e)}_{A} \fa=D^{(e)}_{A} \fb$.

\begin{proposition}[{\cite[Section~6]{MustataBSprime},\cite[Theorem~3.11]{EamonBSpos}}] The multieigenspace  cooresponding to $(\alpha_0,\alpha_1,\alpha_2,\dots,\alpha_{e-1})$ of $(s_0,s_1,s_2,\dots,s_{e-1})$ acting on 
	\[\frac{[D^{(e)}_{A[t_1,\dots,t_n]|\KK}]_{\geq 0} \cdot \eta}{[D^{(e)}_{A[t_1,\dots,t_n]|\KK}]_{\geq 0} \fa \cdot  \eta}\] decomposes as the direct sum of the modules
	\[ \frac{ D^{(e)}_A \cdot \fa^{I(\alpha) + s p^e}}{ D^{(e)}_A \cdot \fa^{I(\alpha) + s p^e+1}} \qquad s=0,1,\dots,n-1. \]
\end{proposition}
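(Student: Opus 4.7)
The strategy is to decompose the module as a direct sum of eigenspaces for $(s_0,\ldots,s_{e-1})$ using the natural $t$-grading on $A[t_1,\ldots,t_n]$, and then to identify each eigenspace with a sum of Frobenius quotients of powers of $\fa$.

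The first step is to analyze the action of the operators $s_i$. A direct computation using the identity $\frac{\partial_t^a}{a!}(t^b)=\binom{b}{a}t^{b-a}$, Vandermonde's identity $\sum_{|a|=p^i}\binom{b}{a}=\binom{|b|}{p^i}$, and Lucas's theorem $\binom{m}{p^i}\equiv [m]_i\pmod p$ (the $i$th base-$p$ digit of $m$) shows that $s_i$ acts on a $t$-homogeneous class of total $t$-degree $k$ by a scalar determined (up to a sign and a shift by $-n$ coming from the local cohomology representation) by $[k]_i\in\FF_p$. The operators $s_0,\ldots,s_{e-1}$ pairwise commute, satisfy $s_i^p=s_i$, commute with the $[D^{(e)}_{A[t_1,\ldots,t_n]|\KK}]_{\geq 0}$-module structure (as each $s_i$ has $t$-degree zero and is of order $p^i<p^e$), and simultaneously diagonalize the module; the $(\alpha_0,\ldots,\alpha_{e-1})$-multieigenspace consists of the sum of $t$-graded pieces whose total $t$-degree satisfies a prescribed residue condition modulo $p^e$ determined by $I(\alpha)$.

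The second step is the matching with Frobenius quotients. The crucial local cohomology identity is $(f_i-t_i)\eta=0$ in $H^n_{(f-t)}(A[t])$, which follows because $(f_i-t_i)\eta=\left[\tfrac{1}{\prod_{j\neq i}(f_j-t_j)}\right]$ vanishes in the top local cohomology. Consequently $f_i\eta=t_i\eta$ in the module, so $\fa\cdot\eta=(t_1,\ldots,t_n)\cdot\eta$. Combined with the standard $A$-module description of $H^n_{(f-t)}(A[t])$ via classes $\left[\tfrac{1}{t_1^{c_1+1}\cdots t_n^{c_n+1}}\right]$ (using the isomorphism $A[t]/(f-t)\cong A$ induced by $t_i\mapsto f_i$), one identifies the $t$-degree $-m-n$ component of $[D^{(e)}_{A[t_1,\ldots,t_n]|\KK}]_{\geq 0}\cdot\eta$ with $D^{(e)}_A\cdot\fa^m$ and the corresponding component of $[D^{(e)}_{A[t_1,\ldots,t_n]|\KK}]_{\geq 0}\cdot\fa\eta$ with $D^{(e)}_A\cdot\fa^{m+1}$, so that the graded piece of the quotient in $t$-degree $-m-n$ becomes $D^{(e)}_A\fa^m/D^{(e)}_A\fa^{m+1}$.

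The main technical obstacle is Step 3: establishing that the multieigenspace decomposes as a direct sum over exactly $n$ values of $m$ in the prescribed residue class, namely $m=I(\alpha),I(\alpha)+p^e,\ldots,I(\alpha)+(n-1)p^e$. This requires a careful combinatorial analysis of the $\NN^n$-multigrading on $H^n_{(f-t)}(A[t])$: one must verify that the quotient is supported on $t$-multidegrees $-c-\mathbf{1}$ with each coordinate $c_i$ bounded in a specific range relative to $p^e$, so that when $|c|=m$ is reduced modulo $p^e$, exactly $n$ distinct residues arise with nonzero contributions. Both the vanishing of higher shifts and the nondegeneracy of the $n$ lower ones must be verified; once this is done, the stated direct sum decomposition follows at once from Steps 1 and 2.
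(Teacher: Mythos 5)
Your proposal stalls exactly where the content of the proposition lies. Steps 1--2 are built on a $t$-grading that the relevant module does not possess: since $f_i-t_i$ is inhomogeneous ($\deg t_i=1$, $\deg f_i=0$), the module $H^n_{(f_1-t_1,\dots,f_n-t_n)}(A[t_1,\dots,t_n])$ is not graded --- indeed the identity $t_i\cdot\eta=f_i\cdot\eta$ that you use in Step 2 is itself incompatible with any such grading --- so neither $[D^{(e)}_{A[t_1,\dots,t_n]|\KK}]_{\geq 0}\cdot\eta$ nor its quotient by $[D^{(e)}_{A[t_1,\dots,t_n]|\KK}]_{\geq 0}\,\fa\cdot\eta$ has ``$t$-graded pieces,'' and the assertion that ``the $t$-degree $-m-n$ component is $D^{(e)}_A\cdot\fa^m$'' does not typecheck; it is, in disguise, the statement to be proved. (Two smaller inaccuracies: the $s_i$ do not commute with the whole $[D^{(e)}]_{\geq 0}$-action, e.g.\ not with multiplication by $t_i$ --- what one actually needs is only that $s_i$ lies in $[D^{(e)}_{A[t]|\KK}]_0$, hence preserves $[D^{(e)}]_{\geq 0}\fa\,\eta$, together with $s_is_j=s_js_i$ and $s_i^p=s_i$; and classes $\bigl[1/t_1^{c_1+1}\cdots t_n^{c_n+1}\bigr]$ do not live in $H^n_{(f-t)}$ --- the natural $A$-basis consists of the classes $\tfrac{\partial_t^{c}}{c!}\eta=\bigl[1/\prod_i(f_i-t_i)^{c_i+1}\bigr]$, $c\in\NN^n$.)

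Decisively, your Step 3 --- which you yourself flag as the main technical obstacle --- is not carried out, and it is precisely the theorem. What must be shown is that, modulo $[D^{(e)}_{A[t]|\KK}]_{\geq 0}\,\fa\cdot\eta$, the module collapses so that each $m\in\{0,\dots,np^e-1\}$ contributes exactly \emph{one} copy of $D^{(e)}_A\fa^m/D^{(e)}_A\fa^{m+1}$, with each $s_i$ acting on it by the scalar given by the $i$-th base-$p$ digit of $m$; grouping the $m$ in a fixed congruence class modulo $p^e$ then yields the $n$ summands $m=I(\alpha)+sp^e$. The issue is not ``which residues arise,'' as your sketch suggests: there are many multi-indices $c$ with $c_i<p^e$ and $|c|=m$, and a component-by-component analysis in the basis $\tfrac{\partial_t^{c}}{c!}\eta$ would naively produce one summand per such $c$; moreover $\partial_{x_j}$ does not act coefficientwise (since $\partial_{x_j}\eta\neq 0$), so it mixes these components. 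The cited proofs (Musta\c{t}\u{a}, Section~6; Quinlan-Gallego, Theorem~3.11) do exactly this missing work: an explicit computation of $[D^{(e)}_{A[t]|\KK}]_{\geq 0}\cdot\eta$ inside $\bigoplus_{c}A\,\tfrac{\partial_t^{c}}{c!}\eta$ using the relations $t_i\,\tfrac{\partial_t^{c}}{c!}\eta=f_i\,\tfrac{\partial_t^{c}}{c!}\eta-\tfrac{\partial_t^{c'}}{c'!}\eta$ (where $c'$ lowers $c_i$ by one) together with the level-$e$ bound $c_i<p^e$, establishing the collapse and the eigenvalue computation simultaneously. Since none of this appears in your proposal, it does not yet constitute a proof.
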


\section{An extension to singular rings} \label{sec:Singular}

We now consider the notion of Bernstein-Sato polynomial in rings of characteristic zero that may be singular. Throughout this section, $\KK$ is a field of characteristic zero, and $R$ is a $\KK$-algebra.

 As in Section~\ref{sec:def-first-prop}, the definition is as follows:

\begin{definition}
	A \emph{Bernstein-Sato functional equation} for an element $f$ in $R$ is an equation of the form
	\[ \delta(s) f^{s+1} = b(s) f^s \qquad \text{for all \ } s\in \NN,\]
	where $\delta(s)\in D_{R|\KK}[s]$ is a polynomial differential operator, and $b(s)\in \KK[s]$ is a polynomial. We say that such a functional equation is nonzero if $b(s)$ is nonzero; this implies that $\delta(s)$ is nonzero as well.
	
	If there exists a nonzero functional equation for $f$, we say that $f$ admits a Bernstein-Sato polynomial, and the Bernstein-Sato polynomial of $f$ is the minimal monic generator of the ideal
		\[ \{ b(s) \in \KK[s] \ | \ \exists \delta(s)\in D_{R|\KK}[s] \ \text{such that} \ \delta(s) f^{s+1} = b(s) f^{s} \ \text{for all} \ s\in \NN \} \subseteq \KK[s].\]
		We denote this as $b_f(s)$, or as $b_f^R(s)$ if we need to keep track of the ring in which we are considering $f$ as an element.
		
		If every element of $R$ admits a Bernstein-Sato polynomial, we say that $R$ has Bernstein-Sato polynomials. 
\end{definition} 

The set specified above is an ideal of $\KK[s]$ for the same reason as in Section~\ref{sec:def-first-prop}.

The proof of existence of Bernstein-Sato polynomials uses the hypothesis that $R$ is regular crucially in multiple steps; thus, a priori Bernstein-Sato polynomials may or may not exist in singular rings. Before we consider examples, we want to consider the functional equation as a formal equality in a $D$-module.

\begin{theorem}[{\cite{Vfilt}}] There exists a unique (up to isomorphism) $D_{R_f|\KK}[s]$-module, $R_f[s]\boldsymbol{f^s}$, that is a free as an $R_f[s]$-module, and that is equipped with maps\\ ${\theta_n:R_f[s]\boldsymbol{f^s} \to R_f}$, such that $\pi_n(\delta(s))\cdot \theta_n(a(s)\boldsymbol{f^s})=\theta_n(\delta(s) \cdot a(s) \boldsymbol{f^s})$ for all $n\in \NN$. An element $a(s)\boldsymbol{f^s}$ is zero in $R_f[s]\boldsymbol{f^s}$ if and only if $\theta_n(a(s)\boldsymbol{f^s})=0$ for infinitely many (if and only if all) $n\in \NN$.
\end{theorem}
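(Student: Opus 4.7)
The approach is to construct $R_f[s]\boldsymbol{f^s}$ by mimicking the local cohomology description from Subsubsection~\ref{subsub:localcohomconstruction}, whose formulation does not use regularity. I consider $H^1_{(f-t)}(R_f[t])$ as a $D_{R_f[t]|\KK}$-module, using that each $\delta \in D_{R_f|\KK}$ extends to an operator on $R_f[t]$ acting coefficient-wise in $t$, and that such extensions commute with the Weyl-algebra elements $t$ and $\partial_t$. The element $-\partial_t t \in D_{R_f[t]|\KK}$ then commutes with $D_{R_f|\KK}$, yielding an algebra map $D_{R_f|\KK}[s] \to D_{R_f[t]|\KK}$, $s \mapsto -\partial_t t$, and hence a $D_{R_f|\KK}[s]$-module structure on $H^1_{(f-t)}(R_f[t])$. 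I set $\boldsymbol{f^s} := [1/(f-t)]$ and define $R_f[s]\boldsymbol{f^s}$ to be the $R_f[s]$-submodule generated by $\boldsymbol{f^s}$. The argument of Lemma~\ref{lem-inj-alpha} uses only the $R_f$-freeness of $H^1_{(f-t)}(R_f[t])$ on the basis $\{[1/(f-t)^{k+1}]\}_{k\ge 0}$ and no regularity, so it shows $\{s^k \boldsymbol{f^s}\}_{k\ge 0}$ is $R_f$-linearly independent; hence $R_f[s]\boldsymbol{f^s}$ is free of rank one over $R_f[s]$ on the generator $\boldsymbol{f^s}$. Conversely, an induction using $s \cdot [a/(f-t)^n] = [(n-1)a/(f-t)^n] + [-n f a/(f-t)^{n+1}]$ expresses each basis element $[1/(f-t)^{j+1}]$ as an $R_f[s]$-multiple of $\boldsymbol{f^s}$, and combined with Remark~\ref{rem:loc} this yields $\delta \cdot \boldsymbol{f^s} \in R_f[s]\boldsymbol{f^s}$ for every $\delta \in D_{R_f|\KK}$, confirming stability under $D_{R_f|\KK}[s]$.

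Next I define $\theta_n \colon R_f[s]\boldsymbol{f^s} \to R_f$ by $a(s)\boldsymbol{f^s} \mapsto a(n) f^n$, which is well-defined by freeness. The crucial step is verifying the compatibility $\theta_n(\delta(s) \cdot a(s)\boldsymbol{f^s}) = \pi_n(\delta(s)) \cdot \theta_n(a(s)\boldsymbol{f^s})$. Using the commutator identities in $D_{R_f|\KK}[s]$ and $R_f[s]$-linearity, the verification reduces to showing that if $\delta \in D_{R_f|\KK}$ satisfies $\delta \cdot \boldsymbol{f^s} = b(s)\boldsymbol{f^s}$, then $b(n) f^n = \delta \cdot f^n$ in $R_f$ for every $n \in \NN$. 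Equivalently, since $s$ is central in $D_{R_f|\KK}[s]$, the $D_{R_f|\KK}$-module quotient $R_f[s]\boldsymbol{f^s}/(s-n)R_f[s]\boldsymbol{f^s}$ (free of rank one over $R_f$ on the class $\bar{\boldsymbol{f^s}}$, with $\delta$ acting as multiplication by $b(n)$) must be $D_{R_f|\KK}$-isomorphic to $R_f$ via $\bar{\boldsymbol{f^s}} \mapsto f^n$. I would prove this by expanding $\delta \cdot [1/(f-t)]$ via Remark~\ref{rem:loc} as $\sum_j [\delta^{(j),f}(1)/(f-t)^{j+1}]$, rewriting each $[1/(f-t)^{j+1}]$ in the $R_f[s]$-basis through the universal polynomials $P_j(s) \in \QQ[s]$ of degree $j$ obtained from the recursion above, and comparing to the direct computation of $\delta \cdot f^n$ obtained from the same iterated commutators $\delta^{(j),f}$. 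I expect the main technical obstacle to be this matching, which requires identifying the $P_j(s)$ precisely (they are signed binomial coefficients in $s$) and verifying the identity $P_j(n) f^{n-j} \delta^{(j),f}(1) = \binom{n}{j}\delta^{(j),f}(1) f^{n-j}$ up to sign conventions for each $j$.

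For the vanishing characterization, freeness gives $a(s)\boldsymbol{f^s} = 0$ iff $a(s) = 0$ in $R_f[s]$, and since $f^n$ is a unit in $R_f$, $\theta_n(a(s)\boldsymbol{f^s}) = a(n) f^n$ is zero iff $a(n) = 0$; a Vandermonde argument, valid because $\QQ \subseteq \KK \subseteq R_f$ has characteristic zero so nonzero integer Vandermonde determinants are units, shows that $a(n) = 0$ for infinitely many $n \in \NN$ forces $a(s) = 0$. For uniqueness, let $M'$ be any $D_{R_f|\KK}[s]$-module that is $R_f[s]$-free of rank one on a distinguished generator $\boldsymbol{f^s}'$, equipped with specialization maps $\theta'_n$ satisfying $\theta'_n(\boldsymbol{f^s}') = f^n$ and the analogous compatibility. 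The $R_f[s]$-linear map $\phi \colon R_f[s]\boldsymbol{f^s} \to M'$ defined by $\phi(a(s)\boldsymbol{f^s}) = a(s)\boldsymbol{f^s}'$ is automatically $D_{R_f|\KK}[s]$-linear: for $\delta \in D_{R_f|\KK}$, writing $\delta \cdot \boldsymbol{f^s} = b(s)\boldsymbol{f^s}$ and $\delta \cdot \boldsymbol{f^s}' = c(s)\boldsymbol{f^s}'$, the compatibility with the respective specialization maps forces $b(n) f^n = \delta \cdot f^n = c(n) f^n$ for every $n \in \NN$, whence $b(s) = c(s)$ by Vandermonde; the $D_{R_f|\KK}[s]$-action on any element $a(s)\boldsymbol{f^s}$ is then determined by this together with the $R_f[s]$-structure and the commutator rule.
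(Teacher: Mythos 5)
Your proposal is essentially the construction used by the cited source \cite{Vfilt} (the survey itself gives no proof of this theorem): you transplant the paper's own local-cohomology construction of Subsubsection~\ref{subsub:localcohomconstruction} to the singular setting, realizing $R_f[s]\boldsymbol{f^s}$ inside $H^1_{(f-t)}(R_f[t])$ with $s=-\partial_t t$, and this is exactly the right route since no regularity enters there. The step you flag as the main obstacle does close, and with the signs as follows: the localization formula should read $\delta\cdot[\tfrac{1}{f-t}]=\sum_j(-1)^j[\delta^{(j),f}(1)/(f-t)^{j+1}]$ with the commutator convention of Remark~\ref{rem:loc} (as printed there the sign is off, as the test $\partial_x\cdot\tfrac1x$ shows), while the recursion in Lemma~\ref{lem-inj-alpha} gives the change of basis $[\tfrac{1}{(f-t)^{j+1}}]=(-1)^j\binom{s}{j}f^{-j}\boldsymbol{f^s}$; the two signs cancel, yielding $\delta\cdot\boldsymbol{f^s}=\sum_j\binom{s}{j}f^{-j}\delta^{(j),f}(1)\,\boldsymbol{f^s}$, which matches the identity $\delta(f^n)=\sum_j\binom{n}{j}f^{n-j}\delta^{(j),f}(1)$, proved by induction on $n$ from $\delta^{(j),f}(fr)=f\,\delta^{(j),f}(r)+\delta^{(j+1),f}(r)$. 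Two small refinements you should make explicit: in the linear-independence step the triangular leading coefficients are $\pm\, n!\,f^n$, and you need that these are \emph{units} (not merely nonzero, as suffices in the paper's regular-domain lemma), since $R_f$ may have zero divisors; and your uniqueness argument uses the normalization $\theta'_n(\boldsymbol{f^s}')=f^n$, which is only implicit in the statement, so uniqueness should be read (as you do) as preserving the distinguished generator and the specialization maps.
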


\begin{remark}\label{rem:equivs-gen}
	From this theorem, we see that the following are equivalent, as in the regular case:
	\begin{enumerate}
		\item $\delta(s) f\boldsymbol{f^s} = b(s) \boldsymbol{f^s}$ in $R_f[s]\boldsymbol{f^s}$;
		\item $\delta(s) f^{s+1} = b(s) f^s$ for all $s\in \NN$;
		\item $\delta(s+t) f^{t+1}\boldsymbol{f^s} = b(s) f^t \boldsymbol{f^s}$ in $R_f[s]\boldsymbol{f^s}$ for some/all $t\in \ZZ$.
	\end{enumerate}
We note also that Proposition~\ref{prop:othercharsBS} holds in this setting, by the same argument.
\end{remark}

\subsection{Nonexistence of Bernstein-Sato polynomials}

In this subsection, we give some examples of rings with elements that do not admit Bernstein-Sato polynomials. This is based on a necessary condition on the roots that utilizes the following definition.

\begin{definition}\label{def:D-ideal}
	A $D$-ideal of $R$ is an ideal $\fa\subseteq R$ such that $D_{R|\KK}(\fa) =\fa$.
\end{definition}

As $R\subseteq D_{R|\KK}$, we always have $\fa\subseteq D_{R|\KK}(\fa)$, so the nontrivial condition in the definition above is $D_{R|\KK}(I) \subseteq I$. We always have that $0$ and $R$ are $D$-ideals. Sums, intersections, and minimal primary components of $D$-ideals (when $R$ is Noetherian) are also $D$-ideals  \cite[Proposition~4.1]{Travesmono}. When $R$ is a polynomial ring, the only $D$-ideals are  $0$ and $R$; in other rings, there may be more. We make a simple observation.

\begin{lemma}\label{lem:rootDideal}
	Let $f\in R$, and let $\fa\subseteq R$ be a $D$-ideal. Let $\delta(s) f^{s+1} = b(s) f^s$ be a functional equation for $f$.
	If $f^{n+1}\in \fa$ and $f^n\notin \fa$, then $b(n)=0$. In particular, if $f$ admits a Bernstein-Sato polynomial $b_f(s)$, then $b_f(n)=0$.
\end{lemma}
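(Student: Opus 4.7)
The proof is essentially a one-step specialization argument, so my plan is short. The strategy is to evaluate the functional equation at $s = n$ and use the $D$-stability of $\fa$ to trap the left-hand side inside $\fa$, then read off information about $b(n)$ from the right-hand side.

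More concretely, I would first note that by the equivalences in Remark~\ref{rem:equivs-gen} (applied in the singular setting) the functional equation $\delta(s) f^{s+1} = b(s) f^s$ holds as an equality in $R$ for every particular value $s = n \in \NN$. Specializing gives $\delta(n) f^{n+1} = b(n) f^n$ in $R$, where $\delta(n) \in D_{R|\KK}$ and $b(n) \in \KK$. Since $f^{n+1} \in \fa$ and $\fa$ is a $D$-ideal (Definition~\ref{def:D-ideal}), the element $\delta(n) f^{n+1}$ lies in $\fa$, so $b(n) f^n \in \fa$.

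To finish, observe that $b(n)$ is an element of the ground field $\KK$, hence either zero or a unit of $R$. If $b(n) \ne 0$, multiplying by $b(n)^{-1}$ yields $f^n \in \fa$, contradicting the assumption $f^n \notin \fa$. Therefore $b(n) = 0$. The ``in particular'' statement is immediate: applying what we just proved to the defining functional equation $\delta(s) f^{s+1} = b_f(s) f^s$ of the Bernstein-Sato polynomial gives $b_f(n) = 0$ whenever $f^{n+1} \in \fa$ and $f^n \notin \fa$.

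There is no genuine obstacle here; the argument is the natural analogue of the classical observation that $-1$ is a root of $b_f(s)$ (compare Lemma~\ref{s+1}), with the maximal ideal replaced by an arbitrary $D$-ideal. The only mild point to be careful about is legitimizing the specialization $s \mapsto n$, which is handled by the cited remark.
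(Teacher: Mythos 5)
Your proof is correct and follows essentially the same route as the paper's: specialize the functional equation at $s=n$, use $D$-stability of $\fa$ to conclude $b(n)f^n\in\fa$, and deduce $b(n)=0$ since $b(n)$ is a scalar and $f^n\notin\fa$. Your extra care in justifying the specialization and the unit argument only makes explicit what the paper leaves implicit.
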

\begin{proof}
	After specializing the functional equation, we have $\delta(n) f^{n+1} = b_f(n) f^n$. Since $\delta(n) f^{n+1}\in \fa$, we must have $b_f(n) f^n \in \fa$, which implies $b_f(n)=0$.
\end{proof}

From the previous lemma, we obtain the following result.

\begin{proposition}
	Let $R$ be a reduced $\NN$-graded $\KK$-algebra. If $D_{R|\KK}$ lives in nonnegative degrees, then no element $f\in [R]_{>0}$ admits a Bernstein-Sato polynomial.
\end{proposition}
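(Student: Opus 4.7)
The plan is to apply Lemma~\ref{lem:rootDideal} to an infinite family of $D$-ideals, forcing any putative Bernstein--Sato polynomial of $f$ to have infinitely many roots.

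The first step is to produce a natural supply of $D$-ideals from the grading. Since $D_{R|\KK}$ lives in nonnegative degrees, I claim that each truncation $\fa_d := [R]_{\geq d} = \bigoplus_{i \geq d} [R]_i$ is a $D$-ideal: if $\delta \in D_{R|\KK}$ is homogeneous of degree $k \geq 0$ and $r \in [R]_i$ with $i \geq d$, then $\delta(r) \in [R]_{i+k} \subseteq [R]_{\geq d}$. Extending by linearity, $D_{R|\KK}(\fa_d) \subseteq \fa_d$.

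Next, fix a nonzero $f \in [R]_{>0}$ and write $f = f_e + f_{e+1} + \cdots$ with $f_i \in [R]_i$ and $f_e \neq 0$, so $e \geq 1$. Reducedness of $R$ is exactly what is needed here: since $f_e$ is a nonzero homogeneous element of a reduced ring, it is not nilpotent, so $f_e^n \neq 0$ for every $n \in \NN$. Because $f_e^n$ is the unique degree-$ne$ component of $f^n$, we conclude that $f^n$ has a nonzero component in degree $ne$, hence $f^n \notin \fa_{ne+1}$. On the other hand every homogeneous component of $f^{n+1}$ sits in degree $\geq (n+1)e = ne + e \geq ne+1$, so $f^{n+1} \in \fa_{ne+1}$.

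Finally, suppose for contradiction that $f$ admits a Bernstein--Sato polynomial $b_f(s)$. Applying Lemma~\ref{lem:rootDideal} to the $D$-ideal $\fa_{ne+1}$ for each $n \in \NN$ gives $b_f(n) = 0$ for every nonnegative integer $n$. A monic polynomial in $\KK[s]$ can have only finitely many roots, so this is impossible. The only step that requires any care is the nonvanishing $f_e^n \neq 0$, and that is precisely where the reducedness hypothesis enters; the rest is bookkeeping with the grading.
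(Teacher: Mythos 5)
Your proposal is correct and follows essentially the same route as the paper: the paper also observes that the truncations $[R]_{\geq w(n+1)}$ (where $w$ is the initial degree of $f$) are $D$-ideals because $D_{R|\KK}$ has no negative-degree part, notes $f^{n+1}\in [R]_{\geq w(n+1)}$ while $f^n\notin [R]_{\geq w(n+1)}$, and applies Lemma~\ref{lem:rootDideal} to force $b(n)=0$ for all $n\in\NN$, hence $b\equiv 0$. Your explicit justification that $f_e^n\neq 0$ via reducedness is exactly the (implicit) point where that hypothesis is used in the paper's argument, so the two proofs coincide.
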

\begin{proof}
	Let $\delta(s) f^{s+1} = b(s) f^s$ be a functional equation for $f$.
	Suppose $f\in [R]_w \smallsetminus [R]_{w-1}$. Since $D_{R|\KK}$ has no elements of negative degree, $[R]_{\geq w(n+1)}$ is a $D$-ideal for each $n\in \NN$, and $f^{n+1} \in [R]_{\geq w(n+1)}$, while $f^n\notin [R]_{\geq w(n+1)}$. Thus, $b(n)=0$ for all $n$, so $b(s)\equiv 0$. Thus, $f$ does not admit a Bernstein-Sato polynomial.
	\end{proof}

Large classes of rings with no differential operators of negative degree are known. In particular, we have the following.

\begin{theorem}[{\cite[Corollary~4.49]{BJNB},\cite{HsiaoD},\cite{Mallory}}]\label{thm:log-terminal-negdeg}
	Let $\KK$ be an algebraically closed field of characteristic zero and let
	$R$ be a standard-graded normal $\KK$-domain with an isolated singularity and that is a
	Gorenstein ring. If $R$ has differential operators of negative degree, then $R$ has log-terminal and rational singularities.
	
	 In particular, if $R$ is a hypersurface, and $R$ has differential operators of negative degree, then the degree of $R$ is less than the dimension of $R$.
	\end{theorem}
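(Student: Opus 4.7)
The plan is to combine the Flenner--Watanabe criterion characterizing rational singularities of graded isolated-singularity rings via the $a$-invariant with Hsiao's geometric interpretation of the graded pieces of $D_{R|\KK}$ in terms of twisted differential operators on $X := \operatorname{Proj} R$. Since $R$ is Gorenstein, rational, canonical, and Kawamata log-terminal singularities all coincide (discrepancies are integral). The Flenner--Watanabe criterion states that a standard-graded normal Cohen--Macaulay $\KK$-domain with isolated singularity, over an algebraically closed field of characteristic zero, has rational singularities if and only if $a(R) < 0$. The main claim therefore reduces to showing: if $[D_{R|\KK}]_{-k} \neq 0$ for some $k > 0$, then $a(R) < 0$.

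For this core implication, I would set $X := \operatorname{Proj} R$, smooth projective by the isolated-singularity hypothesis, with $L := \cO_X(1)$ very ample. Hsiao's identification expresses
\[ [D_{R|\KK}]_{-k} \;\cong\; \bigoplus_{m \geq 0} H^0\!\bigl(X,\; \mathscr{D}(L^m,\, L^{m-k})\bigr), \]
where $\mathscr{D}(L^m, L^{m-k})$ denotes the sheaf of $\KK$-linear differential operators $L^m \to L^{m-k}$. Its order filtration has associated graded $\Sym^j T_X \otimes L^{-k}$, so passing to principal symbols a nonzero operator produces a nonzero global section of $\Sym^j T_X \otimes L^{-k}$ for some $j \geq 1$ and some $m$. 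The Gorenstein hypothesis yields $\omega_X \cong L^{a(R)}$, so the assumption $a(R) \geq 0$ would make $K_X$ nef and effective. The main obstacle is then to show that this nefness, combined with ampleness of $L$, forces $H^0(X, \Sym^j T_X \otimes L^{-k}) = 0$ for all $j, k \geq 1$; the expected tool is generic semipositivity of $\Omega_X^1$ on varieties with $K_X$ pseudoeffective (Miyaoka), which after twisting by $L^{-k}$ and passing to symmetric powers rules out global sections by a slope argument on general complete-intersection curves.

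Finally, for the hypersurface case $R = \KK[x_0, \dots, x_n]/(f)$ with $\deg f = e$ and $\dim R = n$, the adjunction formula gives $\omega_R \cong R(e - n - 1)$, hence $a(R) = e - n - 1$. The main implication yields $a(R) < 0$, i.e.\ $e \leq n$. The sharp vanishing at the boundary case $e = n$, where $X$ is a smooth Calabi--Yau hypersurface (so $T_X$ is polystable of slope zero by Yau's theorem together with Donaldson--Uhlenbeck--Yau), gives $H^0(X, \Sym^j T_X \otimes L^{-k}) = 0$ for all $j, k \geq 1$, ruling out this boundary case. Therefore $e < n = \dim R$, as claimed.
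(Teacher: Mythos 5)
The paper does not prove this theorem; it is quoted from Brenner--Jeffries--N\'u\~nez-Betancourt, Hsiao, and Mallory. For the first assertion your route is essentially the one in those sources: restrict a nonzero operator of degree $-k<0$ to the punctured cone over $X=\operatorname{Proj} R$, take its principal symbol to get a nonzero section of $\Sym^{j}T_X\otimes L^{-k}$ for some $j\geq 1$ (the order-zero possibility is killed by ampleness of $L$, and the descent along the Seifert $\mathbb{G}_m$-bundle needs the Euler-sequence filtration, but this is exactly Hsiao's identification), and then note that if $a(R)\geq 0$ the Gorenstein condition makes $K_X\cong L^{a(R)}$ nef, hence pseudoeffective, hence $X$ is not uniruled, and Miyaoka's generic semipositivity of $\Omega^1_X$ kills such sections by the slope argument on general complete-intersection curves; together with the graded (Flenner--Watanabe) criterion and ``Gorenstein $+$ rational $=$ canonical $=$ klt'' this gives the first paragraph. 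That part of your proposal is sound.

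The gap is in the hypersurface refinement, and it comes from a miscomputed canonical bundle. For $R=\KK[x_0,\dots,x_n]/(f)$ with $\deg f=e$ and $\dim R=n$ one has $\omega_X\cong\cO_X(e-n-1)$, so the Calabi--Yau case is $e=n+1$ (that is, $a(R)=0$), which is already excluded by $a(R)<0$. The boundary case that $a(R)<0$ leaves open is $e=n$, where $\omega_X\cong\cO_X(-1)$: $X$ is a Fano hypersurface of index one, $K_X$ is anti-ample, Miyaoka does not apply, and $T_X$ is certainly not polystable of slope zero, so the Yau/Donaldson--Uhlenbeck--Yau step you propose is unavailable. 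Worse, no argument can exclude $e=n$ in this generality: the quadric cone $\KK[x,y,z]/(xy-z^2)\cong\KK[u,v]^{\mu_2}$ is a standard-graded Gorenstein normal domain with an isolated singularity, has $e=\dim R=2$, and carries operators of negative degree (the invariant operator $\partial_u^2$ has degree $-1$). What actually follows from the first paragraph is the log-terminal bound $e\leq\dim R$, i.e.\ $e$ is less than the number of variables $n+1$, which is presumably the intended reading of the statement; ruling out the Fano boundary $e=\dim R$ in specific examples (the cubic cone with $e=\dim R=3$) is exactly the content of \Cref{thm:Mallory}, which is proved by a separate argument and would be vacuous if the strict inequality were an ``in particular'' consequence as you attempt to derive it.
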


Mallory recently showed that the hypothesis of log-terminal singularities is not sufficient.

\begin{theorem}[\cite{Mallory}]\label{thm:Mallory}
	Let $\KK$ be an algebraically closed field of characteristic zero.  There are no differential operators of negative degree on the log-terminal hypersurface $R=\KK[x_1,x_2,x_3,x_4]/(x_1^3+x_2^3+x_3^3+x_4^3)$.
\end{theorem}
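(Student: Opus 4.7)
The plan is to show that any graded differential operator $\tilde\delta \in D^n_{A|\KK}$ of degree $-k < 0$ on $A = \KK[x_1,x_2,x_3,x_4]$ that preserves the ideal $(f)$, where $f = x_1^3+x_2^3+x_3^3+x_4^3$, must lie in $(f) \, D^n_{A|\KK}$, and hence represents the zero operator in $D_{R|\KK}$. I would argue by induction on the order $n$. The base case $n = 0$ is immediate, since $\tilde\delta$ is then just multiplication by a polynomial of negative degree in $A$.

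For the inductive step, I split into two cases depending on $k$. When $k \geq 2$, the commutator $[\tilde\delta, x_i]$ has order at most $n-1$, graded degree $-k+1 < 0$, and preserves $(f)$ (since $x_i \cdot (f) \subseteq (f)$), so by the inductive hypothesis $[\tilde\delta, x_i] \in (f) \, D^{n-1}_{A|\KK}$ for each $i$. Writing $\tilde\delta = \sum_\alpha a_\alpha \partial^\alpha$, with $e_i$ denoting the $i$-th standard basis vector in $\NN^4$, and using $[\partial^\alpha, x_i] = \alpha_i \partial^{\alpha-e_i}$, this condition forces $(\beta_i+1)\,a_{\beta+e_i} \in (f)$ for all $\beta$ and $i$, hence $a_\alpha \in (f)$ for every $\alpha \neq 0$. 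Combined with $a_0 = 0$, this yields $\tilde\delta \in (f) \, D^n_{A|\KK}$. Notably this half of the argument is uniform in $f$ and uses nothing special about its form.

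The essential case is $k = 1$, where commutation with $x_i$ only produces degree-zero operators and the inductive hypothesis no longer applies. Here I would analyze the equivalent condition $[\tilde\delta, f] \in (f) \, D^{n-1}_{A|\KK}$ directly. Because $f$ is a sum of pure cubes, every iterated partial $\partial^\beta(f)$ involving two or more distinct variables vanishes, so a Leibniz expansion expresses the coefficient of $\partial^\gamma$ in $[\tilde\delta, f]$ as
\[ \sum_i \Big[ 3(\gamma_i+1)\, x_i^2\, a_{\gamma+e_i} \,+\, 3(\gamma_i+2)(\gamma_i+1)\, x_i\, a_{\gamma+2e_i} \,+\, (\gamma_i+3)(\gamma_i+2)(\gamma_i+1)\, a_{\gamma+3e_i} \Big]. \]
Requiring each such bracket to be divisible by $f$ yields, stratum by stratum in $|\gamma|$, an overdetermined linear system on the coefficients $a_\alpha$, which I would solve by a secondary induction on $|\alpha|$, using the key input that $(f)$ meets the degree-$3$ part of $A$ in exactly the one-dimensional subspace $\KK \cdot f$ spanned by $x_1^3+x_2^3+x_3^3+x_4^3$.

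The hardest step will be closing out this $k = 1$ induction in the large-order regime, where each $a_\alpha$ itself may have degree at least $3$ and thus could a priori be a nontrivial multiple of $f$. One must carefully separate the portion of $a_\alpha$ that is forced into $(f)$ from a potential ``free'' part and show that the latter also vanishes, propagating the constraint through every bracket in the sum above. This is precisely where the specific Fermat structure of $f$, as opposed to a more generic cubic, becomes indispensable: the edge-of-threshold numerics $\deg(f)=\dim R = 3$ from Theorem~\ref{thm:log-terminal-negdeg} explain why the argument must be so delicate, and why the conclusion is sensitive to the choice of~$f$.
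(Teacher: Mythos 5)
Your reduction of degrees $\leq -2$ to degree $-1$ checks out: the commutator trick, the identification of $(f)\,D^n_{A|\KK}$ with the operators whose coefficients lie in $(f)$, the equivalence of ``preserves $(f)$'' with $[\tilde\delta,f]\in (f)\,D^{n-1}_{A|\KK}$, and your Leibniz expansion (including the numerical coefficients) are all correct. But the proposal is not a proof, and the gap sits exactly where you flag it: the degree $-1$ case is not a delicate finishing step, it \emph{is} the theorem, and your sketch contains no mechanism that closes it. The conditions you derive are linear conditions on the $a_\alpha$ modulo $(f)$; since each $a_\alpha$ with $|\alpha|\geq 4$ may be altered by a multiple of $f$ without changing anything (this only changes the lift $\tilde\delta$ by an element of $f D_{A|\KK}$), what must be shown is that, for \emph{every} order $n$, a linear system over $R$ whose size grows with $n$ has only the zero solution. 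A secondary induction on $|\alpha|$ gives no uniform handle on this, and the input you single out --- that $(f)$ meets the degree-$3$ part of $A$ only in $\KK f$ --- holds for every cubic, so it cannot be the decisive point. Moreover, formally identical coefficient systems do admit nonzero solutions in nearby situations: the quadric cone $\KK[x_1,\dots,x_4]/(x_1x_4-x_2x_3)$ carries plenty of negative-degree operators (e.g.\ the restriction of $\partial_a\partial_c$ under the presentation as the invariant ring $\KK[ac,ad,bc,bd]\subseteq\KK[a,b,c,d]$), so whether the system has nonzero solutions is not visible at the level of the manipulations you have set up; something genuinely global must enter.

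For comparison, the survey does not prove this statement at all --- it cites Mallory --- and Mallory's argument is of an entirely different nature. Passing to symbols identifies the negative-degree graded pieces of $D_{R|\KK}$, in essence, with global sections of twisted symmetric powers of the tangent bundle of the cubic surface $X=\operatorname{Proj} R\subseteq\mathbb{P}^3$, so the theorem amounts to the vanishing of $H^0\bigl(X,\Sym^m T_X\otimes\mathcal{O}_X(-1)\bigr)$ for all $m$, i.e.\ to the failure of $T_X$ to be big; Mallory establishes this non-bigness using the global geometry of the del Pezzo surface $X$ (its curves), not a coefficientwise computation on the cone. In particular the conclusion holds for the cone over \emph{any} smooth cubic surface, so your closing intuition that the Fermat form of $f$ is the indispensable ingredient points in the wrong direction: the Fermat shape only cleans up your Leibniz expansion, while the actual missing idea is a global vanishing statement on $\operatorname{Proj} R$ that your plan would have to import to terminate the $k=1$ induction uniformly in the order.
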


\begin{corollary}
	For $R$ as in Theorems~\ref{thm:log-terminal-negdeg} and~\ref{thm:Mallory}, no element of $[R]_{\geq 1}$ admits a Bernstein-Sato polynomial.
\end{corollary}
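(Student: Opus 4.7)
The plan is to observe that this corollary is essentially an immediate application of the preceding proposition, with the cited theorems supplying the hypothesis on differential operators.

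First I would verify that $R$ meets the hypotheses of the preceding proposition in both cases. The rings in Theorems~\ref{thm:log-terminal-negdeg} and~\ref{thm:Mallory} are standard-graded normal $\KK$-domains; in particular they are reduced $\NN$-graded $\KK$-algebras, and for standard-graded rings we have $[R]_{\geq 1} = [R]_{>0}$, so the conclusion of the preceding proposition applies to exactly the graded piece we care about.

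Second, I would invoke each theorem to conclude that $D_{R|\KK}$ lives in nonnegative degrees. Taking the contrapositive of the hypersurface statement in Theorem~\ref{thm:log-terminal-negdeg}, when $R$ is a standard-graded normal Gorenstein $\KK$-domain with isolated singularity that is a hypersurface with $\deg R \geq \dim R$, then $R$ has no differential operators of negative degree; analogously, the first part gives the same conclusion whenever $R$ fails to be log-terminal and rational. Theorem~\ref{thm:Mallory} provides the same conclusion directly for the cubic Fermat threefold, which is log-terminal so is not covered by the first theorem.

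Finally, I would apply the preceding proposition to deduce that no element of $[R]_{>0} = [R]_{\geq 1}$ admits a Bernstein-Sato polynomial. There is no genuine obstacle: the corollary is just the combination of the cited structural results on negative-degree differential operators with the preceding proposition's criterion, and no additional argument is required.
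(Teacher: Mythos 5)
Your proposal is correct and is exactly the intended argument: the paper states this corollary without proof precisely because it is the immediate combination of the proposition on rings whose differential operators live in nonnegative degrees with the (contrapositive of the) cited theorems, which is what you do. Your reading of ``$R$ as in Theorems~\ref{thm:log-terminal-negdeg} and~\ref{thm:Mallory}'' — namely the rings for which those results rule out negative-degree operators, including the Fermat cubic not covered by the first theorem — matches the paper's intent.
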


\subsection{Existence of Bernstein-Sato polynomials}

While some rings do not admit Bernstein-Sato polynomials, large classes of singular rings do. 

\begin{definition}
	Let $R,S$ be two rings. We say that $R$ is a direct summand of $S$ if $R\subseteq S$, and there is an $R$-module homomorphism $\beta:S\to R$ such that $\beta|_R$ is the identity on $R$. 
\end{definition}

A major source of direct summands comes from invariant theory: if $G$ is a linearly reductive group acting on a polynomial ring $B$, then $R=B^G$ is a direct summand of $B$. In particular, direct summands of polynomial rings include:

\begin{enumerate}
	\item invariants of finite groups (including the simple singularities $A_n$, $D_n$, $E_n$),
	\item normal toric rings,
	\item determinantal rings, and
	\item coordinate rings of Grassmannians.
\end{enumerate}

We note that a ring $R$ may be a direct summand of a polynomial ring in different ways; i.e., as different subrings of polynomial rings. For example, the $A_1$ singularity $R=\CC[a,b,c]/(c^2-ab)$ embeds as a direct summand of $B=\CC[x,y]$ by the maps
\begin{align*} &\phi_1:R\to B \quad &\phi_1(a)=x^2, \phi_1(b)=y^2, \phi_1(c)=xy&, \ \text{and}\\
 &\phi_2:R\to B \quad &\phi_2(a)=x^4, \phi_2(b)=y^4, \phi_2(c)=x^2y^2;& \ \text{likewise}\\
 &\phi_3:R\to B[z] \quad &\phi_3(a)=x^2, \phi_3(b)=y^2, \phi_3(c)=xy \ \text{splits}.&
 \end{align*}
 We note also that if $R$ is a direct summand of a polynomial ring, there may be other embeddings of $R$ into a polynomial ring that are not split. E.g., for $R$ and $B$ as above,
 \[\phi_4: R\to B \qquad\qquad\qquad \phi_4(a)=x, \phi_4(b)=xy^2, \phi_4(c)=xy\]
 is injective, but no splitting map $\beta|_R$ exists.
 
 \begin{definition}[\cite{BJNB,Vfilt}]\label{deds}
 		Let $R,S$ be two rings. We say that $R$ is a differentially extensible direct summand of $S$ if $R$ is a direct summand of $S$, and for every differential operator $\delta\in D_{R|\KK}$, there is some $\tilde{\delta}\in D_{S|\KK}$ such that $\tilde{\delta}|_R=\delta$. 
 		\end{definition}
 	
 This notion is implicit in a number of papers on differential operators, e.g., \cite{Kantor,LS,Musson,Schwarz}. Differentially extensible direct summands of polynomial rings include
 
 \begin{enumerate}
 	\item invariants of finite groups (including the simple singularities $A_n$, $D_n$, $E_n$),
 	\item normal toric rings,
 	\item determinantal rings, and
 	\item coordinate rings of Grassmannians of lines $\mathrm{Gr}(2,n)$.
 \end{enumerate}

As with the direct summand property, a ring may be a differentially extensible direct summand of a polynomial ring by some embedding, but fail this property for another embedding into a polynomial ring. For the example considered above, $R$ is a differentially extensible direct summand of $B$ via $\phi_1$ and $\phi_3$, but not $\phi_2$ or $\phi_4$.

\begin{theorem}[\cite{AMHNB,BJNB}]\label{existence-DS}
	Let $R$ be a direct summand of a differentiably admissible algebra $B$ over a field $\KK$ of characteristic zero. Then every element $f\in R$ admits a Bernstein-Sato polynomial $b^R_f(s)$, and $b^R_f(s) \, | \, b^B_f(s)$.
	
	If, in addition, $R$ is a differentially extensible direct summand of $B$, then $b^R_f(s) = b^B_f(s)$ for all $f\in R$.
\end{theorem}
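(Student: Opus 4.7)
The plan is to transport the functional equation back and forth between $R$ and $B$ using the splitting $\beta : B \to R$ and (in the second part) the lifting of differential operators.

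\textbf{First part.} Since $B$ is differentiably admissible, Theorem~\ref{ThmExistenceHomological} gives a nonzero functional equation
\[
\delta^B(s)\, f^{s+1} = b^B_f(s)\, f^s \quad\text{for all } s\in\NN,
\]
with $\delta^B(s) = \sum_j \delta^B_j\, s^j \in D_{B|\KK}[s]$. Let $\beta : B \to R$ be an $R$-linear splitting of the inclusion $R \hookrightarrow B$. The key lemma is:

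\emph{If $\delta \in D^i_{B|\KK}$, then $\psi : R \to R$, $\psi(r) = \beta(\delta(r))$, lies in $D^i_{R|\KK}$.}

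I would prove this by induction on $i$. For $i=0$, $\delta$ is multiplication by some $b\in B$, and $R$-linearity of $\beta$ gives $\psi(r_0 r) = \beta(r_0 \cdot br) = r_0\psi(r)$, so $\psi$ is multiplication by $\beta(b)\in R$. For the inductive step, for $r_0\in R$ we compute $[\psi,r_0](r) = \beta([\delta,r_0](r))$, and $[\delta,r_0]\in D^{i-1}_{B|\KK}$, so by induction $[\psi,r_0]\in D^{i-1}_{R|\KK}$, whence $\psi\in D^i_{R|\KK}$. Applying this coefficient-wise produces $\psi(s)\in D_{R|\KK}[s]$. Because $f\in R$, applying $\beta$ to the functional equation above yields
\[
\psi(s)\, f^{s+1} = \beta\!\left(\delta^B(s) f^{s+1}\right) = \beta\!\left(b^B_f(s) f^s\right) = b^B_f(s)\, f^s \quad\text{for all } s\in\NN,
\]
so $f$ admits a Bernstein--Sato polynomial in $R$ and $b^R_f(s) \mid b^B_f(s)$.

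\textbf{Second part.} Assume now that $R$ is a differentially extensible direct summand of $B$. Start with a functional equation $\delta^R(s)\, f^{s+1} = b^R_f(s)\, f^s$ in $R$, with $\delta^R(s) = \sum_j \delta^R_j\, s^j \in D_{R|\KK}[s]$. By Definition~\ref{deds}, each $\delta^R_j$ admits a lift $\tilde\delta_j \in D_{B|\KK}$ with $\tilde\delta_j|_R = \delta^R_j$. Setting $\tilde\delta(s) = \sum_j \tilde\delta_j\, s^j \in D_{B|\KK}[s]$, we have $\tilde\delta(s)\, f^{s+1} = \delta^R(s)\, f^{s+1} = b^R_f(s)\, f^s$ in $B$ for all $s\in\NN$, because $f^{s+1}\in R$ and the $\tilde\delta_j$ agree with $\delta^R_j$ on $R$. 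Hence $b^B_f(s) \mid b^R_f(s)$; combined with the first part and the fact that both polynomials are monic, we obtain $b^R_f(s)=b^B_f(s)$.

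\textbf{Main obstacle.} The only nontrivial point is the inductive lemma that $\beta\circ\delta\circ\iota$ is a differential operator on $R$. This relies crucially on $\beta$ being $R$-linear (not merely $\KK$-linear), which is exactly the direct summand hypothesis, and on the order filtration definition of $D_{R|\KK}$. Everything else is formal manipulation of functional equations, using that $f^n\in R$ so the two functional equations transport cleanly across $\beta$ and the lifting maps.
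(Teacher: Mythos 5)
Your proof is correct and follows essentially the same route as the paper: transport the functional equation from $B$ to $R$ via $\beta\circ\delta(s)|_R$ for the divisibility $b^R_f(s)\mid b^B_f(s)$, and lift coefficients of $\delta^R(s)$ using differential extensibility for the reverse divisibility. The only difference is that you write out the inductive argument that $\beta\circ\delta|_R\in D_{R|\KK}$, which the paper leaves as an exercise (citing \cite{DModFSplit}), and your induction is correct.
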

\begin{proof}
	Let $\beta:B\to R$ be the splitting map. The key point is that for $\delta\in D_{B|\KK}$, the map $\beta \circ \delta|_R$ is a differential operator on $R$; this is left as an exercise using the inductive definition, or see \cite{DModFSplit}. Thus, given a functional equation $\forall s\in \NN, \delta(s) f^{s+1} = b(s) f^s$ for $f$ in $B$, we have $\forall s\in \NN, \beta \circ \delta(s)|_R \,f^{s+1} = \beta (b(s) f^s)= b(s) f^s$ in $R$. This implies that $f$ admits a Bernstein-Sato polynomial in $R$, and that $b_f^R(s) \, | \, b_f^B(s)$.
	
	If $R$ is a differentially extensible direct summand of $B$, then for any functional equation $\forall s\in \NN, \delta(s) f^{s+1} = b(s) f^s$ for $f$ in $R$, we can take an extension $\tilde{\delta}(s)$ by extending each $s^i$-coefficient, and we then have $\forall s\in \NN, \tilde{\delta}(s) f^{s+1} = b(s) f^s$ in $B$. Thus, $b_f^B(s) \, | \, b_f^R(s)$, so equality holds.
\end{proof}

Note that for direct summands of polynomial rings, all roots of the Bernstein-Sato polynomial are negative and rational, as in the regular case.

We end this section with two examples of Bernstein-Sato polynomials in rings that are not direct summands of polynomial rings.

\begin{example}[\cite{Vfilt}]
	Let $R=\CC[x,y]/(xy)$, and $f=x$. The operator $x \partial^2_x$ is a differential operator on $R$ \cite{Tripp}, and it yields a functional equation
	\[ x \partial^2_x \, x^{s+1} = s(s+1) x^s.\]
	Thus, $b_f^R(s)$ exists, and divides $s(s+1)$. In fact, we have $b_f^R(s)=s(s+1)$. The ideal $(x)$ is a minimal primary component of $(0)$, hence a $D$-ideal. By Lemma~\ref{lem:rootDideal}, $s=0$ is a root; $s=-1$ is also a root since $x$ is not a unit.
	\end{example}

\begin{example}[\cite{Vfilt}]
   Let $R=\CC[t^2,t^3]\cong \displaystyle \frac{\CC[x,y]}{(x^3-y^2)}$ and $f=t^2$.
   Consider the differential operator of order two 
   $$
    \delta = (t \partial_t -1) \circ \partial_t^2 \circ (t \partial_t -1)^{-1},
    $$
   where $(t \partial_xt-1)^{-1}$ is the inverse function of $t \partial_t -1$ on $\R$.
   The equation 
   $$\delta \act t^{2(\ell+1)} = (2\ell+2)(2\ell-1) t^{2\ell}$$ holds for every $\ell\in \NN$. Then, the functional equation 
   \[\delta \act t^2 \boldsymbol{(t^2)^{s}} = (2s+2)(2s-1) \boldsymbol{(t^2)^s}\]
   holds in $R_{t^2}[s] \boldsymbol{(t^2)^{s}}$.
   Thus, $b_{t^2}^{T}(s)$ divides $(s-\frac{1}{2})(s+1)$.

We now see that the equality holds.
We already know that  $s=-1$ is a root of $b_{t^2}^{R}(s)$, because $\frac{1}{t^2}\not\in R$.
 Every differential operator of degree $-2$ on $R$ can be written as $(t \partial_t -1) \circ \partial_t^2 \circ \gamma \circ (t \partial_t -1)^{-1}$ for some $\gamma \in \CC[t \partial_t]$ \cite{PaulSmith,SmithStafford}. 
   Since $R_{t^2}[s] \boldsymbol{(t^2)^{s}}$ is a graded module we can decompose the functional equation as a sum of homogeneous pieces. 
   Using previous  description of such operators, it follows that $s=\frac{1}{2}$ must be a root of $b^{R}_{t^2}(s)$.
\end{example}

\subsection{Differentiable direct summands}

\begin{definition}[{\cite[Definition~3.2]{AMHNB}}]
   Let $R \subseteq B$ be an inclusion of $\KK$-algebras with $R$-linear splitting $\beta\colon B \to R$.
   Recall that, for $\zeta\in D^n_{B|\KK}$, the map $\beta \circ \zeta|_R \colon R \to R$ is an element of $D^n_{R|\KK}$.
   By abuse of notation, for $\delta\in D_{B|\KK}$, we write $\beta \circ \delta|_R$ for the element of $D_{R|\KK}$ obtained from $\delta$ by applying $\beta \circ - |_R$.

   We say that a $D_{R|\KK}$-module $M$ is a \emph{differential direct summand} of a $D_{B|\KK}$-module $N$ if $M\subseteq N$ and there exists an $R$-linear splitting $\Theta\colon N\to M$, called a \emph{differential splitting}, such that
   \[  \Theta(\delta \act v) = (\beta \circ \delta|_R) \act v \]
   for every $\delta\in D_{B|\KK}$ and $v\in M$,
   where the action on the left-hand side is the $D_{B|\KK}$-action, considering $v$ as an element of $N$, and the action on the right-hand side is the $D_{R|\KK}$-action.
\end{definition}

A key property for differential direct summands is that one can deduce finite length.

\begin{theorem}[{\cite[Proposition 3.4]{AMHNB}}]
   Let $R\subseteq B$ be $\KK$-algebras such that $R$ is a direct summand of $B$.
   Let $M$ be a $D_{R|\KK}$-module and $N$ be a $D_{B|\KK}$-module  such that $M$ is a differential direct summand of $N$. Then,
 $$\Len_{D_{R|\KK}}(M)\leq \Len_{D_{B|\KK}}(N).$$ 
In particular,  if  $\Len_{D_{B|\KK}}(N)$ is finite, then $\Len_{D_{R|\KK}}(M)$  is also finite.
\end{theorem}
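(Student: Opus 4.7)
The plan is to lift a strict chain of $D_{R|\KK}$-submodules of $M$ to a strict chain of $D_{B|\KK}$-submodules of $N$ of the same length, by applying $D_{B|\KK}$ to each term of the chain. The defining property of a differential splitting will be the mechanism that forces the resulting chain to remain strict.

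Concretely, fix a strictly increasing chain
\[
0 = M_0 \subsetneq M_1 \subsetneq \cdots \subsetneq M_t = M
\]
of $D_{R|\KK}$-submodules of $M$. Regarding each $M_i$ as a subset of $N$ via $M \subseteq N$, I would define
\[
N_i := D_{B|\KK} \cdot M_i \subseteq N.
\]
Each $N_i$ is a $D_{B|\KK}$-submodule of $N$, and the chain $N_0 \subseteq N_1 \subseteq \cdots \subseteq N_t$ is obviously increasing. The whole argument is to show that each inclusion is strict, and for this I would use the key claim $\Theta(N_i) \subseteq M_i$.

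To prove the claim, observe that a typical element of $N_i$ is a finite sum $\sum_j \delta_j \act v_j$ with $\delta_j \in D_{B|\KK}$ and $v_j \in M_i$. Applying $\Theta$ and using the differential splitting hypothesis gives
\[
\Theta\!\left(\sum_j \delta_j \act v_j\right) = \sum_j (\beta \circ \delta_j|_R) \act v_j,
\]
which lies in $M_i$ because each $\beta \circ \delta_j|_R \in D_{R|\KK}$ and $M_i$ is stable under $D_{R|\KK}$. Moreover, since $\Theta$ is an $R$-linear splitting of the inclusion $M \hookrightarrow N$, we have $\Theta|_{M} = \mathrm{id}_M$, and in particular $\Theta(M_{i+1}) = M_{i+1}$. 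Now if we had $N_i = N_{i+1}$, then
\[
M_{i+1} = \Theta(M_{i+1}) \subseteq \Theta(N_{i+1}) = \Theta(N_i) \subseteq M_i,
\]
contradicting strictness of the original chain. Hence $N_i \subsetneq N_{i+1}$ for every $i$, giving a strict chain of $D_{B|\KK}$-submodules in $N$ of length $t$, so $t \leq \Len_{D_{B|\KK}}(N)$. Taking the supremum over all chains in $M$ yields $\Len_{D_{R|\KK}}(M) \leq \Len_{D_{B|\KK}}(N)$, and the ``in particular'' statement follows immediately.

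The only subtle step is verifying the containment $\Theta(D_{B|\KK}\cdot M_i)\subseteq M_i$, and this is where the hypothesis of a differential splitting (as opposed to a mere $R$-linear splitting) is essential: without the identity $\Theta(\delta\act v)=(\beta\circ\delta|_R)\act v$, there is no reason for $\Theta$ to send the $D_{B|\KK}$-span of a $D_{R|\KK}$-submodule back into that submodule. Everything else is formal, so I expect no real obstacle beyond correctly invoking Definition~3.2.
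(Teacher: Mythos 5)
Your proof is correct and follows the same strategy as the cited source \cite{AMHNB} (the survey itself states this result without reproducing the proof): you lift a strict chain $M_0\subsetneq\cdots\subsetneq M_t$ of $D_{R|\KK}$-submodules to the chain $N_i=D_{B|\KK}\cdot M_i$ and use the differential-splitting identity to get $\Theta(N_i)\subseteq M_i$, which together with $\Theta|_M=\mathrm{id}_M$ forces strictness. The key containment is verified exactly as it should be, so there is no gap.
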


\begin{definition}[{\cite[Definition~3.5]{AMHNB}}]
   Let $R\subseteq B$ be $\KK$-algebras such that $R$ is a direct summand of $B$.
   Fix  $D_{R|\KK}[\seq{s}]$-modules $M_1$ and $M_2$ that are differential direct summands of $D_{B|\KK}[\seq{s}]$-modules  $N_1$ and $N_2$, respectively, with differential splittings $\Theta_1\colon N_1\to M_1$ and $\Theta_2\colon N_2\to M_2$.
   We call $\phi\colon N_1\to N_2$ a \emph{morphism of differential direct summands} if $\phi\in \Hom_{D_{B|\KK}[\seq{s}]}(N_1,N_2)$, $\phi(M_1)\subseteq M_2$, $\phi|_{M_1}\in \Hom_{D_{R|\KK}[\seq{s}]}(M_1,M_2)$, and the following diagram commutes:
   \[
      \xymatrix{
         M_1 \ar[d]^{\phi|_{M_1}} \ar[r]^{\subseteq} & N_1\ar[d]^{\phi} \ar[r]^{\Theta_1} & M_1\ar[d]^{\phi|_{M_1}}\\
         M_2 \ar[r]^{\subseteq}  & N_2\ar[r]^{\Theta_2} & M_2
      }
   \]
   For simplicity of notation, we often write $\phi$ instead of $\phi|_{M_1}.$

   Further, a complex $M_\bullet$ of $D_{R|\KK}[\seq{s}]$-modules is called a \emph{differential direct summand} of a complex $N_\bullet$ of $D_{B|\KK}[\seq{s}]$-modules if each $M_i$ is a differential direct summand of $N_i$, and each differential is a morphism of differential direct summands. 
\end{definition}

\begin{remark}
 Let $R\subseteq B$ be $\KK$-algebras such that $R$ is a direct summand of $B$.
It is known that the property of being a differential direct summand is preserved under localization at elements of $R$. In addition, it is preserved under taking kernels and cokernels of morphisms of differential direct summands \cite[Proposition~3.6, Lemma~3.7]{AMHNB}.
\end{remark}

We now present several examples of differentiable direct summands built from the previous remark.

\begin{example}
   Let $R\subseteq B$ be $\KK$-algebras such that $R$ is a direct summand of $B$
   \begin{enumerate}
   \item For every $f\in R\smallsetminus\{0\}$, $R_f$ is a differentiable direct summand of $B_f$.
   \item For every ideal $\fa\subseteq R$, $H^i_{\fa}(R)$ is a differentiable direct summand of $H^i_{\fa}(B)$.
   \item For every sequence of ideals $\fa_1,\ldots,\fa_\ell\subseteq R$, $H^i_{\fa_1}\cdots H^i_{\fa_\ell}(R)$ is a differentiable direct summand of $H^i_{\fa_1}\cdots H^i_{\fa_\ell}(B)$.   
   \end{enumerate}

\end{example}

We end this subsection showing that  $R_f[s] \boldsymbol{f^s}$ is a differentiable direct summand of $R_f[s] \boldsymbol{f^s}$.
This gives a more complete approach to prove the existence of the Bernstein-Sato polynomial. 

\begin{theorem}
   Let $R\subseteq B$ be $\KK$-algebras such that $R$ is a direct summand of $B$, and $f\in R\smallsetminus\{0\}$.
   Then,  $R_f[s] \boldsymbol{f^s}$ is a differentiable direct summand of $R_f[s] \boldsymbol{f^s}$.
   In particular, if $B$ is a differentiably admissible $\KK$-algebra, then 
       $M^R[\boldsymbol{f^s}]\otimes_\KK \KK(s)$ has finite length as $D_{R(s)|\KK(s)}$-module, and so, there exists a functional equation 
       $$
       \delta(s) f \boldsymbol{f^s}=b(s) \boldsymbol{f^s},
       $$
       where  $ \delta(s) \in D_{R|\KK}$ and $b(s)\in\KK[s]\smallsetminus\{0\}.$
\end{theorem}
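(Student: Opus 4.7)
The plan is to construct the differential splitting explicitly, verify the compatibility with differential operators via specialization, and then deduce both the finite length and the functional equation from the corresponding facts over $B$.

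First, since $f \in R$, the $R$-linear splitting $\beta \colon B \to R$ extends uniquely to an $R_f$-linear splitting $\beta_f \colon B_f \to R_f$ of $R_f \subseteq B_f$ by $\beta_f(b/f^k) = \beta(b)/f^k$. Applying $\beta_f$ coefficient-wise in $s$ and transferring through the $A_f[s]$-module identification $A_f[s]\boldsymbol{f^s} \cong A_f[s]$ (for $A = R, B$), we obtain an $R_f[s]$-linear splitting $\Theta \colon B_f[s]\boldsymbol{f^s} \to R_f[s]\boldsymbol{f^s}$.

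Second, I would check the differential splitting condition $\Theta(\delta \act v) = (\beta \circ \delta|_R) \act v$ for $\delta \in D_{B|\KK}[s]$ and $v \in R_f[s]\boldsymbol{f^s}$ using the characterization of Remark~\ref{rem:equivs-gen}. Writing $v = a(s)\boldsymbol{f^s}$ with $a(s) \in R_f[s]$ and specializing at each $n \in \NN$, both sides become $\beta_f(\delta(n) \act (a(n) f^n))$; the only nontrivial point is that for $w \in R_f$ one has $\beta_f(\delta(n) \act w) = (\beta \circ \delta(n)|_R)(w)$ after extending the right-hand operator to $R_f$, which follows by induction on the order of $\delta(n)$ from the identity $(\beta \circ \delta|_R)^{(1),f^k}(r) = \beta(\delta^{(1),f^k}(r))$ (a consequence of the $R$-linearity of $\beta$). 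Since the two sides of the splitting condition specialize identically at every $n$, they agree as formal elements. This establishes that $R_f[s]\boldsymbol{f^s}$ is a differential direct summand of $B_f[s]\boldsymbol{f^s}$.

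Third, suppose $B$ is differentiably admissible. The proof of Theorem~\ref{ThmExistenceHomological} produces a finitely generated $D_{B(s)|\KK(s)}$-submodule $N \subseteq B_f[s]\boldsymbol{f^s} \otimes_{\KK[s]} \KK(s)$ in the Bernstein class with $N_f$ equal to the whole module; since the Bernstein class is closed under localization, $B_f[s]\boldsymbol{f^s} \otimes_{\KK[s]} \KK(s)$ itself has finite length as a $D_{B(s)|\KK(s)}$-module. Flat base change from $\KK[s]$ to $\KK(s)$ preserves the splitting $\Theta$ and the compatibility with differential operators, so $R_f[s]\boldsymbol{f^s} \otimes_{\KK[s]} \KK(s)$ is a differential direct summand of a finite length module and therefore has finite length as a $D_{R(s)|\KK(s)}$-module. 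The descending chain $D_{R(s)|\KK(s)} f^k\boldsymbol{f^s}$, $k \in \NN$, must stabilize; reading off the stabilization relation and clearing denominators and shifting (exactly as in the final paragraph of the proof of Theorem~\ref{ThmExistenceHomological}) yields $\delta(s) \in D_{R|\KK}[s]$ and $b(s) \in \KK[s] \smallsetminus \{0\}$ with $\delta(s)\, f\boldsymbol{f^s} = b(s)\boldsymbol{f^s}$.

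The main obstacle is Step~2: reconciling the fact that $\Theta$ is defined by a purely algebraic recipe (coefficient-wise $\beta_f$) with the requirement that it intertwines the $D_{B|\KK}[s]$-action on the ambient module with the $D_{R|\KK}[s]$-action on the summand via $\delta \mapsto \beta \circ \delta|_R$. The specialization criterion of Remark~\ref{rem:equivs-gen} makes this verifiable by a routine induction, but the care lies in handling the extension of differential operators to the localization at $f$. Everything else reduces to applying Theorem~\ref{ThmExistenceHomological} to $B$ and transferring along the splitting.
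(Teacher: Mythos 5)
Your argument is correct, and it is exactly the route the survey intends: the paper states this theorem without proof as the payoff of the differential-direct-summand machinery it has just set up, and your proof implements that machinery — the coefficient-wise splitting $\Theta$ induced by $\beta$, the verification of $\Theta(\delta \act v)=(\beta\circ\delta|_R)\act v$ via the specialization maps $\theta_n$ (using that an element of $R_f[s]\boldsymbol{f^s}$ vanishes once all its specializations do), finiteness of length of $B_f[s]\boldsymbol{f^s}\otimes_{\KK[s]}\KK(s)$ from the Bernstein class as in Theorem~\ref{ThmExistenceHomological}, the length transfer for differential direct summands, and stabilization of the chain of submodules generated by $f^k\boldsymbol{f^s}$. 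One small refinement for the last step: to clear denominators and land in $D_{R|\KK}[s]$ you should stabilize the chain of submodules over the image of $D_{R|\KK}\otimes_\KK\KK(s)$ rather than over all of $D_{R(s)|\KK(s)}$ (for singular $R$ these need not visibly coincide); this costs nothing, since the compatibility $\Theta(\delta\act v)=(\beta\circ\delta|_R)\act v$ shows that any strictly increasing chain of $(D_{R|\KK}\otimes_\KK\KK(s))$-submodules of $R_f[s]\boldsymbol{f^s}\otimes_{\KK[s]}\KK(s)$ expands to a strictly increasing chain of $D_{B(s)|\KK(s)}$-submodules of $B_f[s]\boldsymbol{f^s}\otimes_{\KK[s]}\KK(s)$, whose length is finite.
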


\section{Local cohomology}
In this section we discuss some properties of local cohomology modules for regular rings that follow from the existence of the Bernstein-Sato polynomial.

\begin{proposition}\label{PropRffingen}
	Let $\KK$ be a field of characteristic zero,  $R$ be a $\KK$-algebra, and  $f\in R$ be a nonzero element.
	If $R$ has Bernstein-Sato polynomials, then, $R_f$ is a finitely generated $D_{R|\KK}$-module. In particular, if $b_f^R(s)$ has no integral root less than or equal to $-n$, then $\displaystyle R_f=D_{R|\KK} \cdot \frac{1}{f^{n-1}}$.
\end{proposition}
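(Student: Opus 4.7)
The plan is to exploit the Bernstein--Sato functional equation by specialization at negative integers. Given a nonzero functional equation
\[ \delta(s)\, f^{s+1} = b_f^R(s)\, f^s, \]
Remark~\ref{rem:equivs-gen} tells us that this is in fact an equality in $R_f[s]\boldsymbol{f^s}$, and that for any integer $t \in \ZZ$ we can shift and specialize to produce an identity in $R_f$. Taking $s=-k$ yields
\[ \delta(-k) \cdot \tfrac{1}{f^{k-1}} \;=\; b_f^R(-k) \cdot \tfrac{1}{f^k} \qquad \text{in } R_f. \]
Whenever $b_f^R(-k) \neq 0$, this equation rearranges to show that $\tfrac{1}{f^k} \in D_{R|\KK} \cdot \tfrac{1}{f^{k-1}}$.

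Now assume that $b_f^R(s)$ has no integer root less than or equal to $-n$. Then $b_f^R(-k) \neq 0$ for every $k \geq n$, so a straightforward induction starting from $k=n$ yields
\[ \tfrac{1}{f^k} \in D_{R|\KK} \cdot \tfrac{1}{f^{n-1}} \qquad \text{for all } k \geq n-1. \]
For $0 \leq k < n-1$ we instead use $\tfrac{1}{f^k} = f^{n-1-k} \cdot \tfrac{1}{f^{n-1}} \in R \cdot \tfrac{1}{f^{n-1}} \subseteq D_{R|\KK} \cdot \tfrac{1}{f^{n-1}}$. Since $R_f$ is generated over $R$ by the elements $\{1/f^k : k \in \NN\}$, we conclude $R_f = D_{R|\KK} \cdot \tfrac{1}{f^{n-1}}$, which establishes the ``in particular'' claim.

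For the first (general) assertion, note that $b_f^R(s)$ is a nonzero polynomial in $\KK[s]$ and therefore has only finitely many integer roots. Choosing $n$ larger than the absolute value of any such root, the argument above exhibits $R_f$ as a cyclic, hence finitely generated, $D_{R|\KK}$-module. There is no substantive obstacle in this plan; the only point requiring care is the justification that the functional equation specializes consistently at negative integers in $R_f$, and this is precisely the content of Remark~\ref{rem:equivs-gen}, which holds both in the regular setting of Section~\ref{sec:def-first-prop} and in the singular setting of Section~\ref{sec:Singular}.
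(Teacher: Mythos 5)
Your argument is correct and is essentially the paper's own proof: specialize (after shifting, via Remark~\ref{rem:equivs-gen}) the functional equation at $s=-t$ to get $\delta(-t)\cdot\frac{1}{f^{t-1}}=b^R_f(-t)\cdot\frac{1}{f^{t}}$ in $R_f$ for all $t\geq n$, where each $b^R_f(-t)\neq 0$, and conclude by induction that every negative power of $f$, hence $R_f$, lies in $D_{R|\KK}\cdot\frac{1}{f^{n-1}}$. Your extra details (the explicit induction, the low powers handled by multiplying by $f^{n-1-k}$, and choosing $n$ beyond the finitely many integer roots for the first assertion) merely spell out what the paper leaves implicit.
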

\begin{proof}
	After specializing the functional equation, we have 
	\[\delta(-t) \frac{1}{f^{t-1}} =b^R_f(-t) \frac{1}{f^{t}}\]
	for all $t\geq n$, with each $b^R_f(-t)\neq 0$. We conclude that each power of $f$, and hence all of $R_f$, is in $D_{R|\KK}\cdot \frac{1}{f^{n-1}}$.
\end{proof}

In fact, a converse to this theorem is true.

\begin{proposition}[{\cite[Proposition~1.3]{WaltherBS}}]\label{uli}
	Let $\KK$ be a field of characteristic zero, $R$ be a $\KK$-algebra, and $f\in R$ have a Bernstein-Sato polynomial. If $-n$ is the smallest integral root of $b_f(s)$, then $\displaystyle \frac{1}{f^{n}} \notin D_{R|\KK} \cdot  \frac{1}{f^{n-1}} \subseteq R_f$.
\end{proposition}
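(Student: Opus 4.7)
The natural plan is a proof by contradiction. Assume $\frac{1}{f^n} = \delta \cdot \frac{1}{f^{n-1}}$ for some $\delta \in D_{R|\KK}$. The aim is to produce a monic polynomial $\bar{b}(s) \in \KK[s]$ of strictly smaller degree than $b_f(s)$ that still satisfies a Bernstein--Sato functional equation for $f$, contradicting the minimality of $b_f(s)$.

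First I would lift the hypothesis into the module $R_f[s]\boldsymbol{f^s}$. Writing $\delta \cdot f \boldsymbol{f^s} = h(s) \boldsymbol{f^s}$ with $h(s) \in R_f[s]$, the specialization maps $\theta_n$ (Remark~\ref{rem:equivs-gen}) force $h(-n) f^{-n} = \delta \cdot f^{-n+1} = f^{-n}$, hence $h(-n) = 1$, and therefore
\[ h(s) = 1 + (s+n)\,u(s), \qquad u(s) \in R_f[s]. \]

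Next I would factor $b_f(s) = (s+n)\bar{b}(s)$ with $\bar{b}(s) \in \KK[s]$ and pick $P(s) \in D_{R|\KK}[s]$ realizing the Bernstein--Sato equation $P(s) f \boldsymbol{f^s} = b_f(s) \boldsymbol{f^s}$. Multiplying the lifted identity $\delta f \boldsymbol{f^s} = \boldsymbol{f^s} + (s+n) u(s) \boldsymbol{f^s}$ by $\bar{b}(s)$ and using $(s+n)\bar{b}(s) u(s)\boldsymbol{f^s} = u(s) b_f(s) \boldsymbol{f^s} = u(s) P(s) f \boldsymbol{f^s}$ yields the key identity
\[ \bar{b}(s) \boldsymbol{f^s} = \bigl(\bar{b}(s) \delta - u(s) P(s)\bigr) f \boldsymbol{f^s} \quad\text{in } R_f[s]\boldsymbol{f^s}. \]

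The main obstacle is the descent of the bracketed operator into $D_{R|\KK}[s]$. A priori $u(s) \in R_f[s]$ carries $f$-denominators, so $u(s) P(s)$ lies only in $D_{R_f|\KK}[s]$. I would handle this by clearing denominators, writing $u(s) = v(s)/f^m$ with $v(s) \in R[s]$, obtaining the weaker statement $f^m \bar{b}(s) \boldsymbol{f^s} = Q(s) f \boldsymbol{f^s}$ with $Q(s) \in D_{R|\KK}[s]$, and then invoking the shift property in Remark~\ref{rem:equivs-gen} together with the Bernstein--Sato equation itself to reabsorb the extraneous $f^m$ factor. This descent is the technical heart of Walther's argument in the regular case, and the extension to the present (possibly singular) setting requires careful bookkeeping to keep every operator within $D_{R|\KK}[s]$. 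If $-n$ appears with multiplicity $k>1$ in $b_f(s)$, the same argument is iterated to strip off one factor of $(s+n)$ at a time.
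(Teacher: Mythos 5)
Your setup through the ``key identity'' is where the proof stops being a proof: the operator $\bar{b}(s)\delta - u(s)P(s)$ lies in $D_{R_f|\KK}[s]$, and in that ring the identity carries no information at all, since $f^{-1}\in R_f\subseteq D_{R_f|\KK}$ gives $\bar{b}(s)\boldsymbol{f^s}=\bar{b}(s)f^{-1}\cdot f\boldsymbol{f^s}$ trivially; membership of $\bar b(s)\boldsymbol{f^s}$ in $D_{R_f|\KK}[s]\,f\boldsymbol{f^s}$ can never contradict minimality of $b_f(s)$, which is a statement over $D_{R|\KK}[s]$ (Proposition~\ref{prop:othercharsBS}). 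The proposed repair makes this worse rather than better: after clearing denominators, the statement $f^m\bar{b}(s)\boldsymbol{f^s}=Q(s)f\boldsymbol{f^s}$ with $Q(s)\in D_{R|\KK}[s]$ is automatically true for every $m\geq 1$ (take $Q(s)=\bar{b}(s)f^{m-1}$), so there is nothing to ``reabsorb,'' and no combination of the shift in Remark~\ref{rem:equivs-gen} with the functional equation will extract $\bar{b}(s)\boldsymbol{f^s}\in \mathrm{Ann}_{D[s]}(\boldsymbol{f^s})+D_{R|\KK}[s]f\boldsymbol{f^s}$ from it. Note also that up to this point you have only used that $-n$ is \emph{a} root of $b_f(s)$, never that it is the \emph{smallest integral} root; the descent from $R_f$-coefficients to $D_{R|\KK}[s]$ is not bookkeeping but is precisely where that hypothesis must enter (in the rational-exponent setting of Remark~\ref{rem:D-mod-alpha}, Saito's example \cite{Saitorootnojump} shows that being a root alone does not suffice, so a proof that never uses minimality cannot be right).

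The missing ingredient is Kashiwara's lemma \cite[Proposition~6.2]{KashiwaraRationality}, which is how the descent is actually done: if $-n$ is the smallest integral root, then $(s+n+j)D_{R|\KK}[s]\boldsymbol{f^s}\cap D_{R|\KK}[s]f^j\boldsymbol{f^s}=(s+n+j)D_{R|\KK}[s]f^j\boldsymbol{f^s}$ for all $j>0$, proved by induction on $j$ using that $-n-j$ is not a root (so $s+n+j$ acts bijectively on $D_{R|\KK}[s]\boldsymbol{f^s}/D_{R|\KK}[s]f\boldsymbol{f^s}$) together with the shift $\delta(s)\boldsymbol{f^s}\mapsto\delta(s+1)f\boldsymbol{f^s}$. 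From this one deduces $\mathrm{Ann}_{D}(f^{-n})=D_{R|\KK}\cap\bigl(\mathrm{Ann}_{D[s]}(\boldsymbol{f^s})+D_{R|\KK}[s](s+n)\bigr)$; then the hypothesis, rewritten as $1\in D_{R|\KK}f+\mathrm{Ann}_{D}(f^{-n})$, is multiplied by $b_f(s)/(s+n)$ to place $b_f(s)/(s+n)$ in $\mathrm{Ann}_{D[s]}(\boldsymbol{f^s})+D_{R|\KK}[s]f$, contradicting Proposition~\ref{prop:othercharsBS}. Incidentally, no iteration over the multiplicity of $-n$ is needed: stripping a single factor $(s+n)$ already gives the contradiction.
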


We give a proof of this proposition here, since it appears in the literature only in the regular case. 

\begin{lemma}[{\cite[Proposition~6.2]{KashiwaraRationality}}]
	If $-n$ is the smallest integral root of $b_f(s)$, then 
	\[(s+n+j) D_{R|\KK}[s] \boldsymbol{f^s} \cap D_{R|\KK}[s] f^j \boldsymbol{f^s} = (s+n+j) D_{R|\KK}[s] \boldsymbol{f^s} \  \text{for all}  \ j> 0.\]
\end{lemma}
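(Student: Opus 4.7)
I would prove the stated equality by establishing the substantive content: that multiplication by $s+n+j$ is a nonzerodivisor on the quotient module $N_j := D_{R|\KK}[s]\boldsymbol{f^s}/D_{R|\KK}[s] f^j \boldsymbol{f^s}$; concretely, that any $X \in D_{R|\KK}[s]\boldsymbol{f^s}$ with $(s+n+j) X \in D_{R|\KK}[s] f^j \boldsymbol{f^s}$ already satisfies $X \in D_{R|\KK}[s] f^j \boldsymbol{f^s}$. The strategy is to exhibit a single polynomial $P(s)\in\KK[s]$ that annihilates $N_j$ and is coprime to $s+n+j$; coprimality in the PID $\KK[s]$ will then force injectivity.

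To produce $P(s)$, I would iterate the Bernstein--Sato functional equation. Starting from $\delta(s) f\boldsymbol{f^s} = b_f(s)\boldsymbol{f^s}$ and its $s\mapsto s+i$ shifts $\delta(s+i) f^{i+1}\boldsymbol{f^s} = b_f(s+i) f^i\boldsymbol{f^s}$ in $R_f[s]\boldsymbol{f^s}$, a direct induction on $j$ (pushing each scalar factor $b_f(s+i)\in \KK[s]$ past the operators using the centrality of $\KK[s]$ in $D_{R|\KK}[s]$) yields
\[
\delta(s)\delta(s+1)\cdots\delta(s+j-1)\, f^j \boldsymbol{f^s} \;=\; P(s)\,\boldsymbol{f^s}, \qquad P(s) := \prod_{i=0}^{j-1} b_f(s+i).
\]
Hence $P(s)\boldsymbol{f^s} \in D_{R|\KK}[s] f^j\boldsymbol{f^s}$. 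The same centrality promotes this to an annihilator of all of $N_j$: for each $\gamma\in D_{R|\KK}[s]$, one has $P(s)\, \gamma\boldsymbol{f^s} = \gamma\, P(s)\boldsymbol{f^s} \in \gamma\cdot D_{R|\KK}[s] f^j\boldsymbol{f^s} \subseteq D_{R|\KK}[s] f^j\boldsymbol{f^s}$, so $P(s)\cdot N_j = 0$, exhibiting $N_j$ as a $\KK[s]/(P(s))$-module.

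A root count then finishes the argument. The integer roots of $P(s)$ are precisely the shifts $\alpha-i$ for $\alpha$ an integer root of $b_f(s)$ and $0\leq i\leq j-1$, so the smallest integer root of $P(s)$ is $-n-(j-1) = -n-j+1$. In particular $-n-j$ is not a root of $P(s)$, so $s+n+j$ is coprime to $P(s)$ in $\KK[s]$ and hence a unit in $\KK[s]/(P(s))$. Multiplication by $s+n+j$ is therefore a bijection of $N_j$, which gives the required injectivity. The main (and essentially only) obstacle is the careful bookkeeping in the iteration step, since $D_{R|\KK}[s]$ is noncommutative; once the centrality of $\KK[s]$ is invoked to produce the clean central annihilator $P(s)$, the remainder reduces to an elementary coprimality computation in the PID $\KK[s]$.
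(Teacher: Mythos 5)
Your proof is correct, and it rightly targets the substantive identity with right-hand side $(s+n+j)\,D_{R|\KK}[s] f^j \boldsymbol{f^s}$ (this is what the displayed statement should say and what the paper's own proof establishes; the printed right-hand side is a typo), since injectivity of $s+n+j$ on $N_j=D_{R|\KK}[s]\boldsymbol{f^s}/D_{R|\KK}[s]f^j\boldsymbol{f^s}$ together with the trivial containment $(s+n+j)D_{R|\KK}[s]f^j\boldsymbol{f^s}\subseteq (s+n+j)D_{R|\KK}[s]\boldsymbol{f^s}\cap D_{R|\KK}[s]f^j\boldsymbol{f^s}$ gives exactly that equality. Your route, however, differs from the paper's. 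The paper argues by induction on $j$: the base case uses that $b_f(s)$ is the minimal polynomial of the action of $s$ on $D_{R|\KK}[s]\boldsymbol{f^s}/D_{R|\KK}[s]f\boldsymbol{f^s}$ (Proposition~\ref{prop:othercharsBS}) and is coprime to $s+n+j$, and the inductive step transports the level-$(j-1)$ identity to level $j$ via the shift map $\Sigma(\delta(s)\boldsymbol{f^s})=\delta(s+1)f\boldsymbol{f^s}$ applied to the relevant intersection. You instead treat the level-$j$ quotient in one stroke: iterating the shifted equations $\delta(s+i)f^{i+1}\boldsymbol{f^s}=b_f(s+i)f^i\boldsymbol{f^s}$ produces the central annihilator $P(s)=\prod_{i=0}^{j-1}b_f(s+i)$ of $N_j$, and since $-n$ is the smallest integral root of $b_f$, none of $-n-1,\dots,-n-j$ is a root, so $s+n+j$ is coprime to $P(s)$ and acts invertibly on $N_j$ by a B\'ezout identity in the central subring $\KK[s]$. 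Both proofs hinge on the same arithmetic fact, and your shifted functional equations are essentially $\Sigma$ in disguise; what your version buys is a direct, induction-free argument needing only the functional equation and the shift compatibility of Remark~\ref{rem:equivs-gen} rather than the minimal-polynomial characterization, at the harmless cost that $P(s)$ is in general a proper multiple of the minimal polynomial of $s$ on $N_j$. Your bookkeeping is sound: centrality of $\KK[s]$ in $D_{R|\KK}[s]$ both yields $P(s)\boldsymbol{f^s}\in D_{R|\KK}[s]f^j\boldsymbol{f^s}$ and upgrades this to $P(s)N_j=0$, which is all the coprimality argument requires.
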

\begin{proof}
	We proceed by induction on $j$.
	
	Since $b_f(s)$ is the minimal polynomial of the action of $s$ on $\displaystyle \frac{D_{R|\KK}[s] \boldsymbol{f^s}}{D_{R|\KK}[s] f\boldsymbol{f^s}}$ and $-n-j$ is not a root of $b_f(s)$ for $j\geq 1$, the map
	\[ \frac{D_{R|\KK}[s] \boldsymbol{f^s}}{D_{R|\KK}[s] f\boldsymbol{f^s}} \xrightarrow{s+n+j} \frac{D_{R|\KK}[s] \boldsymbol{f^s}}{D_{R|\KK}[s] f\boldsymbol{f^s}} \]
	is an isomorphism. Thus, $(s+n+j)  D_{R|\KK}[s] \boldsymbol{f^s}  \cap D_{R|\KK}[s] f\boldsymbol{f^s} = (s+n+j)D_{R|\KK}[s] f\boldsymbol{f^s}$. In particular, for $j=1$, this covers the base case.
	
	Let $\Sigma: D_{R|\KK}[s] \boldsymbol{f^s} \to D_{R|\KK}[s] \boldsymbol{f^s}$ be the map given by the rule $\Sigma(\delta(s) \boldsymbol{f^s}) = \delta(s+1) f \boldsymbol{f^s}$. Using the induction hypothesis, for $j\geq 2$ we compute
	\begin{align*} (s+n+j)  D_{R|\KK}[s] \boldsymbol{f^s} \,\cap\,  &D_{R|\KK}[s] f^j \boldsymbol{f^s} \subseteq (s+n+j) D_{R|\KK}[s] f\boldsymbol{f^s}  \cap  D_{R|\KK}[s] f^j \boldsymbol{f^s} \\ &= \Sigma ((s+n+j-1) D_{R|\KK}[s] \boldsymbol{f^s} \cap D_{R|\KK}[s] f^{j-1} \boldsymbol{f^s}) \\ 
	&= \Sigma((s+n+j-1) D_{R|\KK}[s] f^{j-1} \boldsymbol{f^s} ) \\
	&= (s+n+j) D_{R|\KK}[s] f^{j} \boldsymbol{f^s}. \qedhere
	\end{align*}
\end{proof}

\begin{lemma}[{\cite[Proposition~6.2]{KashiwaraRationality}}]
	If $-n$ is the smallest integral root of $b_f(s)$, then \[\mathrm{Ann}_{D}(f^{-n}) = D_{R|\KK} \cap (\mathrm{Ann}_{D[s]}(\boldsymbol{f^s})+ D_{R|\KK}[s] (s+n)).\]
\end{lemma}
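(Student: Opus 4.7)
The plan is to prove the two inclusions separately, with the nontrivial direction controlled by a $\KK[s]$-torsion argument on an auxiliary quotient module. For $\supseteq$, I will take $\delta = \eta(s) + (s+n)\gamma(s) \in D_{R|\KK}$ with $\eta(s) \in \mathrm{Ann}_{D[s]}(\fs)$ and $\gamma(s) \in D_{R|\KK}[s]$, and specialize at $s = -n$: by Remark~\ref{rem:equivs-gen}, $\eta(-n) f^{-n} = 0$, while the specialization gives $\delta = \eta(-n)$, whence $\delta \cdot f^{-n} = 0$.

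For the reverse inclusion, I will take $\delta \in D_{R|\KK}$ with $\delta f^{-n} = 0$ and aim to show $\delta\fs \in (s+n) D_{R|\KK}[s]\fs$. Since $R_f[s]\fs$ is free over $R_f[s]$ on the generator $\fs$, I can write $\delta\fs = g(s)\fs$ with $g(s) \in R_f[s]$. Specializing at $s = -n$ gives $g(-n) f^{-n} = \delta f^{-n} = 0$ in $R_f$, so $g(-n) = 0$ and $g(s) = (s+n) h(s)$ for some $h(s) \in R_f[s]$. If I can lift $h(s)\fs$ to $D_{R|\KK}[s]\fs$---that is, produce $\gamma(s) \in D_{R|\KK}[s]$ with $\gamma(s)\fs = h(s)\fs$---then $\delta - (s+n)\gamma(s) \in \mathrm{Ann}_{D[s]}(\fs)$, yielding the desired decomposition.

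The central step will then be to show that $(s+n)$ acts injectively on the quotient $Q := R_f[s]\fs/D_{R|\KK}[s]\fs$; granting this, the identity $(s+n)\cdot \overline{h(s)\fs} = \overline{\delta\fs} = 0$ in $Q$ will force $\overline{h(s)\fs} = 0$, completing the argument. To establish the injectivity, I will iterate a suitable shift of the functional equation (available via Remark~\ref{rem:equivs-gen}), namely $\delta(s-j) f^{-j+1}\fs = b_f(s-j) f^{-j}\fs$ for $j = 1,\ldots,m$, to conclude
\[ \prod_{j=1}^{m} b_f(s-j) \cdot f^{-m}\fs \in D_{R|\KK}[s]\fs, \]
so that each class in $Q$ is annihilated by some such product. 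Since $-n$ is the \emph{smallest} integer root of $b_f(s)$, the integer roots of each $b_f(s-j)$ with $j \geq 1$ are shifted strictly above $-n$; thus $(s+n)$ is coprime to $\prod_{j=1}^m b_f(s-j)$ in $\KK[s]$, and a B\'ezout identity yields the injectivity of $(s+n)$ on $Q$.

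The principal obstacle will be the last step: tracking denominators in the iterated functional equation in order to bound the $\KK[s]$-annihilators of classes in $Q$, and then invoking the minimality of $-n$ among integer roots of $b_f$ in an essential way to avoid any collision between shifted roots of $b_f$ and $-n$. Once this coprimality is secured, the rest of the proof is essentially formal B\'ezout juggling.
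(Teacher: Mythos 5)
Your proposal is correct, but it reaches the conclusion by a genuinely different mechanism than the paper. The paper's proof of this lemma leans on the preceding lemma (the intersection statement $(s+n+j)\, D_{R|\KK}[s]\boldsymbol{f^s} \cap D_{R|\KK}[s] f^j \boldsymbol{f^s} = (s+n+j)\, D_{R|\KK}[s] f^j \boldsymbol{f^s}$, proved by induction on $j$ using that $b_f(s)$ is the minimal polynomial of $s$ on $D_{R|\KK}[s]\boldsymbol{f^s}/D_{R|\KK}[s]f\boldsymbol{f^s}$ and that $-n-j$ is not a root): it writes $\delta\boldsymbol{f^s}=f^{-m}g(s)\boldsymbol{f^s}$, shifts to $\delta f^m\boldsymbol{f^s}=g(s+m)\boldsymbol{f^s}$, factors out $(s+n+m)$, applies the intersection lemma, and shifts back. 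You instead factor $(s+n)$ directly in the free module $R_f[s]\boldsymbol{f^s}$ and prove that $(s+n)$ acts injectively on the quotient $Q=R_f[s]\boldsymbol{f^s}/D_{R|\KK}[s]\boldsymbol{f^s}$, by observing that every class of $Q$ is $\KK[s]$-torsion with annihilator containing $\prod_{j=1}^m b_f(s-j)$ (via the iterated shifted functional equation), and that this product is coprime to $s+n$ precisely because $-n$ is the smallest integral root; a B\'ezout identity, applied classwise, finishes. Your coprimality and iteration steps check out (the products act centrally, and $a(s)\prod_{j=1}^m b_f(s-j) f^{-m}\boldsymbol{f^s}=a(s)\delta(s-m)\cdots\delta(s-1)\boldsymbol{f^s}\in D_{R|\KK}[s]\boldsymbol{f^s}$ handles an arbitrary class), and your specializations at $s=-n$ are justified at the same level of rigor the paper itself uses when asserting $g(-n)=0$ (the shift-and-specialize property from Remark~\ref{rem:equivs-gen}, valid in the singular setting). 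What each approach buys: the paper's route isolates Kashiwara's intersection lemma as a standalone statement and works entirely inside $D_{R|\KK}[s]\boldsymbol{f^s}$ with its $f$-multiplication filtration; your route bypasses that auxiliary lemma, also supplies the easy inclusion $\supseteq$ explicitly, and makes transparent that the only input is that no integer below $-n$ is a root of $b_f$ --- it is essentially the same torsion principle that underlies Proposition~\ref{PropRffingen} on the $D$-module generation of $R_f$.
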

\begin{proof}
	Let $\delta\in \mathrm{Ann}_{D}(f^{-n})$. Write ${\delta \boldsymbol{f^s} = f^{-m} g(s) \boldsymbol{f^s}}$, with $g(s)\in R[s]$. In fact, we can take $m$ to be the order of $\delta$. Then $g(-n)=0$. By Remark~\ref{rem:equivs-gen}, 
	$$\delta \cdot f^m \boldsymbol{f^s} = g(s+m) \boldsymbol{f^s}.$$ 
Set $h(s)=g(s+m)$. We then have that ${h(-n-m)=g(-n)=0}$, so ${(s+n+m) | h(s)}$. Thus, ${\delta \cdot f^m \boldsymbol{f^s} \in (s+m+n) D_{R|\KK}[s] \boldsymbol{f^s}}$, and ${\delta \cdot f^m \boldsymbol{f^s} \in D_{R|\KK}[s] f^m \boldsymbol{f^s}}$ by definition. By the previous lemma, we obtain that ${\delta \cdot f^m \boldsymbol{f^s} \in (s+m+n) D_{R|\KK}[s] f^m \boldsymbol{f^s}}$. We can then write ${\delta \cdot f^m \boldsymbol{f^s} = (s+m+n) h'(s) \boldsymbol{f^s}}$ for some $h'(s)\in R[s]$. By Remark~\ref{rem:equivs-gen}, we have that ${\delta \cdot \boldsymbol{f^s} = (s+n) h'(s-m)\boldsymbol{f^s}}$. Thus, we can write $\delta$ as a sum of a multiple of $(s+n)$ and an element in the annihilator of $\boldsymbol{f^s}$.
\end{proof}

\begin{proof}[Proof of Proposition~\ref{uli}]
	Suppose that $ \displaystyle \frac{1}{f^{n}} \in D_{R|\KK} \frac{1}{f^{n-1}}$. Then we can write $D_{R|\KK} = D_{R|\KK} f + \mathrm{Ann}_{D}(\frac{1}{f^{n}})$. From the previous lemma, 
	we have that \[\mathrm{Ann}_{D}(\frac{1}{f^{n}}) = D_{R|\KK} \cap (\mathrm{Ann}_{D[s]}(\boldsymbol{f^s})+D_{R|\KK}[s](s+n)).\]
	Then, \[1\in D_{R|\KK} f + \mathrm{Ann}_{D[s]}(\boldsymbol{f^s})+D_{R|\KK}[s](s+n).\]
	Multiplying by $\frac{b_f(s)}{s+n}$, we get
	\[\frac{b_f(s)}{s+n} \in \mathrm{Ann}_{D[s]}(\boldsymbol{f^s}) + D_{R|\KK} f  +  D_{R|\KK}[s]\, b_f(s).\]
	Since $b_f(s)\in D_{R|\KK} f + \mathrm{Ann}_{D[s]}(\boldsymbol{f^s})$, using Remark~\ref{rem:equivs-gen} we have	\[\frac{b_f(s)}{s+n} D_{R|\KK}[s] \in \mathrm{Ann}_{D[s]}(\boldsymbol{f^s}) + D_{R|\KK}[s] f,\]
	which contradicts that $b_f(s)$ is the minimal polynomial in $s$ contained in 
	$$\mathrm{Ann}_{D[s]}(\boldsymbol{f^s}) + D_{R|\KK}[s] f.$$
\end{proof}

\begin{remark}
 Proposition~\ref{uli} extends to the setting of the $D_{R|\KK}$-modules $D_{R|\KK} f^{\alpha}$ for $\alpha\in \QQ$ discussed in Remark~\ref{rem:D-mod-alpha}. Namely, if $\alpha\in \QQ$ is such that $b_f(\alpha)=0$ and $b_f(\alpha-i)\neq 0$ for all integers $i>0$, then $f^{\alpha} \notin D_{R|\KK} \cdot f^{\alpha+1}$ in the $D_{R|\KK}$-module $R_f f^{\alpha}$.
 
 It is not true in general that $b_f(\alpha)=0$ implies $f^{\alpha} \notin D_{R|\KK} \cdot f^{\alpha+1}$, even in the regular case: an example is given by Saito \cite{Saitorootnojump}. However, this implication does hold when $R=A$ is a polynomial ring, and $f$ is quasihomogeneous with an isolated singularity \cite{BitSch}. We are not aware of an example where $b_f(n)=0$ and $f^{n} \in D_{R|\KK} \cdot f^{n+1}$ for an integer $n$.
\end{remark}

We also relate existence of Bernstein-Sato polynomials to finiteness properties of local cohomology.

\begin{theorem}\label{thm:finite}
Let $\KK$ be a field of characteristic zero,  $R$ be a $\KK$-algebra, and  $f\in R$ be a nonzero element.
Suppose that  $R$ has Bernstein-Sato polynomials and $D_{R|\KK}$ is a Noetherian ring.
Then, $H^i_\fa(R)$ is a finitely generated $D_{R|\KK}$-module, and $\Ass_R(H^i_\fa(R))$ is finite for every ideal $\fa\subseteq R$. 
\end{theorem}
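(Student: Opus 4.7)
The plan is to prove the two conclusions in sequence, reducing each to a general principle about finitely generated $D_{R|\KK}$-modules.

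First I would fix a set of generators $f_1,\ldots,f_\ell$ for $\fa$ and express $H^i_\fa(R)$ as the cohomology of the \v{C}ech complex $\Cech^\bullet(F;R)$. Each term of this complex is a finite direct sum of localizations of the form $R_{f_{i_1}\cdots f_{i_k}}$, which is simply $R_g$ for the single element $g=f_{i_1}\cdots f_{i_k}\in R$. By hypothesis $R$ has Bernstein-Sato polynomials, so $g$ admits one, and Proposition~\ref{PropRffingen} then shows that $R_g$ is finitely generated as a $D_{R|\KK}$-module. Since $\Cech^\bullet(F;R)$ is a complex of $D_{R|\KK}$-modules and $D_{R|\KK}$ is Noetherian by hypothesis, $H^i_\fa(R)$ is a subquotient of a finite direct sum of finitely generated $D_{R|\KK}$-modules, hence itself finitely generated.

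For the second claim, I would prove the more general statement that any finitely generated $D_{R|\KK}$-module $M$ has finitely many associated primes as an $R$-module. The route I would pursue is via filtrations: equip $M$ with a good filtration with respect to the order filtration on $D_{R|\KK}$, so that each filtered piece is a finitely generated $R$-submodule and the associated graded $\gr M$ is finitely generated over the graded ring $\gr_{D^\bullet_{R|\KK}}(D_{R|\KK})$. Then I would relate $\Ass_R(M)$ to the minimal primes of the support of $\gr M$, projected back to $\Spec R$, which form a finite set because $\gr D_{R|\KK}$ is Noetherian and $\gr M$ is finitely generated over it.

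The main obstacle is the second step: pinning down the precise relationship between $\Ass_R(M)$ and the primes supporting $\gr M$ in the generality where $R$ is not required to be smooth, since the standard argument relying on characteristic cycles uses the polynomial structure of $\gr D_{R|\KK}$. An alternative route that avoids the associated graded construction would be a direct Noetherian-chain argument: choose distinct associated primes $p_1,p_2,\ldots$ with witnesses $m_i\in M$ satisfying $\Ann_R(m_i)=p_i$, and consider the ascending chain of $D_{R|\KK}$-submodules $D_{R|\KK}\cdot\{m_1,\ldots,m_k\}\subseteq M$. Noetherianity of $D_{R|\KK}$ combined with finite generation of $M$ forces this chain to stabilize; one would then verify, by localizing at a minimal element of $\{p_1,p_2,\ldots\}$ (so that witnesses for primes not contained in it vanish), that distinct $p_i$ yield strict inclusions, thereby bounding $\Ass_R(M)$.
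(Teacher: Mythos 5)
Your treatment of the first conclusion is correct and is exactly the paper's argument: the \v{C}ech complex has terms $R_g$ for products $g$ of the generators, each $R_g$ is a finitely generated $D_{R|\KK}$-module by Proposition~\ref{PropRffingen} since $R$ has Bernstein--Sato polynomials, and Noetherianity of $D_{R|\KK}$ passes finite generation to the cohomology. The general principle you then aim for --- a Noetherian $D_{R|\KK}$-module has finitely many associated primes over $R$ --- is also what the paper proves; the gap is in how you propose to prove it. Route (A) you rightly abandon: in this generality there is no Noetherian associated graded ring, no theory of good filtrations, and no comparison between $\Ass_R(M)$ and the support of an associated graded module.

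Route (B) is in the right spirit but does not close as sketched, for two reasons. First, localizing at a single minimal element $q$ of $\{p_1,p_2,\dots\}$ only shows that the witness of $q$ lies outside the $D_{R|\KK}$-span of the other witnesses; it does not make the inclusions $D_{R|\KK}\{m_1,\dots,m_k\}\subseteq D_{R|\KK}\{m_1,\dots,m_{k+1}\}$ strict for infinitely many $k$, which is what you need to contradict stabilization. Strictness at step $k$ requires that none of $p_1,\dots,p_k$ be contained in $p_{k+1}$, and an arbitrary infinite family need not admit such an enumeration (consider $\{(0)\}\cup\{(x-a): a\in\KK\}$ in $\KK[x]$: the unique minimal element lies below all the others). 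The repair is to replace the family by an infinite antichain, which exists inside any infinite set of primes of a Noetherian ring because all chains of primes are finite (ACC upward, Krull's height theorem downward), and then run your argument along the antichain. Second, even then you use implicitly that after localizing at $p_{k+1}$ the whole submodule $D_{R|\KK}\, m_j$ vanishes, not merely $m_j$; since you cannot invoke $D_{W^{-1}R|\KK}=W^{-1}D_{R|\KK}$ for general $R$, you need the elementary lemma that $rm=0$ forces $r^{n+1}\,\delta\, m=0$ for every $\delta$ of order $n$ (induction on the order via $[\delta,r]$). The paper sidesteps both points with a different chain: set $N_0=0$ and, given $N_t$, choose $\mathfrak{p}_t$ maximal in $\Ass_R(N/N_t)$ and let $N_{t+1}$ be the preimage of $H^0_{\mathfrak{p}_t}(N/N_t)$, which is a $D_{R|\KK}$-submodule because local cohomology of a $D$-module is a $D$-module; Noetherianity over $D_{R|\KK}$ terminates this strictly increasing chain, each quotient has exactly one associated prime, and $\Ass_R(N)\subseteq\{\mathfrak{p}_1,\dots,\mathfrak{p}_k\}$. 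Your approach can be made to work with the two fixes above, but as written the key step would fail.
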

\begin{proof}
Let $F=f_1,\ldots, f_\ell$ be a set of generators for $\fa$. 
We have that the \v{C}ech complex asociated to $F$ is a complex of finitely generated $D_{R|\KK}$-modules.
Since $D_{R|\KK}$ is Noetherian,
 the   \v{C}ech complex  is a complex of Noetherian  $D_{R|\KK}$-modules.
 Then, the cohomology of this complex is also a Noetherian $D_{R|\KK}$-module.

It suffices to show that  a Noetherian $D_{R|\KK}$-module, $N$,  has a finite set of associated primes.
 We build inductively a sequence of $D_{R|\KK}$-submodules $N_i\subseteq N$
as follows.  We set $N_0=0$. Given $N_t$, we pick a maximal element $\p_t\in \Ass_R (N/N_t)$. This is possible if and only if $\Ass_R (N/N_t)\neq \varnothing$.
We set  $\tilde{N}_{t+1}=H^0_{\p}(N/N_t)$, which is nonzero,  and $N_{t+1}$ the preimage of $\tilde{N}_{t+1}$ in $N$ under the quotient map. We have that $\Ass_R(\tilde{N}_{t+1})=\{\p\}$, and so, $\Ass_R (N_{t+1})=\{\p\}\cup \Ass_R(N_t)$.
We note that this sequence cannot be infinite, because $N$ is Noetherian. Then, the sequence stops, and there is a $k\in\NN$ such that  $N_k=N$. We conclude that $\Ass_R(N)\subseteq \{\p_1,\ldots, \p_k\}$.
\end{proof}

\section{Complex zeta functions} \label{Complex_zeta}

The foundational work of Bernstein \cite{Ber71, Ber72} where he developed the theory of $D$-modules and proved the existence of Bernstein-Sato polynomials was motivated by a question of I.~M.~Gel'fand \cite{Gelf54} at the 1954 edition of the International Congress of Mathematicians
regarding the analytic continuation of the \emph{complex zeta function.} Bernstein's work relates the poles of the complex zeta function to the roots of the Bernstein-Sato polynomials. Previously, Bernstein and S.~I.~Gel'fand \cite{BG69} and independently Atiyah \cite{Ati70}, gave a different approach to the same question using resolution of singularities. 

\vskip 2mm

Throughout this section we  consider  \(A=\mathbb{C}[x_1, \dots, x_d]\) and the corresponding ring of differential operators  \( D_{A|\CC} \). 
 Given a differential operator \( \delta(s) = \sum_{\alpha} a_\alpha(x,s) \partial^\alpha \in D_{A|\CC}[s]\), which is polynomial in $s$, we  denote the \emph{conjugate} and the \emph{adjoint} of \(\delta(s)\) as
\begin{equation*}
\bar{\delta}(s) := \sum_{\alpha} a_{\alpha}(\bar{x}, \bar{s})  \overline{\partial}^{\alpha}, \quad \delta^*(s) := \sum_{\alpha} (-1)^{|\alpha|} \partial^{\alpha} a_{\alpha}(x, s),
\end{equation*}
where we are using the multidegree notation $\partial ^\alpha:=\partial_1^{\alpha_1}\cdots \partial_d^{\alpha_d}$ and $\overline{\partial}^\alpha:=\overline{\partial}_1^{\alpha_1}\cdots \overline{\partial}_d^{\alpha_d}$ with $\overline{\partial}_i = \frac{d}{d\overline{x_i}}$.

\vskip 2mm

Let \(f(x) \in A\) be a non-constant polynomial and let \( \varphi(x) \in C^\infty_c(\mathbb{C}^d) \) be a \emph{test function}: an infinitely many times differentiable function with compact support.   We define the parametric distribution \(f^s : C^{\infty}_c(\mathbb{C}^d) \longrightarrow \mathbb{C}\) by means of the integral
\begin{equation} \label{int-eq}
\langle f^s, \varphi \rangle := \int_{\mathbb{C}^d}|f(x)|^{2s}  \varphi(x, \bar{x}) dx d\bar{x},
\end{equation}
which is well-defined analytic function for any \(s \in \mathbb{C}\) with \(\textnormal{Re}(s) > 0\). We point out that test functions  have holomorphic and  antiholomorphic part so we use the notation \(\varphi = \varphi(x, \bar{x})\). We refer to \( f^s \) or \(\langle f^s, \varphi \rangle\) as the \emph{complex zeta function} of \( f \).

\vskip 2mm

The approach given by Bernstein  in order to solve I.~M.~Gel'fand's question  uses the Bernstein-Sato polynomial and integration by parts as follows: 
\begin{align*}
 \langle f^s, \varphi \rangle & = \int_{\mathbb{C}^d} \varphi(x, \bar{x})|f(x)|^{2s} dxd\bar{x} \\ & = \frac{1}{b^2_{f}(s)} \int_{\mathbb{C}^d} \varphi(x, \bar{x}) \big[\delta(s) \cdot f^{s+1}(x)\big]\big[\bar{\delta}(s) \cdot f^{s+1}(\bar{x})\big] dx d\bar{x} \\
 & = \frac{1}{b^2_{f}(s)} \int_{\mathbb{C}^d} \bar{\delta}^*\delta^*(s)\big(\varphi(x, \bar{x})\big)|f(x)|^{2(s+1)} dx d\bar{x}\\
  & = \frac{\langle f^{s+1}, \bar{\delta}^*\delta^*(s)(\varphi) \rangle}{b^2_{f}(s)} .
\end{align*}
Thus we get an analytic function whenever \(\textnormal{Re}(s) > -1\), except for possible poles at \(b_{f}^{-1}(0)\), and it is equal to \(\langle f^s, \varphi \rangle\) in \(\textnormal{Re}(s) > 0\). Iterating the process we get
\begin{equation*} \label{anal-cont2}
\langle f^s, \varphi \rangle = \frac{\langle f^{s+\ell+1}, \bar{\delta}^*\delta^*(s+\ell)\cdots \bar{\delta}^*\delta^*(s)(\varphi) \rangle}{b^2_{f}(s) \cdots b^2_{f}(s+\ell)}, \quad {\textnormal{Re}}(s) > -\ell -1,
\end{equation*}

In particular we have the following relation between the poles of the complex zeta function and the roots of the Bernstein-Sato polynomial.

\begin{theorem}\label{poles_BS} The complex zeta function $f^s$ admits a meromorphic continuation to \(\CC\)  and the set of poles is included in 
\(
  \{ \lambda - \ell \hskip 2mm | \hskip 2mm  b_f(\lambda) = 0 \quad {\rm and} \quad \ell \in \mathbb{Z}_{\geq 0} \}.
\)
\end{theorem}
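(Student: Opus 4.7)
The plan is to execute the strategy already foreshadowed in the discussion preceding the theorem: bootstrap the original integral defining $\langle f^s,\varphi\rangle$ (which converges for $\mathrm{Re}(s) > 0$) into successively larger half-planes using the Bernstein-Sato functional equation, picking up controlled poles at each step.

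First I would fix the Bernstein-Sato functional equation $\delta(s) f^{s+1} = b_f(s) f^s$ and observe that, by complex conjugation, $\bar\delta(s) \bar f^{s+1} = b_f(s) \bar f^s$ as well (since $b_f(s)$ has rational, hence real, coefficients by Theorem~\ref{rationality_BS}). Substituting $|f|^{2(s+1)} = f^{s+1}\bar f^{s+1}$ into the integral in \eqref{int-eq} and applying both equations gives
\[
b_f(s)^2 \langle f^s,\varphi\rangle = \int_{\CC^d} \varphi(x,\bar x)\,\bigl[\delta(s)\cdot f^{s+1}\bigr]\bigl[\bar\delta(s)\cdot \bar f^{s+1}\bigr]\,dx\,d\bar x.
\]
Next I would integrate by parts, transferring the operators $\delta(s)$ and $\bar\delta(s)$ onto the test function as their formal adjoints $\delta^*(s)$ and $\bar\delta^*(s)$. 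The compact support of $\varphi$ kills all boundary contributions, producing the key identity
\[
\langle f^s,\varphi\rangle = \frac{\langle f^{s+1},\,\bar\delta^*(s)\delta^*(s)(\varphi)\rangle}{b_f(s)^2}
\]
valid for $\mathrm{Re}(s) > 0$. Since the right-hand side makes sense as a meromorphic function on $\mathrm{Re}(s) > -1$ with possible poles only at zeros of $b_f(s)$, it furnishes a meromorphic extension to that half-plane.

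Iterating the identity $\ell + 1$ times yields
\[
\langle f^s,\varphi\rangle = \frac{\langle f^{s+\ell+1},\,\bar\delta^*(s+\ell)\delta^*(s+\ell)\cdots \bar\delta^*(s)\delta^*(s)(\varphi)\rangle}{b_f(s)^2\,b_f(s+1)^2 \cdots b_f(s+\ell)^2},
\]
which is meromorphic on $\mathrm{Re}(s) > -\ell - 1$ with poles contained in $\bigcup_{j=0}^{\ell} \{\lambda - j : b_f(\lambda) = 0\}$. Letting $\ell \to \infty$ and invoking the uniqueness of analytic continuation, the functions patched together on the half-planes $\mathrm{Re}(s) > -\ell - 1$ define a meromorphic function on all of $\CC$ whose pole set is contained in $\{\lambda - \ell : b_f(\lambda) = 0,\ \ell \in \ZZ_{\geq 0}\}$, as claimed.

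The main subtle point will be justifying the integration by parts rigorously: one must check that the real differential operators $\partial_i$ and $\bar\partial_i$ act on $|f|^{2s}$ by the expected power rule even though $|f|^{2s}$ is only locally integrable (not smooth on the zero locus of $f$), and that the adjoint computation is valid as an identity of distributions. For $\mathrm{Re}(s) > 0$ this is standard since $|f|^{2(s+1)}$ is more regular than $|f|^{2s}$, which is exactly why the identity bootstraps the domain of convergence. I would handle this by working first with holomorphic/antiholomorphic decompositions and applying Stokes' theorem on the complement of a shrinking neighborhood of $\{f=0\}$, then showing the boundary contribution vanishes as the neighborhood shrinks thanks to the improved integrability on the right-hand side. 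Everything else is formal bookkeeping.
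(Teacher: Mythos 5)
Your proposal reproduces the paper's argument: the same squared functional equation $b_f(s)^2\langle f^s,\varphi\rangle=\langle f^{s+1},\bar\delta^*\delta^*(s)(\varphi)\rangle$ obtained by integration by parts against the test function, iterated half-plane by half-plane, with poles confined to integer shifts of the zeros of $b_f$. This is exactly Bernstein's continuation as presented in the paper, with your remarks on justifying the adjoint/Stokes step being extra care beyond what the survey records.
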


Both sets are equal for reduced plane curves and isolated quasi-homogeneous singularities by work of Loeser \cite{Loe85}.

\vskip 2mm

On the other hand, the approach given by Bernstein and S.~I.~Gel'fand, and independently Atiyah uses resolution of singularities in order to reduce the problem to the monomial case,  which was already solved  by Gel'fand and Shilov \cite{GS64}.  
Let  $\pi: X' \rightarrow \CC^n$ be a \emph{log-resolution} of $f\in A$ and  \[ F_\pi := \sum_{i=1}^r N_i E_i + \sum_{j=1}^s N'_j S_j  \hskip 2mm \text{and} \hskip 2mm K_\pi := \sum_{i=1}^r k_i E_i\] be the total transform  and the relative canonical divisors. 

\vskip 2mm

The analytic continuation problem is attacked in this case using a change of variables.
\begin{equation*} \label{int-eq_3}
\langle f^s, \varphi \rangle = \int_{\mathbb{C}^d}|f(x)|^{2s}  \varphi(x, \bar{x}) dx d\bar{x} = \int_{X'} |\pi^* f|^{2s} (\pi^* \varphi)  |d\pi|^2 
\end{equation*}
where $|d\pi|^2 =(\pi^\ast dx )(\pi^\ast d\overline{x})$ and $d\pi$ is the Jacobian determinant of  $\pi$. In order to describe the terms of the last integral we consider a finite affine open cover \( \{U_{\alpha}\}_{\alpha \in \Lambda} \) of \( E\subseteq X' \) such that \( \textnormal{Supp}(\varphi) \subseteq \pi(\cup_\alpha U_{\alpha}) \). Consider a set of local coordinates $z_1,\dots , z_d$ in a given $U_{\alpha}$. Then we have 
\[ \pi^\ast f = u_\alpha(z) z_1^{N_{1,\alpha}} \cdots z_d^{N_{d,\alpha}}, \quad |d\pi|^2 = |v_\alpha(z)|^2 |z_1|^{2k_{1,\alpha}} \cdots |z_d|^{k_{d,\alpha}} dz d\overline{z} \]
where $u_\alpha(z)$ and $v_\alpha(z)$ are units and $N_{i,\alpha}$ may denote both the multiplicities of the exceptional divisors or of the strict transform. Take \( \{\eta_\alpha\} \) a partition of unity subordinated to the cover \( \{U_{\alpha}\}_{\alpha \in \Lambda} \). That is, \( \eta_\alpha \in C^\infty(\mathbb{C}^d) \),
\( \sum_\alpha \eta_\alpha \equiv 1 \), with only finitely many \( \eta_\alpha \) being nonzero at a point of \( X' \) and \(\textnormal{Supp}(\eta_\alpha) \subseteq U_{\alpha} \). Therefore
\begin{equation*} \label{anal-cont0}
\begin{split}
  \langle f^s, \varphi \rangle & = \int_{X'} |\pi^* f|^{2s} (\pi^* \varphi)  (\pi^\ast dx )(\pi^\ast d\overline{x}) \\
  & = \sum_{\alpha \in \Lambda} \int_{U_{\alpha}} |z_{1}|^{2(N_{1,\alpha} s + k_{1,\alpha})} \cdots |z_{d}|^{2(N_{d,\alpha} s + k_{d,\alpha})} |u_{\alpha}(z)|^{2s} |v_{\alpha}(z)|^2 \varphi_\alpha(z, \bar{z}) dz d\bar{z},
\end{split}
\end{equation*}
where \( \varphi_\alpha := \eta_\alpha \pi^* \varphi \) for each \( \alpha \in \Lambda \). Notice that  \( \pi^{-1}(\textnormal{Supp}(\varphi)) \) is a compact set because  \(\pi\) is a proper morphism.

\vskip 2mm

Once we reduced the problem to the monomial case, we can use the work of Gel'fand and Shilov \cite{GS64} on \emph{regularization} to generate a set of candidate poles of~\( f^s \).

\begin{theorem}\label{poles_resolution} The complex zeta function $f^s$ admits a meromorphic continuation to \(\CC\)  and the set of poles is included in 
\[
  \left\{ - \frac{k_i + 1 + \ell}{N_i} \ |\ \ell \in \mathbb{Z}_{\geq 0}\right\} \cup \left\{- \frac{\ell + 1}{N'_j} \ |\ \ell \in \mathbb{Z}_{\geq 0} \right\}.
\]
\end{theorem}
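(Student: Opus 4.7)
The plan is to follow the outline already sketched in the text, carrying out carefully the reduction to the monomial case and then invoking Gel'fand--Shilov regularization in each chart. The starting point is the change of variables
\[
\langle f^s, \varphi \rangle \;=\; \int_{X'} |\pi^*f|^{2s}\,(\pi^*\varphi)\,|d\pi|^2,
\]
valid for $\mathrm{Re}(s)>0$ since $\pi$ is proper and birational. Choose a finite affine cover $\{U_\alpha\}$ of $\pi^{-1}(\mathrm{Supp}\,\varphi)$ by charts in which $\pi^*f$ and the Jacobian become monomial up to units, and a smooth partition of unity $\{\eta_\alpha\}$ subordinated to it. Writing $\varphi_\alpha = \eta_\alpha\,\pi^*\varphi \in C^\infty_c(U_\alpha)$, this yields
\[
\langle f^s,\varphi\rangle \;=\; \sum_\alpha \int_{U_\alpha} \prod_{i=1}^d |z_i|^{2(N_{i,\alpha}s + k_{i,\alpha})}\;|u_\alpha(z)|^{2s}|v_\alpha(z)|^2\,\varphi_\alpha(z,\bar z)\,dz\,d\bar z,
\]
where $u_\alpha,v_\alpha$ are units on $U_\alpha$.

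Next, I would reduce each chart integral to a product of one-variable distributions. Absorbing $|u_\alpha|^{2s}|v_\alpha|^2\varphi_\alpha$ into a new smooth compactly supported (for large $\mathrm{Re}(s)$, analytic in $s$ with values in test functions) family $\psi_\alpha(z,\bar z;s)$, the integrand splits into a product $\prod_i |z_i|^{2(N_{i,\alpha}s+k_{i,\alpha})}$ against this test function. One then invokes the classical Gel'fand--Shilov analytic continuation of the distribution $|z|^{2\mu}$ on $\CC$: as a function of the complex parameter $\mu$ it extends meromorphically to all of $\CC$, with simple poles exactly at the negative integers $\mu=-1,-2,-3,\dots$. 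Applied variable by variable (using a Fubini argument, legitimate for $\mathrm{Re}(s)\gg 0$ and then propagated by analytic continuation), each factor contributes possible poles at $N_{i,\alpha}s + k_{i,\alpha} = -1-\ell$, i.e.\ at
\[
s \;=\; -\frac{k_{i,\alpha}+1+\ell}{N_{i,\alpha}}, \qquad \ell\in\ZZ_{\geq 0}.
\]

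Finally, I would collect the contributions globally. Because $F_\pi+E$ has simple normal crossings, each local coordinate $z_i$ in a given chart cuts out either an exceptional component $E_i$ (giving the pair $(N_i,k_i)$ in the global list) or a component $S_j$ of the strict transform (giving the pair $(N'_j,0)$, since the relative canonical divisor $K_\pi$ is supported on the exceptional locus only). Summing over all charts, the pole locus is contained in
\[
\left\{-\tfrac{k_i+1+\ell}{N_i}\,\Big|\,\ell\in\ZZ_{\geq 0}\right\} \;\cup\; \left\{-\tfrac{\ell+1}{N'_j}\,\Big|\,\ell\in\ZZ_{\geq 0}\right\},
\]
and the sum of meromorphic pieces gives the meromorphic continuation of $\langle f^s,\varphi\rangle$ to all of $\CC$. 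Independence from the test function $\varphi$ shows the same bound holds for the distribution $f^s$ itself.

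The main obstacle, and the one that requires the most care, is the rigorous passage from the a priori convergent integral in $\mathrm{Re}(s)>0$ to the claimed meromorphic continuation: one must check that the smooth density $\psi_\alpha(z,\bar z;s)$ depends holomorphically on $s$ in a way compatible with Gel'fand--Shilov (for instance, by expanding $|u_\alpha|^{2s}=\exp(2s\log|u_\alpha|)$ into a convergent series on the compact support and controlling the tails), and that the Fubini-type splitting into one-variable $|z_i|^{2\mu_i}$ distributions is valid locally uniformly in $s$. The rest is combinatorial bookkeeping of the data $(N_i,k_i)$ and $(N'_j,0)$ attached to the components of $F_\pi$.
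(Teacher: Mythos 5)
Your proposal is correct and follows essentially the same route as the paper: pull back via the log-resolution, localize with a partition of unity into charts where $\pi^*f$ and the Jacobian are monomial up to units, and then invoke Gel'fand--Shilov regularization of $|z|^{2\mu}$ in each variable to read off the candidate poles $-(k_i+1+\ell)/N_i$ and $-(\ell+1)/N'_j$. Your added remarks on the holomorphic dependence of the local test densities on $s$ only flesh out details the paper leaves implicit.
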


The fundamental result of Kashiwara \cite{KashiwaraRationality} and Malgrange \cite{BernsteinRationalMalgrange} on the rationality of the roots of the Bernstein-Sato mentioned in \Cref{rationality_BS} was refined later on by Lichtin \cite{Lic89}.  He provides the same set of candidates for the roots of the Bernstein-Sato polynomial in terms of the numerical data of the log-resolution of \(f\).

\begin{theorem}[{\cite{Lic89}}]\label{Kashiwara_Lichtin} 
Let $f\in A$ be a polynomial. Then, the  roots of the Bernstein-Sato polynomial of $f$ are included in  the set 
\[
  \left\{ - \frac{k_i + 1 + \ell}{N_i} \ |\ \ell \in \mathbb{Z}_{\geq 0}\right\} \cup \left\{- \frac{\ell + 1}{N'_j} \ |\ \ell \in \mathbb{Z}_{\geq 0} \right\}.
\] 
In particular, the roots of the Bernstein-Sato polynomial of $f$ are negative rational numbers.
\end{theorem}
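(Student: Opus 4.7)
The plan is to play the two independent analytic continuations of the complex zeta function $\langle f^s, \varphi \rangle$ against each other. Theorem~\ref{poles_BS}, obtained by iterating Bernstein's functional equation, confines every pole to the set $\{\lambda - \ell \mid b_f(\lambda)=0,\, \ell \in \ZZ_{\geq 0}\}$, while Theorem~\ref{poles_resolution}, obtained via the log-resolution $\pi : X' \to X$ and the Gel'fand--Shilov regularization for monomials, confines every pole to the candidate set
\[
\cR := \left\{ -\tfrac{k_i + 1 + \ell}{N_i} \;\middle|\; \ell \in \ZZ_{\geq 0}\right\} \cup \left\{ -\tfrac{\ell + 1}{N'_j} \;\middle|\; \ell \in \ZZ_{\geq 0}\right\}.
\]
To upgrade these two containments to the inclusion $b_f^{-1}(0) \subseteq \cR$ asserted in the theorem, I need a partial converse to Theorem~\ref{poles_BS}: each root of $b_f$ must actually appear as a pole of some $\langle f^s, \varphi\rangle$, after possibly shifting upward by a nonnegative integer.

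The crux is therefore the following claim: for every root $\lambda_0$ of $b_f$ there exist $J \in \ZZ_{\geq 0}$ and $\varphi \in C^\infty_c(\CC^d)$ such that $s = \lambda_0 + J$ is a pole of the meromorphic continuation of $\langle f^s, \varphi\rangle$. I would pick $J$ maximal among nonnegative integers with $\lambda_0 + J$ a root of $b_f$, and set $\mu := \lambda_0 + J$. By maximality, $\mu + k$ is not a root for any $k \geq 1$, so Theorem~\ref{poles_BS} permits $\mu$ to emerge as a pole only through the root $\mu$ itself and not through a higher root. If $\mu$ were not a pole for \emph{any} test function, then tracking Bernstein's iteration backwards shows that the factor $(s-\mu)$ can be stripped from $b_f(s)$ at the operator level: one produces $b'(s) := b_f(s)/(s-\mu)$ together with $\delta'(s) \in D_{A|\CC}[s]$ satisfying $\delta'(s) f^{s+1} = b'(s) f^s$, contradicting the minimality of $b_f$.

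Granting the claim, the conclusion is routine. Theorem~\ref{poles_resolution} forces $\mu = \lambda_0 + J \in \cR$, so either $\mu = -(k_i + 1 + m)/N_i$ or $\mu = -(m+1)/N'_j$ for some $m \in \ZZ_{\geq 0}$. Solving for $\lambda_0$ in the first case gives $\lambda_0 = -(k_i + 1 + m + JN_i)/N_i$ with exponent $m + JN_i \in \ZZ_{\geq 0}$, and similarly in the second case $\lambda_0 = -(m + 1 + JN'_j)/N'_j$; in both cases $\lambda_0 \in \cR$, because $\cR$ is closed under the downshift operation $\alpha \mapsto \alpha - 1$ (this shift moves $\ell$ to $\ell + N_i$ or $\ell + N'_j$). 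The rationality and negativity of $\lambda_0$ asserted in the theorem are immediate from the form of $\cR$ together with $k_i \in \ZZ_{\geq 0}$ and $N_i, N'_j \in \ZZ_{\geq 1}$.

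The main obstacle will be making the minimality reduction rigorous: one must transfer the distributional statement ``$\mu$ is never a pole'' to the purely algebraic statement ``$b_f/(s-\mu)$ admits an operator witness,'' and this transfer is delicate because of the $|f|^{2s}$ doubling in the integrand of the complex zeta function and because $\delta(s)$ itself depends on $s$. The cleanest route is probably through the $V$-filtration along the graph embedding of $f$, following Malgrange and Kashiwara, or through Lichtin's analysis of the asymptotic expansion of $\langle f^s, \varphi\rangle$ along vanishing cycles; bare-hands manipulation of the functional equation is likely to get messy fast.
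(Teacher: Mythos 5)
There is a genuine gap, and it sits exactly where you flag it: the ``partial converse'' to Theorem~\ref{poles_BS}. Your argument needs the statement that for every root $\lambda_0$ of $b_f(s)$, the shifted value $\mu=\lambda_0+J$ (with $J$ maximal so that $\mu$ is still a root) is an \emph{actual} pole of $\langle f^s,\varphi\rangle$ for some test function $\varphi$. No mechanism is given for this: ``tracking Bernstein's iteration backwards'' to strip the factor $(s-\mu)$ would require producing an operator $\delta'(s)$ with $\delta'(s)f^{s+1}=\bigl(b_f(s)/(s-\mu)\bigr)f^s$ from the purely analytic information that a family of integrals has no pole at $\mu$, and nothing in the functional-equation formalism converts the vanishing of residues for all $\varphi$ into such an operator identity. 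Worse, the claim itself is not available in general: as recalled at the end of \Cref{Complex_zeta}, the poles of the complex zeta function do not suffice to recover all roots of $b_f(s)$, and the divisors contributing to poles and to roots agree only in special cases (reduced plane curves and isolated quasi-homogeneous singularities, by Loeser \cite{Loe85}). The Barlet-type results that do exist only realize a root \emph{after a downward integer shift} as a pole, and since the candidate set $\mathcal{R}$ is closed under $\alpha\mapsto\alpha-1$ but not under $\alpha\mapsto\alpha+1$, a pole at $\lambda_0-m$ gives no information about whether $\lambda_0\in\mathcal{R}$. So the strategy of playing Theorem~\ref{poles_BS} against Theorem~\ref{poles_resolution} cannot close without an input that is essentially equivalent to the theorem being proved.

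For comparison: the survey does not prove this statement but cites Lichtin \cite{Lic89}, refining Kashiwara \cite{KashiwaraRationality}, and the actual argument is $D$-module-theoretic rather than analytic. One works on the log-resolution $X'$, where $\pi^*f$ is a monomial times a unit in local coordinates and its $b$-function is computed explicitly from the multiplicities $N_i,N_j'$ and the discrepancies $k_i$; one then pushes the module $\mathcal{O}_{X'}[s](\pi^*f)^{\boldsymbol{s}}$ forward along $\pi$ and shows that $b_f(s)$ divides a product of the linear forms arising upstairs. That is where the numerical data $(k_i+1+\ell)/N_i$ and $(\ell+1)/N_j'$ enter, with no appeal to residues of $\langle f^s,\varphi\rangle$. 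If you want to salvage an analytic route, you would have to follow Lichtin's study of asymptotic expansions of periods along vanishing cycles (or Malgrange--Kashiwara $V$-filtrations), which is precisely the machinery you defer to in your last paragraph; at that point the zeta-function bookkeeping in your first two paragraphs is no longer doing the work.
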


This result has recently been extended by Dirks and Musta\c{t}\u{a} \cite{DirksMustata}.

We also have a bound for the roots given by Saito \cite{Sai94} in terms of the \emph{log-canonical threshold} of $f$, 
\[
 \lct(f):=\min_{i,j} \left\{  \frac{k_i + 1}{N_i}, \frac{ 1}{N'_j} \right\}.
\]

\begin{theorem} [{\cite{Sai94}}]
Let $f\in A$ be a polynomial. Then, the  roots of the Bernstein-Sato polynomial of $f$ are contained in the interval \(  [-d+\emph{\textnormal{lct}}(f), -\emph{\textnormal{lct}}(f)]\).
\end{theorem}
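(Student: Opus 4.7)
The plan is to split the inequality $-d+\lct(f)\le\alpha\le -\lct(f)$ into two one-sided bounds, which require rather different inputs. The upper bound is essentially tautological: by Theorem~\ref{Kashiwara_Lichtin} every root $\alpha$ of $b_f(s)$ appears in the list
\[
\left\{-\frac{k_i+1+\ell}{N_i}\right\}_{i,\ell\ge 0}\cup\left\{-\frac{\ell+1}{N'_j}\right\}_{j,\ell\ge 0}.
\]
Since $\ell\ge 0$ and $N_i,N'_j\in\mathbb{Z}_{>0}$, each such candidate is at most $-\frac{k_i+1}{N_i}$ or $-\frac{1}{N'_j}$, so the largest possible root is $-\min_{i,j}\{\tfrac{k_i+1}{N_i},\tfrac{1}{N'_j}\}=-\lct(f)$. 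Note that no further argument is needed here, and in particular $-\lct(f)$ being an actual root is not required for the upper bound.

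The lower bound $\alpha\ge -d+\lct(f)$ is the substantive content. I would approach it through the $V$-filtration along the graph embedding $\iota_f\colon X\hookrightarrow X\times\mathbb{A}^1_t$ and its compatibility with the Hodge filtration. By Proposition~\ref{prop:othercharsBS}, the roots of $b_f(s)$ are identified, modulo $\mathbb{Z}$, with the eigenvalues of $-\partial_t t$ acting on the graded pieces $\operatorname{gr}^\alpha_V H^1_{(f-t)}(A[t])$, the latter viewed as the $D$-module underlying the direct image $(\iota_f)_+\mathcal{O}_X$. This module carries a canonical structure of a polarized mixed Hodge module in the sense of Saito, and the polarization forces symmetries on the Hodge numbers of $\operatorname{gr}^\alpha_V$. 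Combining this symmetry with Skoda's periodicity (see Subsection~\ref{Log-resolutions}), which implies that no genuinely new information enters the $V$-filtration beyond $\alpha=d$, one obtains that eigenvalues of $-\partial_t t$ on the pieces relevant to $b_f(s)$ must lie in $[\lct(f),d-\lct(f)]$; negating yields the stated bound.

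The main obstacle is the Hodge-theoretic input. Saito's theory of polarized mixed Hodge modules, together with the identification of the weight filtration on nearby cycles, is substantially beyond the preliminaries developed in this survey, and any honest write-up must either invoke it as a black box or build up the required formalism. A purely combinatorial argument using only Theorem~\ref{Kashiwara_Lichtin} cannot suffice: the Lichtin list contains many candidates with $\alpha<-d+\lct(f)$ (take $\ell$ large in $-\tfrac{k_i+1+\ell}{N_i}$), and these must be ruled out by extra structural input. An alternative route via Budur--Saito's correspondence between roots of $b_f$ and jumping numbers of multiplier ideals modulo $\mathbb{Z}$, combined with Skoda's theorem to control shifts, could in principle bypass the direct Hodge argument, but it ultimately rests on the same underlying $V$-filtration machinery.
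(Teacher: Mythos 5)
First, a point of comparison: the paper does not prove this statement at all --- it is quoted from Saito [Sai94] immediately after defining $\lct(f)$ from the log-resolution data --- so there is no in-text argument to measure you against, and your proposal has to stand on its own. Half of it does. The upper bound is correct and complete as you give it: by Theorem~\ref{Kashiwara_Lichtin}, every root lies in the candidate list $\{-\tfrac{k_i+1+\ell}{N_i}\}\cup\{-\tfrac{\ell+1}{N'_j}\}$, the maximum of which (taking $\ell=0$) is $-\min_{i,j}\{\tfrac{k_i+1}{N_i},\tfrac{1}{N'_j}\}=-\lct(f)$ for the same resolution used to define $\lct(f)$. Nothing more is needed there.

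The lower bound, however, is where the theorem lives, and your sketch does not contain an argument for it. The two inputs you invoke do not combine into a proof. ``Skoda's periodicity'' ($\cJ(f^{\lambda})=(f)\cJ(f^{\lambda-1})$ for $\lambda\ge 1$) is a statement about multiplier ideals, i.e.\ about jumping numbers, and the link between jumping numbers and roots of $b_f(s)$ is only mod $\ZZ$ (Theorem~\ref{ELSV} gives jumping numbers in $(0,1]$ as roots; the general correspondence of Budur--Saito is a statement about classes in $\QQ/\ZZ$). Mod-$\ZZ$ information plus periodicity past $1$ cannot exclude roots below $-d+\lct(f)$: that is exactly the kind of integer-shift ambiguity the theorem rules out, and it is why your own observation that Lichtin's list contains candidates with large $\ell$ applies equally to your proposed replacement. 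Likewise ``the polarization forces symmetries on the Hodge numbers of $\gr_V^\alpha$'' is never made into a statement; the actual mechanism in Saito's proof is a duality for the \emph{microlocal} module $\widetilde{\mathcal{B}}_f=\mathcal{B}_f[\partial_t^{-1}]$ and its microlocal $V$-filtration, which interchanges $\gr_V^{\alpha}$ with a dual of $\gr_V^{d-\alpha}$ and hence bounds the largest root of the reduced (microlocal) $b$-function by $d$ minus the smallest one; the interval $[-d+\lct(f),-\lct(f)]$ then follows since the smallest root in absolute value is $\lct$-bounded. Formulating and proving that self-duality is the entire content of the theorem, and your write-up leaves it as an unnamed black box; also note that Proposition~\ref{prop:othercharsBS} identifies the roots with eigenvalues of $s=-\partial_t t$ on $D_{A|\KK}[s]\fs/D_{A|\KK}[s]f\fs$ on the nose, not merely mod $\ZZ$, and any honest argument must keep track of that exact (not mod-$\ZZ$) normalization, which is precisely what the sketch elides. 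As it stands, the lower bound is asserted, not proved.
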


In general the set of candidates that we have for the poles of the complex zeta function or the roots of the Bernstein-Sato polynomial is too big. In order to separate the wheat from the chaff we consider the notion of \emph{contributing divisors}.

\begin{definition}
We say that a divisor $E_i$ or $S_j$ contributes to a pole $\lambda$ of the complex zeta function $f^s$ or to a root $\lambda$ of the Bernstein-Sato polynomial of $f$, if we have \(\lambda= - \frac{k_i + 1 + \ell}{N_i} \) or \(\lambda= - \frac{\ell + 1}{N'_j} \) for some $\ell \in \mathbb{Z}_{\geq 0}$.
\end{definition}

It is an open question to determine the contributing divisors (see \cite{Kollar1997}). Also we point out that, in general, the divisors contributing to poles are different from the divisors contributing to roots. This is not the case for reduced plane curves and isolated quasi-homogeneous singularities by work of Loeser \cite[Theorem~1.9]{Loe85}. In the case of reduced plane curves, Blanco \cite{Bla19} determined the contributing divisors.

\vskip 2mm

Although we have a set of candidate poles of the complex zeta function one has to ensure that a candidate is indeed a pole by checking the corresponding \emph{residue}. This can be quite challenging and was already posed as a question by I.~M.~Gel'fand \cite{Gelf54}. In the case of plane curves we have a complete description given by Blanco \cite{Bla19}. 
Moreover, it is not straightforward to relate poles of the complex zeta function to roots of the Bernstein-Sato polynomial. We have that a pole  \( \lambda \in [-d+\emph{\textnormal{lct}}(f), -\emph{\textnormal{lct}}(f)]\)
such that $\lambda + \ell$ is not a root of \( b_f(s) \) for all \( \ell \in \mathbb{Z}_{>0} \) is a root of \( b_{f}(s) \) but this is not enough to recover all the roots of the Bernstein-Sato polynomial even if we know all the poles of the complex zeta function.

\section{Multiplier ideals} \label{Multipliers}

Let \(f\in A=\mathbb{C}[x_1, \dots, x_d]\) be a polynomial. As we mentioned in \Cref{Log-resolutions}, the family of \emph{multiplier ideals} of $f$ is an important object in birational geometry that is described using a log-resolution of $f$ and comes with a discrete set of rational numbers, the \emph{jumping numbers}, that are also related to the roots of the Bernstein-Sato polynomial.

\vskip 2mm

We start with an analytic approach to multiplier ideals that has its origin in the work of Kohn \cite{Kohn}, Nadel \cite{Nad90}, and Siu \cite{Siu01}. The idea behind the construction is to measure the singularity of \(f\) at a point \( p \in Z(f)  \subseteq \mathbb{C}^d \) using the convergence of certain integrals. 

\begin{definition}
Let \( f \in A \) and \( p \in Z(f) \). Let ${\overline{B}_{\epsilon}(p)} $ be a closed ball of radius $\epsilon$ and center $p$. The multiplier ideal of \( f \) at \( p \) associated with a rational number \( \lambda \in \mathbb{Q}_{>0} \) is
\begin{equation*}
\mathcal{J}(f^\lambda)_p = \big\{ g \in A\ \big|\ \exists\, \epsilon \ll 1\ \textnormal{such that}\ \int_{\overline{B}_{\epsilon}(p)} \frac{|g|^2}{|f|^{2\lambda}} dxd\overline{x} < \infty \big\}.
\end{equation*}
More generally we consider \(\mathcal{J}(f^\lambda) =\cap_{p\in Z(f) }\mathcal{J}(f^\lambda)_p \).
\end{definition}

Similarly to the case of the complex zeta function we may use a log-resolution $\pi: X' \rightarrow \CC^d$ of $f$ to reduce the above integral to a monomial case where we can easily check its convergence.  
\begin{equation*}
\int_{\overline{B}_\epsilon(p)} \frac{|g|^2}{|f|^{2\lambda}} dxd\overline{x} = \int_{\pi^{-1}\big(\overline{B}_\epsilon(p)\big)} \frac{|\pi^\ast g|^2}{|\pi^* f|^{2\lambda}} |d\pi| ^2, 
\end{equation*}

Consider a finite affine open cover \( \{U_{\alpha}\}_{\alpha \in \Lambda} \) of \( \pi^{-1}\big(\overline{B}_\epsilon(p) \big)  \) which is still a compact set since \( \pi \) is proper. We have to check the convergence of the integral at each $U_{\alpha}$ so let $z_1,\dots , z_d$ be a set of local coordinates in such an open set. Taking local equations for $ \pi^\ast f$,  $\pi^\ast g$ we get

\begin{align*}
& \int_{U_\alpha}  \frac{|u(z)\,z_1^{L_{1,\alpha}} \cdots z_d^{L_{d,\alpha}}|^2}{|z_1^{N_{1,\alpha}} \cdots z_d^{N_{d,\alpha}}|^{2 \lambda}}  |z_1^{k_{1,\alpha}} \cdots z_d^{k_{d,\alpha}}| ^2 dz d\overline{z} \\ &  = \int_{U_\alpha}  |u(z)| \, |z_1|^{2(L_{1,\alpha}+ k_{1,\alpha}- \lambda N_{1,\alpha}) } \cdots |z_d|^{2(L_{d,\alpha}+ k_{d,\alpha}- \lambda N_{d,\alpha})}   dz d\overline{z} .
\end{align*}
where $u(z)$ is a unit. Using Fubini's theorem we have that the integral converges if and only if 
\[ L_{i}+ k_{i}- \lambda N_{i} > -1, \quad  L'_{j} - \lambda N'_{j} > -1\]  for all $i,j$. Here we use that the total transform divisors of $f$ and $g$ are respectively \[ F_\pi := \sum_{i=1}^r N_i E_i + \sum_{j=1}^s N'_j S_j, \quad G_\pi := \sum_{i=1}^r L_i E_i + \sum_{j=1}^t L'_j S'_j\] and the components of the strict transform of $g$ must contain the components of $f$. Equivalently, we require 
\[  L_{i}  \geq -\lceil k_{i}- \lambda N_{i} \rceil , \quad  L'_{j} \geq \lceil  \lambda N'_{j} \rceil\] 
so we are saying that $\pi^\ast g$ is a section of $\mathcal{O}_{X'}(\lceil K_\pi - \lambda F_\pi\rceil)$. This fact leads to the algebraic geometry definition of multiplier ideals given in \Cref{Mult_ideal} that we refine to the local case.

\begin{definition}
Let  $\pi: X' \rightarrow \CC^d$ be a \emph{log-resolution} of \( f \in A\) and let $F_\pi$ be the total transform divisor. The multiplier ideal of \( f \)  at  \( p \in Z(f) \) associated with a real number \( \lambda \in \mathbb{R}_{>0} \) is the stalk at $p$ of
\[\mathcal{J}(f^\lambda) = \pi_*\mathcal{O}_{X'}\left(\left\lceil K_{\pi} - \lambda F_\pi \right\rceil\right).\]
\end{definition}

 We omit the reference to the point $p$ if it is clear from the context. 
Recall that the multiplier ideals form a discrete filtration
$$A\supsetneqq\mathcal{J}(f^{\lambda_1})\supsetneqq \mathcal{J}(f^{\lambda_2})\supsetneqq\cdots\supsetneqq \mathcal{J}(f^{\lambda_i}))\supsetneqq\cdots$$
and the  $\lambda_i$ where  we have a strict inclusion of ideals are the {\em jumping numbers} of $f$ and $\lambda_1={\rm lct}(f)$ is the log-canonical threshold.

\vskip 2mm

There is a way to describe a set of candidate jumping numbers in a reasonable time. However,   contrary to the case of roots of the Bernstein-Sato polynomial, the jumping numbers are not bounded. However they satisfy some periodicity given by the following version of Skoda's theorem, which for principal ideals reads as $\mathcal{J}(f^\lambda) = (f) \cdot \mathcal{J}(f^{\lambda-1}) $ for all $\lambda \geqslant 1$.

\begin{theorem}\label{Kashiwara_Lichtin_2} 
Let $f\in A$ be a polynomial. Then, the  jumping numbers of $f$ are included in the set 
\[
  \left\{  \frac{k_i + 1 + \ell}{N_i} \ |\ \ell \in \mathbb{Z}_{\geq 0}\right\} \cup \left\{ \frac{\ell + 1}{N'_j} \ |\ \ell \in \mathbb{Z}_{\geq 0} \right\}.
\] 
In particular, the jumping numbers of $f$ form a discrete set of positive rational numbers.
\end{theorem}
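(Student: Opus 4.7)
The plan is to use the vanishing-order characterization of the multiplier ideal already recorded in Definition~\ref{Mult_ideal}. Writing the log-resolution data as $F_\pi = \sum_{i=1}^r N_i E_i + \sum_{j=1}^s N'_j S_j$ and $K_\pi = \sum_{i=1}^r k_i E_i$, we have
\[
\mathcal{J}(f^\lambda) = \{ g \in A :\ {\rm ord}_{E_i}(\pi^* g) \geq \lfloor \lambda N_i - k_i \rfloor \ \forall i,\ {\rm ord}_{S_j}(\pi^* g) \geq \lfloor \lambda N'_j \rfloor \ \forall j \}.
\]
The first point I would emphasize is that any $g \in A$ satisfies ${\rm ord}_D(\pi^* g) \geq 0$ for every prime divisor $D$ of $X'$, so any inequality whose right-hand side is $\leq 0$ is automatic. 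Consequently $\mathcal{J}(f^\lambda)$ is determined solely by the \emph{active} inequalities, namely those for which the floor on the right is a positive integer, together with those floor values.

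Next, I would analyze how these floors evolve with $\lambda$. The function $\lambda \mapsto \lfloor \lambda N_i - k_i \rfloor$ is a nondecreasing integer-valued step function that jumps from $m-1$ to $m$ precisely at $\lambda = (k_i + m)/N_i$; analogously, $\lfloor \lambda N'_j \rfloor$ jumps from $m-1$ to $m$ at $\lambda = m/N'_j$. Hence the set of active inequalities (together with their values) can change only at values of $\lambda$ where some floor takes an integer step \emph{to a value $\geq 1$}; these are precisely $\lambda = (k_i + m)/N_i$ with $m \in \mathbb{Z}_{\geq 1}$ and $\lambda = m/N'_j$ with $m \in \mathbb{Z}_{\geq 1}$. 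At any $\lambda > 0$ outside this set the active inequalities, and hence $\mathcal{J}(f^\lambda)$, coincide with those at $\lambda - \epsilon$ for sufficiently small $\epsilon > 0$, so $\lambda$ cannot be a jumping number. Reindexing $m = \ell + 1$ then recovers exactly the set in the statement.

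For the final clause, the candidate set lies in $\tfrac{1}{M}\mathbb{Z}_{>0}$ with $M = {\rm lcm}(N_1,\dots, N_r, N'_1,\dots,N'_s)$, which is a discrete subset of the positive rationals; hence so is the set of jumping numbers. The main (and essentially only) subtlety is the observation that a change in $\lfloor \lambda N_i - k_i \rfloor$ occurring at a nonpositive value is invisible to $\mathcal{J}(f^\lambda)$: this is what rules out the candidates $\lambda = (k_i + m)/N_i$ with $m \leq 0$ (which can still yield $\lambda > 0$ when $k_i \geq 1$) and forces the range of $m$ to be $\mathbb{Z}_{\geq 1}$.
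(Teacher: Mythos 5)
Your argument is correct and is essentially the paper's own (implicit) approach: the paper deduces the theorem from the log-resolution description of $\mathcal{J}(f^\lambda)$, where membership of $g$ is governed by the per-divisor conditions $\ord_{E_i}(\pi^*g)\geq \lfloor \lambda N_i - k_i\rfloor$ and $\ord_{S_j}(\pi^*g)\geq\lfloor \lambda N'_j\rfloor$, so the ideal can only change when one of these integer thresholds steps up to a positive value, which happens exactly at $\lambda=(k_i+1+\ell)/N_i$ or $(\ell+1)/N'_j$ with $\ell\in\mathbb{Z}_{\geq 0}$. Your explicit remark that steps to nonpositive thresholds are invisible (ruling out $(k_i+m)/N_i$ with $m\leq 0$) is the same bookkeeping the paper handles by indexing candidates directly by the attainable vanishing orders $\ell\geq 0$, so there is no substantive difference.
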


We see that we have the same set of candidates for the roots of the Bernstein-Sato polynomial and the jumping numbers so it is natural to ask how these invariants of singularities are related. The result that we are going to present is due to  Ein, Lazarsfeld, Smith, and Varolin \cite{ELSV2004}. A different proof of the same result can be found in the work of Budur and Saito \cite{BudurSaito05} that relies on the theory of $V$-filtrations.

\begin{theorem}[{\cite{ELSV2004,BudurSaito05}}]\label{ELSV}
 Let $\lambda\in (0,1]$ be a jumping number of a polynomial $f\in A$. Then $-\lambda$ is a root of the Bernstein-Sato polynomial $b_f(s)$.
\end{theorem}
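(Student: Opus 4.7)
The plan is to follow the complex analytic approach of Ein--Lazarsfeld--Smith--Varolin: link divergence of the integrals that define multiplier ideals to poles of a meromorphic continuation, and then link those poles to roots of $b_f(s)$ via iteration of the Bernstein--Sato functional equation.

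First, using that $\lambda \in (0,1]$ is a jumping number, I would pick a polynomial $g \in A$ and a point $p \in Z(f)$ such that $g \in \mathcal{J}(f^{\lambda-\epsilon})_p$ for every $\epsilon > 0$ but $g \notin \mathcal{J}(f^{\lambda})_p$. By the analytic characterization of multiplier ideals recalled above, this means that on a sufficiently small ball $\overline{B}_\epsilon(p)$ the integral $\int |g|^2/|f|^{2\lambda}\, dx\, d\bar x$ diverges while the analogue with exponent $2(\lambda - \epsilon)$ converges for every $\epsilon > 0$. I would then choose a nonnegative test function $\varphi \in C_c^\infty(\CC^d)$ positive at $p$ and supported in such a ball, small enough that $|f| < 1$ on its support, and form
$$I(s) = \int_{\CC^d} \varphi \, |g|^2\, |f|^{2s}\, dx\, d\bar x, \qquad \mathrm{Re}(s) > -\lambda.$$

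Next I would produce a meromorphic continuation of $I(s)$ using the Bernstein--Sato functional equation. Multiplying $\delta(s) f^{s+1} = b_f(s) f^s$ by its conjugate and integrating by parts against $\varphi|g|^2$ gives, exactly as in the derivation preceding Theorem \ref{poles_BS},
$$I(s) = \frac{1}{b_f(s)^2}\int \bar\delta^{*}\delta^{*}\bigl(\varphi\,|g|^2\bigr)\, |f|^{2(s+1)}\, dx\, d\bar x,$$
and iterating this identity yields a meromorphic continuation of $I$ to all of $\CC$ whose poles lie in $\{\mu - \ell : b_f(\mu) = 0,\ \ell \in \ZZ_{\geq 0}\}$. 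The crux is to check that $s = -\lambda$ is an \emph{actual} pole of this continuation, not merely a point where the defining integral diverges while the continuation happens to be holomorphic. For this I use positivity: on the support of $\varphi$ the integrand $\varphi|g|^2|f|^{2s}$ increases pointwise as $s$ decreases along the real axis, so monotone convergence gives $\lim_{s\to -\lambda^+} I(s) = \int \varphi|g|^2/|f|^{2\lambda} = +\infty$. Since $I$ is already holomorphic on $\{\mathrm{Re}(s) > -\lambda\}$, this unbounded real-axis limit rules out a removable singularity, so $-\lambda$ must be a genuine pole.

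Combining these steps, $-\lambda = \mu - \ell$ for some root $\mu$ of $b_f$ and some $\ell \in \ZZ_{\geq 0}$. By Theorem \ref{rationality_BS} every such $\mu$ is negative, so $\mu = -\lambda + \ell < 0$; together with $\lambda \leq 1$ this forces $\ell = 0$, giving $\mu = -\lambda$, as desired. The main obstacle I expect is precisely the positivity/no-removability step, i.e.\ making sure that the divergence at $s = -\lambda$ really produces a pole at $-\lambda$ and not only further to the left; once $\varphi$ is chosen so that $|f| < 1$ on its support, the monotone-convergence argument above handles this cleanly, and the rest of the proof is a direct assembly of the functional equation, Theorem \ref{poles_BS}, and the constraint $\lambda \in (0,1]$.
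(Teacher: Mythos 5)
Your proposal is correct, but its endgame takes a different route than the paper's. Both arguments begin identically: use the jumping-number hypothesis to pick $g\in\mathcal{J}(f^{\lambda-\epsilon})\smallsetminus\mathcal{J}(f^{\lambda})$ and a point $p$ witnessing the failure, note that $\varphi|g|^2$ is again a test function, and feed it into the integration-by-parts identity coming from the Bernstein--Sato functional equation. The paper, however, never constructs the meromorphic continuation: it evaluates the identity at the \emph{real} points $s=-\lambda+\varepsilon$, where both sides are honest convergent integrals, observes that the right-hand side is uniformly bounded in $\varepsilon$ because the exponent $2(s+1)=2(1-\lambda+\varepsilon)$ is nonnegative (this is exactly where $\lambda\le 1$ enters), and then lets monotone convergence blow up the left-hand integral, forcing $b_f(-\lambda+\varepsilon)^2\to 0$ and hence $b_f(-\lambda)=0$ by continuity of the polynomial. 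You instead continue $I(s)$ to all of $\CC$ via iteration, use the monotone divergence together with the identity theorem to rule out a removable singularity at $-\lambda$ (this ``no removable singularity'' step is indeed the crux and you handle it correctly), and then invoke the negativity of the roots from \Cref{rationality_BS} together with $\lambda\le 1$ to exclude the shifted candidates $\mu-\ell$ with $\ell\ge 1$. Your route is a clean packaging of \Cref{poles_BS}; its cost is the extra input of Kashiwara's rationality/negativity theorem, which the paper's more elementary uniform bound avoids, and in the paper's version the constraint $\lambda\le 1$ does its work through boundedness of $|f|^{2(s+1)}$ rather than through root bookkeeping. One small repair to your setup: choose $g$ in the global ideal $\mathcal{J}(f^{\lambda-\epsilon})$ (not merely in the stalk $\mathcal{J}(f^{\lambda-\epsilon})_p$), so that $I(s)$ converges on the entire support of $\varphi$ and not just near $p$; discreteness of the multiplier-ideal filtration supplies such a $g$, and then a point $p$ with $g\notin\mathcal{J}(f^{\lambda})_p$, after which your argument goes through as written.
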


\begin{proof}
 Let $\lambda\in (0,1]$ be a jumping number  and take $g\in \mathcal{J}(f^{\lambda -\varepsilon}) \smallsetminus \mathcal{J}(f^{\lambda})$
 for $\varepsilon > 0$ small enough.  Therefore \( \frac{|g(x)|^2}{|f(x)|^{2(\lambda-\varepsilon)}}\) is integrable but when we take the limit $\varepsilon \rightarrow 0$   we end up with \( \frac{|g(x)|^2}{|f(x)|^{2\lambda}}\) that is not integrable.

Consider Bernstein-Sato functional equation $\delta(s) \cdot f^{s+1} = b_f(s) \cdot f^s$ and its application to the analytic continuation of the complex zeta function
\[
b^2_{f}(s) \int_{\mathbb{C}^d} \varphi(x, \bar{x})|f(x)|^{2s} dxd\bar{x}  = 
 \int_{\mathbb{C}^d} \bar{\delta}^*\delta^*(s)\big(\varphi(x, \bar{x})\big)|f(x)|^{2(s+1)} dx d\bar{x}.\]
Notice that $|g(x)|^2\varphi(x, \bar{x})$ is still a test function so
\[
b^2_{f}(s) \int_{\mathbb{C}^d} |g|^2 \varphi(x, \bar{x})|f(x)|^{2s} dxd\bar{x}  = 
 \int_{\mathbb{C}^d} \bar{\delta}^*\delta^*(s)\big(|g|^2\varphi(x, \bar{x})\big)|f(x)|^{2(s+1)} dx d\bar{x}.\]
Now we take a test function $\varphi$ which is zero outside the ball $\overline{B}_{\epsilon}(p)$ and identically one on a smaller ball  $\overline{B}_{\epsilon'}(p)\subseteq \overline{B}_{\epsilon}(p)$ and thus we get
 \[
b^2_{f}(s) \int_{\overline{B}_{\epsilon'}(p)} |g|^2 |f(x)|^{2s} dxd\bar{x}  = 
 \int_{\overline{B}_{\epsilon'}(p)} \bar{\delta}^*\delta^*(s)\big(|g|^2\big)|f(x)|^{2(s+1)} dx d\bar{x}.\]
Taking $s=-(\lambda - \varepsilon)$ we get
 \[
b^2_{f}(-\lambda + \varepsilon) \int_{\overline{B}_{\epsilon'}(p)} \frac{|g|^2} {|f(x)|^{2(\lambda - \varepsilon)}} dxd\bar{x}  = 
 \int_{\overline{B}_{\epsilon'}(p)} \bar{\delta}^*\delta^*(-\lambda + \varepsilon)\big(|g|^2\big)|f(x)|^{2(1-\lambda + \varepsilon)} dx d\bar{x}\]
but the right-hand side is uniformly bounded for all $\varepsilon >0$. Thus we have 
\[
b^2_{f}(-\lambda + \varepsilon) \int_{\overline{B}_{\epsilon'}(p)} \frac{|g|^2} {|f(x)|^{2(\lambda - \varepsilon)}} dxd\bar{x}  \leq  M < \infty\]
for some positive number $M$ that depends on $g$. Then, by the monotone convergence theorem we have to have \(b^2_{f}(-\lambda) = 0 \).
\end{proof}

So far we have been dealing with the case of an hypersurface $f\in A$ for the sake of clarity but everything works just fine for any ideal  $\mathfrak{a}=\langle f_1, \dots , f_m \rangle\subseteq A$. The analytical definition of multiplier ideal at a point \( p \in Z(\mathfrak{a}) \) associated with a rational number \( \lambda \in \mathbb{Q}_{>0} \) is
\begin{equation*}
\mathcal{J}(\mathfrak{a}^\lambda)_p = \big\{ g \in A\ \big|\ \exists\, \epsilon \ll 1\ \textnormal{such that}\ \int_{\overline{B}_{\epsilon}(p)} \frac{|g|^2}{(|f_1|^{2}+ \cdots + |f_m|^{2})^\lambda} dxd\overline{x} < \infty \big\}.
\end{equation*}
and  \(\mathcal{J}(\mathfrak{a}^\lambda) =\cap_{p\in Z(\mathfrak{a}) }\mathcal{J}(\mathfrak{a}^\lambda)_p \).
One can show that the ideal that we obtain is independent of the set of generators of the ideal $\mathfrak{a}$.

\vskip 2mm

For the  algebraic geometry version we consider the stalk at $p$ of  the multiplier ideal
\[\mathcal{J}(\mathfrak{a}^\lambda) = \pi_*\mathcal{O}_{X'}\left(\left\lceil K_{\pi} - \lambda F_\pi \right\rceil\right),\] given in \Cref{Mult_ideal}.  The extension of \Cref{ELSV} to this setting was proved by Budur, Musta\c{t}\u{a}, and Saito \cite{BMS2006a}    using the theory of $V$-filtrations.

\begin{theorem}[{\cite{BMS2006a}}]\label{BMS}
 Let $\lambda\in ({\rm lct}(\mathfrak{a}),{\rm lct}(\mathfrak{a})+1]$ be a jumping number of  $\mathfrak{a} \subseteq  A$. Then $-\lambda$ is a root of the Bernstein-Sato polynomial $b_{\mathfrak{a}}(s)$.
\end{theorem}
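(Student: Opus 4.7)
My plan is to follow the $V$-filtration approach of Budur-Musta\c{t}\u{a}-Saito, which packages both invariants into a single graded object and thereby reduces the theorem to two compatibility statements.

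First I would set up the relevant module: let $B_F := H^\ell_{(T-F)}(A[t_1,\ldots,t_\ell])$ be the direct image of $A$ under the graph embedding determined by $F = (f_1,\ldots,f_\ell)$, as in Definition~\ref{DefVfilIdeal}. This $D_{A[T]|\KK}$-module is regular holonomic, contains a canonical generator $\delta_F := [1/\prod_i (t_i - f_i)]$ with $A \cdot \delta_F \cong A$, and admits a unique $V$-filtration $V^\bullet B_F$ along $(T)$ by the Kashiwara-Malgrange theorem recalled after Definition~\ref{VfilM}. The defining property (vi) of the $V$-filtration says that $\sum_{i=1}^\ell \partial_{t_i} t_i$ acts on $\gr_V^\alpha B_F$ as $\alpha + (\text{nilpotent})$, so its only eigenvalue on this graded piece is $\alpha$ itself.

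Next I would establish the two dictionaries relating $V^\bullet B_F$ to each of the invariants. On the one hand, the relative Bernstein-Sato polynomial characterization of Subsection~\ref{SubGeneralIdeals2}, combined with property (vi) above, identifies (modulo a shift involving $\ell$) the Bernstein-Sato polynomial $b_\mathfrak{a}(s)$ with the minimal polynomial of $-\sum_i \partial_{t_i} t_i$ on a cyclic subquotient of $\bigoplus_\alpha \gr_V^\alpha B_F$, so that $-\lambda$ is a root of $b_\mathfrak{a}(s)$ whenever $\gr_V^\lambda B_F$ contains a nonzero class in the $D$-orbit of $\delta_F$. On the other hand, the central technical result of \cite{BMS2006a}---proved by deformation to the monomial case, where both sides admit explicit toric descriptions from a log-resolution---is the formula
\[
   \cJ(\mathfrak{a}^\alpha) \;=\; \{\, u \in A \;:\; u\,\delta_F \in V^\alpha B_F \,\}.
\]

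Finally, if $\lambda \in (\lct(\mathfrak{a}), \lct(\mathfrak{a})+1]$ is a jumping number, the strict inclusion $\cJ(\mathfrak{a}^\lambda) \subsetneq \cJ(\mathfrak{a}^{\lambda - \varepsilon})$ lifts under the multiplier ideal dictionary to a nonzero class in $\gr_V^\lambda B_F$ that lies in the $D$-submodule generated by $\delta_F$; the interval restriction ensures this class is genuinely new at level $\lambda$ rather than being the image of a class at level $\lambda - k \leq \lct(\mathfrak{a})$ under the Skoda-type multiplication $t_i \cdot V^\alpha \subseteq V^{\alpha+1}$ (which would merely reproduce an eigenvalue already coming from the $k=0$ case). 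The Bernstein-Sato dictionary then yields $-\lambda$ as a root of $b_\mathfrak{a}(s)$. The principal obstacle is the multiplier ideal formula, which is the heart of \cite{BMS2006a} and rests on a delicate comparison of the $V$-filtration with multiplier ideals computed from an equivariant log-resolution, together with the Skoda-type bookkeeping needed to justify the interval restriction in the last step.
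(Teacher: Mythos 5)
The survey does not actually prove this statement: it records it as a citation to Budur--Musta\c{t}\u{a}--Saito ``using the theory of $V$-filtrations,'' and the only proof it gives in this circle of ideas is for the principal-hypersurface analogue, \Cref{ELSV}, via the analytic route (integration by parts against $b_f(s)^2$ in the complex zeta function, the analytic description of $\cJ(f^\lambda)$, and monotone convergence). So your outline is not comparable to an argument in the paper; what it reproduces is the strategy of the cited reference itself. As a roadmap that is reasonable, but as a proof it is essentially circular: the two ``dictionaries'' you invoke --- the identification of $b_{\fa}(s)$ (up to the codimension shift and the $(s+1)$-normalization) with the minimal polynomial of $\sum_i \partial_{t_i}t_i$ on the associated graded of the $V$-filtration of the submodule generated by $\delta_F$, and the comparison of multiplier ideals with the $V$-filtration --- are precisely the main theorems of \cite{BMS2006a}, the paper being cited for the statement you are asked to prove. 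The survey's stand-ins for the second dictionary are the characterization $V^{\alpha}_{(F)}M=\{v \mid b_{(F),v}(-c)=0 \Rightarrow \alpha\le c\}$ and \Cref{BMS+}.

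Two concrete issues if you want to upgrade the sketch. First, your multiplier-ideal formula has the wrong boundary convention: the correct statement is $\cJ(\fa^{\alpha})=\{u\in A : u\,\delta_F\in V^{>\alpha}B_F\}$ (i.e.\ $V^{\alpha+\epsilon}$ for $0<\epsilon\ll1$), equivalently $\cJ(\fa^{\lambda})=\{g : b_{\fa,g}(-\gamma)=0\Rightarrow \gamma>\lambda\}$ as in \Cref{BMS+}. With $V^{\alpha}$ in place of $V^{>\alpha}$, the equivalence ``$\lambda$ is a jumping number iff some class $[g\,\delta_F]$, $g\in A$, is nonzero in $\gr_V^{\lambda}$'' comes out shifted, and this off-by-$\epsilon$ is exactly what decides which $\lambda$ are detected as roots. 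Second, your last step is incomplete: a nonzero class $[g\,\delta_F]\in\gr_V^{\lambda}$ only gives that $-\lambda$ is a root of the relative polynomial $b_{\fa,g}(s)$, and passing to $b_{\fa}(s)$ requires knowing that roots of $b_{\fa,g}$ are roots of $b_{\fa}$ up to shifts by nonpositive integers; the hypothesis $\lambda\in(\lct(\fa),\lct(\fa)+1]$ is then what excludes the shifted alternatives, using that $\lct(\fa)$ is the smallest root of $b_{\fa}(-s)$ together with a Skoda-type shift statement (the analogue for ideals of \Cref{shift} and of $\cJ(\fa^{\lambda})=\fa\,\cJ(\fa^{\lambda-1})$). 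You gesture at this ``bookkeeping'' but do not supply it, and it is not a formality. Note finally that the survey's analytic proof of \Cref{ELSV} does not extend verbatim to ideals (the functional equation for $\fa$ involves the operators $s_{ij}$, and comparing jumping numbers of $\fa$ with those of $\sum_i f_iy_i$ in the range $(\lct,\lct+1]$ is not immediate), which is why the paper defers to \cite{BMS2006a}; a proof avoiding mixed Hodge modules is in the work of Dirks and Musta\c{t}\u{a} cited there.
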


Finally we want to mention that multiplier ideals can be characterized completely in terms of relative Bernstein-Sato polynomials. Namely:

\begin{theorem}[{\cite{BMS2006a}}]\label{BMS+}
	For all ideals $\fa\subseteq A$ and all $\lambda$ we have the equality
	\[  \cJ(\fa^\lambda) = \{g \in A \ | \ \gamma > \lambda \text{ if } b_{\fa,g}(-\gamma) = 0\}.\]
\end{theorem}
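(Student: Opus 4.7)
The plan is to pass through the theory of $V$-filtrations, which by the previous two results in the excerpt already encodes both multiplier ideals (implicitly, via their characterization through direct images) and relative Bernstein--Sato polynomials (explicitly, via the formula $V^\alpha_{(F)} M = \{v : c \geq \alpha \text{ if } b_{(F),v}(-c)=0\}$). The argument will be carried out in three main steps.

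\emph{Step 1 (setup).} Fix a set of generators $F = f_1,\ldots,f_\ell$ of $\fa$, and view $A$ as the trivial $D_{A|\KK}$-module. Its direct image under the graph embedding $x \mapsto (x, F(x))$, namely $M' = H^\ell_{(T-F)}(A[T])$, is quasi-unipotent regular holonomic over $D_{A[T]|\KK}$, so by Kashiwara--Malgrange it admits a unique $V$-filtration $\{V^\alpha_{(T)} M'\}$ along $(T) = (t_1,\ldots,t_\ell)$. Identifying $A$ with $0 :_{M'} (T-F) \subseteq M'$ produces the induced filtration $V^\alpha_{(F)} A := V^\alpha_{(T)} M' \cap A$, as in Definition~\ref{DefVfilIdeal}.

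\emph{Step 2 (Bernstein--Sato side).} Apply the characterization recalled at the end of the previous subsection to $M = A$ and $v = g \in A$: for each $\gamma \in \QQ$,
\[
g \in V^{\gamma}_{(F)} A \iff \text{every root } -c \text{ of } b_{\fa,g}(s) \text{ satisfies } c \geq \gamma.
\]
Using the discreteness of the jump set of a $V$-filtration together with the discreteness of Bernstein--Sato roots, it follows that
\[
g \in V^{>\lambda}_{(F)} A := \bigcup_{\gamma > \lambda} V^{\gamma}_{(F)} A \iff b_{\fa,g}(-\gamma) = 0 \Rightarrow \gamma > \lambda,
\]
which is exactly the right-hand side of the statement.

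\emph{Step 3 (multiplier ideal side).} It remains to show $\cJ(\fa^\lambda) = V^{>\lambda}_{(F)} A$. This is the central identification of Budur--Musta\c{t}\u{a}--Saito \cite{BMS2006a} and is where I expect the main technical difficulty to lie. The strategy is to reduce to the simple normal crossing situation via a log-resolution $\pi: X' \to X$ of $\fa$. On $X'$, the pullback of $\fa$ is the principal ideal cut out by $F_\pi$, for which both sides are computable: multiplier ideals reduce to the explicit formula $\pi_*\cO_{X'}(\lceil K_\pi - \lambda F_\pi\rceil)$ from Definition~\ref{Mult_ideal}, while the $V$-filtration along a single monomial equation is determined stratum by stratum by the exponents appearing in $F_\pi$ (the roots of the relative Bernstein--Sato polynomial being negatives of the candidate jumping numbers $(k_i+1+j)/N_i$ of Theorem~\ref{Kashiwara_Lichtin_2}). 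Then one transfers the equality back to $A$ using the compatibility of $V$-filtrations with proper direct images (Kashiwara's theorem) together with the vanishing $R^i\pi_*\cO_{X'}(\lceil K_\pi - \lambda F_\pi\rceil)=0$ for $i>0$ recorded after Definition~\ref{Mult_ideal}. Independence of the identification from the choice of generators $F$ follows as in Proposition~\ref{PropWellDefMM}, guaranteeing that $V^{>\lambda}_{(F)} A$ depends only on $\fa$.

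Combining Steps 2 and 3 yields the stated equality. The conceptual content of the theorem is thus: multiplier ideals coincide with the strict $V$-filtration, and the strict $V$-filtration is characterized by the pole locus of the relative Bernstein--Sato polynomial. The hardest ingredient is the sheaf-theoretic identification $\cJ(\fa^\lambda) = V^{>\lambda}_{(F)} A$ in Step 3, which requires the full interplay between log-resolutions, $V$-filtrations along non-principal ideals, and direct images of $D$-modules.
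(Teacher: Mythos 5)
A point of reference first: the paper does not actually prove Theorem~\ref{BMS+}. It quotes it from \cite{BudurSaito05} (principal case) and \cite{BMS2006a} (as stated), and explicitly remarks that those proofs rely on the theory of mixed Hodge modules, with \cite{DirksMustata} giving a recent alternative. So your proposal must stand on its own. Your Step 2 is fine: applying the characterization $V^{\alpha}_{(F)}M=\{v \mid \alpha\leq c \text{ if } b_{(F),v}(-c)=0\}$ to $M=A$, $v=g$, together with discreteness of the roots, identifies the right-hand side of the theorem with $V^{>\lambda}_{(F)}A$, so the theorem becomes the single assertion $\cJ(\fa^{\lambda})=V^{>\lambda}_{(F)}A$.

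The gap is that Step 3 is not a supporting lemma but the entire content of the theorem, and your sketch does not establish it. The transfer from the simple normal crossing situation on $X'$ back to $A$ is not given by ``compatibility of $V$-filtrations with proper direct images (Kashiwara's theorem)'': Kashiwara's theorems concern closed embeddings and the existence of $V$-filtrations, and there is no elementary statement that the $V$-filtration of a proper direct image is the direct image of the $V$-filtration. What one needs is a strictness (bistrictness) property of the filtered direct image under $\pi$, and that is precisely the deep input supplied by Saito's theory of mixed Hodge modules in \cite{BudurSaito05,BMS2006a}, or by the substantial alternative arguments of \cite{DirksMustata}; it cannot be invoked as routine. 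Moreover, in the nonprincipal case you cannot simply replace $\fa$ by ``the principal ideal cut out by $F_{\pi}$'' upstairs: the $V$-filtration relevant to $\fa$ lives along the $\ell$ variables $t_1,\dots,t_\ell$ of the graph embedding of $F=f_1,\dots,f_\ell$ (Definition~\ref{DefVfilIdeal}), and comparing it with a $V$-filtration along a single equation on $X'$ is itself a nontrivial step in \cite{BMS2006a} (handled there by additional arguments, not by the change-of-generators trick of Proposition~\ref{PropWellDefMM}, which concerns a different construction). The local SNC computation and the vanishing $R^{i}\pi_{*}\cO_{X'}(\lceil K_{\pi}-\lambda F_{\pi}\rceil)=0$ compute the multiplier-ideal side, but by themselves say nothing about the $V$-filtration on $A$. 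As written, Step 3 restates the strategy of the original proof while omitting its key technical engine, so the proposal is an outline rather than a proof.
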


This theorem is due to Budur and Saito \cite{BudurSaito05} in the case $\fa$ is principal, and due to Budur, Musta\c{t}\u{a}, and Saito \cite{BMS2006a} as stated. The proofs rely on the theory of mixed Hodge modules. Recent work of Dirks and Musta\c{t}\u{a} \cite{DirksMustata} provides a proof of this result that does not use the theory of mixed Hodge modules.

The analogues of Theorems~\ref{BMS} and~\ref{BMS+} have been shown to hold for certain singular rings.

To illustrate Theorem~\ref{BMS+}, we use this description of multiplier ideals to give a quick proof of Skoda's Theorem in the principal ideal case.

\begin{proposition}[Skoda's theorem for principal ideals]
	For all $f\in A\smallsetminus\{0\}$ and all $\lambda$, we have $\cJ(f^{\lambda+1}) = (f) \cJ(f^{\lambda})$.
	\end{proposition}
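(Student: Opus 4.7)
The plan is to combine Theorem~\ref{BMS+}, which identifies $\cJ(f^\lambda)$ with the set of $g$ whose relative Bernstein-Sato polynomial $b_{f,g}(s)$ has every root $-\gamma$ satisfying $\gamma>\lambda$, with the shift identity $b_{f,fg}(s)=b_{f,g}(s+1)$ of Lemma~\ref{shift} and the divisibility criterion of Lemma~\ref{s+n}. Everything reduces to tracking roots of $b_{f,g}(s)$ through multiplication by $f$.

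For the inclusion $(f)\cJ(f^\lambda)\subseteq\cJ(f^{\lambda+1})$, I would take $h=fg$ with $g\in\cJ(f^\lambda)$. By Lemma~\ref{shift}, $b_{f,h}(s)=b_{f,g}(s+1)$, so each root $-\gamma$ of $b_{f,h}(s)$ corresponds to a root $-(\gamma-1)$ of $b_{f,g}(s)$. Theorem~\ref{BMS+} applied to $g$ forces $\gamma-1>\lambda$, whence $\gamma>\lambda+1$, and applying Theorem~\ref{BMS+} to $h$ gives $h\in\cJ(f^{\lambda+1})$.

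For the reverse inclusion $\cJ(f^{\lambda+1})\subseteq(f)\cJ(f^\lambda)$, assume $\lambda\geq 0$ and take $h\in\cJ(f^{\lambda+1})$. Then every root $-\gamma$ of $b_{f,h}(s)$ satisfies $\gamma>\lambda+1\geq 1$, so in particular $s=-1$ is \emph{not} a root of $b_{f,h}(s)$. The contrapositive of Lemma~\ref{s+n} with $n=1$ then forces $h\in(f)$. Writing $h=fg$, Lemma~\ref{shift} rearranges to $b_{f,g}(s)=b_{f,h}(s-1)$: if $-\gamma'$ is a root of $b_{f,g}(s)$, then $-(\gamma'+1)$ is a root of $b_{f,h}(s)$, so $\gamma'+1>\lambda+1$ and hence $\gamma'>\lambda$. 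Theorem~\ref{BMS+} now yields $g\in\cJ(f^\lambda)$, so $h\in(f)\cJ(f^\lambda)$.

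The main obstacle is producing the factorization $h=fg$ in the harder direction; this is exactly what Lemma~\ref{s+n} buys us, since the absence of $-1$ among the roots of $b_{f,h}(s)$ translates directly into divisibility of $h$ by $f$. The edge cases $\lambda<0$ are handled by the convention $\cJ(f^\lambda)=A$ there, under which Skoda's equality collapses to a triviality.
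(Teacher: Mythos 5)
Your proof is correct and follows essentially the same route as the paper: the containment $(f)\cJ(f^{\lambda})\subseteq\cJ(f^{\lambda+1})$ via the shift identity of Lemma~\ref{shift} together with the characterization in Theorem~\ref{BMS+}, and the reverse containment by first using Lemma~\ref{s+n} (with $n=1$) to force $\cJ(f^{\lambda+1})\subseteq(f)$ and then shifting back to see the quotient lies in $\cJ(f^{\lambda})$. Your explicit remark that $\lambda\geq 0$ is needed so that $-1$ cannot be a root is a welcome clarification of a point the paper leaves implicit, but it does not change the argument.
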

\begin{proof}
	Let $g\in \cJ(f^{\lambda})$, so every root of $b_{f,g}(s)$ is less than $-\lambda$. Then, by Lemma~\ref{shift}, every root of $b_{f,fg}(s)$ is less than $-\lambda-1$, and hence $fg\in \cJ(f^{\lambda+1})$. This shows the containment $\cJ(f^{\lambda+1}) \supseteq (f) \cJ(f^{\lambda})$.
	
	Now, if $g\notin (f)$, then $s=-1$ is a root of $b_{f,g}(s)$ by Lemma~\ref{s+n}. Thus, ${\cJ(f^{\lambda+1})\subseteq (f)}$. In particular, we can write $h\in \cJ(f^{\lambda+1})$ as $h=fg$ for $g\in A$; since the largest root of $b_{f,g}(s)$ is one greater than the largest root of $b_{f,h}(s)$ by Lemma~\ref{shift}, we have that $h\in \cJ(f^{\lambda})$, and the equality follows.
	\end{proof}

\begin{theorem}[{\cite{Vfilt}}] Let $R$ be either
a ring of invariants of an action of a finite group on a polynomial ring, or
	 an affine normal toric ring.
	Then, for every ideal $\fa\subseteq R$, we have the log canonical threshold of $\fa$ in $R$ coincides with the smallest
	root $\alpha$ of $b^R_{\fa}(-s)$, and every jumping number of $\fa$ in  $[\alpha, \alpha + 1)$ is a root of $b^R_{\fa}(-s)$. Moreover,
	\[ \cJ_R(\fa^\lambda)= \{g \in R \ | \ \gamma > \lambda \text{ if } b^R_{\fa,g}(-\gamma) = 0\}.\]
\end{theorem}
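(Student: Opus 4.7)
The plan is to reduce everything to the polynomial-ring case established in Theorems~\ref{BMS} and~\ref{BMS+}, exploiting the fact that both classes of rings in the hypothesis are differentially extensible direct summands of polynomial rings over $\CC$. Specifically, write $R \subseteq B$ with $B$ a polynomial ring and $\beta\colon B \to R$ an $R$-linear splitting. For a ring of invariants of a finite group $G$, $B = R[G]$-module structure gives $\beta$ as the Reynolds operator; for a normal affine toric ring, the embedding of $R$ as a monomial subring of a polynomial ring is split by the monomial projection. In both cases, the differential extensibility was recorded in the list following Definition~\ref{deds}.

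First I would extend Theorem~\ref{existence-DS} to relative and non-principal Bernstein-Sato polynomials, establishing
\[ b^R_{\fa,g}(s) = b^B_{\fa B,\, g}(s) \qquad \text{for all } g\in R \text{ and all ideals } \fa\subseteq R. \]
The argument is the same two-sided one as in the proof of Theorem~\ref{existence-DS}: any relative functional equation in $B$ (viewed, via Musta\c{t}\u{a}'s reformulation in Subsection~\ref{SubNonPrincipalDiffAd} as a reduced functional equation for $f_1y_1+\cdots+f_\ell y_\ell$ with the element $g$ inserted) descends to $R$ by applying $\beta\circ(-)|_R$ coefficientwise, giving $b^R_{\fa,g}(s)\,|\,b^B_{\fa B,g}(s)$; conversely, differential extensibility lifts any such equation from $R$ to $B$, yielding the reverse divisibility.

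Next I would invoke the comparison of multiplier ideals between $R$ and $B$: for both classes of rings one has
\[ \cJ_R(\fa^\lambda) = \cJ_B\!\left((\fa B)^\lambda\right) \cap R, \]
the key point being that the embedding $R\hookrightarrow B$ is étale in codimension one and $B$ is a suitable "cover" realizing the relative canonical divisor trivially (Reynolds-operator averaging in the finite-group case; direct toric computation via the Newton-polyhedron formulas in the toric case). Combining this with Theorem~\ref{BMS+} over the polynomial ring $B$ gives
\[ \cJ_R(\fa^\lambda) = \{g\in B \mid \gamma>\lambda \text{ whenever } b^B_{\fa B,g}(-\gamma)=0\} \cap R = \{g\in R \mid \gamma>\lambda \text{ whenever } b^R_{\fa,g}(-\gamma)=0\}, \]
which is the \emph{moreover} clause. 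The statements about the log canonical threshold and jumping numbers in $[\alpha,\alpha+1)$ then follow formally by specializing $g$: at $g=1$ one recovers $b^R_\fa(s)$ itself, so the smallest $\lambda$ for which $\cJ_R(\fa^\lambda)\ne R$ equals the smallest root $\alpha$ of $b^R_\fa(-s)$; and if $\lambda\in[\alpha,\alpha+1)$ is a jumping number witnessed by some $g\in \cJ_R(\fa^{\lambda-\varepsilon})\smallsetminus\cJ_R(\fa^\lambda)$, the characterization forces $-\lambda$ to be a root of $b^R_{\fa,g}(s)$, and Lemma~\ref{s+n}-type considerations (together with the shift Lemma~\ref{shift}) propagate this to a root of $b^R_\fa(-s)$ within the window $[\alpha,\alpha+1)$.

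The main obstacle I expect is justifying the identity $\cJ_R(\fa^\lambda)=\cJ_B((\fa B)^\lambda)\cap R$ in the toric case, since $R$ need not be $\QQ$-Gorenstein and one must use the de Fernex–Hacon definition discussed in Subsection~\ref{Log-resolutions}; verifying that the maximizing boundary $\Delta$ is compatible with the toric equivariant structure, and that the comparison with $B$ survives the ceiling operation $\lceil K_\pi - \lambda F_\pi\rceil$ on a common equivariant log-resolution, is the technical heart. Once this compatibility is established, the remainder of the argument is a formal assembly of the three ingredients above.
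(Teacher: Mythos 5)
Your route is genuinely different from the one the paper points to: the remark following the theorem (and the proof in \cite{Vfilt}) proceeds by reduction modulo $p$, comparing with test ideals and $F$-jumping numbers via a positive-characteristic analogue of the differentially extensible summand property of Definition~\ref{deds}, rather than by a characteristic-zero comparison with the ambient polynomial ring. As written, though, your argument has a genuine gap exactly at the step you flag as the technical heart: the identity $\cJ_R(\fa^\lambda)=\cJ_B((\fa B)^\lambda)\cap R$. In the finite-group case this can plausibly be extracted from the behaviour of multiplier ideals under finite morphisms that are \'etale in codimension one (after dealing with pseudo-reflections), but in the toric case the split monomial embedding $R\hookrightarrow B$ is not even finite --- for the cone over $\mathbb{P}^1\times\mathbb{P}^1$, $R=\CC[x_1y_1,x_1y_2,x_2y_1,x_2y_2]$ has dimension $3$ inside the $4$-dimensional $B$ --- so ``\'etale in codimension one'' and ``a cover realizing the relative canonical divisor trivially'' do not apply, and a Newton-polyhedron computation would at best treat monomial ideals $\fa$, whereas the theorem concerns arbitrary ideals and, in the non-$\QQ$-Gorenstein case, the de Fernex--Hacon multiplier ideal. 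The would-be formula is moreover sensitive to the choice of embedding (the Veronese-type embedding $\phi_2$ of the $A_1$-singularity rescales all the numerical data, hence thresholds), so you would at least have to isolate which property of the standard embedding is being used; no restriction theorem for multiplier ideals along such non-finite split inclusions is available off the shelf, and establishing one is essentially of the same order of difficulty as the theorem itself, which is precisely why \cite{Vfilt} passes through characteristic $p$.

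Two further points. First, the identification $b^R_{\fa,g}(s)=b^B_{\fa B,g}(s)$ needs both that $R[y_1,\dots,y_\ell]$ is again a differentially extensible direct summand of $B[y_1,\dots,y_\ell]$ and that Musta\c{t}\u{a}'s reformulation (the reduced functional equation for $f_1y_1+\cdots+f_\ell y_\ell$ with $g$ inserted) agrees with the definition of $b^R_{\fa,g}$ used in the singular setting; neither is automatic and both need an argument, even if the proofs of Proposition~\ref{PropWellDefMM} and Theorem~\ref{existence-DS} do adapt. Second, your final step --- passing from ``$-\lambda$ is a root of $b^R_{\fa,g}(s)$'' to ``$-\lambda$ is a root of $b^R_{\fa}(s)$'' via Lemmas~\ref{s+n} and~\ref{shift} --- does not work as stated: those lemmas concern a principal $f$ and multiplication of $g$ by powers of $f$, and there is no general divisibility between $b_{\fa,g}$ and $b_{\fa}$. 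If you did have the restriction formula, the cleaner deduction would be that every jumping number of $\fa$ in $R$ is a jumping number of $\fa B$ in $B$ (a jump of the intersection forces a jump upstairs), so Theorem~\ref{BMS} applied in $B$, together with your claimed equality $b^R_{\fa}=b^B_{\fa B}$, yields the statement on $[\alpha,\alpha+1)$, and taking $g=1$ in the moreover clause identifies $\alpha$ with the log canonical threshold.
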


The idea behind the proof of this theorem is based on reduction modulo $p$ and a positive characteristic analogue of the notion of differentially extensibility direct summand as in Definition~\ref{deds}. We refer the reader to \cite{Vfilt} for details.

\section{Computations via F-thresholds}

The notion of Bernstein-Sato root in positive characteristic discussed in Section~\ref{sec:positivechar} is closely related to $F$-jumping numbers. In this section, we discuss a relationship between the classical Bernstein-Sato polynomial in characteristic zero and similar numerical invariants in characteristic $p$. This connection was first established by Musta\c{t}\u{a}, Takagi, and Watanabe \cite{MTW2005}, and extended to the singular setting by \`Alvarez Montaner, Huneke, and N\'u\~nez-Betancourt \cite{AMHNB}.

\begin{definition}[\cite{MTW2005}]
	Let $R$ be a ring of characteristic $p>0$. Let $\fa, J$ be ideals of $R$ such that $\fa\subseteq \sqrt{J}$. We set 
	\[\nu_{\fa}^J(p^e)= \max\{ n\in \NN \ | \ \fa^n \not\subseteq J^{[p^e]}\}.\]
\end{definition}

We point out that the limit of $\lim\limits_{e\to\infty}\frac{\nu_{\fa}^J(p^e)}{p^e}$ exists \cite{DSNBP}.

\begin{theorem}[{\cite{AMHNB}, see also \cite{MTW2005}}]
	Let $R$ be a finitely generated flat $\ZZ[1/a]$-algebra for some nonzero $a\in \ZZ$, and $\fa\subseteq \sqrt{J}$ ideals of $R$. Write $R_0$ for $R\otimes_{\ZZ} \QQ$, and $R_p$ for $R/pR$; likewise, write $\fa_0$ for the extension of $\fa$ to $R_0$, and similarly for $\fa_p$, $J_0$, $J_p$, etc. If $\fa_0$ has a Bernstein-Sato polynomial in $R_0$, then we have
	\[((s+1) b_{\fa_0}^{A_0})(\nu_{\fa_p}^{J_p}(p^e)) \equiv 0 \ \mathrm{mod} \ p\]
	for all $p\gg 0$.
	\end{theorem}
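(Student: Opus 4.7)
The plan is to adapt the Musta\c{t}\u{a}--Takagi--Watanabe strategy from the principal case by reducing a characteristic-zero Bernstein--Sato functional equation modulo $p$ and specializing at the relevant $F$-threshold. Fix generators $f_1,\ldots,f_\ell$ of $\fa$. Using Musta\c{t}\u{a}'s reformulation, the polynomial $(s+1) b_{\fa_0}^{R_0}(s)$ is the Bernstein--Sato polynomial of $g = f_1 y_1 + \cdots + f_\ell y_\ell \in R_0[y_1,\ldots,y_\ell]$. So there is a functional equation
\[ \delta(s)\, g^{s+1} = (s+1)\, b_{\fa_0}^{R_0}(s)\, g^s \]
in $R_0[y][s] g^s$ for some $\delta(s) \in D_{R_0[y]|\QQ}[s]$. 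Since $\delta(s)$ has only finitely many coefficients, there exists $N \in \ZZ$ so that after inverting $aN$ the equation descends to an identity in $R[1/N,y][s] g^s$. For every prime $p$ not dividing $aN$ and larger than the total order $m$ of $\delta$, reducing modulo $p$ gives a functional equation $\bar\delta(s) g^{s+1} = (s+1) \bar b_{\fa_0}(s) g^s$ in $R_p[y][s]g^s$.

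Set $n = \nu_{\fa_p}^{J_p}(p^e)$ and specialize at $s = n$:
\[ \bar\delta(n) \, g^{n+1} = (n+1)\, \bar b_{\fa_0}(n)\, g^n \qquad \text{in } R_p[y]. \]
The operator $\bar\delta(n)$ has order at most $m < p$, so by Theorem~\ref{thm:opscharp} it is $R_p[y]^{p^e}$-linear for every $e$, and therefore preserves $J_p^{[p^e]} R_p[y]$. Since $g^{n+1}$ is a $y$-polynomial whose coefficients lie in $\fa_p^{n+1} \subseteq J_p^{[p^e]}$, we obtain $g^{n+1} \in J_p^{[p^e]} R_p[y]$ and hence $(n+1)\bar b_{\fa_0}(n) \cdot g^n \in J_p^{[p^e]} R_p[y]$.

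The crux, and main obstacle, is to convert this single-variable containment into control on $\fa_p^n$ itself. Looking at each $y^\alpha$-coefficient only yields $(n+1)\bar b_{\fa_0}(n)\binom{n}{\alpha} f^\alpha \in J_p^{[p^e]}$, and the multinomial coefficients $\binom{n}{\alpha}$ frequently vanish modulo $p$ (by Lucas), so this step alone is not enough. To finish, one invokes the multi-parameter Budur--Musta\c{t}\u{a}--Saito form of the functional equation (equivalent to Musta\c{t}\u{a}'s): after rewriting, one obtains, for every $m = (m_1,\ldots,m_\ell) \in \NN^\ell$, an identity
\[ b_{\fa_0}^{R_0}(|m|)\, f^m \;=\; \sum_{j=1}^\ell \delta_j(m)\, f^{m+e_j} \qquad \text{in } R_0, \]
with $\delta_j \in D_{R_0|\QQ}[s_1,\ldots,s_\ell]$ of uniformly bounded order. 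Clearing denominators as before and specializing mod $p$ at any $m$ with $|m|=n$, each term $\delta_j(m) f^{m+e_j}$ lies in $J_p^{[p^e]}$ (using $f^{m+e_j} \in \fa_p^{n+1} \subseteq J_p^{[p^e]}$ and $R_p^{p^e}$-linearity of $\bar\delta_j(m)$ for $p$ larger than $\max_j \operatorname{ord}(\delta_j)$). Thus $\bar b_{\fa_0}(n)\, f^m \in J_p^{[p^e]}$ for every $|m|=n$, i.e., $\bar b_{\fa_0}(n)\, \fa_p^n \subseteq J_p^{[p^e]}$.

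Since $\fa_p^n \not\subseteq J_p^{[p^e]}$ by definition of $n$, and $\bar b_{\fa_0}(n) \in \FF_p$ is either zero or a unit, the only possibility is $b_{\fa_0}^{R_0}(n) \equiv 0 \pmod{p}$, a fortiori $((s+1) b_{\fa_0}^{R_0})(n) \equiv 0 \pmod p$. The finitely many primes excluded -- those dividing $aN$, those not exceeding the orders of the various $\delta_j$'s, and any others appearing in denominators of the multi-parameter functional equation -- form a single finite set independent of $e$, giving the claim for all $p \gg 0$ and all $e$.
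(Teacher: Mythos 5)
Your final argument is sound, but it takes a genuinely different route from the paper's sketch. The paper reduces immediately to the principal case: it passes to $g=f_1y_1+\cdots+f_\ell y_\ell\in R'=R[y_1,\ldots,y_\ell]$, asserts the identity $\nu_{\fa_p}^{J_p}(p^e)=\nu_{g_p}^{JR'_p}(p^e)$, and then runs exactly your first paragraph for the single element $g$ (spread out, reduce mod $p$, use that an operator of order $<p$ is $R'^{p}_p$-linear hence preserves $J_p^{[p^e]}R'_p$, specialize at $s=\nu$). Your worry about multinomial coefficients is precisely the delicate point of that reduction: because the coefficients $\binom{n}{\alpha}$ of $g^n$ can vanish mod $p$, the invariant $\nu_{g_p}^{JR'_p}(p^e)$ can genuinely drop below $\nu_{\fa_p}^{J_p}(p^e)$ (already for $\fa=J=(x^2,y^2)$ and $e=1$ one gets $2p-2$ versus $p-1$), so the ``one checks easily'' step is not innocuous. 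Your repair --- reducing the multi-parameter functional equation mod $p$ and specializing at every $m\in\NN^\ell$ with $|m|=\nu$ to get $\overline{b_{\fa_0}}(\nu)\,\fa_p^{\nu}\subseteq J_p^{[p^e]}$, hence $b_{\fa_0}(\nu)\equiv 0 \bmod p$ --- is essentially the original argument of \cite{MTW2005} and \cite{AMHNB}, and it even yields the stronger conclusion without the $(s+1)$ factor, which of course implies the stated one. The exclusion of only finitely many primes, uniformly in $e$, is handled correctly.

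Two caveats you should address. First, the hypothesis is only that $\fa_0$ admits a Bernstein--Sato polynomial in $R_0$, and $R_0$ need not be regular; in the survey's conventions that hypothesis amounts to a functional equation for the single element $g$ over $R_0[y]$, which is all the paper's route consumes. Your route instead consumes the multi-parameter (Budur--Musta\c{t}\u{a}--Saito) functional equation over $R_0$, and the equivalence of the two formulations (\cite{Mustata2019}, \cite{BMS2006a}) is established for regular rings, not in this generality; so you must either take the multi-parameter equation as the definition of $b_{\fa_0}^{R_0}$ (as is effectively done in \cite{AMHNB}) or justify the equivalence in your setting. Second, the BMS equation involves all shifts $\alpha\in\ZZ^\ell$ with $|\alpha|=1$, not just the unit vectors $e_j$, with binomial factors $\binom{s_i}{-\alpha_i}$ attached to the negative entries; this is harmless --- upon specializing at $m$ those factors kill the terms where $m+\alpha$ has a negative entry, and every surviving $f^{m+\alpha}$ lies in $\fa^{\nu+1}\subseteq J_p^{[p^e]}$, which is all your argument needs --- but the proof should be run in that form rather than with the simplified identity you wrote.
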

\begin{proof}[Sketch of proof]
	First, if $\fa = (f_1,\dots,f_\ell)$, set $g=\sum_i f_i y_i \in R'=R[y_1,\dots,y_{\ell}]$. Then, one checks easily that for $p\nmid a$,  we have $\nu_{\fa_p}^{J_p}(p^e)=\nu_{g_p}^{J R'_p}(p^e)$. Thus, we can reduce to the principal case, where $\fa=(f)$.
	
	Let $\delta(s) f^{s+1} = b_f(s) f^s$ be a functional equation for $f$ in. If we replace $a$ by a nonzero multiple, we can assume that $\delta(s)$ is contained in the image of $D_R[s]$ in $D_{R_0}[s]$ (see \cite[Lemma~4.18]{AMHNB}) and that $b_f(s)\in \ZZ[1/a][s]$. Pick $n$ such that $\delta(s)\in D^n_R[s]$ and $n$ is greater than any prime dividing a denominator of a coefficient of $b_f(s)$. Then, for every $p\geq n$, we may take the functional equation modulo $p$ in $R_p$:
	\[ \overline{\delta(s)} f^{s+1} = \overline{b_f(s)} f^s. \]
	Since $n<p$, we have $\overline{\delta(s)}\in D^{(1)}_{R_p|\FF_p}$. In particular, $\overline{\delta(s)}$ is linear over each subring $R^{[p^e]}$, so it stabilizes every ideal expanded from such a subring, namely the Frobenius powers $J^{[p^e]}$ of $J$. For $s=\nu_{f_p}^{J_p}$, we have $f^s\notin J^{[p^e]}$, and $f^{s+1} \in J^{[p^e]}$, so $\overline{\delta(s)} f^{s+1} \in J^{[p^e]}$; we conclude that $\overline{b_f(s)}=0$ in $\FF_p$, as claimed.
	\end{proof}

The previous theorem can be applied to find roots of $b^{A_0}_{\fa_0}(s)$ in $\QQ$ when there are sufficiently nice formulas for $\nu_{\fa_p}^{J_p}(p^e)$ for $e$ fixed as $p$ varies. 

\begin{proposition}[{\cite{MTW2005}}]\label{prop-roots-nu}
	Let $R$ be a finitely generated flat $\ZZ[1/a]$-algebra for some nonzero $a\in \ZZ$, and $\fa\subseteq \sqrt{J}$ ideals of $R$. Write $R_0$ for $R\otimes_{\ZZ} \QQ$, and $R_p$ for $R/pR$; likewise, write $\fa_0$ for the extension of $\fa$ to $R_0$, and similarly for $\fa_p$, $J_0$, $J_p$, etc. Suppose that $\fa_0$ has a Bernstein-Sato polynomial in $R_0$. 
	
	Let $e>0$. Suppose that there is an integer $N$ and polynomials $Q_{[i]}$ for each $[i]\in (\ZZ/N\ZZ)^\times$ such that $\nu_{\fa_p}^{J_p}(p^e)=Q_{[i]}(p^e)$ for all $p\gg 0$ with $p\in [i]$. Then $Q_{[i]}(0)$ is a root of $b_{\fa_0}^{A_0}(s)$ for each $[i]\in (\ZZ/N\ZZ)^\times$.
\end{proposition}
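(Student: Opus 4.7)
The plan is to apply the preceding theorem to convert the polynomial-formula hypothesis on $\nu_{\fa_p}^{J_p}(p^e)$ into a divisibility statement about a single univariate polynomial in $p$, and then use the standard observation that an integer polynomial whose values are divisible by infinitely many primes must vanish at $0$.

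First, by the previous theorem, for all primes $p \gg 0$ we have the congruence
\[ ((s+1)\, b_{\fa_0}^{A_0})\bigl(\nu_{\fa_p}^{J_p}(p^e)\bigr) \equiv 0 \pmod p. \]
Combining this with the hypothesis $\nu_{\fa_p}^{J_p}(p^e) = Q_{[i]}(p^e)$ for all sufficiently large $p \equiv i \pmod N$, substitution gives
\[ ((s+1)\, b_{\fa_0}^{A_0})\bigl(Q_{[i]}(p^e)\bigr) \equiv 0 \pmod p \]
for every such $p$.

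Next, introduce the single-variable polynomial $P(T) := ((s+1)\, b_{\fa_0}^{A_0})(Q_{[i]}(T^e)) \in \QQ[T]$, and choose a positive integer $C$ with $CP(T) \in \ZZ[T]$ (clearing the denominators appearing in the coefficients of $b_{\fa_0}$). For any prime $p$ coprime to $C$, the congruence obtained above is equivalent to $p \mid CP(p)$ in $\ZZ$. Since $CP(T) - CP(0) \in T\cdot\ZZ[T]$, we have $CP(p) \equiv CP(0) \pmod p$, and hence $p \mid CP(0)$. By Dirichlet's theorem, the primes $p$ in the residue class $[i]$ modulo $N$ (further restricted to be large enough and coprime to $C$) are infinite in number, so $CP(0) = 0$, i.e., $P(0) = 0$.

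Unpacking the definition of $P$, this says $((s+1)\, b_{\fa_0}^{A_0})(Q_{[i]}(0)) = 0$, so $Q_{[i]}(0)$ is a root of $(s+1)\, b_{\fa_0}^{A_0}(s)$. If $Q_{[i]}(0) \neq -1$ we are immediately done; and if $Q_{[i]}(0) = -1$, then $-1$ is a root of $b_{\fa_0}^{A_0}(s)$ because $\fa_0$ is a proper ideal (e.g., via Lemma~\ref{s+1} applied to $f_1 y_1 + \cdots + f_\ell y_\ell$, bearing in mind the conventions used to define $b_{\fa_0}$), so either way $Q_{[i]}(0)$ is a root of $b_{\fa_0}^{A_0}(s)$. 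The main subtlety lies not in the polynomial-vanishing argument, which is quite robust, but in this last step: one must match the conventions of the Bernstein--Sato polynomial for ideals used in the statement with the factor of $(s+1)$ supplied by the previous theorem. This is purely bookkeeping but must be done carefully, since the statement asserts a root of $b_{\fa_0}^{A_0}$ itself rather than of $(s+1) b_{\fa_0}^{A_0}$.
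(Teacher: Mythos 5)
Your main line of argument is exactly the paper's: reduce the functional-equation congruence of the preceding theorem modulo $p$, substitute $\nu_{\fa_p}^{J_p}(p^e)=Q_{[i]}(p^e)$, use $Q_{[i]}(p^e)\equiv Q_{[i]}(0)\pmod p$ after clearing denominators, and let the infinitude of primes in the class $[i]$ (Dirichlet, since $[i]\in(\ZZ/N\ZZ)^\times$) force the integer $C\,\bigl((s+1)b_{\fa_0}^{A_0}\bigr)(Q_{[i]}(0))$ to vanish. That part is correct and coincides with the proof in the text; it yields that $Q_{[i]}(0)$ is a root of $(s+1)\,b_{\fa_0}^{A_0}(s)$.

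The genuine problem is your disposal of the exceptional case $Q_{[i]}(0)=-1$. Lemma~\ref{s+1} applied to $g=f_1y_1+\cdots+f_\ell y_\ell$ gives $(s+1)\mid b_g(s)$; but by the very convention you invoke, $b_{\fa_0}(s)=b_g(s)/(s+1)$, so the lemma produces exactly the factor that has already been divided out and says nothing about whether $-1$ is a root of $b_{\fa_0}$. Indeed, the claim that $-1$ is a root of $b_{\fa}$ for every proper nonzero ideal is false: for $\fa=(x_1,\dots,x_d)$ with $d\geq 2$, the element $g=\sum_i x_iy_i$ is equivalent to a nondegenerate quadratic form in $2d$ variables, so $b_g(s)=(s+1)(s+d)$ and $b_{\fa}(s)=s+d$, which does not vanish at $-1$. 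Consequently your write-up, as it stands, proves only that $Q_{[i]}(0)$ is a root of $(s+1)b_{\fa_0}^{A_0}(s)$, and the case $Q_{[i]}(0)=-1$ is left genuinely open rather than settled. (To be fair, the paper's own proof silently suppresses the $(s+1)$ at the same point, so in substance you have matched it; but the justification you offer for the exceptional case asserts a false general statement and should be removed, or replaced by an argument that either excludes $Q_{[i]}(0)=-1$ in the situation at hand or treats it by other means.)
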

\begin{proof}
	We can consider $b_{\fa_0}^{A_0}(s)$ as a polynomial over $\ZZ[1/aa']$ for some $a'$. Fix ${[i]\in (\ZZ/N\ZZ)^\times}$. For any $p\in [i]$ with $p\nmid (aa')$, we have \[(s+1)b_{\fa_0}^{A_0}(Q_{[i]}(0)) \equiv b_{\fa_0}^{A_0}(Q_{[i]}(p^e)) \equiv 0 \mod  p,\] so  $p \,|\, b_{\fa_0}^{A_0}(Q_{[i]}(0))$. As there are infinitely many primes $p\in [i]$, we must have $b_{\fa_0}^{A_0}(Q_{[i]}(0))=0$.
\end{proof}

\begin{example}[{\cite{MTW2005}}]
	Let $f=x^2+y^3\in \ZZ[x,y]$, and $\mathfrak{m}=(x,y)$. One has
	\[\nu_{f_p}^{\mathfrak{m}}(p^e) =  \begin{cases} \frac{5}{6}p^e -\frac{5}{6}& \text{if } p \equiv 1 \mod 3 \\
	\nu_{f_p}^{\mathfrak{m}}(p^e) = \frac{5}{6}p-\frac{7}{6} & \text{if } p \equiv 2 \mod 3, \ e=1 \\
	\nu_{f_p}^{\mathfrak{m}}(p^e) = \frac{5}{6}p^e-\frac{1}{6}p^{e-1} -1 & \text{if } p \equiv 2 \mod 3, \ e\geq 2.
	  \end{cases} \]
	  By the previous proposition, $-5/6, -1$ and $-7/6$ are roots of $b_f(s)$, considering $f$ as an element of $\QQ[x,y]$. In fact, $b_f(s)=(s+\frac{5}{6})(s+1)(s+\frac{7}{6})$. 
\end{example}

We note that the method of Proposition~\ref{prop-roots-nu} does not yield any information about the multiplicities of the roots. There are also examples given in \cite{MTW2005} of Bernstein-Sato polynomials with roots that cannot be recovered by this method. Nonetheless, we note that this method was successfully employed by Budur, Musta\c{t}\u{a}, and Saito \cite{BMS2006} to compute the Bernstein-Sato polynomials of monomial ideals.

\begin{remark}
	In the case of a regular ring $A=\FF_p[x_1,\dots,x_d]$, and ideals $\fa,J$ of $A$ with $\fa\subseteq \sqrt{J}$, the numbers $\nu_{\fa}^J(p^e)$ are closely related to the $F$-jumping numbers discussed in the introduction. In particular, combining \cite[Propositions~1.9~\&~2.7]{MTW2005} for $\fa$ and $e$ fixed, we have
	\[ \{ \nu_{\fa}^J(p^e) \ | \ \sqrt{J} \supseteq \fa\} = \{ \lceil p^e \lambda \rceil - 1 \ | \ \lambda \text{ is an $F$-jumping number of } \fa\}. \]
\end{remark}

\section*{Acknowledgments}
We thank Guillem Blanco, Manuel Gonz\'alez Villa, and Luis Narv\'aez-Macarro for helpful comments.

\newcommand{\etalchar}[1]{$^{#1}$}

\end{document}